\documentclass[11pt]{article}

\usepackage{enumitem}

\usepackage[utf8]{inputenc}
\usepackage[T1]{fontenc}
\usepackage{xcolor,graphicx}
\usepackage{eepic}
\usepackage{fouriernc}
\usepackage[english]{babel}
\usepackage{amsfonts, amssymb, amsmath, amsthm}

\usepackage{mathrsfs}
\usepackage{cancel}

\usepackage{amsmath}
\numberwithin{equation}{section}

\usepackage{graphicx}
\usepackage{color}
\usepackage{graphicx}
\usepackage[colorinlistoftodos]{todonotes}
\usepackage[colorlinks=true, allcolors=blue]{hyperref}

\usepackage{amsmath}
\usepackage{amssymb}
\usetikzlibrary{arrows.meta, angles, quotes, calc}

\usepackage{array} 
\usepackage{amsfonts}
\usepackage{amsmath}
\usepackage{amssymb}
\usepackage{graphicx}
\usepackage{float} 
\usepackage{color}
\usepackage{esint}    

\usepackage{enumitem} 

\usepackage{multirow}

\newtheorem{thm}{Theorem}[section]
\newtheorem{cor}[thm]{Corollary}
\newtheorem{lem}[thm]{Lemma}

\newtheorem{defn}[thm]{Definition}
\newtheorem{rem}[thm]{Remark}
\newtheorem{nota}[thm]{Notation}
\numberwithin{equation}{section}

\newcommand{\dy}{\,{\rm d}y}

\def\LL{\mathrm{L}} 

\newcommand{\ua}{\overline{u}}

\newcommand{\ub}{\underline{u}}

\newcommand{\vb}{\underline{v}}

\newcommand{\RR}{\mathbb{R}}

\newcommand{\NN}{\mathbb{N}}

\def\ee{\mathrm{e}} 
\DeclareMathOperator*{\osc}{osc}
\def\qed{\,\unskip\kern 6pt \penalty 500
	\raise -2pt\hbox{\vrule \vbox to8pt{\hrule width 6pt
			\vfill\hrule}\vrule}\par}

\def\quotient#1#2{\raise1ex\hbox{$#1$}\Big/\lower1ex\hbox{$#2$}}

\definecolor{darkblue}{rgb}{0.05, .05, .65}
\definecolor{darkgreen}{rgb}{0.1, .65, .1}
\definecolor{darkred}{rgb}{0.8,0,0}

\usepackage{mathtools}
\mathtoolsset{showonlyrefs}

\newcommand{\ue}{u_\epsilon}

\newcommand{\plap}{\Delta_p}
\newcommand{\plapN}{\Delta_p^{N}}
\newcommand{\qlap}{\Delta_q}

\newcommand{\gr}{\nabla}
\newcommand{\ph}{\varphi}

\newcommand{\intn}{\fint}

\DeclareMathOperator{\sgn}{sgn}

\newcommand{\A}{\mathcal{A}}
\newcommand{\B}{\mathcal{B}}
\newcommand{\C}{\mathcal{C}}
\newcommand{\wb}{\underline{w}}
\newcommand{\wa}{\overline{w}}

\newcommand{\Rd}{\mathbb{R}^d}

\newcommand{\beps}{\overline{\epsilon}}

\begin{document}
	\title{\textbf{A mean value formula for the $p-$Laplacian}}
	
	\author{\Large Carlos Fuertes-Moran
		~and~ Julio D. Rossi
	}
	\date{} 
	
	\maketitle
	
	
	\begin{abstract}
		In this paper we introduce an asymptotic mean value formula of the form
		$u(x) - \B_\epsilon^p (u) (x) =\epsilon^2 \Delta_p u(x) + o(\epsilon^2)$ for the $p-$Laplacian, $\plap u := \nabla \cdot \big( |\nabla u |^{p-2} \nabla u \big)$. Here $\B_\epsilon^p (u)$ is a nonlinear expression that 
		involve the mean value and the surmemum/infimum of $u$ in
		suitable balls. 
		This asymptotic mean value formula
		allows us to characterize viscosity solutions to equations with different homogeneities in different sets of variables, like  
		$\plap^{x'} u(x)+\qlap^{x''}u(x)=f(x)$ (for $x=(x',x'') \in\mathbb{R}^d$, $x'=(x_1,...,x_n)$ and $x''=(x_{n+1,...,x_d})$), 
		and the parabolic $p$-Laplace heat equation
		$\partial_t u (x,t)-\Delta_p u(x,t)=0$.
		Moreover, the resulting asymptotic mean value formulas are related to two-player zero sum games
		whose value functions, characterized as the unique solution to the corresponding dynamic programming principle, converge as
		the parameter that controls the size of the steps in the game goes to zero, 
		to the unique viscosity solution to the corresponding boundary value problem for the operator under
		consideration. Finally, we also include a Holder regularity result for $\plap^{x'} u(x)+\qlap^{x''}u(x)=0$ proved using the game theoretical 
		interpretation of the problem. 
	\end{abstract}
	
	
	\small
	
	\noindent Carlos Fuertes Moran. 
	
	\noindent Departamento de Matem\'{a}ticas, Universidad Aut\'{o}noma de Madrid,\\
	ICMAT - Instituto de Ciencias Matem\'{a}ticas, CSIC-UAM-UC3M-UCM, \\
	Campus de Cantoblanco, 28049 Madrid, Spain.
	
	\noindent E-mail:\texttt{~carlos.fuertesm@uam.es}
	
	\bigskip
	
	\noindent Julio D. Rossi.
	
	\noindent Departamento de Matemáticas y Estadística, 
	Universidad Torcuato di Tella, \\
	Campus Alcorta - Av. Figueroa Alcorta 7350, Buenos Aires, Argentina.
	
	\noindent E-mail:\texttt{~julio.rossi@utdt.edu}	
	
	\bigskip

	\noindent {\sc Keywords:} Mean value formula, viscosity solutions. $p-$Laplacian.
	
	\noindent{\sc Mathematics Subject Classification 2020}. 35Q91, 35J92, 35D40, 35C20, 91A80, 
	
	
	\smallskip\noindent {\sl\small\copyright~2026 by the authors. This paper may be reproduced, in its entirety, for non-commercial purposes.}

	\newpage
	
	
	\normalsize
	\tableofcontents

	\newpage
	
	\section{Introduction}\label{sec1}
	
	There is a deep connection between Partial Differential Equations and Probability theory. For linear operators, like the Laplacian,
	this relation turns out to rely on the validity of mean value formulas
	for the solutions in the PDE side and martingale identities in the
	probability side. 
	Moreover, approximation of solutions to 
	linear and nonlinear PDEs using game theory is by now a classical subject, see \cite{libroBR} and \cite{Lewicka_book}.
	
	For the Laplacian, 
	it is a well-known fact that a mean value formula characterizes harmonic functions;  a function $u$ is a solution to $\Delta u=0$ in a domain
	$\Omega\subset\mathbb{R}^d$ 
	if and only if $u$ satisfies the mean value property
	\begin{equation}\label{MVPrperty_HarmonicFunctions}
		u(x)
		=
		\intn_{B_\epsilon(x)}u(y)\, dy
	\end{equation}
	for each $x\in\Omega$ and all $0 < \epsilon< \textrm{dist}(x,\partial\Omega)$ (here we denote by 
	$\intn_{A}u(y)\, dy = \frac{1}{|A|} \int_{A} u (y) \,d y$ the mean value of the function $u$ in the set $A$). 
	In fact, a weaker statement,  
	known as an asymptotic mean value property, holds: A continuous function $u$ is harmonic in $\Omega$ if and only if 
	\begin{equation}\label{Asymptotic.MVP.Harmonic}
		u(x)= \intn_{B_\epsilon(x)} u(y)\,dy+o(\epsilon^2)\quad\textrm{as}\ \epsilon\to0,
	\end{equation}
	see \cite{Blaschke,Privaloff}.
	Moreover, the mean value property can be extended to characterize sub- and superharmonic functions replacing the equality by the appropriate inequality
	in \eqref{MVPrperty_HarmonicFunctions} and \eqref{Asymptotic.MVP.Harmonic}.
	Extensions of these ideas to the classical Poisson equation $-\Delta u=f$  can be obtained from the formula
	\begin{equation}\label{Laplaciano.formula.asintotica}
		\Delta u(x)=\lim_{\epsilon \to0}\left[\frac{2(n+2)}{\epsilon^2}\left(\intn_{B_\epsilon(x)} u(y)\,dy-u(x)\right)\right],
	\end{equation}
	from where we conclude that when $f$ is continuous, a function $u$ satisfies $-\Delta u=f$ in $\Omega$ if and only if
	\begin{equation}
		\label{Asymptotic.MVP.Harmonic.f}
		u(x)= \intn_{B_\epsilon (x)} u(y)\,dy+\frac{\epsilon^2}{2(n+2)}\,f(x)+o(\epsilon^2)\qquad\textrm{as} \ \epsilon \to0
	\end{equation}
	for each $x \in \Omega$.

	Mean value formulas for operators other than the Laplacian are known. In \cite{Littman.et.al1963}, the authors prove a mean value theorem for linear divergence form operators with bounded measurable coefficients.
	In this case, the statement of the mean value formula involves the Green function for the operator.
	For extensions to very different equations (like Pucci operators, the Monge-Ampere equation, the obstacle problem, etc), we refer to the recent books 
	\cite{libroBR} and \cite{Lewicka_book} and references therein.

	\indent The present work is motivated by the study of game-theoretical asymptotic expansions, asymptotic mean value properties, and dynamic programming principles for various partial differential equations involving the $p$-Laplacian operator, with $p>2$. The $p$-Laplacian of a real valued function is given by
	$$	\plap u = \nabla \cdot \big( |\nabla u |^{p-2} \nabla u \big).$$
	
	The story begins with seminal papers \cite{PSSW} and \cite{PSSW2}. In those papers the aithors introduce and deeply study 
	a game (called Tug-of-War or Tug-of-War with noise in the literature) whose value function approximates $p-$harmonic functions, that is, 
	functions that satisfy $\plap u = 0$. Inspired by these results in \cite{Manfredi_Parviainen_Rossi_2010} an asymptotic mean value formula that characterizes viscosity 
	$p-$harmonic functions was found. In fact, it holds that a function $u$ is  a viscosity
	solution to $\Delta_p u = \mbox{div} ( |\nabla u|^{p-2}
	\nabla u)=0$ in a domain $\Omega \subset \mathbb{R}^d$ if and only if the expansion
	\begin{equation} \label{as-1}
		u(x) = \frac{\beta_p}{2} \left\{ \sup_{{B_\epsilon
				(x)}} u +
		\inf_{{B_\epsilon (x)}} u
		\right\} + (1-\beta_p) \intn_{B_\epsilon (x)} u (y) \,d y
		+ o (\epsilon^2)
	\end{equation}
	holds as $\epsilon \to 0$ for $x\in \Omega$.
	Here the coefficient $\beta_p$ depends on $p$ as $\beta_p = (p-2) / (d+p)$.
	The key for thei result is the following identity
	\begin{align*}
		\Delta_p u = |\nabla u|^{p-2} \plap^N u, \quad \text{with} \quad \plap^N u \coloneqq (p-2) 
		\left\langle D^2 u \frac{\nabla u}{|\nabla u |}, \frac{\nabla u}{|\nabla u |}\right\rangle + \Delta u, 
	\end{align*}
	where $\plap^N$ is the so-called \emph{normalized} $p$-Laplacian. 
	This is useful due to the fact that $\plap u = 0$ if and only if $\plap^N u = 0$,
	and the fact that the asymptotic mean value formula \eqref{as-1} is related to $\plap^N u$ as simple Taylor expanssions
	show.
	Notice that the operator $\plap^N$ (and also \eqref{as-1}) is $1-$homogeneous in the sense that
	$\plap^N (\lambda u ) = \lambda \plap^N u$.
	
	When one adds a nontrivial right hand side to the equation it is not true that $\plap u = f$ if and only if $\plap^N u = f$,
	and hence new ideas are needed. 
	Recently, in~\cite{delTeso_Rossi_2024}, a mean value formula for 
	$$
	\plap u(x) =f(x)
	$$ in a bounded domain $\Omega\subset \mathbb{R}^d$,
	was found. 
	In the previously mentioned reference, the authors derived a mean value operator that approximates $(\Delta_p u)^{\frac{1}{p-1}}$ at points where the gradient of the function does not vanish, and whose form depends on the sign of the operator (that coincides with the sign of $f$).
	
	Subsequently, in~\cite{delTeso+Fuertes+Rossi_2026}, the authors succeeded in approximating the signed power of the $p$-Laplacian, namely, $(\plap u)^{\frac{1}{p-1}}=\sgn (\plap u) \left|\plap u\right|^{\frac{1}{p-1}}$ (here and in what follows we will denote by $(a)^r = \sgn (a) \left| a \right|^{r}$ the signed power of a real number). 
	Moreover, the new formula for $(\plap u)^{\frac{1}{p-1}}$, has the form
	$$
	u (x) = \A_\epsilon^p[u](x) - \epsilon^2\left(\plap u (x)\right)^{\frac{1}{p-1}}+ o(\epsilon^2).
	$$
	(we shall present $\A_\epsilon^p$ in Section \ref{sec3}), 
	and is also valid at points where the gradient vanishes.
	
	The resulting formula is more robust in the sense that it provides a single algebraic expression independently of the sign of the $p$-Laplacian. This made it possible to develop a complete game-theoretical framework for parabolic equations with the same homogeneity, 
	$$
	\left( \partial_t u(x,t) \right)^{p-1} = \plap u(x,t).
	$$ 
	 Notice that $(\plap u)^{\frac{1}{p-1}}$ and also $\A_\epsilon^p$ are again $1-$homogeneous in
	the sense that $(\plap u)^{\frac{1}{p-1}} (\lambda u)= \lambda (\plap u)^{\frac{1}{p-1}} u$ and $\A_\epsilon^p [\lambda u] = \lambda \A_\epsilon^p [u]$.
	
	The fact that all the previous asymptotic mean value formulas are related to $1-$homogeneous operators prevents
	us to deal with equations with different homogeneities, the primary example being the following anisotropic elliptic equation,
	\begin{equation}\label{eq_p-q}
		\plap^{x'} u(x)+\qlap^{x''}u(x)=f(x).
	\end{equation}
	Here, we denote $x\in\mathbb{R}^d$ by $x=(x',x'')$, where  $x'=(x_1,...,x_n)$ and $x''=(x_{n+1,...,x_d})$. 
	
	Operators of this type, and even more general ones, are known in the literature as anisotropic $p$-Laplacians. For existence and uniqueness results, see \cite{Antontsev+Chipot, Nardo+Feo-2014, Nardo+Feo+Guibe-2013}. Moreover, \cite{Vetois-max-prin} establishes a strong maximum principle, while \cite{Ochoa+Valvelde_2024} proves, under suitable regularity assumptions, an equivalence between viscosity and weak solutions. Some regularity issues are studied in \cite{Demengel-regularity}, or, in the more general setting where the exponent depends on the spatial variable, $p=p(x)$, in \cite{Coscia+Mingione-holder, deFilipis+Mingione-schauder, deFilipis+Mingione-lipschitz}. Although throughout this work we will always assume $2<q<p<\infty$, the limiting cases in which the exponents tend to $1$ and $\infty$, respectively, are considered in \cite{Mercaldo+Rossi+deLeon+Trombetti} and \cite{Llanos+Rossi}.
	
	Our main goal in the present paper is to find a new strategy to handle this problem
	and to deal with non-homogeneous operators. 
	Relying on the previous results established in~\cite{delTeso+Fuertes+Rossi_2026} for signed powers, we find here a new mean value operator	that approximates the $p$-Laplacian without involving any power.
	We find $\B_\epsilon^p$ such that
	$$
	u (x) = \B_\epsilon^p[u](x) - \epsilon^2 \plap u (x) + o(\epsilon^2).
	$$ 
	Now, the mean value operator $\B_\epsilon^p[u]$ is not $1-$homogeneous. 
	
	Once this new mean value formula is stablished, we are able to study equations like \eqref{eq_p-q} both in the homogeneous case, $f(x)=0$, and in the non-homogeneous case, even when more than two $p$-Laplacian operators are involved.
	
	We also are able to handle the parabolic $p$-Laplace heat equation
	\begin{equation}\label{eq-parabolic}
		\partial_t u (x,t)-\Delta_p u(x,t)=0.
	\end{equation}
	Indeed, once a mean value operator approximating the $p$-Laplacian has been obtained, approximating the time derivative becomes standard. 
	
	This type of degenerate parabolic equation has been the subject of several classical works, such as \cite{DIB_1993, DiBenedetto+Friedman, DiBenedetto+Gianazza+Vespri-2008, DiBenedetto+Gianazza+Vespri-2012, DiBenedetto-1988, Kilpelainen_Lindqvist_1996}, which address issues of regularity, Hölder continuity, and Harnack inequalities. In addition, \cite{Wang+Yin+Cao-cauchy} studies the Cauchy problem in the whole space, while \cite{Porcio-t-evolution} analyzes the long-time evolution as $t\to\infty$ and the corresponding asymptotic behavior. Finally, \cite{Guidolin+Schutz+Ziebell-comparison} establishes a comparison principle for weak solutions under certain assumptions.
	
	After deriving the asymptotic mean value formula for equation~\eqref{eq_p-q}, we formulate and analyze the associated Poisson problem for the corresponding Dynamic Programming Principle (DPP)
	(that is, the equation that appears for $\epsilon$ fixed dropping the $o(\epsilon^2)$ term in the asymptotic mean value formula) 
	in a bounded domain with regular boundary $\Omega\subset\mathbb{R}^d$. More precisely, we show that, given an exterior datum and a right hand side, this DPP admits a unique solution, which moreover converges to the viscosity solution of equation~\eqref{eq_p-q} in the same bounded domain and with the same exterior datum.
	
	Finally, we describe the game induced by the asymptotic mean value operator. Using probabilistic arguments, we prove that the game possesses a value which coincides with the solution to the DPP. Furthermore, again by means of probabilistic techniques, we establish that viscosity solutions to the homogeneous equation~\eqref{eq_p-q}, namely when $f(x)=0$, are locally Hölder continuous in the interior.

	
	\section{Preliminaries and Main Results}\label{sec2}
	
	In this section let us state precisely the main results of this paper. 
	Before introducing our mean value operator for the $p$-Laplacian with $p>2$, we recall several fundamental results that will be used throughout this work. One of the key results in the development of the theory extending the classical mean value property for the Laplacian to nonlinear and non-homogeneous problems involving the $p$-Laplacian was established in~\cite{Manfredi_Parviainen_Rossi_2010}; see also~\cite{libroBR,Lewicka_book}.
	
	We refer to the following theorem.
	\begin{thm}\label{exp-plapN}
		Let $d\geq 1$, $p\geq 2$, $x\in\Rd$, $R>0$, and $\ph\in C^2(B_R(x))$ with $\gr\ph(x)\neq 0$. Let $\gamma=\sqrt{2(p+d)}$ and define
		\[
		M_\rho[\ph](x)
		\coloneqq
		\frac{p-2}{2(p+d)}
		\Big(
		\sup_{B_{\gamma \rho}(x)}\ph
		+
		\inf_{B_{\gamma \rho}(x)}\ph
		\Big)
		+
		\frac{d+2}{p+d}
		\intn_{B_{\gamma \rho}(x)} \ph(y)\dy.
		\]
		Then, for $\rho$ sufficiently small, we have
		\begin{equation*}
			M_\rho[\ph](x)-\ph(x)
			=
			\rho^2 \plapN \ph(x)
			+
			\rho^2 E(\rho,\ph,x),
		\end{equation*}
		where $E(\rho,\ph,x)=o_\rho(1)$.
		Moreover, if $\ph \in C^3(B_R(x))$, there exists a constant $k=k(p,d)>0$ such that
		\begin{equation}\label{error-norm-plaplacian}
			\left|E(\rho,\ph,x)\right|
			\leq
			k \rho
			\left(
			\|D^3\ph\|_{\LL^\infty(B_R(x))}
			+
			\frac{|D^2\ph(x)|^2}{|\gr\ph(x)|}
			\right).
		\end{equation}
	\end{thm}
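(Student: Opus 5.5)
The proof splits into the two pieces of $M_\rho$: the average over the ball, which produces the Laplacian, and the half-sum of $\sup$ and $\inf$, which produces the second derivative in the gradient direction. Throughout, set $r=\gamma\rho$, $\nu=\gr\ph(x)/|\gr\ph(x)|$, and write the Taylor expansion with third-order remainder
\[
\ph(x+h)=\ph(x)+\langle\gr\ph(x),h\rangle+\tfrac12\langle D^2\ph(x)h,h\rangle+R(h),\qquad |R(h)|\le \tfrac16\|D^3\ph\|_{\LL^\infty(B_R(x))}|h|^3 .
\]
In the merely $C^2$ case we use instead $R(h)=o(|h|^2)$. We only consider $r<R$.

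\emph{Average term.} Integrating the expansion over $B_r(x)$, the linear term vanishes by symmetry, and $\intn_{B_r}h_ih_j\,dh=\frac{r^2}{d+2}\delta_{ij}$. Hence
\[
\intn_{B_r(x)}\ph=\ph(x)+\frac{r^2}{2(d+2)}\Delta\ph(x)+O\big(r^3\|D^3\ph\|_{\LL^\infty}\big).
\]

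\emph{Sup/inf term.} This is the key step.
\begin{itemize}
\item \emph{Lower bound for the sup.} Evaluating at $h=r\nu$ gives
\[
\sup_{B_r}\ph\ge \ph(x)+r|\gr\ph(x)|+\tfrac{r^2}{2}\langle D^2\ph(x)\nu,\nu\rangle-Cr^3\|D^3\ph\|_{\LL^\infty}.
\]
\item \emph{Upper bound for the sup.} Let $x_r=x+h_r$ be a maximizer. Write $h=r\nu+\eta$ with $|r\nu+\eta|\le r$. Comparing the Taylor expansions at $h_r$ and at $r\nu$ and using that $h_r$ is a maximizer shows $|\eta|\lesssim r^2|D^2\ph(x)|/|\gr\ph(x)|$, plus a $D^3$ contribution. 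Here the constraint $|h|\le r$ forces the loss in the linear term, $\langle\gr\ph,\eta\rangle$, to be at least of order $|\gr\ph|\,|\eta|^2/r$ in the tangential part.
\item \emph{Quadratic term at $h_r$.} Substituting back, the quadratic term at $h_r$ differs from $\frac{r^2}{2}\langle D^2\ph\,\nu,\nu\rangle$ by $O\big(r|D^2\ph||\eta|\big)=O\big(r^3|D^2\ph|^2/|\gr\ph|\big)$. This is exactly the term $\frac{|D^2\ph(x)|^2}{|\gr\ph(x)|}$ in \eqref{error-norm-plaplacian}.
\item \emph{Symmetric estimate for the inf.} The same argument with $-\nu$ gives
\[
\inf_{B_r}\ph=\ph(x)-r|\gr\ph(x)|+\tfrac{r^2}{2}\langle D^2\ph(x)\nu,\nu\rangle+O(\cdot).
\]
\end{itemize}
Adding the two, the first-order terms cancel:
\[
\tfrac12\Big(\sup_{B_r}\ph+\inf_{B_r}\ph\Big)=\ph(x)+\tfrac{r^2}{2}\langle D^2\ph(x)\nu,\nu\rangle+O\Big(r^3\big(\|D^3\ph\|_{\LL^\infty}+|D^2\ph(x)|^2/|\gr\ph(x)|\big)\Big).
\]

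\emph{Combination.} The weights satisfy $\frac{p-2}{p+d}+\frac{d+2}{p+d}=1$, so the zeroth-order terms give exactly $\ph(x)$. With $r^2=2(p+d)\rho^2$, the second-order part equals
\[
\frac{p-2}{p+d}\cdot(p+d)\rho^2\langle D^2\ph\,\nu,\nu\rangle+\frac{d+2}{p+d}\cdot\frac{2(p+d)\rho^2}{2(d+2)}\Delta\ph=\rho^2\big((p-2)\langle D^2\ph\,\nu,\nu\rangle+\Delta\ph\big)=\rho^2\plapN\ph(x).
\]
All remainders are $O(\rho^3)$ times the claimed quantity, which gives \eqref{error-norm-plaplacian}. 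In the $C^2$ case the same argument with $o(r^2)$ remainders gives $E=o_\rho(1)$. For the maximizer step there we use only that $x_r-x\to r\nu$ direction-wise, which holds since $\gr\ph(x)\ne0$.

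\emph{Main obstacle.} The sup/inf step is the delicate one: we need to control the deviation of the maximizer from $x+r\nu$ quantitatively, with the specific dependence $|D^2\ph|^2/|\gr\ph|$. We would make this rigorous by maximizing the quadratic model over the ball. The exact maximum of $\langle g,h\rangle+\frac12\langle Ah,h\rangle$ over $|h|\le r$ equals $r|g|+\frac{r^2}{2}\langle A\nu,\nu\rangle+O(r^3|A|^2/|g|)$, by perturbing $h=r\nu$ on the sphere and optimizing. The cubic remainder then shifts sup and inf by at most $Cr^3\|D^3\ph\|_{\LL^\infty}$.
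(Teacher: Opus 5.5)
Your proof is correct, but it takes a different route from the source the paper relies on. The paper does not prove this theorem itself; it quotes it from Manfredi--Parviainen--Rossi and related references, and the standard argument there handles the sup/inf step by reflection.

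\textbf{The reflection argument.} One takes a minimum point $x_r$ of $\ph$ on $\overline{B_r(x)}$ and uses $\sup_{B_r(x)}\ph\ge\ph(2x-x_r)$. The first-order terms then cancel pointwise in $\ph(x_r)+\ph(2x-x_r)$, which gives
\[
\tfrac12\big(\sup\ph+\inf\ph\big)-\ph(x)\ge\tfrac12\langle D^2\ph(x)(x_r-x),x_r-x\rangle+o(r^2).
\]
The reverse inequality comes in the same way from a maximum point. The limit is identified through $(x_r-x)/r\to-\nu$ (resp.\ $\nu$ for the maximum point), and the quantitative version needs a rate for this convergence.

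\textbf{Your argument.} You expand $\sup$ and $\inf$ separately, so $\pm r|\gr\ph(x)|$ cancels only after adding. In your final version you replace convergence of extremal points by an explicit bound. With $Q(h)=\langle\gr\ph(x),h\rangle+\frac12\langle D^2\ph(x)h,h\rangle$, it holds for every $|h|\le r$:
\[
Q(h)-Q(r\nu)\le-\frac{|\gr\ph(x)|}{2r}\,|h-r\nu|^2+r\,|D^2\ph(x)|\,|h-r\nu|\le\frac{r^3|D^2\ph(x)|^2}{2|\gr\ph(x)|}.
\]
This follows from $r-\langle\nu,h\rangle\ge|h-r\nu|^2/(2r)$ and $|\langle Ah,h\rangle-r^2\langle A\nu,\nu\rangle|\le 2r|A|\,|h-r\nu|$. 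Write it out explicitly, since it is the whole content of the key step.

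\textbf{What each route buys.} Your route gives exactly the stated form of \eqref{error-norm-plaplacian}, with the Hessian evaluated at $x$ and no smallness of $r$ beyond $r<R$. It also gives the $C^2$ case by the same computation with an $o(r^2)$ remainder, so your remark about maximizer directions is unnecessary. The reflection route gives each one-sided inequality from a single extremal point, without using $\gr\ph(x)\neq0$; that is the form typically needed in viscosity arguments.

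\textbf{If you keep the maximizer bullet.} The $D^3$ contribution to $|\eta|$ is of order $r^2\big(\|D^3\ph\|_{\LL^\infty}/|\gr\ph(x)|\big)^{1/2}$, not just $r^2|D^2\ph(x)|/|\gr\ph(x)|$. The resulting cross term must be absorbed via
\[
|D^2\ph(x)|\big(\|D^3\ph\|_{\LL^\infty}/|\gr\ph(x)|\big)^{1/2}\le\tfrac12\big(|D^2\ph(x)|^2/|\gr\ph(x)|+\|D^3\ph\|_{\LL^\infty}\big).
\]
The quadratic-model route avoids this step entirely.
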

	
	The corresponding asymptotic mean value formula for the signed power of the $p$-Laplacian, (remember that we have defined  $(a)^\beta = \operatorname{sgn}(a)\,|a|^\beta$, for $\beta>0$), presented in~\cite{delTeso+Fuertes+Rossi_2026}, follows from
	\[
	\left(\plap u\right)^{\frac{1}{p-1}}
	=
	\left|\gr u\right|^{\alpha_p}
	\left(\plapN u\right)^{1-\alpha_p},
	\qquad
	\mbox{with } \alpha_p=\frac{p-2}{p-1}.
	\]
	Here, we use the following numerical identity, valid for positive $a,b\in\RR$,
	\[
	a^\alpha b^{1-\alpha}
	=
	\inf_{c>0}
	\left\{
	\alpha c^{1-\alpha}a
	+
	(1-\alpha)c^{-\alpha}b
	\right\},
	\]
	together with the asymptotic mean value formulas for the gradient and the normalized $p$-Laplacian given in Theorem~\ref{exp-plapN}. Moreover, in order to obtain a closed-form expression that is independent of the sign of the $p$-Laplacian, we decompose its signed power into its positive and negative parts, thereby obtaining the following mean operator. 
	
	\begin{equation}\label{operator-A}
		\begin{split}
			\A_\epsilon^p[\ph](x)=&
			\frac{1}{2}\max\left[
			\inf_{c\in[m_p(\epsilon),M_p(\epsilon)]}
			\left\{
			\alpha_p \sup_{B_{2\epsilon^2 c^{1-\alpha_p}}(x)}\ph
			+
			(1-\alpha_p) M_{\sqrt{2}\epsilon c^{-\alpha_p/2}}[\ph](x)
			\right\},
			\ph(x)
			\right]\\
			&+\frac{1}{2}\min\left[
			\sup_{c\in[m_p(\epsilon),M_p(\epsilon)]}
			\left\{
			\alpha_p \inf_{B_{2\epsilon^2 c^{1-\alpha_p}}(x)}\ph
			+
			(1-\alpha_p) M_{\sqrt{2}\epsilon c^{-\alpha_p/2}}[\ph](x)
			\right\},
			\ph(x)
			\right].
		\end{split}
	\end{equation}
	For a more detailed motivation, see \cite{delTeso+Fuertes+Rossi_2026}.
	
	Here, we restrict the range of $c$ in the infimum and supremum appearing in the above formula to
	$c\in[m_p(\epsilon),M_p(\epsilon)]$
	in order to ensure that all the terms are well defined. More precisely, we set
	\begin{equation}\label{m-M}\tag{m-M}
		m_p(\epsilon)=\epsilon^{\frac{2(p-1)}{3p-4}}
		\hspace{3mm}\mbox{and}\hspace{3mm}
		M_p(\epsilon)=\epsilon^{-2+\frac{2}{p}}.
	\end{equation}
	
	We have the following asymptotic mean value formula (AMVF) for the signed power of the $p$-Laplacian operator.
	\begin{thm}\label{thm.asexp}
		Let $d\geq 1$, $p>2$, $x\in\Rd$, $R\in(0,1)$, and assume \eqref{m-M}. Then, there exists $\beps\in(0,R)$ such that for all $\epsilon\in(0,\beps)$ the operator \eqref{operator-A} is well defined for any bounded measurable function $\varphi$ in $B_{R}(x)$. Moreover, if $\ph\in C^2(B_{R}(x))$, then the following asymptotic expansions holds
		\begin{equation}\label{asexp}
			\A_\epsilon^p[\ph](x)=\ph(x)+\epsilon^2\left(\plap \ph(x)\right)^{\frac{1}{p-1}}+\epsilon^2 E_1(\epsilon, \ph, x).
		\end{equation}
		We further have the following estimates for the error term $E_1(\epsilon, \ph, x)$:
		\begin{enumerate}[label=(\alph*), leftmargin=*]
			\item\label{thm.asexp-item-a} If $\ph\in C^2(B_{R}(x))$, then $E_1(\epsilon, \ph, x)=o_\epsilon(1)$ as $\epsilon\to0^+$. 
			\item\label{thm.asexp-item-b} If $\ph\in C^3(B_{R}(x))$ and  $\nabla\varphi(x)\not=0$,  there exists constants $k_1=k_1(d,p)>0$, $k_2=k_2(d,p)>0$ such that 
			\begin{equation}\label{E1}
				\left|E_1(\epsilon,\ph,x)\right|\leq  k_1 \epsilon^{2-\frac{4}{p}}\|D^2\ph\|_{\LL^\infty(B_{\beps}(x))}
				+ k_2 \epsilon^{\frac{2}{3p-4}}\left(\|\gr\ph\|_{\LL^\infty(B_{\beps}(x))}+\|D^3\ph\|_{\LL^\infty(B_{\beps}(x))}+\frac{|D^2\ph(x)|^2}{|\gr\ph(x)|}\right).
			\end{equation}
			\item\label{thm.asexp-item-c} If $\ph\in C^2(B_{R}(x))$, there exists a constant $k_3=k_3(d,p)>0$ such that
			\begin{equation}\label{E1-gr-not}
				\left|E_1(\epsilon,\ph,x)\right|\leq k_3 \left( \|D^2\ph\|_{\LL^\infty(B_{\beps}(x))}^{\frac{1}{p-1}}|\gr \ph(x)|^{\frac{p-2}{p-1}}+ \epsilon^{2-\frac{4}{p}} \|D^2\ph\|_{\LL^\infty(B_{\beps}(x))}+\epsilon^{\frac{2}{3p-4}}|\gr\ph(x)|\right).
			\end{equation}
		\end{enumerate}
	\end{thm}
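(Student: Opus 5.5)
The plan is to reduce \eqref{asexp} to Theorem~\ref{exp-plapN} together with a Taylor expansion of the supremum/infimum over the tiny balls $B_{2\epsilon^2 c^{1-\alpha_p}}(x)$. Then we optimize in $c$ via the identity $a^\alpha b^{1-\alpha}=\inf_{c>0}\{\alpha c^{1-\alpha}a+(1-\alpha)c^{-\alpha}b\}$ with $\alpha=\alpha_p$.

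\emph{Well-definedness.} For $c\in[m_p(\epsilon),M_p(\epsilon)]$ the two radii satisfy
\[
2\epsilon^2c^{1-\alpha_p}\le 2\epsilon^2 M_p(\epsilon)^{1-\alpha_p}\to0,\qquad \gamma\sqrt2\,\epsilon\, c^{-\alpha_p/2}\le \gamma\sqrt2\,\epsilon\, m_p(\epsilon)^{-\alpha_p/2}\to 0 .
\]
The exponents in \eqref{m-M} are chosen exactly so that both tend to zero. We therefore pick $\beps$ with all balls contained in $B_R(x)$; bounded measurability then suffices for every term to be finite.

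\emph{Expansion when $\nabla\ph(x)\ne0$.} Write $r=2\epsilon^2c^{1-\alpha_p}$. Taylor expansion gives
\[
\sup_{B_r(x)}\ph=\ph(x)+r|\nabla\ph(x)|+O(r^2\|D^2\ph\|),
\]
and symmetrically for the infimum. With $\rho=\sqrt2\epsilon c^{-\alpha_p/2}$, Theorem~\ref{exp-plapN} gives
\[
M_\rho[\ph](x)=\ph(x)+2\epsilon^2c^{-\alpha_p}\plapN\ph(x)+\rho^2E.
\]
Hence the bracket inside the infimum equals
\[
\ph(x)+2\epsilon^2\big\{\alpha_p c^{1-\alpha_p}|\nabla\ph|+(1-\alpha_p)c^{-\alpha_p}\plapN\ph\big\}+\text{errors}.
\]
If $\plapN\ph(x)>0$, the infimum over $c>0$ of the braces is $|\nabla\ph|^{\alpha_p}(\plapN\ph)^{1-\alpha_p}=(\plap\ph)^{1/(p-1)}$, attained at $c^*=\big((1-\alpha_p)\plapN\ph\big)/\big((1-\alpha_p)|\nabla\ph|\big)\cdot$const. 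This minimizer is a fixed positive number, so for small $\epsilon$ it lies in $[m_p,M_p]$. The first max then equals $\ph(x)+2\epsilon^2(\plap\ph)^{1/(p-1)}+o(\epsilon^2)$, since that exceeds $\ph(x)$.

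For the second term we use the infimum over the ball, which gives $-\alpha_pc^{1-\alpha_p}|\nabla\ph|+(1-\alpha_p)c^{-\alpha_p}\plapN\ph$. Its supremum over $c$ is $+\infty$ in the limit, or at least $\geq0$, so the min returns $\ph(x)$. Averaging the two terms gives \eqref{asexp}. The case $\plapN\ph<0$ is symmetric, with the roles of the two terms exchanged. When $\plapN\ph(x)=0$, both terms are $\ph(x)+o(\epsilon^2)$: this follows by taking $c=M_p$, respectively $c=m_p$, where the braces become small.

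\emph{Error bounds.} Part~\ref{thm.asexp-item-b} comes from tracking the errors uniformly over $c\in[m_p,M_p]$. The term $r^2\|D^2\ph\|$ divided by $\epsilon^2$ is $\lesssim\epsilon^2M_p^{2(1-\alpha_p)}\|D^2\ph\|$, which produces the $\epsilon^{2-4/p}$ factor. The Theorem~\ref{exp-plapN} error $c^{-\alpha_p}\rho\,(\dots)$ is maximized at $c=m_p$ and produces $\epsilon^{2/(3p-4)}$. The $\|\nabla\ph\|$ contribution comes from the truncation of the optimization to $[m_p,M_p]$ near the endpoints.

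\emph{The obstacle: $\nabla\ph(x)=0$, or $\nabla\ph$ small.} Here the error of Theorem~\ref{exp-plapN} blows up, so I would avoid it. Instead I would bound $M_\rho[\ph](x)-\ph(x)$ crudely by $C\rho^2\|D^2\ph\|$ using only second-order Taylor expansion, which holds for all $\ph\in C^2$. The sup over the small ball is bounded by $r|\nabla\ph(x)|+Cr^2\|D^2\ph\|$. Each bracket is then within
\[
\epsilon^2\Big(\inf_c\{c^{1-\alpha_p}|\nabla\ph|+c^{-\alpha_p}\|D^2\ph\|\}+\dots\Big)\lesssim\epsilon^2\|D^2\ph\|^{1/(p-1)}|\nabla\ph|^{\alpha_p},
\]
plus the boundary-of-range terms. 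Since $(\plap\ph)^{1/(p-1)}$ is itself bounded by this quantity, this yields \eqref{E1-gr-not}. Letting $|\nabla\ph(x)|\to0$ gives~\ref{thm.asexp-item-a} at critical points, with the $\epsilon$-powers vanishing. At noncritical points, \ref{thm.asexp-item-a} follows from the first expansion using $o_\rho(1)$ of Theorem~\ref{exp-plapN} uniformly in $c$. Checking that the optimal $c$ and all the truncation effects are controlled uniformly is the delicate step.
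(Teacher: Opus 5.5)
Your route is the right one. It is exactly the heuristic the paper sketches just before the statement; the theorem itself is imported from the authors' earlier work and is not reproved in this paper. Your bookkeeping for (b) and (c) is also sound:
\begin{itemize}
\item $\epsilon^2c^{2(1-\alpha_p)}\le\epsilon^{2-4/p}$ and $\epsilon c^{-3\alpha_p/2}\le\epsilon^{2/(3p-4)}$ on $[m_p,M_p]$;
\item truncation at the endpoints costs $m_p^{1-\alpha_p}|\gr\ph(x)|=\epsilon^{2/(3p-4)}|\gr\ph(x)|$ and $M_p^{-\alpha_p}|\plapN\ph(x)|\lesssim\epsilon^{2-4/p}\|D^2\ph\|$.
\end{itemize}

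\textbf{The gap.} It is in (a), at points where $\gr\ph(x)\neq0$ and $\ph$ is only $C^2$. After dividing by $2\epsilon^2$, the error of Theorem~\ref{exp-plapN} enters the bracket as $(1-\alpha_p)c^{-\alpha_p}E(\rho_c,\ph,x)$, with $\rho_c=\sqrt{2}\,\epsilon c^{-\alpha_p/2}$. It is true that $\sup_{c\in[m_p,M_p]}|E(\rho_c,\ph,x)|\to0$. However, the prefactor reaches $m_p^{-\alpha_p}=\epsilon^{-2(p-2)/(3p-4)}\to\infty$, and for $C^2$ functions $E$ has no rate. So ``$o_\rho(1)$ uniformly in $c$'' does not yield $E_1=o_\epsilon(1)$.

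Your case $\plapN\ph(x)=0$ fails concretely:
\begin{itemize}
\item $c=M_p$ is the wrong choice for either clipped term. It produces $\alpha_pM_p^{1-\alpha_p}|\gr\ph(x)|=\alpha_p\epsilon^{-2/p}|\gr\ph(x)|\to\infty$; both terms need $c$ small.
\item $c=m_p$ leaves the uncontrolled quantity $m_p^{-\alpha_p}E(\rho_{m_p},\ph,x)$.
\end{itemize}

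\textbf{The repair} is routine. Fix $\delta>0$ and take $\epsilon$ so small that $|E(\rho_c,\ph,x)|\le\delta$ for all $c\in[m_p,M_p]$.
\begin{itemize}
\item Where you need a bound valid for every $c$ (the lower bound of the infimum when $\plapN\ph(x)>0$, the upper bound of the supremum when $\plapN\ph(x)<0$), absorb the error into the leading term: $c^{-\alpha_p}(\plapN\ph(x)+E)\geq c^{-\alpha_p}(\plapN\ph(x)-\delta)$. Then apply the $c$-identity with $\plapN\ph(x)-\delta>0$. The small-ball Taylor error $O(\epsilon^4c^{2(1-\alpha_p)})$ is uniformly $o(\epsilon^2)$ and causes no trouble.
\item Where a single $c$ suffices (the reverse inequalities, and the clipped term that must reduce to $\frac{1}{2}\ph(x)$), use a $c$ independent of $\epsilon$. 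Take either $c^*=\plapN\ph(x)/|\gr\ph(x)|$, or a fixed small $c_0$ followed by $c_0\to0$. For $\plapN\ph(x)=0$ this gives $0\le\limsup_{\epsilon\to0}\epsilon^{-2}\big(\text{first term}-\frac{1}{2}\ph(x)\big)\le\alpha_pc_0^{1-\alpha_p}|\gr\ph(x)|$ for every $c_0$.
\end{itemize}
Under the $C^3$ rate the problem disappears, because $c^{-\alpha_p}\rho_c\lesssim\epsilon^{2/(3p-4)}$ uniformly. That is why your argument for (b) is fine.

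\textbf{A smaller point on (c).} Phrase the estimate for the clipped terms, not for ``each bracket''. Your crude bounds give only an upper bound on the first infimum. Unclipped, that infimum can be as low as $\ph(x)-C\epsilon^2m_p^{-\alpha_p}$, for instance for $\ph(y)=-|y-x|^2$. It is the max with $\ph(x)$ that places the first term in $[\frac{1}{2}\ph(x),\frac{1}{2}\ph(x)+C\epsilon^2(\cdots)]$.
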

	
	\begin{rem}\rm
		First, we observe that the error estimate in Theorem \ref{thm.asexp}\ref{thm.asexp-item-a} holds for $C^2$ functions regardless of whether the gradient vanishes or not. However, the estimate obtained there is not locally uniform in $x$. In contrast, the error estimate in Theorem \ref{thm.asexp}\ref{thm.asexp-item-b}, which requires $C^3$ regularity, is locally uniform away from points where the gradient vanishes. Nevertheless, this estimate degenerates as one approaches points where the gradient is zero. This is precisely why Theorem \ref{thm.asexp}\ref{thm.asexp-item-c} is needed, since it allows us to avoid such degeneracies.  These features are essential for the characterization of viscosity solutions and for the convergence of the Dynamic programming principle established later. 
	\end{rem}
	
	In addition to the mean value operator $\A_\epsilon^p$, which approximates the signed power $(\Delta_p \ph)^{\frac{1}{p-1}}$, the other key ingredient in deriving our new formulation is the following numerical lemma. 
		\begin{lem}\label{signed-power-lemma}
		Let $a\in\RR$ and $r>1$. Then, the signed power of $a$ can be written as follows
		\begin{equation}\label{signed-power}
			(a)^r
			=
			\sup_{b>0}
			\left\{
			rab-(r-1)b^{\frac{r}{r-1}}
			\right\}
			+
			\inf_{b>0}
			\left\{
			rab+(r-1)b^{\frac{r}{r-1}}
			\right\}.
		\end{equation}
		Moreover, if we restrict the supremum and infimum to $b\in[m,M]$, then
		\[
		(a)^r
		=
		\sup_{b\in[m,M]}
		\left\{
		rab-(r-1)b^{\frac{r}{r-1}}
		\right\}
		+
		\inf_{b\in[m,M]}
		\left\{
		rab+(r-1)b^{\frac{r}{r-1}}
		\right\}
		+
		E(a,r,m,M),
		\]
		where
		\begin{equation}\label{signed-power-error}
			\left|E(a,r,m,M)\right|
			\leq
			2r
			\left(
			m^{\frac{r}{r-1}}
			+
			m\left|a\right|
			+
			\left|a\right|
			\left(\left|a\right|^{r-1}-M\right)_+
			\right).
		\end{equation}
	\end{lem}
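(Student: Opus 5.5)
This is a one-variable convex-duality computation, so the plan is to compute the two extremal problems in \eqref{signed-power} explicitly. Set $s=\frac{r}{r-1}$, the conjugate exponent of $r$, and write
$$g_a(b)=rab-(r-1)b^{s}, \qquad h_a(b)=rab+(r-1)b^{s}, \qquad b>0 .$$
Since $r>1$ we have $s>1$. Hence $g_a$ is strictly concave and $h_a$ is strictly convex on $(0,\infty)$, with derivatives
$$g_a'(b)=r\bigl(a-b^{1/(r-1)}\bigr), \qquad h_a'(b)=r\bigl(a+b^{1/(r-1)}\bigr).$$

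\textbf{Unrestricted identity.} There are two cases.

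If $a>0$, then $g_a$ is maximized at $b^*=a^{r-1}$, and $g_a(b^*)=ra^r-(r-1)a^r=a^r$. On the other hand $h_a$ is increasing, so $\inf_{b>0}h_a=\lim_{b\to0^+}h_a(b)=0$. The sum is $a^r=(a)^r$.

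If $a\le 0$, the situation is symmetric. Now $g_a$ is decreasing, so $\sup g_a=0$. The function $h_a$ is minimized at $b^*=|a|^{r-1}$, with value $-r|a|^r+(r-1)|a|^r=-|a|^r=(a)^r$.

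This proves \eqref{signed-power}. The key fact is that at most one of the two terms is nonzero, and that the relevant optimizer is $b^*=|a|^{r-1}$.

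\textbf{Restricted version.} Put
$$E=(a)^r-\sup_{[m,M]}g_a-\inf_{[m,M]}h_a ,$$
and compare each restricted extremum with the unrestricted one, term by term. I assume $0<m\le M$. Restricting to $[m,M]$ can only lower the supremum and raise the infimum, so in each case it suffices to bound the loss.

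Take first the term whose unrestricted extremum is $0$, attained as $b\to0^+$. Evaluating at $b=m$ shows that the loss is at most
$$|g_a(m)|,\ |h_a(m)|\le r|a|m+(r-1)m^{s}.$$

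Next take the term attained at $b^*=|a|^{r-1}$. There are three sub-cases.

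If $b^*\in[m,M]$, there is no loss.

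If $b^*<m$, then by monotonicity the restricted extremum is attained at $m$. The loss is at most the oscillation of $g_a$ (or $h_a$) on $[0,m]$, which is again bounded by $r|a|m+(r-1)m^s$. This holds because both $|g_a(b^*)|=|a|^r\le |a|m$ in this case, and $|g_a(m)|\le r|a|m+(r-1)m^s$.

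If $b^*>M$, the extremum is attained at $M$. For $a>0$ the loss is
$$g_a(b^*)-g_a(M)=\int_M^{b^*}r\bigl(a-t^{1/(r-1)}\bigr)\,dt\le ra\,(b^*-M)=r|a|\bigl(|a|^{r-1}-M\bigr)_+ ,$$
and the case $a<0$ with $h_a$ is identical.

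Summing the two contributions and absorbing the constants, using $r-1\le r$ and a factor $2$ for the two terms, gives
$$|E|\le 2r\bigl(m^{s}+m|a|+|a|(|a|^{r-1}-M)_+\bigr),$$
which is \eqref{signed-power-error}.

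The only delicate point is the bookkeeping in the sub-case $b^*<m$, where both endpoint values must be controlled by $m|a|$ and $m^s$. It is handled by the elementary bound $|a|^r=|a|\,b^*<|a|m$.
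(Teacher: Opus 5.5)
Your proposal is correct and is essentially the paper's argument: the explicit optimizer $b^*=|a|^{r-1}$; the observation that the wrong-sign term vanishes without restriction and costs at most $r|a|m+(r-1)m^{\frac{r}{r-1}}$ when evaluated at $b=m$; and a three-case comparison of $b^*$ with $[m,M]$ for the other term. One small bookkeeping point: in the sub-case $b^*<m$, the triangle inequality applied to your two endpoint bounds only gives $(r+1)|a|m+(r-1)m^{\frac{r}{r-1}}$, but computing directly, as the paper does, gives $|a|^r-r|a|m+(r-1)m^{\frac{r}{r-1}}\le (r-1)m^{\frac{r}{r-1}}$ (using $|a|^r=|a|b^*<|a|m$), which is the bound you want.
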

	The proof can be found in Appendix~\ref{apenA}.
	
	Therefore, our strategy is to use the mean operator $\mathcal{A}_\epsilon^p$ to approximate $\left(\Delta_p u\right)^{\frac{1}{p-1}}$, together with the preceding lemma with $r=p-1$. We have
	\begin{align*}
		\plap \ph(x) = [\left(\Delta_p \phi  (x)\right)^{\frac{1}{p-1}}]^{p-1}
		&=
		\sup_{b>0}
		\left\{
		(p-1)b (\plap \ph(x))^{\frac{1}{p-1}}
		-
		(p-2)b^{\frac{p-1}{p-2}}
		\right\}\\
		&\quad+
		\inf_{b>0}
		\left\{
		(p-1)b (\plap \ph(x))^{\frac{1}{p-1}}
		+
		(p-2)b^{\frac{p-1}{p-2}}
		\right\}\\
		&\approx
		\sup_{b>0}
		\left\{
		\frac{\A_{\epsilon\sqrt{2(p-1)b}}^p[\ph](x)-\ph(x)}
		{2\epsilon^2}
		-
		(p-2)b^{\frac{p-1}{p-2}}
		\right\}\\
		&\quad+
		\inf_{b>0}
		\left\{
		\frac{\A_{\epsilon\sqrt{2(p-1)b}}^p[\ph](x)-\ph(x)}
		{2\epsilon^2}
		+
		(p-2)b^{\frac{p-1}{p-2}}
		\right\}.
	\end{align*}
	
	Thus, we define the mean value operator
	\begin{equation}\label{operator-B}
		\begin{split}
			\B_\epsilon^p[\ph](x)
			&=
			\frac{1}{2}
			\sup_{b\in[m_b,M_b]}
			\left\{
			\A_{\epsilon\sqrt{2(p-1)b}}^p[\ph](x)
			-
			2(p-2)\epsilon^2 b^{\frac{p-1}{p-2}}
			\right\}\\
			&\quad+
			\frac{1}{2}
			\inf_{b\in[m_b,M_b]}
			\left\{
			\A_{\epsilon\sqrt{2(p-1)b}}^p[\ph](x)
			+
			2(p-2)\epsilon^2 b^{\frac{p-1}{p-2}}
			\right\}.
		\end{split}
	\end{equation}
	Here, we restrict the range of $b\in [m_b,M_b]$ in order to ensure that all terms are well defined. In fact, we set
		\begin{equation}\label{m-M-b}
		m_b(\epsilon)=\epsilon^2
		\qquad\mbox{and}\qquad
		M_b(\epsilon)
		=
		\epsilon^{-\frac{1}{2}
			\min\left[
			\frac{p-2}{p-1},
			\frac{2}{3(p-1)}
			\right]}.
	\end{equation}
	Observe that, in the mean operator $\B_\epsilon^p$, the choice of the constant $b$ determines the size of the balls appearing in the operator $\A_\epsilon^p$, and this choice is controlled by the penalization factor.
	
	\subsection{Asymtotic Mean Value Formula}
	Now we are ready to establish the asymptotic expansions.
	
	\begin{thm}\label{main-asym-exp-plap}
		Let $d\in \NN$, $p>2$, $x\in\Rd$, $R\in(0,1)$ and assume that \eqref{m-M} and \eqref{m-M-b} holds. Then, there exists $\beps\in(0,R)$ such that for all $\epsilon\in(0,\beps)$ the operator \eqref{operator-B} is well defined for any bounded measurable function $\ph$ in $B_R(x)$. Moreover, if $\ph$ is regular enough, namely $\ph\in C^2(B_R(x))$, then the following asymptotic expansion holds
		\begin{equation*}
			\B_\epsilon^p[\ph](x)=\ph(x)+\epsilon^2 \plap \ph(x)+\epsilon^2 E_p(\epsilon,\ph,x).
		\end{equation*}
		We futher have the following estimates for the error term:
		\begin{enumerate}[label=(\alph*), leftmargin=*]
			\item If $\ph\in C^2(B_R(x))$, then $E_p(\epsilon,\ph,x)=o_\epsilon(1)$ as $\epsilon\to 0^+$.
			
			\item If $\ph\in C^3(B_R(x))$ and $\gr \ph(x)\not=0$, there exists positive constants $k_4$, $k_5$ and $k_6$ which only depends on $d$ and $p$ such that
			\begin{equation*}
				\begin{split}
					&\left|E_p(\epsilon,\ph,x)\right|
					\leq k_4 \Bigg(
					\epsilon^{\frac{2(p-1)}{p-2}}
					+\epsilon^2
					\|D^2\ph\|_{\LL^\infty(B_{\beps}(x))}^{\frac{1}{p-1}}
					\|\gr \ph\|_{\LL^\infty(B_{\beps}(x))}^{\frac{p-2}{p-1}}
					\\
					&\quad
					+\|D^2\ph\|_{\LL^\infty(B_{\beps}(x))}^{\frac{1}{p-1}}
					\|\gr\ph\|_{\LL^\infty(B_{\beps}(x))}^{\frac{p-2}{p-1}} 
					\Bigg(
					\|D^2\ph\|_{\LL^\infty(B_{\beps}(x))}^{\frac{p-2}{p-1}}
					\|\gr\ph\|_{\LL^\infty(B_{\beps}(x))}^{\frac{(p-2)^2}{p-1}}
					-\epsilon^{-\frac{1}{2}\min\left[
						\frac{p-2}{p-1},
						\, \frac{2}{3(p-1)}
						\right]}
					\Bigg)_+
					\Bigg)
					\\
					&\quad
					+ k_5
					\epsilon^{1-\frac{2}{p}}
					\|D^2\ph\|_{\LL^\infty(B_{\beps}(x))}
					+ k_6
					\epsilon^{\frac{1}{3p-4}}
					\Bigg(
					\|\gr\ph\|_{\LL^\infty(B_{\beps}(x))}
					+\|D^3\ph\|_{\LL^\infty(B_{\beps}(x))}
					+\frac{\left|D^2\ph(x)\right|^2}
					{\left|\gr \ph(x)\right|}
					\Bigg).
				\end{split}
			\end{equation*}
			
			\item If $\ph\in C^2(B_R(x))$ and we allow its gradient to vanish in $x$, then there exists a positive constants  $k_4$ and $k_7$ which only depends on $d$ and $p$ such that we may reformulate the previous estimate as follows
			\begin{equation*}
				\begin{split}
					&\left|E_p(\epsilon,\ph,x)\right|
					\leq k_4 \Bigg(
					\epsilon^{\frac{2(p-1)}{p-2}}
					+\epsilon^2
					\|D^2\ph\|_{\LL^\infty(B_{\beps}(x))}^{\frac{1}{p-1}}
					\|\gr \ph\|_{\LL^\infty(B_{\beps}(x))}^{\frac{p-2}{p-1}}
					\\
					&\quad
					+\|D^2\ph\|_{\LL^\infty(B_{\beps}(x))}^{\frac{1}{p-1}}
					\|\gr\ph\|_{\LL^\infty(B_{\beps}(x))}^{\frac{p-2}{p-1}} 
					\Bigg(
					\|D^2\ph\|_{\LL^\infty(B_{\beps}(x))}^{\frac{p-2}{p-1}}
					\|\gr\ph\|_{\LL^\infty(B_{\beps}(x))}^{\frac{(p-2)^2}{p-1}}
					-\epsilon^{-\frac{1}{2}\min\left[
						\frac{p-2}{p-1},
						\, \frac{2}{3(p-1)}
						\right]}
					\Bigg)_+
					\Bigg)
					\\
					&\quad
					+ k_7
					\Bigg(\epsilon^{-\frac{2}{3}}
					\|D^2\ph\|_{\LL^\infty(B_{\beps}(x))}^{\frac{1}{p-1}}\left|\gr \ph(x)\right|^{\frac{p-2}{p-1}}+\epsilon^{1-\frac{2}{p}}\|D^2\ph\|_{\LL^\infty(B_{\beps}(x))}+\epsilon^{\frac{1}{3p-4}}\left|\gr \ph(x)\right|
					\Bigg).
				\end{split}
			\end{equation*}
		\end{enumerate}
	\end{thm}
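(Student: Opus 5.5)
The plan is to combine Theorem~\ref{thm.asexp}, applied with the rescaled parameter $\delta=\epsilon\sqrt{2(p-1)b}$, with Lemma~\ref{signed-power-lemma} for $r=p-1$.

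\textbf{Well-posedness.} For $b\in[m_b,M_b]$ we have $\delta\le \epsilon\sqrt{2(p-1)}\,\epsilon^{-\frac14\min[\cdot]}\to0$, since the exponent $\frac14\min[\frac{p-2}{p-1},\frac{2}{3(p-1)}]<1$. Hence, for $\epsilon$ small, $\delta<\beps_{\A}$, where $\beps_{\A}$ is the threshold of Theorem~\ref{thm.asexp}. Therefore every $\A^p_\delta[\ph](x)$ is well defined for bounded measurable $\ph$. The supremum and infimum over $b$ are of bounded quantities, so $\B_\epsilon^p$ is well defined.

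\textbf{Expansion.} Write $a=(\plap\ph(x))^{\frac1{p-1}}$. By \eqref{asexp},
\[
\A^p_\delta[\ph](x)=\ph(x)+2(p-1)b\,\epsilon^2\big(a+E_1(\delta,\ph,x)\big).
\]
Substituting this into \eqref{operator-B} gives
\[
\B_\epsilon^p[\ph](x)=\ph(x)+\epsilon^2\Big(\sup_{b}\{(p-1)b(a+E_1)-(p-2)b^{\frac{p-1}{p-2}}\}+\inf_b\{(p-1)b(a+E_1)+(p-2)b^{\frac{p-1}{p-2}}\}\Big).
\]
Since $|\sup f-\sup g|\le\sup|f-g|$ (and likewise for the infimum), the $E_1$ contributions are bounded by $2(p-1)\sup_{b\in[m_b,M_b]} b\,|E_1(\delta(b),\ph,x)|$. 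What remains is exactly the truncated formula of Lemma~\ref{signed-power-lemma} with $r=p-1$, $m=m_b$, $M=M_b$. That lemma gives $(a)^{p-1}=\plap\ph(x)$ up to an error bounded by
\[
2(p-1)\Big(\epsilon^{\frac{2(p-1)}{p-2}}+\epsilon^2|a|+|a|(|a|^{p-2}-M_b)_+\Big).
\]
Next we bound $|a|\lesssim \|D^2\ph\|^{\frac1{p-1}}\|\gr\ph\|^{\frac{p-2}{p-1}}$, using $|\plap\ph|\le C|\gr\ph|^{p-2}|D^2\ph|$. This produces precisely the $k_4$ block in (b) and (c).

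\textbf{Bounding $\sup_b b|E_1(\delta,\ph,x)|$.} This step is the main obstacle. For (a), $E_1(\delta,\ph,x)=o_\delta(1)$ and $\delta\to0$ uniformly in $b$. However, $b\le M_b$ is unbounded, so we need $M_b\,\sup_{\delta\le \delta_{\max}}|E_1|\to0$; the bare $o(1)$ is not quantitative, so this cannot be read off directly. The first attempt will be to use the quantitative bounds (b)/(c) of Theorem~\ref{thm.asexp}. For $C^2$ functions with $\gr\ph(x)=0$, (c) gives $|E_1|\lesssim \delta^{2-4/p}\|D^2\ph\|+\delta^{\frac2{3p-4}}|\gr\ph(x)|$. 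In the general $C^2$ case, we will argue via uniform continuity of $D^2\ph$ in the proof of Theorem~\ref{thm.asexp}(a), noting that the relevant modulus is $b$-independent.

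For (b), insert \eqref{E1} with $\delta\le C\epsilon^{1-\theta}$, where $\theta=\frac14\min[\cdot]$, and multiply by $M_b=\epsilon^{-2\theta}$. The $D^2$ term becomes $\epsilon^{(1-\theta)(2-4/p)-2\theta}$. The choice of $M_b$ (namely $2\theta\le\frac{p-2}{2(p-1)}$) is designed so that this is at most $\epsilon^{1-2/p}$ for small $\epsilon$; checking the exponent arithmetic here is where I expect the fiddliest work. Similarly, $2\theta\le\frac1{3(p-1)}$ makes $\epsilon^{(1-\theta)\frac2{3p-4}-2\theta}\le\epsilon^{\frac1{3p-4}}$, giving the $k_5,k_6$ terms. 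For (c), use \eqref{E1-gr-not} instead. The term $\|D^2\ph\|^{\frac1{p-1}}|\gr\ph(x)|^{\frac{p-2}{p-1}}$ carries no power of $\delta$, so multiplying by $M_b\le\epsilon^{-2/3}$ gives the stated $\epsilon^{-2/3}$ factor, while the other two terms are handled as in (b). Collecting constants yields $k_4,\dots,k_7$ depending only on $d,p$.
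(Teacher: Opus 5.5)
Your well-posedness argument and your treatment of (b) and (c) coincide with the paper's proof (Lemma~\ref{well-def-B} and Theorem~\ref{exp-c3}). In both, the $E_1$-contribution is bounded by $2(p-1)M_b\,|E_1(\epsilon\sqrt{2(p-1)M_b},\varphi,x)|$ and the remainder is the truncated formula of Lemma~\ref{signed-power-lemma}. Writing $M_b=\epsilon^{-\mu/2}$, your conditions $\mu\le\frac{p-2}{p-1}$ and $\mu\le\frac{2}{3(p-1)}$ are exactly what produce $\epsilon^{1-\frac2p}$ and $\epsilon^{\frac{1}{3p-4}}$. Your argument for (a) when $\nabla\varphi(x)=0$, via \eqref{E1-gr-not}, is also fine.

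The gap is in (a) for a general $C^2$ function with $\nabla\varphi(x)\neq0$. Your estimate $|\sup f-\sup g|\le\sup|f-g|$ commits you to proving $\sup_{b\in[m_b,M_b]} b\,|E_1(\epsilon\sqrt{2(p-1)b},\varphi,x)|\to0$. Uniform continuity of $D^2\varphi$ cannot deliver this. It gives $\sup_b|E_1|\to0$ uniformly in $b$, as you note, but only at the rate of the modulus of continuity of $D^2\varphi$, which can be slower than any power of $\epsilon$, whereas $M_b$ blows up like a power. The quantity really diverges in simple cases. Take $d=1$, $x=0$, $\varphi(y)=y+y^2/\log(1/|y|)$ near $0$. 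Then $\plap\varphi(0)=0$, but one computes that $E_1(\rho)$ is of order $(\log(1/\rho))^{-1/(p-1)}$, so $M_b|E_1|\to\infty$, even though $E_p\to0$ for this $\varphi$. The loss is in your estimate, not in the theorem.

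The missing idea, used in the paper's proof of Theorem~\ref{exp-c2}, is to absorb the error into $a$ \emph{before} optimizing in $b$.
\begin{itemize}
\item Fix $\delta>0$. For $\epsilon$ small, $|E_1(\epsilon\sqrt{2(p-1)b},\varphi,x)|\le\delta$ for every $b\in[m_b,M_b]$; this is where your $b$-independence is used.
\item Since $b>0$, you have $(p-1)b(a-\delta)\le(p-1)b(a+E_1)\le(p-1)b(a+\delta)$. Both the sup and the inf are monotone, so $(\B_\epsilon^p[\varphi](x)-\varphi(x))/\epsilon^2$ is squeezed between the truncated expressions of Lemma~\ref{signed-power-lemma} evaluated at $a-\delta$ and at $a+\delta$.
\item For fixed $\delta$ these equal $(a\pm\delta)^{p-1}$ up to at most $2(p-1)\big(\epsilon^{\frac{2(p-1)}{p-2}}+\epsilon^2(|a|+\delta)+(|a|+\delta)\big((|a|+\delta)^{p-2}-M_b\big)_+\big)$, which tends to $0$ with $\epsilon$.
\item Then let $\delta\to0$ and use continuity of the signed power.
\end{itemize}
The mechanism is that the superlinear penalty $(p-2)b^{\frac{p-1}{p-2}}$ keeps the optimal $b$ near $|a\pm\delta|^{p-2}$, a bounded value. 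So the range of large $b$, where $b\,E_1$ can be huge, never affects the sup or the inf, and $\sup_b|f-g|$ throws away precisely this information. Equivalently, you may note that, once $|E_1|\le1$, both the sup and the inf can be restricted to $b\le B_0$ with $B_0$ depending only on $|a|+1$. Your uniform estimate then works on $[m_b,B_0]$, since there $b|E_1|\le B_0\sup_b|E_1|\to0$.
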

	
	One of the advantages of having asymptotic expansions for the $p$-Laplacian, this time without the power $(p-1)^{-1}$, is that they can be used for several equations with different homogeneities. The two main examples that we will consider in this work are \eqref{eq-parabolic} and \eqref{eq_p-q}; see \ref{sect-final} for other possible applications. Once Theorem \ref{main-asym-exp-plap} is established, the asymptotic expansions for the remaining equations involving the $p$-Laplacian follow directly.
	
	\begin{thm}\label{main-exp-parab}
		Let $d\in \NN$, $p>2$, $x\in \Rd$, $>0$, $R\in(0,1)$, $\tau\in(0,t)$ and assume that \eqref{m-M} and \eqref{m-M-b} hold. Then, there exists $\beps\in (0,\min[R,\tau])$ such that for all $\epsilon\in(0,\beps)$ the operator \eqref{operator-B} is well defined for any bounded measurable function $\ph$ in $B_R(x)\times [t-\tau,t+\tau]$. Moreover, if $\ph\in C^2(B_R(x)\times [t-\tau,t+\tau])$, then the following asymptotic expansion holds
		\begin{equation}
			\B_\epsilon^p[\ph](x,t)=\ph(x,t+\epsilon^2)+\epsilon^2 \left(\plap \ph(x,t)-\partial_t \ph(x,t)\right)+\epsilon^2 E(\epsilon,\ph,x,t).
		\end{equation}
		\begin{enumerate}[label=(\alph*), leftmargin=*]
			\item If $\ph\in C^2(B_R(x)\times [t-\tau,t+\tau])$, then $E(\epsilon,\ph,x,t)=o_\epsilon(1)$ as $\epsilon\to 0^+$.
			
			\item If $\ph\in C^3(B_R(x)\times [t-\tau,t+\tau])$, then
			$$\left|E(\epsilon,\ph,x,t)\right|\leq \left|E_p(\epsilon,\ph,x,t)\right|+\frac{\epsilon^2}{2}\|\partial_{tt} \ph\|_{\LL^\infty(t-\beps,t+\beps)}.$$
			Where $E_p$ is the error term that appears in Theorem \ref{main-asym-exp-plap}.
		\end{enumerate}
	\end{thm}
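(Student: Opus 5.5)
The plan is to reduce Theorem~\ref{main-exp-parab} to Theorem~\ref{main-asym-exp-plap}, applied in the spatial variable at the frozen time $t$, followed by a one-variable Taylor expansion in time. I read $\B_\epsilon^p[\ph](x,t)$ as the operator \eqref{operator-B} acting on the spatial slice $y\mapsto\ph(y,t)$: all suprema, infima and averages inside $\A^p$ and $M_\rho$ are taken over balls in $y$ at the fixed time $t$.

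\textbf{Step 1: well-definedness.} Take $\beps\in(0,\min[R,\tau])$ given by Theorem~\ref{main-asym-exp-plap} for the slice $\ph(\cdot,t)$, which is a bounded measurable function on $B_R(x)$. Then $\B_\epsilon^p[\ph](x,t)$ is well defined for $\epsilon<\beps$. Shrinking $\beps$ further if necessary, we also have $t+\epsilon^2\in[t-\tau,t+\tau]$, so the value $\ph(x,t+\epsilon^2)$ makes sense.

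\textbf{Step 2: spatial expansion.} For $\ph\in C^2(B_R(x)\times[t-\tau,t+\tau])$, the slice $\ph(\cdot,t)$ belongs to $C^2(B_R(x))$. Theorem~\ref{main-asym-exp-plap} then gives
\[
\B_\epsilon^p[\ph](x,t)=\ph(x,t)+\epsilon^2\plap\ph(x,t)+\epsilon^2E_p(\epsilon,\ph,x,t).
\]
Here $E_p=o_\epsilon(1)$ in general, and $E_p$ satisfies the bounds (b) or (c) of that theorem when $\ph\in C^3$.

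\textbf{Step 3: temporal expansion.} Since $\ph(x,\cdot)$ is $C^2$ near $t$, Taylor's formula with Lagrange remainder gives
\[
\ph(x,t)=\ph(x,t+\epsilon^2)-\epsilon^2\partial_t\ph(x,t)-\tfrac{\epsilon^4}{2}\partial_{tt}\ph(x,\xi),
\qquad \xi\in(t,t+\epsilon^2).
\]
Substituting this into Step 2 yields the claimed identity with
\[
E=E_p-\tfrac{\epsilon^2}{2}\partial_{tt}\ph(x,\xi).
\]
This immediately gives the bound in (b):
\[
|E|\le|E_p|+\tfrac{\epsilon^2}{2}\|\partial_{tt}\ph\|_{\LL^\infty(t-\beps,t+\beps)}.
\]
For (a), $E_p=o_\epsilon(1)$, and the second term also tends to $0$ because $\partial_{tt}\ph$ is bounded on the compact interval by $C^2$ regularity. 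Hence $E=o_\epsilon(1)$.

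There is no real obstacle. The only points that need care are, first, fixing the interpretation of $\B_\epsilon^p$ as acting on the time slice $t$ (rather than $t+\epsilon^2$, which would only change the remainder by $o(\epsilon^2)$ thanks to the continuity of $\plap\ph$ in $t$), and second, choosing $\beps$ small enough that $t+\epsilon^2\le t+\tau$.
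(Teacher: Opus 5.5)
Your proposal is correct and follows essentially the same route as the paper. The paper proves this via Corollaries~\ref{exp-c2-parabolic} and~\ref{exp-c3-parabolic}: it applies the spatial expansion of Theorem~\ref{exp-c2} (resp.\ Theorem~\ref{exp-c3}) to the time slice $\ph(\cdot,t)$, and then absorbs $\ph(x,t+\epsilon^2)-\ph(x,t)=\epsilon^2\partial_t\ph(x,t)+R$ with $|R|\leq\frac{\epsilon^4}{2}\|\partial_{tt}\ph\|_{\LL^\infty}$. The only cosmetic difference is in part (a), where the paper uses the difference quotient $(\ph(x,t+\epsilon^2)-\ph(x,t))/\epsilon^2\to\partial_t\ph(x,t)$ instead of your Lagrange remainder; both are valid under the $C^2$ hypothesis.
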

	
Before proceeding to the next result, we need to introduce some notation.
\begin{nota}\rm
	 Let $d\geq 2$, $1\leq n<d$, and $2<q<p$. We denote the points in $\mathbb{R}^d$ by $x=(x',x'')$, where
	$$
	x'=(x_1,\ldots,x_n)
	\quad \mbox{and} \quad
	x''=(x_{n+1},\ldots,x_d).
	$$
	
	On the one hand, $\Delta_p^{x'}u(x',x'')$ denotes the $p$-Laplacian acting on the first $n$ variables, while keeping the remaining $d-n$ variables fixed. Moreover, when we write the mean operator $\B_\epsilon^p[\ph](x',x'')$, we refer to the operator \eqref{operator-B} with exponent $p$ acting on the first $n$ variables, while keeping the remaining $d-n$ variables fixed (in the same way as time is kept fixed in the parabolic setting). Analogously, $\Delta_q^{x''}$ and $\B_\epsilon^q[\ph](x',x'')$ denote the corresponding operators acting on the last $d-n$ variables.
	
	Furthermore, we shall say that conditions \eqref{m-M} and \eqref{m-M-b} hold simultaneously for $p$ and $q$ when the corresponding restrictions on the infima and suprema of $\B_\epsilon^p$ and $\B_\epsilon^q$ hold with powers of $\epsilon$ having exponent $p$ in the former case and exponent $q$ in the latter.
\end{nota}

\begin{thm}\label{main-exp-aniso}
	Let $d\geq 2$ be a integer, $1\leq n<d$, $p>q>2$, $x=(x',x'')\in \Rd$ with $x'=(x_1,\ldots, x_n)$ and $x''=(x_{n+1},\ldots,x_d)$, $R\in(0,1)$ and assume that \eqref{m-M} and \eqref{m-M-b} hold simultaneously for $p$ and $q$. Then, there exists $\beps\in(0,R)$ such that for all $\epsilon\in(0,\beps)$ the following operator
	\begin{equation}\label{operator-C}
		\C_\epsilon[\ph](x)
		=
		\frac{1}{2}\B_\epsilon^p[\ph](x',x'')
		+
		\frac{1}{2}\B_\epsilon^q[\ph](x',x''),
	\end{equation}
	is well defined for any bounded measurable function $\ph$ in $B_R(x',x'')$. Moreover, if $\ph\in C^2(B_R(x',x''))$, then the following asymptotic expansion holds
	\begin{equation*}
		\C_\epsilon[\ph](x',x'')=\ph(x',x'')+\frac{\epsilon^2}{2}\left(\plap^{x'}\ph(x',x'')+\qlap^{x''}\ph(x',x'')\right)+\epsilon^2\left(E_p(\epsilon,\ph,x)+E_q(\epsilon,\ph,x)\right).
	\end{equation*}
	Here, these $E_p(\epsilon,\ph,x)$ and $E_q(\epsilon,\ph,x)$ are the error terms that appear in Theorem \ref{main-asym-exp-plap} with exponents $p$ and $q$ respectively.
\end{thm}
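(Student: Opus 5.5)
The plan is to reduce Theorem \ref{main-exp-aniso} to two applications of Theorem \ref{main-asym-exp-plap}, one on each slice of variables, exactly as the time variable is frozen in Theorem \ref{main-exp-parab}.

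\textbf{Well-posedness.} Fix $x=(x',x'')$ and freeze $x''$. The function $y'\mapsto \ph(y',x'')$ is then a bounded measurable function on the $n$-dimensional ball $\{y' : |y'-x'|<R\}$, and this ball contains the slice of $B_R(x)$ through $x''$. By Theorem \ref{main-asym-exp-plap} applied in dimension $n$ with exponent $p$, there is $\beps_p\in(0,R)$ such that $\B_\epsilon^p[\ph](x',x'')$ is well defined for $\epsilon<\beps_p$. The same argument in dimension $d-n$ with exponent $q$, freezing $x'$, gives $\beps_q$. Since \eqref{m-M} and \eqref{m-M-b} hold simultaneously for $p$ and $q$, each operator is built with its own admissible ranges. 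Setting $\beps=\min[\beps_p,\beps_q]$, the operator $\C_\epsilon$ in \eqref{operator-C} is well defined for all $\epsilon\in(0,\beps)$.

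\textbf{Expansion.} If $\ph\in C^2(B_R(x))$, then the restriction $y'\mapsto\ph(y',x'')$ is $C^2$ on the $n$-ball. Its $p$-Laplacian at $x'$ is $\plap^{x'}\ph(x',x'')$, and the norms of its gradient, Hessian and third derivatives are bounded by the corresponding full norms of $\ph$. Theorem \ref{main-asym-exp-plap} therefore gives
\[
\B_\epsilon^p[\ph](x',x'')=\ph(x)+\epsilon^2\plap^{x'}\ph(x)+\epsilon^2E_p(\epsilon,\ph,x),
\]
and the analogous identity holds for $q$ in the $x''$ variables. Averaging the two identities with weights $\tfrac12$ gives
\[
\C_\epsilon[\ph](x)=\ph(x)+\frac{\epsilon^2}{2}\big(\plap^{x'}\ph(x)+\qlap^{x''}\ph(x)\big)+\frac{\epsilon^2}{2}\big(E_p+E_q\big).
\]
This is the stated formula, with the error term $\tfrac12(E_p+E_q)$, whose absolute value is at most $|E_p|+|E_q|$. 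All the bounds (a)--(c) of Theorem \ref{main-asym-exp-plap} transfer to the slices.

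\textbf{Main obstacle.} The only delicate point is in case (b), where $E_p$ is controlled through $|\gr\ph(x)|^{-1}$. For the slice this quantity should be the partial gradient $|\gr_{x'}\ph(x)|$, which may vanish even when the full gradient $\gr\ph(x)$ does not. The statement avoids this by referring to the error terms of Theorem \ref{main-asym-exp-plap} computed for the restricted functions. Whenever a partial gradient vanishes, I would use estimate (c) of that theorem, which does not require a nonvanishing gradient. With that choice the identification of the error terms is consistent, and the remainder of the argument is bookkeeping.
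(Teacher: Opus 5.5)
Your proposal is correct and is essentially the paper's argument. Well-definedness of $\C_\epsilon$ is inherited from that of $\B_\epsilon^p$ and $\B_\epsilon^q$ (Lemma~\ref{well-def-B}), and the expansion is obtained, as in Corollaries~\ref{exp-c2-p_q} and~\ref{exp-c3-p_q}, by averaging the one-operator expansions of Theorems~\ref{exp-c2} and~\ref{exp-c3} applied in the $x'$ and $x''$ variables, which yields exactly the factor $\frac12$ in front of $E_p+E_q$ that you noticed. Your observation that in estimate (b) the relevant quantity is the partial gradient $|\gr_{x'}\ph(x)|$ (resp.\ $|\gr_{x''}\ph(x)|$), so that estimate (c) must be used whenever it vanishes even if $\gr\ph(x)\neq 0$, is correct and sharpens a point the paper states loosely (``if $\gr\ph\not=0$'').
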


These lasts results allow us to provide asymptotic mean value characterizations of viscosity solutions to equations \eqref{eq-parabolic} and \eqref{eq_p-q}.

\begin{thm}
	The following statements are true:
	
	\begin{enumerate}[label=(\roman*)]
		\item Let $d\in \NN$, $p>2$, $T>0$, let $\Omega\subset \Rd$ be an open set and assume that \eqref{m-M} and \eqref{m-M-b} hold. Then, a continuous function $u$ is a viscosity solution of
		\begin{equation*}
			\partial_tu(x,t)=\plap u(x,t)
			\qquad\mbox{for all }(x,t)\in\Omega\times(0,\infty),
		\end{equation*}
		if and only if $u$ is a viscosity solution of the AMVF
		\begin{equation}
			u(x,t+\epsilon^2)
			=\B_\epsilon^p[u](x,t)+o(\epsilon^2)
			\qquad\mbox{as }\epsilon\to0^+.
		\end{equation}
		
		\item Let $1\leq n<d$, $2<q<p$, let $\Omega\subset \Rd$ be an open set and assume that \eqref{m-M} and \eqref{m-M-b} hold simultaneously for $p$ and $q$. Then, a continuous function $u$ is a viscosity solution of
		\begin{equation*}
			\plap^{x'}\ph(x',x'')+\qlap^{x''}\ph(x',x'')= f(x)
			\qquad\mbox{for all }x\in \Omega,
		\end{equation*}
		if and only if $u$ is a viscosity solution of the AMVF
		\begin{equation}
			u(x)=\C_\epsilon[u](x)-\frac{\epsilon^2}{2}f(x)+o(\epsilon^2)
			\qquad\mbox{as }\epsilon\to 0^+.
		\end{equation}
	\end{enumerate}
\end{thm}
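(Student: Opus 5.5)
The plan is to unwind the definitions of viscosity solution on both sides and then use the asymptotic expansions of Theorem~\ref{main-exp-parab} and Theorem~\ref{main-exp-aniso} applied to test functions. We use the standard notion: $u$ is a viscosity subsolution of the AMVF if, for every $\ph\in C^2$ touching $u$ from above at $x_0$ (that is, $u-\ph$ has a strict local maximum at $x_0$ with $u(x_0)=\ph(x_0)$), we have
\[
\ph(x_0)\leq \C_\epsilon[\ph](x_0)-\frac{\epsilon^2}{2}f(x_0)+o(\epsilon^2),
\]
and analogously for supersolutions. For the parabolic case the inequality is $\ph(x_0,t_0+\epsilon^2)\le \B_\epsilon^p[\ph](x_0,t_0)+o(\epsilon^2)$. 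For the PDE side we use the definition for the singular/degenerate operators. At points where $\gr\ph(x_0)=0$ the $p$-Laplacian term vanishes since $p>2$ (respectively $\gr_{x'}\ph$ or $\gr_{x''}\ph$ in the anisotropic case), so $\plap\ph$ is continuous on $C^2$ functions and no special treatment of critical points is required.

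For the direction (AMVF $\Rightarrow$ PDE) in (ii), take $\ph$ touching $u$ from above at $x_0$. The viscosity AMVF inequality together with Theorem~\ref{main-exp-aniso} gives
\[
0\le \frac{\epsilon^2}{2}\left(\plap^{x'}\ph(x_0)+\qlap^{x''}\ph(x_0)-f(x_0)\right)+\epsilon^2\big(E_p+E_q\big)+o(\epsilon^2).
\]
By part (a) of Theorem~\ref{main-asym-exp-plap}, applied with exponent $p$ in $x'$ and $q$ in $x''$, we have $E_p,E_q=o_\epsilon(1)$ for $C^2$ test functions, including at points with vanishing gradient. Dividing by $\epsilon^2$ and letting $\epsilon\to0$ yields $\plap^{x'}\ph(x_0)+\qlap^{x''}\ph(x_0)\ge f(x_0)$, which is the subsolution inequality. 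The supersolution case is symmetric. For (i) the argument is the same, now using Theorem~\ref{main-exp-parab}(a): $\ph(x_0,t_0+\epsilon^2)\le \ph(x_0,t_0+\epsilon^2)+\epsilon^2(\plap\ph-\partial_t\ph)(x_0,t_0)+o(\epsilon^2)$ gives $\partial_t\ph\le\plap\ph$ at $(x_0,t_0)$.

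The converse direction runs the same computation backwards. If $u$ is a viscosity subsolution of the PDE and $\ph$ touches $u$ from above at $x_0$, then $\plap^{x'}\ph(x_0)+\qlap^{x''}\ph(x_0)-f(x_0)\ge 0$. The expansion then gives
\[
\C_\epsilon[\ph](x_0)-\tfrac{\epsilon^2}{2}f(x_0)-\ph(x_0)\ge \epsilon^2(E_p+E_q)=o(\epsilon^2),
\]
which is exactly the viscosity AMVF inequality. One must check that the AMVF definition allows the error $o(\epsilon^2)$ to depend on $\ph$ and on the point; this is the reason the pointwise estimate (a) suffices and the uniform estimates (b), (c) are not needed here.

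The main obstacle is verifying that Theorem~\ref{main-exp-aniso} applies at the touching point. First, $\B_\epsilon^p$ acting in $x'$ with $x''$ frozen requires $\ph(\cdot,x_0'')\in C^2$ on an $n$-dimensional ball. Second, the error term (a) must be $o(1)$ even when $\gr_{x'}\ph(x_0)=0$ but $\gr\ph(x_0)\ne0$. Both follow from applying Theorem~\ref{main-asym-exp-plap}(a) in dimension $n$ (respectively $d-n$) to the restricted function, with $\beps$ taken small enough that the relevant balls stay inside $\Omega$. One should also confirm that the choice of $f$ continuous enters only through $f(x_0)$, and that the shift $t_0+\epsilon^2$ lies in the domain for small $\epsilon$.
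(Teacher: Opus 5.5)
Your proposal is correct and follows the paper's argument (Lemma~\ref{characterizacion-1} together with Lemma~\ref{characterizacion-2}). In both, the asymptotic expansion is applied to the test function at the touching point, so in either direction the PDE inequality for $\ph$ and the AMVF inequality for $\ph$ differ only by an $o(\epsilon^2)$ term. The one small difference is the error estimate invoked: you use the pointwise $C^2$ estimate, part (a) of Theorem~\ref{main-asym-exp-plap}, applied in dimensions $n$ and $d-n$ to the frozen-variable restrictions. That is exactly what $C^2$ test functions at possibly critical points require, whereas the paper states Lemma~\ref{characterizacion-1} under the $C^3$ hypotheses of Theorem~\ref{exp-c3}.
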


\subsection{Dynamic Programming Principles}
	
	We present here a dynamic programming principle (DPP) for the following parabolic problem
	\begin{equation}\label{parabolic-problem}
		\left\{\begin{array}{cll} \partial_t u(x,t)=\plap u(x,t), &\mbox{in }\Omega\times (0,\infty),
			\\ u(x,0)=u_0(x), &\mbox{in } \Omega,
			\\ u(x,t)=g(x), &\mbox{in }\partial\Omega \times(0,\infty).\end{array}\right.
	\end{equation}
	Here, $\Omega\subset\Rd$ is a bounded smooth domain, and the initial and exterior data belong to the space of bounded continuous functions, namely, $g\in C_b(\Rd\setminus\Omega)$ and $u_0\in C_b(\overline{\Omega})$. We will assume that the boundary datum is extended to a function defined in $\Rd\setminus \Omega$.
	
	The AMVF given in Theorem \ref{main-exp-parab} allow us to derive the following DPP associated
	\begin{equation}\label{DPP-parab}\tag{DPP I}
		\left\{\begin{array}{cll} \ue(x,t)=\B_\epsilon^p[\ue](x,t-\epsilon^2), &\mbox{in }\Omega\times (0,\infty),
			\\ \ue(x,t)=u_0(x), &\mbox{in } \Omega\times (-\epsilon^2,0],
			\\ \ue(x,t)=g(x), &\mbox{in }\Rd\setminus \Omega\times (-\epsilon^2,\infty).\end{array}\right.
	\end{equation}
	
	The relationship between the two problems is that, as stated in the following result, the unique solution of the DPP converges to the unique solution of the parabolic problem.
	
	\begin{thm}\label{main-convergence-parabolic}
		Let $d\geq 1$, $p>2$, $\epsilon\in(0,1)$, and let $\Omega\subset\Rd$ be a bounded domain satisfying the uniform exterior ball condition. Let $u_0\in C_b(\Omega)$, $g\in C_b(\Rd\setminus\Omega)$, and assume that \eqref{m-M} and \eqref{m-M-b} hold.
		Then:
		\begin{enumerate}[label=(\roman*)]
			\item There exists a unique bounded measurable function $\ue$ that is the solution of \eqref{DPP-parab}.
			
			\item There exists a function $u\in C(\overline{ \Omega}\times[0,\infty))$ such that
			\begin{equation*}
					\ue(x,t)\to u(x,t)
				\quad \mbox{uniformly in } \overline{\Omega}\times\RR^+
				\quad \mbox{as } \epsilon\to 0^+.
			\end{equation*}
			This limit is the unique viscosity solution of \eqref{parabolic-problem}.
		\end{enumerate}
	\end{thm}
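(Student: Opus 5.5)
The approach follows the standard Barles--Souganidis / Arzel\`a--Ascoli scheme for dynamic programming principles, adapted to the time-marching structure of \eqref{DPP-parab}.

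\textbf{Existence and uniqueness (item (i)).} Since the DPP is explicit in time, I construct $\ue$ by induction on time slabs. On $(-\epsilon^2,0]$ the solution is prescribed by $u_0$ in $\Omega$ and by $g$ outside. If $\ue$ is already defined, bounded and measurable on $\Rd\times(-\epsilon^2,k\epsilon^2]$, then on $(k\epsilon^2,(k+1)\epsilon^2]$ I set $\ue(x,t)=\B_\epsilon^p[\ue](x,t-\epsilon^2)$ for $x\in\Omega$ and $\ue=g$ outside $\Omega$. Theorem \ref{main-exp-parab} guarantees that $\B_\epsilon^p$ is well defined for bounded measurable functions, so each step is legitimate. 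Uniqueness is immediate because every value is determined by earlier values.

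Measurability is the delicate point of this step, since $\B_\epsilon^p$ involves $\sup$ and $\inf$ over balls and over the parameters $b$ and $c$. I would argue as follows. The maps $x\mapsto \sup_{B_r(x)}\ph$ and $x\mapsto\inf_{B_r(x)}\ph$ are lower and upper semicontinuous respectively, and the averages are continuous in $x$ and $r$. Hence the sup/inf over $b$ and $c$ can be reduced to countable dense sets. For the sup over balls this uses monotonicity in the radius, after a routine check of one-sided continuity.

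For boundedness, note that $\B_\epsilon^p$ is monotone. The penalization terms $\pm 2(p-2)\epsilon^2 b^{\frac{p-1}{p-2}}$ cancel on constants only up to the gap between the sup and the inf. Applying the operator to a constant $C$ gives
\[
\B_\epsilon^p[C]=C+\tfrac12\sup_b\{-\cdot\}+\tfrac12\inf_b\{+\cdot\},
\]
which lies between $C-(p-2)\epsilon^2 m_b^{\frac{p-1}{p-2}}$ and $C+(p-2)\epsilon^2 m_b^{\frac{p-1}{p-2}}$. This perturbation is $O(\epsilon^{2+2\frac{p-1}{p-2}})$ per step. Therefore $\|\ue(\cdot,t)\|_\infty\le \max(\|u_0\|_\infty,\|g\|_\infty)+Ct\,\epsilon^{\frac{2(p-1)}{p-2}}$. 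This bound is uniform on compact time intervals, and with a slight barrier modification it is uniform for all $t$.

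\textbf{Convergence (item (ii)).} I use an asymptotic Arzel\`a--Ascoli lemma: if $\{\ue\}$ is uniformly bounded and asymptotically equicontinuous, that is, for every $\eta>0$ there are $r_0,\epsilon_0$ such that $|\ue(x,t)-\ue(y,s)|<\eta$ whenever $|x-y|+|t-s|<r_0$ and $\epsilon<\epsilon_0$, then a subsequence converges uniformly to a continuous limit. Asymptotic equicontinuity is the main obstacle, and I would establish it by comparison with explicit barriers.

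\emph{Interior in time.} The DPP commutes with translations in $x$, but $\Omega$ breaks this invariance. So interior regularity has to be reduced to boundary and initial estimates via comparison: the function $\ue(x+h,t)$ solves the same DPP in $\Omega-h$. This is combined with the comparison principle for the DPP, which follows from monotonicity of $\B_\epsilon^p$ and induction over time slabs.

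\emph{Lateral boundary.} Near the boundary I use the uniform exterior ball condition. At $y\in\partial\Omega$ with exterior ball $B_\delta(z)$, I take radial barriers $v(x,t)=g(y)+\eta+K(|x-z|^{-\alpha}\ \text{type})+Lt$. I choose $\alpha$ so that $\plap v<0$ strictly away from $z$; the gradient does not vanish there, so part (b) of Theorem \ref{main-exp-parab} gives a locally uniform error. With $L$ large this makes $v$ a supersolution of the DPP, $v(x,t)\ge \B_\epsilon^p[v](x,t-\epsilon^2)$.

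\emph{Initial time.} Near $t=0$ I use barriers of the form $u_0(y)+\eta+K|x-y|^2+Lt$. Here the gradient of $|x-y|^2$ vanishes at $y$, so I must use the estimate in part (c) of Theorem \ref{main-asym-exp-plap}. One could instead use $|x-y|^{\beta}$ with $\beta>p/(p-1)$ to keep $\plap$ bounded. To keep the barrier $\epsilon$-uniform, the $\epsilon$-dependent error terms, including the $\epsilon^{-2/3}$ factor multiplying a power of $|\gr\ph(x)|$, must be absorbed by $L$. I expect this to be the most technical point, and I would first try a modified barrier $|x-y|^{\beta}$ with $\beta$ large so that $|\gr\ph|^{\frac{p-2}{p-1}}$ beats $\epsilon^{-2/3}$ wherever the barrier is active.

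\textbf{Identification of the limit.} With equicontinuity in hand, any uniform subsequential limit $u$ attains the boundary and initial data, by the barrier estimates above. To show $u$ is a viscosity solution, take $\phi$ touching $u$ from below at $(x_0,t_0)$ strictly. Approximate minimum points $(x_\epsilon,t_\epsilon)$ of $\ue-\phi$, up to an $\epsilon^3$ tolerance, converge to $(x_0,t_0)$. Monotonicity and translation invariance of $\B_\epsilon^p$ under constants (up to the small constant error) give
\[
\phi(x_\epsilon,t_\epsilon)\ge \B_\epsilon^p[\phi](x_\epsilon,t_\epsilon-\epsilon^2)-o(\epsilon^2).
\]
Applying Theorem \ref{main-exp-parab}, dividing by $\epsilon^2$ and letting $\epsilon\to0$ yields $\partial_t\phi-\plap\phi\ge0$ at $(x_0,t_0)$. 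If $\gr\phi(x_0)=0$, part (c) still gives a vanishing error, since the $|\gr\phi(x_\epsilon)|$ terms tend to zero once multiplied by $\epsilon^2$ inside $E_p$.

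Finally, uniqueness of viscosity solutions of \eqref{parabolic-problem} with continuous data follows from the known comparison principle. Hence the whole family converges, and the convergence is uniform in $\overline\Omega\times\RR^+$ thanks to the time-uniform bounds.
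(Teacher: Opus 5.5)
Part (i) is essentially the paper's argument: explicit marching over time slabs, with uniqueness by induction.

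One simplification applies there. $\B_\epsilon^p[\ph+\xi]=\B_\epsilon^p[\ph]+\xi$ holds exactly, because $\A$ commutes with constants. Moreover $\B_\epsilon^p[\xi]=\xi$, since both extremal values over $b$ are attained at $b=m_b$, where the penalizations cancel. Hence $\|\ue\|_{\LL^\infty}\le\max\{\|u_0\|_\infty,\|g\|_\infty\}$ with no growth in $t$, and no ``small constant error'' enters the identification step.

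For (ii) you take a genuinely different route. The paper uses Barles--Souganidis half-relaxed limits. Boundedness, the pointwise boundary and initial statements of Lemma~\ref{bound-ue}, monotonicity and consistency show that $\ua$ is a viscosity subsolution and $\ub$ a supersolution attaining the data. The comparison principle for the limit problem then forces $\ua=\ub$. No interior or time regularity of $\ue$ is ever needed, and the barriers are used only pointwise.

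Your asymptotic Arzel\`a--Ascoli route is heavier. Interior equicontinuity comes from translation invariance plus DPP comparison, which requires barrier estimates uniform in the boundary point, in $\epsilon$ and in $t$. You also need a time-shift comparison for equicontinuity in $t$: compare $\ue(\cdot,\cdot+s)$ with $\ue$ using the initial estimate. You should state this explicitly. In return you get an $\epsilon$-uniform modulus of continuity for $\ue$, and you need uniqueness only among continuous solutions rather than comparison for semicontinuous sub/supersolutions. Arzel\`a--Ascoli, like half-relaxed limits, only gives uniform convergence on compact time intervals. So time-uniform bounds alone do not justify uniformity on all of $\overline{\Omega}\times\RR^+$.

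Three steps need repair.

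\textbf{(a) Lateral barrier.} With the term $+Lt$, your lateral barrier only controls $\ue$ for $t\le\eta/L$, while the translation argument needs lateral control at all times. The term is unnecessary. Since $\plap v\le -c<0$ and the part (b) error is uniform where $\gr v\neq 0$, the time-independent $v$ is already a supersolution of the DPP.

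\textbf{(b) Initial barrier.} This barrier does not need the asymptotic error bounds at all. For $\ph=K|x-y|^2$, taking $c=1$ in $\A_\rho^p$ gives $\A_\rho^p[\ph]-\ph\le C\rho^2$ uniformly in $x$. Hence $\B_\epsilon^p[\ph]-\ph\le\frac{\epsilon^2}{2}\sup_{b>0}\{2(p-1)Cb-2(p-2)b^{\frac{p-1}{p-2}}\}+o(\epsilon^2)\le C'\epsilon^2$, and $L=C'$ works.

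\textbf{(c) Critical points of the test function.} Your treatment of $\gr\ph(x_0)=0$ is incorrect. Bound (c) contains $k_7\epsilon^{-2/3}\|D^2\ph\|^{\frac{1}{p-1}}|\gr\ph(x_\epsilon)|^{\frac{p-2}{p-1}}$ inside $E_p$ itself, that is, after division by $\epsilon^2$. Nothing forces $|\gr\ph(x_\epsilon)|$ to decay faster than $\epsilon^{\frac{2(p-1)}{3(p-2)}}$. Bound (b) degenerates instead through $|D^2\ph|^2/|\gr\ph|$. The paper only asserts a point-independent error bound at this step, so the weakness is shared.

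A clean fix is a direct two-sided estimate. Choosing $c=\min\{|\gr\ph(x)|^{-1},M_p\}$ in $\A_\rho^p$ gives $|\A_\rho^p[\ph](x)-\ph(x)|\le C\rho^2\big(|\gr\ph(x)|^{\alpha_p}+o(1)\big)$ uniformly near $x_0$. Optimizing in $b$ then yields $|\B_\epsilon^p[\ph](x)-\ph(x)|\le C\epsilon^2\big(|\gr\ph(x)|^{p-2}+o(1)\big)$. This suffices because $\plap\ph(x_0)=0$.
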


	We now turn to the analysis of the elliptic problem, 
		\begin{equation}\label{anosotropic-problem}
		\left\{\begin{array}{rlll}
			\plap^{x'}u(x',x'')+\qlap^{x''}u(x',x'')&=&f(x), & \mbox{in } \Omega,\\ 
			u(x)&=&g(x), & \mbox{in } \partial \Omega.
		\end{array}\right.
	\end{equation}
	where $\Omega\subset\mathbb{R}^d$ is a bounded smooth domain, and the data satisfy $f\in C(\overline{\Omega})$ and $g\in C_b(\mathbb{R}^d\setminus\Omega)$.
	
	Before moving on to the DPP corresponding to problem \eqref{anosotropic-problem}, some slight modifications to the operator are required. More specifically, we approximate the value of the function at a given point by its average over a small ball centered at that point. That is, we consider the following operators:
	
	\begin{equation}\label{operator-A-promedio}
		\begin{split}
			\widehat{\A^p_\epsilon}[\ph](x)&=\frac{1}{2}\max\left[ \inf_{c\in[m_p,M_p]}\left\{ \alpha_p \sup_{B_{2\epsilon^2 c^{1-\alpha_p}}(x)}\ph +(1-\alpha_p) M_{\sqrt{2}\epsilon c^{-\alpha_p/2}}[\ph](x) \right\}, \fint_{B_{\epsilon^8}(x)} \ph(y)\, \dy \right]\\
			&+\frac{1}{2} \min\left[ \sup_{c\in[m_p,M_p]}\left\{ \alpha_p \inf_{B_{2\epsilon^2 c^{1-\alpha_p}}(x)}\ph +(1-\alpha_p) M_{\sqrt{2}\epsilon c^{-\alpha_p/2}}[\ph](x) \right\}, \fint_{B_{\epsilon^8}(x)} \ph(y)\, \dy \right],
		\end{split}
	\end{equation} 
	and
	\begin{equation}\label{operator-B-promedio}
		\begin{split}
			\widehat{\B_\epsilon^p}[\ph](x)&=\frac{1}{2}\sup_{b\in[m_b,M_b]}\left\{\widehat{\A_{\epsilon\sqrt{2(p-1)b}}^p}[\ph](x)-2(p-2)\epsilon^2 b^{\frac{p-1}{p-2}}\right\}\\&+\frac{1}{2}\inf_{b\in[m_b,M_b]}\left\{\widehat{\A_{\epsilon\sqrt{2(p-1)b}}^p}[\ph](x)+2(p-2)\epsilon^2 b^{\frac{p-1}{p-2}}\right\}.
		\end{split}
	\end{equation}
	
	The reason for this modification is that, if we consider the operator as originally defined, an admissible strategy for both players is to remain stationary at all times. Indeed, in the maxima and minima appearing in~\eqref{operator-A}, one may always select $\ph(x)$, thereby preventing the game from ever terminating. However, these mean operators are equivalent to $\A_\epsilon^p$ and $\B_\epsilon^p$. In other words, they satisfy exactly the same results as those established in the previous section. The only difference lies in the estimate of the error term, where an additional term of the following form appears:
	\begin{equation}\label{error-A-promedio}
			\left|\widehat{E_1}(\epsilon,\ph,x)\right|
		\leq
		\left|E_1(\epsilon,\ph,x)\right|
		+
		\frac{\epsilon^{14}}{2}\,\|D^2 \ph\|_{\LL^\infty(B_{\beps}(x))},
	\end{equation}
	in Theorem \ref{thm.asexp} and
	\begin{equation}\label{error-B-promedio}
		\left|\widehat{E_p}(\epsilon,\ph,x)\right|
		\leq
		\left|E_p(\epsilon,\ph,x)\right|
		+
		k_8 \epsilon^{11} \|D^2\ph\|_{\LL^\infty(B_{\beps}(x))},
	\end{equation}
	in Theorem \ref{main-asym-exp-plap}.
	
	Then the corresponding AMVF give us the following DPP associated
	\begin{equation}\label{DPP-aniso}\tag{DPP II}
		\left\{\begin{array}{rlll}
			\ue(x)&=&\frac{1}{2}\left(\widehat{\B_\epsilon^p}[\ue](x',x'')+\widehat{\B_\epsilon^q}[\ue](x',x'')\right)-\frac{\epsilon^2}{2}f(x), & \mbox{in } \Omega,\\ 
			\ue(x)&=&g(x), & \mbox{in } \Rd\setminus \Omega.
		\end{array}\right.
	\end{equation}
	
		\begin{thm}\label{main-convergence-elliptic}
		Let $d\geq 2$, $p>q>2$, $\epsilon\in (0,1)$, and let $\Omega\subset \Rd$ be a bounded domain satisfying the uniform exterior ball condition. Assume that \eqref{m-M} and \eqref{m-M-b} hold for both $p$ and $q$. Then:
		\begin{enumerate}[label=(\roman*)]
			\item There exists a unique bounded measurable function $\ue$ that is the solution of \eqref{DPP-aniso}.
			
			\item There exists a function $u\in C(\overline{ \Omega})$ such that
			\begin{equation*}
				\ue \longrightarrow u \qquad \mbox{ as }\epsilon\to 0^+\quad \mbox{uniformly in }\overline{\Omega}.
			\end{equation*}
			This limit coincides with the viscosity solution $u$ of~\eqref{anosotropic-problem}.
		\end{enumerate}
	\end{thm}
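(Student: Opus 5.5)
The proof follows the Barles--Souganidis scheme adapted to DPPs, as in \cite{libroBR}, combined with the asymptotic expansion of Theorem~\ref{main-exp-aniso} for the modified operators $\widehat{\B^p_\epsilon}$, $\widehat{\B^q_\epsilon}$, whose errors are controlled by \eqref{error-B-promedio}.

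\textbf{Part (i).} Denote by $T_\epsilon$ the right-hand side of \eqref{DPP-aniso}, extended by $g$ outside $\Omega$. It is monotone ($\ph\le\psi\Rightarrow T_\epsilon\ph\le T_\epsilon\psi$), since every sup, inf, mean value and max/min in \eqref{operator-A-promedio}--\eqref{operator-B-promedio} is monotone. It also commutes with constants: $T_\epsilon(\ph+c)=T_\epsilon\ph+c$ inside $\Omega$, because each $\widehat{\A}$ term shifts by $c$, the penalizations $\pm2(p-2)\epsilon^2b^{\frac{p-1}{p-2}}$ are independent of $\ph$, and the two halves add to $c$.
\begin{itemize}
\item \emph{Existence.} Iterate from a subsolution $u_0=-C$ with $C$ large, using $\|f\|_\infty$ and $\|g\|_\infty$. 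The monotone bounded sequence $T_\epsilon^k u_0$ converges, and a uniform-convergence argument (the standard one for sup/inf of means) shows the limit is a fixed point.
\item \emph{Uniqueness.} Let $u,v$ be two solutions and $M=\sup(u-v)>0$. Since $u-v\le 0$ outside $\Omega$, the sup is attained up to $\delta$ inside $\Omega$. Use the averages $\fint_{B_{\epsilon^8}}$: every branch of $\widehat{\A}$ contains either a mean over a ball of radius $\sim\epsilon$ (through $M_\rho$, with weight $(1-\alpha_p)\frac{d+2}{p+d}>0$) or the mean over $B_{\epsilon^8}$. This gives a strict averaging inequality: at near-maximal points, $u-v$ is close to $M$ on a set of positive proportion of a ball. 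Chaining these balls reaches $\Rd\setminus\Omega$ in finitely many steps, which is a contradiction. This is exactly why the hat-modification is needed; the game interpretation (the game ends a.s.) can be used alternatively.
\end{itemize}

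\textbf{Part (ii).} Use an Arzel\`a--Ascoli type lemma for non-continuous families: $\|u_\epsilon\|_\infty\le C$ uniformly, and an asymptotic equicontinuity, namely $|u_\epsilon(x)-u_\epsilon(y)|\le\eta$ whenever $|x-y|<r_0$ and $\epsilon<\epsilon_0$.
\begin{itemize}
\item \emph{Uniform bound.} Comparison (monotonicity plus the argument of (i)) with a barrier of the form $a|x''|^2+a|x'|^2$ or $-a|x|^2+C$, whose operator value is $\ge \|f\|_\infty$.
\item \emph{Boundary equicontinuity.} This is the main obstacle. Use the uniform exterior ball condition. At $y\in\partial\Omega$ with exterior ball $B_\delta(z)$, build a radial barrier $w(x)=g(y)+\eta+K(\delta^{-\sigma}-|x-z|^{-\sigma})$ with $\sigma$ large, so that $\plap^{x'}w+\qlap^{x''}w\le -\|f\|_\infty-1$ in the annulus. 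The delicate point is that the partial gradients $\nabla_{x'}w$ and $\nabla_{x''}w$ may vanish separately. I would handle this by using estimate (c) of Theorem~\ref{main-asym-exp-plap}, which is uniform for $C^2$ functions with small gradient, or by perturbing $z$ slightly so that the gradient components stay away from zero on the relevant region. Then $T_\epsilon w\le w$ up to $o(\epsilon^2)$, and a comparison-type argument gives $u_\epsilon\le w$ near $y$ (symmetrically from below). Interior equicontinuity then follows from translation invariance together with comparison, applied to $u_\epsilon(\cdot+h)$ versus $u_\epsilon$ plus the boundary oscillation.
\end{itemize}

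\textbf{Identification of the limit.} Take a subsequence converging uniformly to $u$. If $\phi\in C^2$ touches $u$ from below at $x_0$, then there are points $x_\epsilon\to x_0$ where $u_\epsilon-\phi$ is minimal up to $\epsilon^3$. Monotonicity and commutation with constants give
\[
\phi(x_\epsilon)\ge T_\epsilon\phi(x_\epsilon)-\epsilon^3 .
\]
Theorem~\ref{main-exp-aniso} then yields $\frac12(\plap^{x'}\phi+\qlap^{x''}\phi)(x_\epsilon)-\frac12 f(x_\epsilon)\le o(1)$. To pass to the limit we need errors that are uniform in $x_\epsilon$; estimates (b)/(c) of Theorem~\ref{main-asym-exp-plap} provide this even where the gradient vanishes, with the usual convention for viscosity tests at critical points. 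Since the exterior datum is attained continuously, $u$ is a viscosity solution of \eqref{anosotropic-problem}. By the comparison principle for \eqref{anosotropic-problem} (cf.\ \cite{Ochoa+Valvelde_2024}), the limit is unique, so the whole family converges.
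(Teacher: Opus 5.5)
Your uniqueness argument (near-maximal points, a positive-weight mean in every branch of $\widehat{\A_\epsilon^p}$, chaining out of $\Omega$) and your consistency computation at quasi-extremal points are sound and essentially those of the paper.

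The genuine gap is the interior asymptotic equicontinuity on which your Arzel\`a--Ascoli route rests. The scheme \eqref{DPP-aniso} is translation invariant only when $f$ is constant, and for constant $f$ your translation argument does work. In general, $\ue(\cdot+h)$ solves the scheme with right-hand side $f(\cdot+h)$, while Theorem~\ref{comparison} requires $f_1\le f_2$. Because $\widehat{\C_\epsilon}[\ph+c]=\widehat{\C_\epsilon}[\ph]+c$, adding a constant to $\ue$ cannot absorb the discrepancy $\frac{\epsilon^2}{2}|f(x+h)-f(x)|$, which is paid at every step. You would need a perturbation of $\ue$ that is a supersolution with right-hand side $f-\omega_f(|h|)$, where $\omega_f$ is the modulus of continuity of $f$, and the nonlinearity of $\widehat{\C_\epsilon}$ gives no way to build one from $\ue$. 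The probabilistic version, bounding the difference by $\frac{\epsilon^2}{2}\omega_f(|h|)\,\mathbb{E}[\tau]$, is not available either: nothing bounds $\mathbb{E}[\tau]$ by $O(\epsilon^{-2})$ uniformly in the strategies, since the almost-stationary moves in $B_{\epsilon^8}$ can make the game last far longer.

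The paper never needs interior equicontinuity. It takes the half-relaxed limits $\ua(x)=\limsup_{\epsilon\to0,\,y\to x}\ue(y)$ and $\ub(x)=\liminf_{\epsilon\to0,\,y\to x}\ue(y)$. Your consistency argument (quasi-extremal points as in \cite{Barles_Souganidis_1991}) shows that $\ua$ is a viscosity subsolution and $\ub$ a supersolution. The barriers of Theorem~\ref{barreras} give $\ua\le g\le\ub$ on $\partial\Omega$, and the comparison principle of Lemma~\ref{com-vis-elliptic} yields $\ua\le\ub$. Since $\ub\le\ua$ trivially, this gives at once continuity of the limit and uniform convergence of the whole family. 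Your uniform bound, boundary barriers and consistency step are exactly the inputs this argument needs, so replace the Arzel\`a--Ascoli step by it.

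There is also a concrete error in your existence step: a constant is not a subsolution. For constants $\widehat{\B_\epsilon^p}[c]=c$, because both $b$-penalties are taken at $b=m_b$ and cancel. Hence the right-hand side of \eqref{DPP-aniso} applied to $-C$ equals $-C-\frac{\epsilon^2}{2}f(x)$ at every $x\in\Omega$ whose balls stay inside $\Omega$. So $u_0\le T_\epsilon u_0$ fails wherever $f>0$ there, however large $C$ is, and the iteration need not be monotone. You need a strict subsolution whose increment beats $\frac{\epsilon^2}{2}\|f\|_{\LL^\infty(\Omega)}$ at the fixed $\epsilon$. The paper uses $e^{Lx_1}-T$ with $L$ large, following \cite{delTeso_Rossi_2024}, and its negative for the supersolution.

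Your quadratic $L^\infty$ barrier has a similar problem: it degenerates at its center, since $\plap^{x'}(a|x'|^2)+\qlap^{x''}(a|x''|^2)=0$ at $x=0$. Center it at $z=(z',z'')$ with $z'$ outside the projection of $\Omega$ onto $\RR^n$ and $z''$ outside its projection onto $\RR^{d-n}$. Then both partial gradients stay away from zero on $\Omega$, and estimate (b) of Theorem~\ref{main-asym-exp-plap} applies uniformly.

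Finally, estimate (c) does not give uniformity near the zero set of $\gr_{x'}\ph$. Its first term carries the factor $\epsilon^{-2/3}$ and is small only where $|\gr_{x'}\ph|$ lies below a positive power of $\epsilon$. The paper is equally brief on this point, both in the boundary barriers and in the consistency step.
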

	
	\subsection{Game Theoretical Interpretation}\label{sec-main-game}
	
	Both of the DPP are related with different two-players zero-sum games. Firstly we describe the game in the elliptic case. Before proceeding, we recall the game associated with $M_\rho$, known as \emph{tug-of-war with noise}
	
	\textbf{Description of the tug-of-war with noise for $p$-harmonic functions.}
	Fix a parameter $\rho>0$, an open bounded domain $\Omega$, a starting point $x_0\in\Omega$, and a  final payoff function $g\in C_b(\Rd\setminus \Omega)$.
	\begin{enumerate}[label=(\roman*)]
		\item At each round, a biased coin is tossed with probabilities $\beta_p=(p-2)/(p+d)$ for heads and $1-\beta_p$ for tails.
		\begin{itemize}[leftmargin=1em]
			\item If the outcome of the coin toss is tails, the next position of the game is chosen randomly inside the ball of radius $\sqrt{2(p+d)}\rho$ (with uniform distribution).
			\item If the outcome is heads, a round of tug-of-war is played: a fair coin
			(probabilities $1/2-1/2$) determines which of the two players chooses the next position of the game inside the ball of radius $\sqrt{2(p+d)}\rho$.
		\end{itemize}
		
		\item The procedure described in (i) is repeated starting at the new position until the position of the game exits $\Omega$ for the first time; this exit point is denoted by $x_\tau$. At that moment, the second player pays the amount $g(x_\tau)$ to the first player.
	\end{enumerate}
	
	Once we have described \emph{tug-of-war with noise}, we can introduce the two-player zero-sum game associated with the operator $\widehat{\A_\epsilon^p}$.
	
	\textbf{Description of the game for $\widehat{\A_\epsilon^p}$.}
	Fix $\epsilon>0$, a bounded open domain $\Omega$, a starting position $x_0\in\Omega$, and a final payoff function $g\in C_b(\Rd\setminus \Omega)$.
	
	\begin{enumerate}[label=(\roman*)]
		
		\item[(I)] At each round, a fair coin is tossed, with probability $1/2$ of landing heads and probability $1/2$ of landing tails.
		
		\item[(II)(a)] If the outcome of (I) is heads, Player $I$ may either move randomly within a ball of radius $\epsilon^8$ centered at $x_0$ or initiate the following process. Player $II$ selects a parameter $c\in[m_p(\epsilon),M_p(\epsilon)]$. After this choice, a biased coin is tossed, with probabilities $\alpha_p=(p-2)/(p-1)$ of landing heads and $1-\alpha_p$ of landing tails:
		\begin{itemize}[leftmargin=1em]
			\item If the outcome is heads, Player $I$ chooses $x_{1}\in B_{2\epsilon^2 c^{1-\alpha_p}}(x_0)$.
			\item If the outcome is tails, a round of tug-of-war with noise is played in $B_{\sqrt{4(p+d)}\epsilon c^{-\alpha_p/2}}(x_0)$, with probabilities $\beta_p$ and $1-\beta_p$ as described above, which determines $x_{1}$.
		\end{itemize}
		
		\item[(II)(b)] If the outcome of (I) is tails, the same procedure as in (II)(a) is followed, but with the roles of Player $I$ and Player $II$ interchanged.
		
		\item[(III)] The procedure described in (I) and (II) determines $x_1$ and is repeated until the end of the game, which occurs when the position $x_\tau$ leaves $\Omega$ for the first time. The final payoff for a particular realization of the game is given by Player $II$ paying $g(x_{\tau})$ to Player $I$.
		
	\end{enumerate}
	
	\begin{rem} {\rm In this game, the player who chooses $c$ determines the size of the balls in which they can move the token, but this choice also affects the size of the movements in the tug-of-war with noise.}
	\end{rem}
	
	\textbf{Description of the game for $\widehat{\B_\epsilon^p}$.}
	We now describe a game whose value function is related to the mean value formula for the $p$-Laplacian associated with $\widehat{\B_\epsilon^p}$.
	
	Fix $\epsilon>0$, a bounded open domain $\Omega$, a starting point $x_0\in\Omega$, a final payoff function $g\in C_b(\Rd\setminus \Omega)$, and a running payoff function $f\in C(\overline{\Omega})$.
	
	The rules of the game are as follows:
	
	\begin{enumerate}[label=(\roman*)]
		
		\item[(I)] At each round, a fair coin is tossed, with probability $1/2$ of landing heads and probability $1/2$ of landing tails.
		
		\item[(II)(a)] If the outcome of (I) is heads, Player $I$ may choose a parameter $b\in[m_b(\epsilon),M_b(\epsilon)]$ (recall that $m_b$ and $M_b$ are given by \eqref{m-M-b}). With this choice of $b$, a round of the game associated with $\widehat{\A_{\epsilon\sqrt{2(p-1)b}}^p}$ is played, with the parameter controlling the size of the steps set to $\epsilon\sqrt{2(p-1)b}$. In addition, Player $I$ pays $2(p-2)\epsilon^2 b^{\frac{p-1}{p-2}}$ to Player $II$.
		
		\item[(II)(b)] If the outcome of (I) is tails, the same procedure as in (II)(a) is followed, but with the roles of Player $I$ and Player $II$ interchanged (this time, Player $II$ chooses $b\in[m_b(\epsilon),M_b(\epsilon)]$ and pays $2(p-2)\epsilon^2 b^{\frac{p-1}{p-2}}$ to Player $I$).
		
		\item[(III)] At each turn, independently of the coin toss in (I), Player $I$ receives from Player $II$ a running payoff given by $\frac12\epsilon^2 f(x_0)$.
		
		\item[(IV)] The procedure described in (I), (II)(a), (II)(b) and (III) dertermines the next position $x_1$ and is repeated until the position $x_\tau$ leaves $\Omega$ for first time. The final payoff for a particular realization of the game is given as follows: if $x_{\tau}\notin\Omega$, Player $II$ pays $g(x_{\tau})$ to Player $I$.
		
	\end{enumerate}
	
	\begin{rem} {\rm In this game, the player who chooses $b$ determines the size of the balls in which a round of the previous game is played, but this choice also entails a larger penalty when the desired size of the movements is large.}
	\end{rem}
	
	\textbf{Description of the game for $\widehat{\C_\epsilon}=\frac{1}{2}\left(\widehat{\B_\epsilon^p}+\widehat{\B_\epsilon^q}\right)$.}
	The previous construction also allows us to provide a game-theoretic interpretation of problem \eqref{anosotropic-problem} through \eqref{DPP-aniso}. The game again involves two players: Player~$I$, who aims to maximize the final payoff, and Player~$II$, who aims to minimize it.
	
	As before, fix $\epsilon>0$, a bounded open domain $\Omega$, a starting point $x_0\in\Omega$, a final payoff function $g\in C_b(\Rd\setminus \Omega)$, and a running payoff function $f\in C(\overline{\Omega})$. Recall that we use the following notation for $x=(x',x'')\in \Rd$, where $x'=(x_1,\cdots,x_n)$ and $x''=(x_{n+1},\cdots,x_d)$.
	
	The rules of the game are as follows:
	
	\begin{enumerate}[label=(\roman*)]
		
		\item[(I)] At each round, a fair coin is tossed, with probability $1/2$ of landing heads and probability $1/2$ of landing tails.
		
		\item[(II)(a)] If the outcome of (I) is heads, a round of the game associated with $\widehat{\B_\epsilon^p}$ is played in balls in $\mathbb{R}^n$ centered at $x_0=(x_0',x_0'')\in \Rd$, with the last coordinates $x_0''$ kept fixed.
		
		\item[(II)(b)] If the outcome of (I) is tails, a round of the game associated with $\widehat{\B_\epsilon^q}$ is played in balls in $\mathbb{R}^{d-n}$ centered at $x_0=(x_0',x_0'')\in \Rd$, with the first coordinates $x_0'$ kept fixed.
		
		\item[(III)] At each turn, independently of the coin toss in (I), Player $I$ receives a running payoff given by $\frac12\epsilon^2 f(x_0)$ from Player $II$.
		
		\item[(IV)] The procedure described in (I), (II)(a), (II)(b) and (III) determines the next position $x_1$ and is repeated until the position $x_\tau$ leaves $\Omega$ for first time. The final payoff for a particular realization of the game is given as follows: if $x_{\tau}\notin\Omega$, Player $II$ pays $g(x_{\tau})$ to Player $I$.
		
	\end{enumerate}
	
	\begin{rem} {\rm Now, the movements of the game take place inside horizontal balls (balls in $\mathbb{R}^n$ centered at $x=(x',x'')\in \Rd$, with the last coordinates $x''$ kept fixed) or vertical balls (balls in $\mathbb{R}^{d-n}$ centered at $x=(x',x'')\in \Rd$, with the first coordinates $x'$ kept fixed). This reflects the anisotropic character of the problem under consideration.}
	\end{rem}
	
	For the latter game described above, we have the following result concerning its value.
	\begin{thm} \label{main-juego-p-q}
		Let the assumptions of Theorem \ref{main-convergence-elliptic} hold. Then, the game associated with $\widehat{\C_\epsilon}$ has a value and it is given by the unique solution to \eqref{DPP-aniso}.
		
		In particular, the value of the game converges uniformly in $\overline{\Omega}$ as $\epsilon\to 0^+$ to viscosity solution to the elliptic problem \eqref{anosotropic-problem}.
	\end{thm}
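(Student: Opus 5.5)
The plan is to follow the standard scheme for tug-of-war type games (as in \cite{libroBR,Lewicka_book}): show that the lower value $u_I^\epsilon$ satisfies $u_I^\epsilon\ge u_\epsilon$ and the upper value $u_{II}^\epsilon$ satisfies $u_{II}^\epsilon\le u_\epsilon$, where $u_\epsilon$ is the unique solution of \eqref{DPP-aniso} from Theorem \ref{main-convergence-elliptic}(i). Since $u_I^\epsilon\le u_{II}^\epsilon$ always holds, we get $u_I^\epsilon=u_{II}^\epsilon=u_\epsilon$. The convergence statement then follows at once from Theorem \ref{main-convergence-elliptic}(ii).

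\textbf{Step 1 (the game ends).} I would first show that, whatever strategies are used, the game ends almost surely, with a uniform bound on $\mathbb{E}[\tau]$ (at least for fixed $\epsilon$). The averaged operators $\widehat{\A^p_\epsilon}$ remove the option of standing still: in every branch the token either moves by a uniform random step in a ball of radius at least $\epsilon^8$, or enters a tug-of-war-with-noise round whose noisy part occurs with probability $1-\beta_p>0$ in a ball of radius comparable to $\epsilon\sqrt{b}\,c^{-\alpha_p/2}\ge C\epsilon^{N}$ for some $N$ (using $b\ge m_b$ and $c\le M_p$). The horizontal or vertical direction is then chosen by a fair coin.

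Given this, I would fix a direction, say $x_1$, which lies in the horizontal block when $n\ge1$. I would then use the standard argument that with probability bounded below by some $\eta(\epsilon)>0$ there are $K(\epsilon)$ consecutive rounds, each pushing $x_1$ by at least $\delta(\epsilon)/2$ in the same direction. Since $\Omega$ is bounded, this gives $\mathbb{P}(\tau>kK)\le(1-\eta)^k$. Alternatively, one can use a supermartingale $|x_k-z|^2$-type argument.

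The bounded running payoff $\tfrac12\epsilon^2 f$ and the penalties $2(p-2)\epsilon^2b^{\frac{p-1}{p-2}}$ are bounded for fixed $\epsilon$, because $b\le M_b$. Hence the total payoff is integrable and the values are well defined.

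\textbf{Step 2 (comparison via martingales).} Fix $\eta>0$. Player $I$ chooses almost-optimal actions with respect to $u_\epsilon$ at every nested sup and inf in \eqref{DPP-aniso}: the sup over $b$, the max between the averaged value and the $c$-process, the sup over $c$ in the $\min$ branch, and the point realizing $\sup_B u_\epsilon$ up to $\eta 2^{-(k+1)}$. Player $II$ acts arbitrarily. Then
\[
M_k=u_\epsilon(x_k)+\sum_{j<k}\Big(\tfrac{\epsilon^2}{2}f(x_j)\pm\text{penalties}\Big)-\eta 2^{-k}
\]
is a submartingale. The delicate point is to check that each branch of the game reproduces exactly the corresponding term of \eqref{operator-A-promedio}--\eqref{operator-B-promedio} in conditional expectation. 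In particular, the penalty sign convention must match the $\pm 2(p-2)\epsilon^2b^{\frac{p-1}{p-2}}$ in the sup/inf, and the factor $\tfrac12$ splits between the $p$- and $q$-blocks. By the bound on $\tau$ from Step 1 and the boundedness of all quantities, optional stopping applies. Taking the sup over Player $I$'s strategies and letting $\eta\to0$ yields $u_I^\epsilon\ge u_\epsilon$. The symmetric argument gives $u_{II}^\epsilon\le u_\epsilon$.

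\textbf{Main obstacle.} I expect the hardest step to be Step 1 combined with measurability. Player $I$ can try to prolong the game, so termination must hold uniformly in strategies, and the admissible step sizes shrink with $\epsilon$ through $m_b,m_p,M_p$. I would first try the ``many consecutive pushes in one coordinate'' argument using only the noise steps and the $\epsilon^8$-averaging step, which are not controlled by the players. If needed, I would pass to a slightly modified game with a small killing probability and then let it tend to zero, using the uniqueness of solutions to \eqref{DPP-aniso}. Measurable selection of the almost-optimal strategies would be handled with the usual $\eta 2^{-k}$ countable approximations.
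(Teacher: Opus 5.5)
Your plan is correct up to one sign and is essentially the paper's proof. It builds near-optimal strategies from the nested $\sup/\inf$ in \eqref{DPP-aniso}, uses a sub/supermartingale on $u_\epsilon(x_k)$ with optional stopping to get $u_\epsilon\le u_I^\epsilon$ and $u_{II}^\epsilon\le u_\epsilon$, and then invokes Theorem~\ref{main-convergence-elliptic}. Your Step~1 (an exponential tail for $\tau$ that does not depend on the strategies, which is what actually justifies optional stopping) and your keeping the accumulated $b$-penalties in $M_k$ supply points the paper only asserts or omits.

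The sign: to match the $-\frac{\epsilon^2}{2}f$ in \eqref{DPP-aniso}, the running term in $M_k$ must be $-\frac{\epsilon^2}{2}\sum_{j<k}f(x_j)$. This is the convention of the paper's payoff $g(x_\tau)-\frac{\epsilon^2}{2}\sum_{i<\tau}f(x_i)$, even though the verbal game description says Player $I$ receives the running payoff. With your $+$ sign, the conditional expectation of $M_{k+1}$ exceeds $M_k$ by an extra $\epsilon^2 f(x_k)$, so one of the two martingale inequalities fails unless $f$ has a sign.
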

	
	We now turn our attention to the parabolic DPP and its associated game, which we proceed to describe. Before that, we recall its parabolic version—that is, the version in space and time—generated by the operator $\A_\epsilon^p$, which was previously studied in~\cite{delTeso+Fuertes+Rossi_2026}.
	
	\begin{enumerate}[label=(\roman*)]
		
		\item[(I)] At each round, a fair coin is tossed, with probability $1/2$ of landing heads and probability $1/2$ of landing tails.
		
		\item[(II)(a)] If the outcome of (I) is heads, Player~$I$ may either keep the spatial position, $x_{k+1}=x_k$, and update the time to $t_{k+1}=t_k-\epsilon^2$, or initiate the following procedure. Player~$II$ selects a parameter $c\in[m(\epsilon),M(\epsilon)]$, after which a biased coin is tossed, with probabilities $\alpha_p=(p-2)/(p-1)$ of landing heads and $1-\alpha_p$ of landing tails:
		\begin{itemize}[leftmargin=1em]
			\item If the outcome is heads, Player~$I$ chooses $x_{k+1}\in B_{2\epsilon^2 c^{1-\alpha_p}}(x_k)$ and $t_{k+1}=t_k-\epsilon^2$.
			\item If the outcome is tails, a round of tug-of-war with noise is played in $B_{\sqrt{4(p+d)}\epsilon c^{-\alpha_p/2}}(x_k)$, with probabilities $\beta_p=(p-2)/(p+d)$ and $1-\beta_p$ (this game was above), which determines $x_{k+1}$, and $t_{k+1}=t_k-\epsilon^2$.
		\end{itemize}
		
		\item[(II)(b)] If the outcome of (I) is tails, the same procedure as in (II)(a) is followed, with the roles of Player~$I$ and Player~$II$ interchanged.
		
		\item[(III)] The procedure described in (I) and (II) is repeated until the game ends. It happends when either $x_k\not \in \Omega$ and $t_k>0$ or $x_k\in \Omega$ and $t_k<0$. After that, the final payoff is determined as follows: if $t_{k+1}\leq 0$ and $x_{k+1}\in\Omega$, Player~$II$ pays $u_0(x_{k+1})$ to Player~$I$; if $x_{k+1}\notin\Omega$ and $t_{k+1}>0$, Player~$II$ pays $g(x_{k+1})$ to Player~$I$.
	\end{enumerate}
	
	We now describe the game associated with \eqref{DPP-parab}. It is a two-player zero-sum game in which Player~I aims to maximize the payoff, whereas Player~II aims to minimize it. Given $\epsilon>0$, a bounded domain $\Omega\subset\Rd$ satisfying the uniform exterior ball condition, $g\in C_b(\Rd\setminus\Omega)$, $u_0\in C_b(\Omega)$, and an initial position $(x_0,t_0)\in\Omega\times(0,\infty)$, the position evolves according to the following rules.
	
	\textbf{Description of the game for $\B_\epsilon^p$.}
	\begin{enumerate}[label=(\roman*)]
		\item[(I)] At each round, a fair coin is tossed, with probability $1/2$ of landing heads and probability $1/2$ of landing tails.
		\item[(II)(a)] If the outcome of (I) is heads, Player~I chooses $b\in[m_b,M_b]$. After that, a round of the game described above is played with the new radius $\epsilon\sqrt{2(p-1)b}$ instead of $\epsilon$, and Player~I pays the amount $2(p-2)\epsilon^2 b^{\frac{p-1}{p-2}}$ to Player~II. The next spatial position $x_{k+1}$ is determined by the game associated with $\A^p_{\epsilon\sqrt{2(p-1)b}}$, and $t_{k+1}=t_k-\epsilon^2$.
		
		\item[(II)(b)] If the outcome of (I) is tails, the same procedure as in (II)(a) is followed, with the roles of Player~$I$ and Player~$II$ interchanged.
		
		\item[(III)] The procedure in (I), (II)(a), and (II)(b) is repeated until the game ends. This happens when $t_{k+1}\leq 0$ for the first time and $x_{k+1}\in \Omega$, in which case Player~II pays $u_0(x_{k+1})$ to Player~I, or when $x_{k+1}\notin \Omega$ with $t_{k+1}>0$, in which case Player~II pays $g(x_{k+1})$ to Player~I.
	\end{enumerate}

	\begin{thm}\label{main-game-parabolic}
		Let the assumptions of Theorem \ref{main-convergence-parabolic} hold. Then, the game associated with $\B_\epsilon^p$, defined above has a value  and it is given by the unique solution to \eqref{DPP-parab}.
		
		In particular, the value of the game converges uniformly on $\overline{\Omega}\times\RR^+$ as $\epsilon\to 0^+$ to the viscosity solution of \eqref{parabolic-problem}.
	\end{thm}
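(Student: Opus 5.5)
We have to prove Theorem \ref{main-game-parabolic}: the parabolic game associated with $\B_\epsilon^p$ has a value, this value equals the unique solution $\ue$ of \eqref{DPP-parab}, and hence it converges to the viscosity solution of \eqref{parabolic-problem}. Once the game value is identified with $\ue$, the convergence statement follows at once from Theorem \ref{main-convergence-parabolic}. So the real work is the identification, which we carry out by the standard martingale comparison argument.

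\textbf{Finite horizon.} Unlike the elliptic case, termination is automatic. Each round lowers the time by exactly $\epsilon^2$, so starting from $(x_0,t_0)$ the game ends after at most $\lceil t_0/\epsilon^2\rceil+1$ rounds, whatever strategies are used. This means we do not need the averaging modification $\widehat{\A}$, and no ``stay put'' strategy can cause trouble. It also means expected payoffs are well defined for all strategy pairs, once the accumulated penalties $\pm 2(p-2)\epsilon^2 b^{\frac{p-1}{p-2}}$ are included in the payoff.

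\textbf{Upper bound $u_{II}\le \ue$.} Fix $\eta>0$ and let Player~II use a strategy $S_{II}^0$ that is almost optimal in the DPP at every node. Concretely:
\begin{itemize}
\item when Player~II chooses $b$, she picks it $\eta 2^{-(k+1)}$-optimal for the $\inf_b$ in \eqref{operator-B};
\item inside the $\A$-round she picks $c$ almost optimally for the $\sup_c$;
\item she moves to near-infimum points when she controls the token.
\end{itemize}
Against any strategy $S_I$ of Player~I, consider
\[
M_k=\ue(x_k,t_k)+\text{(accumulated penalties up to round }k) - \eta 2^{-k}.
\]
Using the DPP \eqref{DPP-parab} and the formula for $\B_\epsilon^p$, we check that $M_k$ is a supermartingale: $\mathbb E[M_{k+1}\mid \mathcal F_k]\le M_k$. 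The key point is that each of Player~I's choices (his $b$, his decision whether to stay, his moves) can only produce at most the $\sup$ in the corresponding term, while Player~II's choices achieve the $\inf$ up to $\eta2^{-(k+1)}$. At the terminal time the DPP boundary conditions give $\ue=u_0$ or $\ue=g$, which match the payoff. Since the stopping time is bounded, optional stopping yields $\mathbb E[\text{payoff}]\le \ue(x_0,t_0)+\eta$. Taking the sup over $S_I$ and letting $\eta\to0$ gives $u_{II}\le\ue$.

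\textbf{Lower bound $u_I\ge\ue$.} This is the symmetric argument, with the roles of the players reversed and a submartingale in place of the supermartingale. Combined with the trivial inequality $u_I\le u_{II}$, we conclude that the game has a value and that it equals $\ue$.

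\textbf{Main obstacle: measurability.} The delicate step is making sure the almost-optimal selections define measurable strategies and that the conditional expectations are well defined. The $\sup$ and $\inf$ over balls of $\ue$ are attained only approximately, and the parameters $b\in[m_b,M_b]$ and $c\in[m_p,M_p]$ range over intervals. For the parameters we will pick $b$ and $c$ from countable dense sets; this is possible because, for bounded $\ue$, the map $b\mapsto \A^p_{\epsilon\sqrt{2(p-1)b}}[\ue]$ is monotone or continuous enough in the radius, which should be verified from the structure of $M_\rho$ and of the sup/inf over balls. For the spatial moves we will use a measurable selection theorem. Finally, the nested structure (coin for $b$, then coin for staying, then $c$, then the $\alpha_p$-coin, then tug-of-war with noise) must be unfolded so that one step of the expectation reproduces exactly the expression $\B_\epsilon^p[\ue](x,t-\epsilon^2)$. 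This bookkeeping mirrors the corresponding argument for the $\A^p_\epsilon$ game in \cite{delTeso+Fuertes+Rossi_2026}, which we will adapt.
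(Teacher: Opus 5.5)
Your proposal follows essentially the same route as the paper: DPP-following $\eta 2^{-k}$-optimal strategies, a sub/supermartingale built from $u_\epsilon(x_k,t_k)$, optional stopping (the number of rounds is bounded because time drops by $\epsilon^2$ each round), the symmetric argument, and then Theorem~\ref{main-convergence-parabolic}. Your inclusion of the accumulated $b$-penalties in $M_k$ is cleaner than the paper's bookkeeping, and the paper does not treat measurability at all. Two small slips to fix: for Player~II's supermartingale the correction must be $+\eta 2^{-k}$, not $-\eta 2^{-k}$ (as in the paper, and as your conclusion $\le u_\epsilon(x_0,t_0)+\eta$ already presupposes); and the $c$ chosen by Player~II is the one in the $\inf_c$ term of $\mathcal{A}^p$, not the $\sup_c$ term.
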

	
	\subsection{Hölder continuity and Harnack Inequality}
	
	Finally, we establish interior Hölder regularity results for the solutions to the equation in \eqref{anosotropic-problem} with $f\equiv 0$, by means of probabilistic arguments. This Hölder estimate will be the key to subsequently obtaining classical Harnack-type inequalities.
	
	\begin{thm}\label{main-holder}
		Let the assumptions of Theorem \ref{main-convergence-elliptic} hold. Let $u$ be the viscosity solution of \eqref{anosotropic-problem} in $B_R(0)$ with $R>1$. Assume also that $f=0$. Then, the following estimates hold:
		
		\begin{enumerate}[label=(\roman*)]
			\item There exists a constant $K>0$, depending only on $p$, $q$, and $d$, such that, for every $0<\delta<1$,
			\begin{equation*}
				\sup_{B_\delta} u - \inf_{B_\delta} u \leq (K+2)\delta^{1/3}\left(\sup_{B_1} u -\inf_{B_1} u\right).
			\end{equation*}
			
			\item Moreover, if $u$ is positive, then it satisfies the Harnack Inequality, there exists a constant $H>0$ such that
			\begin{equation*}
				\sup_{B_1(0)} u \leq H \inf_{B_1(0)} u.
			\end{equation*}
		\end{enumerate}
	\end{thm}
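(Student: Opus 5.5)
The plan is to prove the oscillation estimate (i) at the level of the dynamic programming principle \eqref{DPP-aniso} with $f=0$, uniformly in $\epsilon$, and then pass to the limit. By Theorem \ref{main-juego-p-q}, $\ue$ is the value of the game for $\widehat{\C_\epsilon}$ and $\ue\to u$ uniformly, so an estimate of the form
\[
\sup_{B_\delta}\ue-\inf_{B_\delta}\ue\le K\delta^{1/3}\big(\sup_{B_1}\ue-\inf_{B_1}\ue\big)+C\epsilon^{\theta},
\]
with $\theta>0$, yields (i) after letting $\epsilon\to0$. The extra $2\delta^{1/3}$ absorbs the boundary/approximation slack. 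Here $\ue$ is obtained by solving the DPP in $B_1$ with exterior datum $u$; comparison then makes $\osc \ue$ controlled by $\osc_{B_1}u$.

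For the game estimate, fix $x,z\in B_\delta$ and run a coupling/strategy argument. We may normalize $\inf_{B_1}u=0$ and $\sup_{B_1}u=1$. To bound $\ue(x)-\ue(z)$, let Player II, starting from $x$, use a "pull towards $z$" strategy in the tug-of-war steps and choose the smallest admissible parameters. Concretely, II chooses $b=m_b$ when he chooses $b$ and $c=M_p$ when he chooses $c$. This makes his own steps small, while the opponent's steps are bounded in size, since $b\le M_b$ and $c\ge m_p$ give step sizes $o(1)$.

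The key step is to build a supermartingale $\Phi(x_k)+$ (correction), where $\Phi(y)=|y-z|^{1/3}$ or a radial barrier such as $|y-z|^{\gamma}$, possibly modified by a term forcing concavity. This yields the exponent $1/3$. We then check that $\widehat{\C_\epsilon}\Phi\le\Phi+C\epsilon^{2}\cdot(\text{something summable})$ away from a small ball around $z$. The concavity of $t^{1/3}$ controls both the noise terms from $M_\rho$ and the horizontal/vertical averages. The penalties $2(p-2)\epsilon^2b^{\frac{p-1}{p-2}}$ also matter: if I takes large $b$ he pays, and these payments are accounted for through optional stopping. One then estimates the probability of exiting $B_1$ before reaching a neighborhood of $z$ by $C\delta^{1/3}$. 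Finally, a symmetric/translation argument compares the payoffs from $x$ and $z$, since the game is translation invariant in the interior.

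I expect the main obstacle to be this supermartingale computation. The operator acts separately in $x'$ and $x''$ balls, so one-directional averages of a radial $\Phi$ must be controlled: the Hessian of $|y|^{\gamma}$ restricted to a subspace is still negative in the radial direction only if that direction lies in the subspace. I would first try the anisotropic barrier $\Phi(y)=|y'-z'|^{1/3}+|y''-z''|^{1/3}$, which is separately concave in each block, so that each half-step decreases it.

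For (ii), apply (i) combined with a standard chaining argument, comparing $u$ on overlapping balls. Together with positivity and a weak Harnack obtained from the same exit-probability estimate (the probability of reaching a ball where $u\ge\frac12\sup u$ is bounded below), this gives $\sup_{B_1}u\le H\inf_{B_1}u$.
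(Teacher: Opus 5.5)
Your reduction (an $\epsilon$-uniform oscillation bound for $\ue$ in $B_1$, followed by uniform convergence) is the same as the paper's. The core step fails, however. You run a single token from $x$, with Player~II pulling toward a fixed $z$. To compare with $\ue(z)$ you must let $\eta\to0$, so you need the token to enter $B_\eta(z)$ before leaving $B_1$ with probability at least $1-C\delta^{1/3}$ \emph{uniformly in $\eta$}. That requires $\Phi=|y'-z'|^{1/3}+|y''-z''|^{1/3}$ to be a supermartingale, and it is not one in general.

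The function $|y'|^{1/3}$ is not concave on $\RR^n$ for $n\ge2$: its tangential Hessian eigenvalues are $\frac13|y'|^{-5/3}>0$. So the uniform-noise part of $M_\rho$ raises $\Phi$ by order $\rho^2|y'-z'|^{-5/3}$ in a step of radius $\rho$, which is the same order as the tug-of-war gain. In a tug-of-war-with-noise step in which Player~I pulls away from $z$, the expected change of $\Phi$ is a positive multiple of $\rho^2\,\Delta_p^{N}\Phi$ (taken in the $x'$ variables), and
\[
\Delta_p^{N}|y'|^{\gamma}=\gamma|y'|^{\gamma-2}\big((p-1)\gamma-(p-n)\big).
\]
Such steps occur with probability bounded below whatever II does. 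Moreover, II's free moves toward $z$ become negligible when I picks $c=m_p$. Hence $\Phi$ is a supermartingale only if $\gamma<\frac{p-n}{p-1}$ and $\gamma<\frac{q-(d-n)}{q-1}$. For $\gamma=\frac13$ this needs $p>\frac{3n-1}{2}$ and $q>\frac{3(d-n)-1}{2}$. For $p\le n$ no exponent works at all: the token then behaves like a Bessel process of dimension $1+\frac{n-1}{p-1}\ge2$, which does not hit small balls. The radial choice $|y-z|^{1/3}$ is even worse, since its $\Delta_p^N$ in $x'$ is positive wherever $|y'-z'|\ll|y''-z''|$. The theorem carries no such dimensional restriction. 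The ``translation argument'' does not rescue this either: comparing $\ue$ with a translate only bounds $\ue(x)-\ue(x+h)$ by the difference on a boundary strip, with no $\delta^{1/3}$ decay.

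The missing idea is the coupling of \emph{two} games used in the paper (following Peres--Wang).
\begin{itemize}
\item Play from $x$ and from $y$ simultaneously, with counter-strategies that copy the opponent's choices of $b$, $c$ and of staying or moving, so both tokens always use balls of the same radius.
\item The uniform noise vector is shared, so it leaves $Z_n=X_n-Y_n$ unchanged. The small $\epsilon^8$-moves are coupled by reflection.
\item In the strategic and tug-of-war moves, each token is pulled toward the other.
\end{itemize}
Then $|X_n|^2+|Y_n|^2+K|Z_n|^{1/3}$ is a supermartingale for large $K$; the modified ranges of $b$ and $c$ are what order the radii here. Optional stopping at $\{|Z_n|<\eta\}\cup\{|X_n|>1\text{ or }|Y_n|>1\}$ bounds the exit probability by $(K+2)\delta^{1/3}$. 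The penalties $2(p-2)\epsilon^2b^{\frac{p-1}{p-2}}$ are identical in both copies and cancel. One then lets $\epsilon\to0$ and afterwards $\eta\to0$.

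For (ii), chaining the H\"older bound gives no multiplicative control. Your weak-Harnack step is also unproved, since the size of $\{u\ge\frac12\sup u\}$ is not controlled. The paper instead applies Peres--Wang's lemma. It combines the scale-invariant decay from (i) with $\inf_{B_r(x)}u\le Cr^{-\lambda}$, and obtains the latter by comparing $u$ with a multiple of the supersolution $9-|x-z|^2$ on $B_3(z)\setminus\overline{B_r(z)}$.
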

	
	\noindent\textbf{Plan of the paper.}
	Section \ref{sec3} is devoted to the asymptotic expansions of the $p$-Laplacian and the equations \eqref{eq_p-q} and \eqref{eq-parabolic}, together with the equivalence between viscosity solutions of the AMVF and of the corresponding equations. We then turn to problems \eqref{parabolic-problem} and \eqref{anosotropic-problem}, starting with the elliptic case. In Section \ref{sec4}, we establish existence and uniqueness for the associated DPP and prove the uniform convergence, as $\epsilon\to0^+$, of its solutions to the unique viscosity solution of \eqref{anosotropic-problem}. A key ingredient is the construction of a family of smooth barriers independent of $\epsilon$.
	
	Section \ref{sec5} concerns the game associated with $\widehat{\C_\epsilon}$, for which we establish the existence of a value and its coincidence with the unique solution of the DPP. In Section \ref{sec6}, probabilistic arguments yield the interior Hölder continuity, with exponent $\beta=1/3$, of solutions to \eqref{anosotropic-problem} with $f=0$.
	
	Section \ref{sec7} is devoted to the parabolic problem \eqref{parabolic-problem}, where we extend the analysis of Sections \ref{sec4} and \ref{sec5}. This case is treated last, as several tools and barriers developed in \cite{delTeso+Fuertes+Rossi_2026} are readily applicable. Finally, Section \ref{sect-final} outlines possible extensions of our approach, while the appendices collect the technical computations used throughout the paper and recall the relevant notions of viscosity solution.
	
	
	\section{Asymptotic expansions for $C^2$ and $C^3$ functions}\label{sec3}
	
	The first objective of this section is to prove rigorously that the operator $\B_\epsilon^p$ (and, consequently, $\widehat{\B_\epsilon^p}$) is well defined for any bounded measurable function in a neighborhood of a given point. We then derive the corresponding asymptotic expansions for sufficiently regular functions. More precisely, we first show that the mean operator $\B_\epsilon^p$ approximates the $p$-Laplacian and subsequently use this result to obtain asymptotic expansions for \eqref{eq-parabolic} and \eqref{eq_p-q}.
	
	To carry out these arguments rigorously, we need suitable bounds for the sets over which the relevant infima and suprema are taken. While particular choices were introduced in the previous section in \eqref{m-M} and \eqref{m-M-b}, a more general choice is possible here. This flexibility allows us both to establish the well-definedness of $\B_\epsilon^p$ and to derive the asymptotic expansions without requiring explicit control of the error term.
	
	\begin{equation}\label{HT}\tag{HT}
		\begin{split}
			&m_p:\RR_+\longrightarrow \RR_+
			\mbox{ is nondecreasing, with }
			m_p(\epsilon)\to 0
			\mbox{ and }
			\epsilon\,m_p(\epsilon)^{-\alpha_p/2}\to 0
			\mbox{ as }\epsilon\to 0^+,\\
			&M_p:\RR_+\longrightarrow \RR_+
			\mbox{ is nonincreasing, with }
			M_p(\epsilon)\to +\infty
			\mbox{ and }
			\epsilon^2 M_p(\epsilon)^{1-\alpha_p}\to 0
			\mbox{ as }\epsilon\to 0^+,
		\end{split}
	\end{equation}
	and
	\begin{equation}\label{HT-b}
		\begin{split}
			&m_b:\RR_+\longrightarrow \RR_+
			\mbox{ is nondecreasing, with }
			m_b(\epsilon)\to 0,\\
			&M_b:\RR_+\longrightarrow \RR_+
			\mbox{ is nonincreasing, with }
			M_b(\epsilon)\to +\infty
			\mbox{ such that}\\
			&\qquad\qquad\qquad
			\epsilon m_p^{-\alpha_p/2}M_b^{1/2}\to 0,
			\qquad
			\epsilon^2 M_p(\epsilon)^{1-\alpha_p}M_b\to 0
			\qquad\mbox{as }\epsilon\to 0^+.
		\end{split}
	\end{equation}
	It can be checked that the bounds \eqref{m-M} and \eqref{m-M-b} satisfy \eqref{HT} and \eqref{HT-b}. We now show that the operators $\B_\epsilon^p$ and $\C_\epsilon$ are well defined under suitable assumptions.
	
	\begin{lem}\label{well-def-B}
		Let $d\in\NN$, $p>2$, $x\in\Rd$, $R>0$, and let $\ph:B_R(x)\longrightarrow\RR$ be a measurable and bounded function. Assume that~\eqref{HT} and~\eqref{HT-b} hold. Then, there exists $\beps\in(0,1)$ such that, for all $\epsilon\in(0,\beps)$, the operator $\B_\epsilon^p$ is well defined.
		
		In particular the mean operator $\C_\epsilon$ is well defined too.
	\end{lem}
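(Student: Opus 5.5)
The argument reduces well-definedness of $\B_\epsilon^p$ to that of $\A_\delta^p$, applied at the rescaled parameter $\delta=\delta(\epsilon,b)=\epsilon\sqrt{2(p-1)b}$. What has to be checked is that every ball appearing in $\A_\delta^p[\ph](x)$ stays inside $B_R(x)$, uniformly in $b\in[m_b(\epsilon),M_b(\epsilon)]$ and $c\in[m_p(\epsilon),M_p(\epsilon)]$. Once this holds, each supremum, infimum and average of the bounded measurable $\ph$ is a finite number. All the resulting quantities are bounded by $\|\ph\|_{\LL^\infty(B_R(x))}$ plus the penalization. The inner $\inf_c$, $\sup_c$ and the max/min with $\ph(x)$ are therefore finite, and the outer $\sup_b$, $\inf_b$ are finite as well.

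Inside $\A_\delta^p$ there are two kinds of balls. The first has radius $2\delta^2c^{1-\alpha_p}$. Since $1-\alpha_p>0$, it is largest at $c=M_p$ and $b=M_b$, where it equals
\[
4(p-1)\epsilon^2 M_b M_p^{1-\alpha_p}.
\]
The second comes from $M_{\sqrt2\delta c^{-\alpha_p/2}}$, and has radius $\gamma\sqrt2\,\delta\,c^{-\alpha_p/2}$ with $\gamma=\sqrt{2(p+d)}$. It is largest at $c=m_p$ and $b=M_b$, where it equals
\[
2\gamma\sqrt{p-1}\,\epsilon\, m_p^{-\alpha_p/2}M_b^{1/2}.
\]
Here I use the monotonicity in $c$ and $b$ and the fact that $m_p$, $M_p$ are evaluated at $\epsilon$, the outer parameter, as in the definition of $\B_\epsilon^p$. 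By \eqref{HT-b}, both bounds tend to $0$ as $\epsilon\to0^+$. Hence there is $\beps\in(0,\min\{1,R\})$ such that for $\epsilon<\beps$ both are smaller than $R$. This also makes the interval for $b$ nonempty, since $m_b<M_b$ for small $\epsilon$ because $m_b\to0$ and $M_b\to\infty$. The same reasoning gives $m_p<M_p$.

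The penalization terms $2(p-2)\epsilon^2 b^{\frac{p-1}{p-2}}$ are bounded on the compact $b$-interval, so $\B_\epsilon^p[\ph](x)$ is a finite real number. For $\C_\epsilon$ the argument is applied separately to $\B_\epsilon^p$ acting in $x'$ and $\B_\epsilon^q$ acting in $x''$, with the other variables frozen. The lower-dimensional balls are contained in $B_R(x)$ whenever their radii are less than $R$, so one takes $\beps$ as the minimum of the two thresholds.

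The main obstacle is a matter of bookkeeping rather than a real difficulty. One has to make sure that the hypotheses in \eqref{HT} and \eqref{HT-b} refer to $m_p,M_p$ evaluated at $\epsilon$ and not at $\delta$. If instead $\A_\delta^p$ used $m_p(\delta)$ and $M_p(\delta)$, an extra monotonicity comparison would be needed: for $\delta\le\epsilon$ one uses that $m_p$ is nondecreasing and $M_p$ nonincreasing, and for $\delta>\epsilon$, which occurs when $b$ is large, one bounds directly using $\delta\lesssim\epsilon M_b^{1/2}\to0$. I would state explicitly which convention is used and verify the radius bounds in that convention.
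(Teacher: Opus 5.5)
Your argument is correct and matches the paper's proof. With $m_p,M_p$ evaluated at $\epsilon$ (the convention the paper uses), you bound both families of radii uniformly by their values at $b=M_b$ and at $c=M_p$, respectively $c=m_p$. You then invoke \eqref{HT-b} to make these radii smaller than $R$, and note that the penalization and the averaging in $\C_\epsilon$ cause no trouble.

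One small correction to your side remark on the other convention. Monotonicity of $m_p$ and $M_p$ is what handles $\delta>\epsilon$, because there the $c$-interval shrinks. For $\delta\le\epsilon$ you should instead apply \eqref{HT} directly at level $\delta$. This is legitimate because $\delta\le\epsilon\sqrt{2(p-1)M_b}\to0$ uniformly in $b$.
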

	
	\begin{proof}
		By~\eqref{HT} and~\eqref{HT-b}, there exists $\beps\in(0,1)$ sufficiently small such that
		\[
		B_{2(p-1)b\epsilon^2c^{1-\alpha_p}}(x)
		\subset
		B_{2(p-1)M_b\epsilon^2M_p^{1-\alpha_p}}(x)
		\subset
		B_R(x),
		\]
		and
		\[
		B_{\sqrt{4(p+d)(p-1)b}\epsilon c^{-\alpha_p/2}}(x)
		\subset
		B_{\sqrt{4(p+d)(p-1)M_b}\epsilon m_p^{-\alpha_p/2}}(x)
		\subset
		B_R(x).
		\]
		Therefore, all the infima, suprema, and integrals appearing in $\A_\epsilon^p$ are well defined and finite. This, in turn, implies that the supremum and infimum appearing in $\B_\epsilon^p$ are also well defined. 
		
		Note that $2(p-2)\epsilon^2b^{\frac{p-1}{p-2}}>0$, so the presence of this term does not affect the well-definedness of either the infimum or the supremum, despite its negative sign in the latter.
		
		The mean operator $\C_\epsilon$ is also well defined since $\C_\epsilon[\ph](x) = \frac{1}{2} \left(\B_\epsilon^p[\ph](x',x'') + \B_\epsilon^q[\ph](x',x'')\right)$.
	\end{proof}
	
	We are now ready to establish the asymptotic expansion for $C^2$ functions.
	
	\begin{thm}\label{exp-c2}
		Let $d\in\NN$, $p>2$, $x\in\Rd$, $R>0$, and let $\ph\in C^2(B_R(x))$. Assume that~\eqref{HT} and~\eqref{HT-b} hold. Then, there exists $0<\beps<R$ such that, for all $\epsilon\in(0,\beps)$,
		\begin{equation}\label{exp-c2-formula}
			\B_\epsilon^p[\ph](x)
			=
			\ph(x)
			+
			\epsilon^2\plap\ph(x)
			+
			o(\epsilon^2).
		\end{equation}
	\end{thm}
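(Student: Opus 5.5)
The plan is to reduce the statement to Theorem~\ref{thm.asexp} applied with the rescaled parameter $\delta=\epsilon\sqrt{2(p-1)b}$, and then to Lemma~\ref{signed-power-lemma} with $r=p-1$. Write $a=(\plap\ph(x))^{\frac{1}{p-1}}$. For each $b\in[m_b,M_b]$, Theorem~\ref{thm.asexp} gives
\[
\A^p_{\epsilon\sqrt{2(p-1)b}}[\ph](x)=\ph(x)+2(p-1)b\,\epsilon^2 a+2(p-1)b\,\epsilon^2E_1\big(\epsilon\sqrt{2(p-1)b},\ph,x\big).
\]
Hypotheses \eqref{HT} and \eqref{HT-b} (specifically $\epsilon m_p^{-\alpha_p/2}M_b^{1/2}\to0$ and $\epsilon^2M_p^{1-\alpha_p}M_b\to0$) ensure that all balls involved stay inside $B_R(x)$, as in Lemma~\ref{well-def-B}. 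They also ensure that $\delta\le \epsilon\sqrt{2(p-1)M_b}\to0$ uniformly in $b$.

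Substituting into \eqref{operator-B}, the constant $\ph(x)$ factors out of both the sup and the inf, and dividing the remainder by $\epsilon^2$ yields
\[
\frac{\B^p_\epsilon[\ph](x)-\ph(x)}{\epsilon^2}=\sup_{b\in[m_b,M_b]}\{(p-1)ab-(p-2)b^{\frac{p-1}{p-2}}+R_b\}+\inf_{b\in[m_b,M_b]}\{(p-1)ab+(p-2)b^{\frac{p-1}{p-2}}+R_b\},
\]
where $R_b=(p-1)b\,E_1(\delta,\ph,x)$. Since $|\sup(F+R)-\sup F|\le\sup|R|$ (and likewise for the inf), the perturbation contributes at most $2(p-1)M_b\sup_{b}|E_1(\delta,\ph,x)|$. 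By Lemma~\ref{signed-power-lemma} the unperturbed sup plus inf equals $(a)^{p-1}=\plap\ph(x)$ up to the error $E(a,p-1,m_b,M_b)$. Because $m_b\to0$ and $M_b\to\infty$, with $a$ fixed, the bound \eqref{signed-power-error} tends to $0$: the positive part vanishes once $M_b>|a|^{p-2}$.

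The main obstacle is controlling $M_b\sup_b|E_1(\delta,\ph,x)|$. Part (a) of Theorem~\ref{thm.asexp} only gives $E_1(\delta)=o(1)$ as $\delta\to0$, which is uniform in $b$ since $\delta\le\epsilon\sqrt{2(p-1)M_b}\to0$. However, the factor $M_b\to\infty$ multiplies this error. I would handle this by not passing through $E_1$ blindly. Observe that the sup/inf in $b$ are essentially attained near the optimizer $b^*\approx|a|^{p-2}$, which is a fixed number. So I would first show that, for $\epsilon$ small, restricting both extremizations to a compact interval $[m_b,K]$ with $K$ depending on $|a|$ changes nothing. This holds because for $b\ge K$ the penalty $(p-2)b^{\frac{p-1}{p-2}}$ dominates $(p-1)b(|a|+\sup|E_1|)$, since $\sup|E_1|$ is bounded for $\epsilon$ small. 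On $[m_b,K]$ the remainder is then at most $2(p-1)K\sup|E_1|=o(1)$.

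For the inf part, large $b$ is likewise never preferred when the coefficient $(p-1)(a+E_1)$ is bounded, because the penalty grows superlinearly. The sup part is symmetric. Combining these estimates gives $\B^p_\epsilon[\ph](x)=\ph(x)+\epsilon^2\plap\ph(x)+o(\epsilon^2)$. One technical point to verify is that the bound $|E_1|\le C$ is uniform over the family of radii $\delta\in(0,\epsilon\sqrt{2(p-1)K}]$. This follows from the statement $E_1(\delta)=o_\delta(1)$ itself, since $\delta\mapsto E_1(\delta,\ph,x)$ tends to $0$ and is therefore bounded near $0$.
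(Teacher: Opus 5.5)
Your proof is correct, but it neutralizes the factor $M_b$ differently from the paper. Both arguments start from the same identity, which writes $(\B_\epsilon^p[\ph](x)-\ph(x))/\epsilon^2$ as the sup plus the inf over $b\in[m_b,M_b]$ of $(p-1)b\,(a+E_1)\mp(p-2)b^{\frac{p-1}{p-2}}$. Both also use that $E_1(\epsilon\sqrt{2(p-1)b},\ph,x)$ is small uniformly in $b$, because $\epsilon\sqrt{M_b}\to0$.

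The paper never treats $E_1$ as an additive perturbation. It fixes $\delta>0$ and takes $\epsilon$ so small that $|E_1|\le\delta$ for every admissible $b$. Since $b>0$, it can then replace the coefficient $a+E_1$ by $a+\delta$ in both the sup and the inf. Lemma~\ref{signed-power-lemma}, applied to the fixed number $a+\delta$, gives the upper bound $(a+\delta)^{p-1}+o_\epsilon(1)$. The lower bound follows by applying this to $-\ph$, and finally $\delta\to0$. This monotone sandwich needs no information about where the extremizers lie.

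Your route instead localizes in $b$. Take $K\ge|a|^{p-2}$ large, depending only on $|a|$ and $p$. For $b\ge K$ the sup-objective is $\le-1$ and the inf-objective is $\ge1$, whereas both are $o(1)$ at $b=m_b$. So the tail can be discarded, and the perturbation then costs only $(p-1)K\sup_b|E_1|$. This gives the two-sided estimate in one pass, and the error carries the fixed factor $K$ in place of $M_b$ (compare the term $2(p-1)M_b|E_1|$ in the proof of Theorem~\ref{exp-c3}). The price is the extra tail bookkeeping.

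One small fix: the bound $|E_1|\le C$ used in the tail argument must hold for all $b\in[m_b,M_b]$. That means radii up to $\epsilon\sqrt{2(p-1)M_b}$, not only up to $\epsilon\sqrt{2(p-1)K}$. This is already covered by your observation that $\epsilon\sqrt{M_b}\to0$.
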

	\begin{proof}
		
		A direct computation, using the result established in Theorem~\ref{thm.asexp} together with formula~\eqref{operator-B}, yields
		\begin{equation}\label{expression}
			\begin{split}
				\frac{\B_\epsilon^p[\ph](x)-\ph(x)}{\epsilon^2}
				&=
				\sup_{b\in[m_b,M_b]}
				\left\{
				(p-1)b
				\frac{\A_{\epsilon\sqrt{2(p-1)b}}^p[\ph](x)-\ph(x)}
				{\epsilon^2 2(p-1)b}
				-
				(p-2)b^{\frac{p-1}{p-2}}
				\right\}\\
				&\quad+
				\inf_{b\in[m_b,M_b]}
				\left\{
				(p-1)b
				\frac{\A_{\epsilon\sqrt{2(p-1)b}}^p[\ph](x)-\ph(x)}
				{\epsilon^2 2(p-1)b}
				+
				(p-2)b^{\frac{p-1}{p-2}}
				\right\}\\
				&=
				\sup_{b\in[m_b,M_b]}
				\left\{
				(p-1)b
				\left(
				\left(\plap\ph(x)\right)^{\frac{1}{p-1}}
				+
				o_\epsilon(1)
				\right)
				-
				(p-2)b^{\frac{p-1}{p-2}}
				\right\}\\
				&\quad+
				\inf_{b\in[m_b,M_b]}
				\left\{
				(p-1)b
				\left(
				\left(\plap\ph(x)\right)^{\frac{1}{p-1}}
				+
				o_\epsilon(1)
				\right)
				+
				(p-2)b^{\frac{p-1}{p-2}}
				\right\}.
			\end{split}
		\end{equation}
		Observe that we have used the fact that
		$\epsilon^2 2(p-1)b\to 0$ as $\epsilon\to 0^+$.
		We aim to obtain upper and lower bounds for the expression in~\eqref{expression}.
		Let us fix $\delta>0$. Since~\eqref{HT} holds, there exists $\beps>0$ such that, for all $\epsilon\in(0,\beps)$,
		\[
		-\delta\leq o_\epsilon(1)\leq\delta.
		\]
		
		We shall show that
		$$
		\frac{\B_\epsilon^p[\ph](x)-\ph(x)}{\epsilon^2}
		=
		\plap \ph(x)+o_\epsilon(1).
		$$
		To this end, we bound the quantity above from both below and above by $\plap \ph(x)+o_\epsilon(1)+o_\delta(1)$.
		
		\noindent\textit{Upper bound.}
		Starting from~\eqref{expression}, we obtain
		\begin{align*}
			\frac{\B_\epsilon^p[\ph](x)-\ph(x)}{\epsilon^2}
			&\leq
			\sup_{b\in[m_b,M_b]}
			\left\{
			(p-1)b
			\left(
			\left(\plap\ph(x)\right)^{\frac{1}{p-1}}
			+\delta
			\right)
			-
			(p-2)b^{\frac{p-1}{p-2}}
			\right\}\\
			&\quad+
			\inf_{b\in[m_b,M_b]}
			\left\{
			(p-1)b
			\left(
			\left(\plap\ph(x)\right)^{\frac{1}{p-1}}
			+\delta
			\right)
			+
			(p-2)b^{\frac{p-1}{p-2}}
			\right\}.
		\end{align*}
		
		Assume first that $\plap\ph(x)\geq0$. Then the infimum above is attained at $b=m_b(\epsilon)$ and is therefore equal to
		\[
		(p-1)m_b(\epsilon)
		\left(
		\left(\plap\ph(x)\right)^{\frac{1}{p-1}}+\delta
		\right)
		+
		(p-2)m_b(\epsilon)^{\frac{p-1}{p-2}}
		=
		o_\epsilon(1),
		\]
		see~\eqref{signed-power-error}, with $r=p-1$ and
		$a=\left(\plap\ph(x)\right)^{\frac{1}{p-1}}+\delta$.
		Therefore, by Lemma~\ref{signed-power-lemma},
		\begin{align*}
			\frac{\B_\epsilon^p[\ph](x)-\ph(x)}{\epsilon^2}
			&\leq
			\sup_{b\in[m_b,M_b]}
			\left\{
			(p-1)b
			\left(
			\left(\plap\ph(x)\right)^{\frac{1}{p-1}}
			+\delta
			\right)
			-
			(p-2)b^{\frac{p-1}{p-2}}
			\right\}
			+
			o_\epsilon(1)\\
			&=
			\left(
			\left(\plap\ph(x)\right)^{\frac{1}{p-1}}
			+\delta
			\right)^{p-1}
			+
			o_\epsilon(1)\\
			&\leq
			\plap\ph(x)
			+
			o_\delta(1)
			+
			o_\epsilon(1).
		\end{align*}
		
		Now assume that $\plap\ph(x)<0$. In this case, the supremum in~\eqref{expression} is equal to $o_\epsilon(1)$; see the precise estimate in~\eqref{signed-power-error}. Hence, by Lemma~\ref{signed-power-lemma},
		\begin{align*}
			\frac{\B_\epsilon^p[\ph](x)-\ph(x)}{\epsilon^2}
			&\leq
			\inf_{b\in[m_b,M_b]}
			\left\{
			(p-1)b
			\left(
			\left(\plap\ph(x)\right)^{\frac{1}{p-1}}
			+\delta
			\right)
			+
			(p-2)b^{\frac{p-1}{p-2}}
			\right\}
			+
			o_\epsilon(1)\\
			&=
			\left(
			-\left(-\plap\ph(x)\right)^{\frac{1}{p-1}}
			+\delta
			\right)^{p-1}
			+
			o_\epsilon(1)\\
			&\leq
			\plap\ph(x)
			+
			o_\delta(1)
			+
			o_\epsilon(1).
		\end{align*}
		
		Thus, regardless of the sign of the $p$-Laplacian, we conclude that
		\begin{equation}\label{upper-b}
			\frac{\B_\epsilon^p[\ph](x)-\ph(x)}{\epsilon^2}
			\leq
			\plap\ph(x)
			+
			o_\delta(1)
			+
			o_\epsilon(1).
		\end{equation}
		
		\noindent\textit{Lower Bound: } If we apply the upper bound to $-\ph$, it is standard to check that 
		\begin{equation}\label{lower-b}
			\frac{\B_\epsilon^p[\ph](x)-\ph(x)}{\epsilon^2}\geq \plap \ph(x)+o_\delta(1)+o_\epsilon(1).
		\end{equation}
		
		Then, by the arbitrariness of $\delta$, we conclude from the estimates \eqref{upper-b} and \eqref{lower-b}  that 
		$$\frac{\B_\epsilon^p [\ph](x)-\ph(x)}{\epsilon^2}= \plap\ph(x) + o_\epsilon(1),$$
		which completes the proof.
	\end{proof}
	
	This latter asymptotic expansion allows us to derive analogous expansions for several equations involving the $p$-Laplacian, in this paper we are focusing in \eqref{eq_p-q} and~\eqref{eq-parabolic}.
	
	\begin{cor}\label{exp-c2-p_q}
		Let $d\geq 2$, $p>q>2$, $R>0$, and $x=(x',x'')\in\Rd$, with $x'=(x_1,\cdots,x_n)$ and $x''=(x_{n+1},\cdots,x_d)$. Let $\ph\in C^2(B_R(x))$ and assume that~\eqref{HT} and~\eqref{HT-b} hold for both $p$ and $q$. Then, there exists $0<\beps<R$ such that, for all $\epsilon\in(0,\beps)$,
		\begin{equation}
			\C_\epsilon[\ph](x',x'')
			=
			\ph(x',x'')
			+
			\frac{\epsilon^2}{2}
			\left(
			\plap^{x'}\ph(x',x'')
			+
			\qlap^{x''}\ph(x',x'')
			\right)
			+
			o(\epsilon^2).
		\end{equation}
	\end{cor}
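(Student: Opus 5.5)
The plan is to reduce the statement to Theorem~\ref{exp-c2}, applied separately in the two groups of variables, and then average. The key observation is that, by the Notation, $\B_\epsilon^p[\ph](x',x'')$ is just the operator \eqref{operator-B} in dimension $n$ acting on the slice function
\[
\psi_1(y') \coloneqq \ph(y',x''), \qquad y'\in B^{n}_{R}(x')\subset\RR^n,
\]
with $x''$ frozen. Likewise, $\B_\epsilon^q[\ph](x',x'')$ is \eqref{operator-B} in dimension $d-n$, with exponent $q$, acting on
\[
\psi_2(y'')\coloneqq\ph(x',y''), \qquad y''\in B^{d-n}_{R}(x'').
\]
All balls, sups, infs and averages appearing in these operators are balls in $\RR^n$ (respectively $\RR^{d-n}$) centered at $x'$ (respectively $x''$). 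For small $\epsilon$ they stay inside the slices of $B_R(x)$, exactly as in Lemma~\ref{well-def-B}.

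First, I check regularity. Since $\ph\in C^2(B_R(x))$, we have $\psi_1\in C^2(B^n_R(x'))$ and $\psi_2\in C^2(B^{d-n}_R(x''))$, because restricting a $C^2$ function to an affine slice preserves $C^2$ regularity. Moreover, by definition of the partial operators,
\[
\plap\psi_1(x')=\plap^{x'}\ph(x',x''), \qquad \qlap\psi_2(x'')=\qlap^{x''}\ph(x',x'').
\]
The hypotheses \eqref{HT} and \eqref{HT-b} hold for $p$ and for $q$. Note that the constants $\alpha_p,\beta_p$ and $\gamma$ inside $M_\rho$ now use the dimensions $n$ and $d-n$; since \eqref{HT} involves only $\alpha_p$, this changes nothing.

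Next, I apply Theorem~\ref{exp-c2} twice: with $(d,p,\psi_1)$ replaced by $(n,p,\psi_1)$, and with $(d-n,q,\psi_2)$. This gives radii $\beps_1,\beps_2$ such that
\[
\B_\epsilon^p[\ph](x',x'')=\ph(x)+\epsilon^2\plap^{x'}\ph(x)+o(\epsilon^2),
\qquad
\B_\epsilon^q[\ph](x',x'')=\ph(x)+\epsilon^2\qlap^{x''}\ph(x)+o(\epsilon^2).
\]
Taking $\beps=\min\{\beps_1,\beps_2\}$ and averaging with weights $\tfrac12,\tfrac12$ as in \eqref{operator-C}, the two $\ph(x)$ terms add to $\ph(x)$, the principal terms give $\frac{\epsilon^2}{2}(\plap^{x'}\ph+\qlap^{x''}\ph)(x)$, and the sum of two $o(\epsilon^2)$ terms is again $o(\epsilon^2)$.

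There is no real obstacle; the only point to verify carefully is the slicing step. The operator with frozen variables must coincide literally with the lower-dimensional operator \eqref{operator-B} applied to $\psi_1$ (respectively $\psi_2$). This includes the dimension-dependent constants in $M_\rho$ and the normalization of the averages over $n$-dimensional balls. Once this is checked, Theorem~\ref{exp-c2}, whose proof used only the pointwise expansion of Theorem~\ref{thm.asexp} in an arbitrary dimension, applies verbatim.
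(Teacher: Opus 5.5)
Your proposal is correct and takes the same route as the paper. The paper's proof simply applies Theorem~\ref{exp-c2} to $\B_\epsilon^p$ and $\B_\epsilon^q$ and averages the two expansions with weights $\tfrac12$; your restriction of $\ph$ to the $n$- and $(d-n)$-dimensional slices, with Theorem~\ref{exp-c2} applied in those dimensions, makes explicit what the paper leaves implicit. One small inaccuracy: $\alpha_p=(p-2)/(p-1)$ neither appears inside $M_\rho$ nor depends on the dimension, though, as you note, this does not affect \eqref{HT}.
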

	
	\begin{proof}
		This result follows directly from Theorem~\ref{exp-c2}, since
		\begin{align*}
			\frac{\C_\epsilon[\ph](x',x'')-\ph(x',x'')}{\epsilon^2}
			&=
			\frac{
				\B_\epsilon^p[\ph](x',x'')-\ph(x',x'')
				+
				\B_\epsilon^q[\ph](x',x'')-\ph(x',x'')
			}
			{2\epsilon^2}\\
			&=
			\frac{1}{2}
			\left(
			\plap^{x'}\ph(x',x'')
			+
			\qlap^{x''}\ph(x',x'')
			\right)
			+
			o_\epsilon(1).
		\end{align*}
	\end{proof}
	\begin{cor}\label{exp-c2-parabolic}
		Let $d\in \NN$, $p>2$, $x\in \Rd$, $t>0$ $R>0$ and let $\ph \in C^2(B_R(x)\times (t-R,t+R))$. Assume that \eqref{HT} and \eqref{HT-b} holds. Then, there exists $0<\beps<R$ such that for all $\epsilon\in(0,\beps)$ we have
		\begin{equation}
			\B_\epsilon^p[\ph](x,t)=\ph(x,t+\epsilon^2)+\epsilon^2 \left(\plap \ph(x,t)-\partial_t \ph(x,t)\right)+o(\epsilon^2).
		\end{equation}
	\end{cor}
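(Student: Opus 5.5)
The plan is to reduce Corollary~\ref{exp-c2-parabolic} to Theorem~\ref{exp-c2} applied at the frozen time $t$, and then to handle the time shift by a first-order Taylor expansion in $t$.

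In the parabolic setting the operator $\B_\epsilon^p[\ph](x,t)$ acts only on the spatial variable, with $t$ kept fixed, as in the Notation. So I would set $\psi(y):=\ph(y,t)$ for $y\in B_R(x)$. Since $\ph\in C^2(B_R(x)\times(t-R,t+R))$, we have $\psi\in C^2(B_R(x))$, and by definition $\B_\epsilon^p[\ph](x,t)=\B_\epsilon^p[\psi](x)$. Lemma~\ref{well-def-B} guarantees this is well defined for $\epsilon$ small, since every ball involved has radius tending to zero under \eqref{HT} and \eqref{HT-b}. Theorem~\ref{exp-c2} then gives
\begin{equation*}
\B_\epsilon^p[\ph](x,t)=\ph(x,t)+\epsilon^2\plap\ph(x,t)+o(\epsilon^2).
\end{equation*}

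Next, I choose $\beps$ small enough that, in addition, $\beps^2<R$. For $\epsilon<\beps$ the point $(x,t+\epsilon^2)$ then lies in the domain. Because $\ph$ is $C^1$ in $t$, Taylor's formula gives
\begin{equation*}
\ph(x,t+\epsilon^2)=\ph(x,t)+\epsilon^2\partial_t\ph(x,t)+o(\epsilon^2).
\end{equation*}
More precisely, the mean value theorem gives $\ph(x,t+\epsilon^2)-\ph(x,t)=\epsilon^2\partial_t\ph(x,t+\theta\epsilon^2)$ for some $\theta\in(0,1)$, and continuity of $\partial_t\ph$ makes the remainder $o(\epsilon^2)$. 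If $\ph$ is $C^2$ in time, the remainder is bounded by $\frac{\epsilon^4}{2}\|\partial_{tt}\ph\|_{\LL^\infty}$, which is consistent with Theorem~\ref{main-exp-parab}(b).

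Finally, I substitute $\ph(x,t)=\ph(x,t+\epsilon^2)-\epsilon^2\partial_t\ph(x,t)+o(\epsilon^2)$ into the spatial expansion. The two $o(\epsilon^2)$ remainders add, which yields the claimed identity
\begin{equation*}
\B_\epsilon^p[\ph](x,t)=\ph(x,t+\epsilon^2)+\epsilon^2\left(\plap\ph(x,t)-\partial_t\ph(x,t)\right)+o(\epsilon^2).
\end{equation*}

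There is no real obstacle in this argument. The only point that needs care is making sure a single $\beps$ works for both requirements: the balls used by $\B_\epsilon^p$ must stay inside $B_R(x)$, and $t+\epsilon^2$ must stay inside $(t-R,t+R)$. Taking the minimum of the two thresholds handles this.
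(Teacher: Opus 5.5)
Your proposal is correct and matches the paper's proof: the paper also freezes $t$, applies Theorem~\ref{exp-c2} to the spatial slice, and absorbs the time shift through $\ph(x,t+\epsilon^2)-\ph(x,t)=\epsilon^2\partial_t\ph(x,t)+o(\epsilon^2)$. You additionally choose $\beps$ so that both the balls in $\B_\epsilon^p$ and the time $t+\epsilon^2$ stay inside the domain, a detail the paper leaves implicit.
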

	\begin{proof}
		It also follows from Theorem~\ref{exp-c2}, since
		\begin{align*}
			\frac{\B_\epsilon^p[\ph](x,t)-\ph(x,t+\epsilon^2)}{\epsilon^2}&=\frac{\B_\epsilon^p[\ph](x,t)-\ph(x,t)-\left(\ph(x,t+\epsilon^2)-\ph(x,t)\right)}{\epsilon^2}\\
			&=\plap \ph(x,t)-\partial_t \ph(x,t)+o_\epsilon(1).
		\end{align*}
	\end{proof}
	
	Under stronger regularity assumptions, namely $\ph\in C^3$, one can obtain a more refined version of the preceding asymptotic expansions. In particular, the error terms in these expansions can be explicitly estimated in terms of $\epsilon$.
	
	\begin{thm}\label{exp-c3}
		Let $d\in\NN$, $p>2$, $x\in\Rd$, $R>0$, and let $\ph\in C^3(B_R(x))$. Assume that~\eqref{m-M} and~\eqref{m-M-b} hold. Then, there exists $0<\beps<R$ such that, for all $\epsilon\in(0,\beps)$,
		\begin{equation}
			\B_\epsilon^p[\ph](x)
			=
			\ph(x)
			+
			\epsilon^2\plap\ph(x)
			+
			\epsilon^2 E_p(\epsilon,\ph,x).
		\end{equation}
		Moreover, if $k_4$, $k_5$, $k_6$ and $k_7$ are positive constants depending only on $d$ and $p$, then the following estimates hold for the error term:
		\begin{enumerate}[label=(\alph*)]
			\item If $\gr\ph(x)\neq0$,
			\begin{equation}\label{error-B-grad}
				\begin{split}
					&\left|E_p(\epsilon,\ph,x)\right|
					\leq k_4 \Bigg(
					\epsilon^{\frac{2(p-1)}{p-2}}
					+\epsilon^2
					\|D^2\ph\|_{\LL^\infty(B_{\beps}(x))}^{\frac{1}{p-1}}
					\|\gr \ph\|_{\LL^\infty(B_{\beps}(x))}^{\frac{p-2}{p-1}}
					\\
					&\quad
					+\|D^2\ph\|_{\LL^\infty(B_{\beps}(x))}^{\frac{1}{p-1}}
					\|\gr\ph\|_{\LL^\infty(B_{\beps}(x))}^{\frac{p-2}{p-1}} 
					\Bigg(
					\|D^2\ph\|_{\LL^\infty(B_{\beps}(x))}^{\frac{p-2}{p-1}}
					\|\gr\ph\|_{\LL^\infty(B_{\beps}(x))}^{\frac{(p-2)^2}{p-1}}
					-\epsilon^{-\frac{1}{2}\min\left[
						\frac{p-2}{p-1},
						\, \frac{2}{3(p-1)}
						\right]}
					\Bigg)_+
					\Bigg)
					\\
					&\quad
					+ k_5
					\epsilon^{1-\frac{2}{p}}
					\|D^2\ph\|_{\LL^\infty(B_{\beps}(x))}
					+ k_6
					\epsilon^{\frac{1}{3p-4}}
					\Bigg(
					\|\gr\ph\|_{\LL^\infty(B_{\beps}(x))}
					+\|D^3\ph\|_{\LL^\infty(B_{\beps}(x))}
					+\frac{\left|D^2\ph(x)\right|^2}
					{\left|\gr \ph(x)\right|}
					\Bigg).
				\end{split}
			\end{equation}
			
			\item If we allow the gradient to be zero or not we have,
			\begin{equation}\label{error-B-gr-not}
				\begin{split}
					&\left|E_p(\epsilon,\ph,x)\right|
					\leq k_4 \Bigg(
					\epsilon^{\frac{2(p-1)}{p-2}}
					+\epsilon^2
					\|D^2\ph\|_{\LL^\infty(B_{\beps}(x))}^{\frac{1}{p-1}}
					\|\gr \ph\|_{\LL^\infty(B_{\beps}(x))}^{\frac{p-2}{p-1}}
					\\
					&\quad
					+\|D^2\ph\|_{\LL^\infty(B_{\beps}(x))}^{\frac{1}{p-1}}
					\|\gr\ph\|_{\LL^\infty(B_{\beps}(x))}^{\frac{p-2}{p-1}} 
					\Bigg(
					\|D^2\ph\|_{\LL^\infty(B_{\beps}(x))}^{\frac{p-2}{p-1}}
					\|\gr\ph\|_{\LL^\infty(B_{\beps}(x))}^{\frac{(p-2)^2}{p-1}}
					-\epsilon^{-\frac{1}{2}\min\left[
						\frac{p-2}{p-1},
						\, \frac{2}{3(p-1)}
						\right]}
					\Bigg)_+
					\Bigg)
					\\
					&\quad
					+ k_7
					\Bigg(\epsilon^{-\frac{2}{3}}
					\|D^2\ph\|_{\LL^\infty(B_{\beps}(x))}^{\frac{1}{p-1}}\left|\gr \ph(x)\right|^{\frac{p-2}{p-1}}+\epsilon^{1-\frac{2}{p}}\|D^2\ph\|_{\LL^\infty(B_{\beps}(x))}+\epsilon^{\frac{1}{3p-4}}\left|\gr \ph(x)\right|
					\Bigg).
				\end{split}
			\end{equation}
		\end{enumerate}
	\end{thm}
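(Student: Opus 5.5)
We will follow the scheme of the proof of Theorem~\ref{exp-c2}, but make every $o_\epsilon(1)$ quantitative. Write $a:=(\plap\ph(x))^{\frac{1}{p-1}}$ and, for each $b\in[m_b,M_b]$, set $\rho_b:=\epsilon\sqrt{2(p-1)b}$. Theorem~\ref{thm.asexp} applied with step $\rho_b$ gives
\[
\frac{\A^p_{\rho_b}[\ph](x)-\ph(x)}{2\epsilon^2}=(p-1)b\,\big(a+E_1(\rho_b,\ph,x)\big).
\]
Substituting into \eqref{operator-B} yields the exact analogue of \eqref{expression}, with $o_\epsilon(1)$ replaced by $E_1(\rho_b,\ph,x)$. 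The first step is therefore a uniform bound
\[
\eta(\epsilon):=\sup_{b\in[m_b,M_b]}|E_1(\rho_b,\ph,x)|.
\]
Since $\rho_b\le \epsilon\sqrt{2(p-1)M_b}$, we can insert this into \eqref{E1} in case (a) and into \eqref{E1-gr-not} in case (b). Because $\rho_b^{2-4/p}\lesssim \epsilon^{2-4/p}M_b^{1-2/p}$ and $\rho_b^{2/(3p-4)}\lesssim\epsilon^{2/(3p-4)}M_b^{1/(3p-4)}$, the choice \eqref{m-M-b} of $M_b$ (whose exponent involves $\min[\frac{p-2}{p-1},\frac{2}{3(p-1)}]$) is exactly what turns these into $\epsilon^{1-2/p}$ and $\epsilon^{1/(3p-4)}$, up to constants. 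We also need that the larger ball $B_{\beps}(x)$ contains all the balls involved, which is guaranteed by Lemma~\ref{well-def-B}.

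The second step is to sandwich the sup and inf. Since $b\mapsto (p-1)b(a\pm\eta)\mp(p-2)b^{\frac{p-1}{p-2}}$ is monotone in the perturbation, the sup and inf are bounded above by the same expressions with $a$ replaced by $a+\eta$, and bounded below by those with $a-\eta$. We then apply Lemma~\ref{signed-power-lemma} with $r=p-1$, $m=m_b=\epsilon^2$ and $M=M_b$, obtaining
\[
(a-\eta)^{p-1}-E\;\le\;\frac{\B^p_\epsilon[\ph](x)-\ph(x)}{\epsilon^2}\;\le\;(a+\eta)^{p-1}+E .
\]
The error \eqref{signed-power-error} contributes three terms. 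The first, $m_b^{\frac{p-1}{p-2}}=\epsilon^{\frac{2(p-1)}{p-2}}$, gives the first term of $k_4$. The second, $m_b|a\pm\eta|\lesssim \epsilon^2|a|+\epsilon^2\eta$, gives the $\epsilon^2$ term, once we bound $|a|\le C\|D^2\ph\|^{\frac{1}{p-1}}\|\gr\ph\|^{\frac{p-2}{p-1}}$ using $|\plap\ph|\le C|\gr\ph|^{p-2}|D^2\ph|$. The third, $|a|(|a|^{p-2}-M_b)_+$, gives the positive-part term, after the same bound on $|a|$ and $|a|^{p-2}$.

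The third step is to linearize $(a\pm\eta)^{p-1}-a^{p-1}$. Since $p>2$, the mean value theorem gives
\[
\big|(a\pm\eta)^{p-1}-a^{p-1}\big|\le (p-1)\big(|a|+\eta\big)^{p-2}\eta .
\]
Here $\eta\to0$ and $|a|$ is controlled by the $C^2$ norms, so this is bounded by a constant multiple of $\eta$, plus harmless terms absorbed into $k_4$. Collecting everything, $\eta$ produces the $k_5,k_6$ terms in case (a) and the $k_7$ terms in case (b).

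The main obstacle is case (b). There the term $\|D^2\ph\|^{\frac{1}{p-1}}|\gr\ph(x)|^{\frac{p-2}{p-1}}$ in \eqref{E1-gr-not} carries no power of $\epsilon$, and it gets multiplied by the factor $(p-1)b\le CM_b$ coming from the extremal $b$. We would try to confine the optimizing $b$ to the region $b\lesssim |a|^{p-2}+o(1)$, where the prefactor is bounded. If that fails, we would accept the crude bound $M_b\le \epsilon^{-2/3}$, which holds since $\frac12\min[\cdot]\le\frac{1}{3(p-1)}<\frac23$. This produces exactly the $\epsilon^{-2/3}$ factor appearing in the statement, so the crude route suffices, and we would present it that way.
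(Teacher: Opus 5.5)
Your Steps 1 and 2 are sound, but Step 3 does not give the estimate in the stated form, and your case (b) argument does not match your own decomposition.

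\emph{The multiplier of $\eta$ is not a $(d,p)$-constant.} You apply Lemma~\ref{signed-power-lemma} at the perturbed base $a\pm\eta$ and then linearize the unrestricted power. So the factor in front of $\eta$ is $(p-1)(|a|+\eta)^{p-2}$. This depends on $\ph$, through $|\plap\ph(x)|^{\frac{p-2}{p-1}}$, and on $\eta$ itself. It cannot be absorbed into constants $k_5,k_6,k_7$ that depend only on $d$ and $p$, which is what the statement asserts.

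Applying the Lemma at $a\pm\eta$ also puts $\eta$ inside the positive part $(|a\pm\eta|^{p-2}-M_b)_+$. You replaced this by $(|a|^{p-2}-M_b)_+$ without justification.

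\emph{Case (b).} Your claim that $\eta\to0$ is false there: \eqref{E1-gr-not} contains the $\epsilon$-independent term $k_3\|D^2\ph\|^{\frac{1}{p-1}}|\gr\ph(x)|^{\frac{p-2}{p-1}}$. Moreover, in your chain no factor $M_b$ ever multiplies $\eta$. So the $\epsilon^{-2/3}$ in \eqref{error-B-gr-not} cannot come out of it. Your last paragraph tacitly switches to a different mechanism (``multiplied by $(p-1)b\le CM_b$'') that your Steps 2--3 never use.

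\emph{Step 1 exponents.} The same confusion appears here. Set $\mu=\min[\frac{p-2}{p-1},\frac{2}{3(p-1)}]$. Without an $M_b$ factor, your bounds give $\epsilon^{(2-\frac4p)(1-\frac{\mu}{4})}$ and $\epsilon^{\frac{2}{3p-4}(1-\frac{\mu}{4})}$. These are strictly smaller than $\epsilon^{1-\frac2p}$ and $\epsilon^{\frac{1}{3p-4}}$. The exponents in the statement are exactly those of $M_b\,|E_1(\epsilon\sqrt{2(p-1)M_b},\ph,x)|$, not of $\eta$ alone.

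\emph{The missing step.} Compare the restricted sup and inf directly, before invoking the Lemma; this is what the paper does. For each $b\in[m_b,M_b]$ the perturbation is $(p-1)b\,E_1(\rho_b,\ph,x)$, whose absolute value is at most $(p-1)M_b\eta$. Hence the sup and the inf each move by at most $(p-1)M_b\eta$ relative to their unperturbed versions. Lemma~\ref{signed-power-lemma} is then applied at $a$ itself, not at $a\pm\eta$.

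This gives the error of the Lemma at $a$ plus $2(p-1)M_b\,\eta$, with constants depending only on $d,p$. In case (a), $M_b\eta$ produces exactly $\epsilon^{1-2/p}$ and $\epsilon^{1/(3p-4)}$. In case (b), the factor $M_b\le\epsilon^{-2/3}$ multiplies the $\epsilon$-free term.

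Your underlying idea, that the effective $b$ sits near $|a|^{p-2}$, is not wrong. The slope in $a$ of the \emph{restricted} expression is $(p-1)$ times the optimizer, which is at most $(p-1)M_b$. So if you linearize the restricted expression rather than $(a)^{p-1}$, the cap at $M_b$ recovers the stated bound. Your uncapped version is in fact sharper for a fixed $\ph$ (no $\epsilon^{-2/3}$ blow-up in (b)), but only with $\ph$-dependent constants.
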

	\begin{proof}
		Recall that the error term arising from the operator $\A_\epsilon$ is denoted by $E_1(\epsilon,\ph,x)$; see Theorem~\ref{thm.asexp}. Repeating the computations from Theorem~\ref{exp-c2}, we obtain
		\begin{align*}
			\frac{\B_\epsilon^p[\ph](x)-\ph(x)}{\epsilon^2}&= \sup_{b\in[m_b,M_b]}\left\{(p-1)b \left(\left(\plap \ph(x)\right)^{\frac{1}{p-1}}+E_1(\epsilon\sqrt{2(p-1)b},\ph,x)\right)-(p-2)b^{\frac{p-1}{p-2}}\right\}\\
			&\quad +\inf_{b\in[m_b,M_b]}\left\{(p-1)b \left(\left(\plap \ph(x)\right)^{\frac{1}{p-1}}+E_1(\epsilon\sqrt{2(p-1)b},\ph,x)\right)+(p-2)b^{\frac{p-1}{p-2}}\right\}\\
			&= \sup_{b>0}\left\{(p-1)b \left(\plap \ph(x)\right)^{\frac{1}{p-1}}-(p-2)b^{\frac{p-1}{p-2}}\right\}\\
			&\quad +\inf_{b>0}\left\{(p-1)b \left(\plap \ph(x)\right)^{\frac{1}{p-1}}+(p-2)b^{\frac{p-1}{p-2}}\right\}\\
			&\quad E_p(\epsilon,\ph,x),
		\end{align*}
		where
		\begin{align*}
			\left|E(\epsilon,\ph,x)\right|&\leq 
			2(p-1)\left(m_b(\epsilon)^{\frac{p-1}{p-2}} + m_b(\epsilon) \left|\plap \ph(x)\right|^{\frac{1}{p-1}}+\left|\plap \ph(x)\right|^{\frac{1}{p-1}}\left(\left|\plap \ph(x)\right|^{\frac{p-2}{p-1}}-M_b(\epsilon)\right)_+\right)\\
			&\quad +2 (p-1) M_b(\epsilon)\left|E_1(\epsilon\sqrt{2(p-1)M_b(\epsilon)},\ph,x)\right|.
		\end{align*}
		It remains to determine the powers of $\epsilon$. At this point, the choice of $M_b(\epsilon)$ plays a crucial role.
		
		In the first term in expression above, we only have to take $m_b(\epsilon)=\epsilon^2$ and $M_b(\epsilon)=\epsilon^{-\frac{1}{2}\min\left[\frac{p-2}{p-1},\, \frac{2}{3(p-1)}\right]}$ in order to obtain the first part of inequalities \eqref{error-B-grad} and \eqref{error-B-gr-not}.
		
		The most important part of this proof is bounding the second term. Here, we use the bound \eqref{E1} (when $\gr \ph(x)\not=0$) or \eqref{E1-gr-not} with the term $\epsilon \sqrt{2(p-1) M_b(\epsilon)}$.
		
		First, we complete the proof when $\gr \ph(x)\not=0$. On the one hand, since
		$$
		\min\left[\frac{p-2}{p-1},\, \frac{2}{3(p-1)}\right]\leq \frac{p-2}{p-1},
		$$
		we can estimate the term involving $k_1 \|D^2\ph\|_{\LL^\infty(B_{\beps}(x))}$ in $E_1$ as follows:
		\begin{equation}\label{ec1}
			\begin{split}
				(2(p-1))^{2-\frac{2}{p}}k_1 &\left(\epsilon^{1-\frac{1}{4}\min\left[\frac{p-2}{p-1},\, \frac{2}{3(p-1)}\right]}\right)^{2-\frac{4}{p}}\epsilon^{-\frac{1}{2}\min\left[\frac{p-2}{p-1},\, \frac{2}{3(p-1)}\right]}\|D^2\ph\|_{\LL^\infty(B_{\beps}(x))}\\
				&\leq k_5(d,p) \epsilon^{\frac{4(p-1)-(p-2)}{4(p-1)}2\frac{p-2}{p}-\frac{p-2}{2(p-1)}}\|D^2\ph\|_{\LL^\infty(B_{\beps}(x))}\\
				&= k_5(d,p) \epsilon^{1-\frac{2}{p}}\|D^2\ph\|_{\LL^\infty(B_{\beps}(x))}.
			\end{split}
		\end{equation}
		
		On the other hand, since
		\[
		\min\left[\frac{p-2}{p-1},\, \frac{2}{3(p-1)}\right]\leq \frac{2}{3(p-1)},
		\]
		we can estimate the term involving $k_2 \left(\|\gr\ph\|+\|D^3\ph\|+\frac{\left|D^2\ph(x)\right|^2}{\left|\gr \ph(x)\right|}\right)$ in $E_1$ as follows:
		
		\begin{equation}\label{ec2}
			\begin{split}
				(2(p-1))^{\frac{3(p-1)}{3p-4}}k_2 &\left(\epsilon^{1-\frac{1}{4}\min\left[\frac{p-2}{p-1},\, \frac{2}{3(p-1)}\right]}\right)^{\frac{2}{3p-4}}\epsilon^{-\frac{1}{2}\min\left[\frac{p-2}{p-1},\, \frac{2}{3(p-1)}\right]}\left(\|\gr\ph\|+\|D^3\ph\|+\frac{\left|D^2\ph(x)\right|^2}{\left|\gr \ph(x)\right|}\right)\\
				&\leq k_6(d,p)\epsilon^{\frac{6(p-1)-1}{6(p-1)}\frac{2}{3p-4}-\frac{1}{3(p-1)}}\left(\|\gr\ph\|+\|D^3\ph\|+\frac{\left|D^2\ph(x)\right|^2}{\left|\gr \ph(x)\right|}\right)\\
				&= k_6(d,p)\epsilon^{\frac{1}{3p-4}}\left(\|\gr\ph\|+\|D^3\ph\|+\frac{\left|D^2\ph(x)\right|^2}{\left|\gr \ph(x)\right|}\right).
			\end{split}
		\end{equation}
		Combining estimates~\eqref{ec1} and~\eqref{ec2}, we conclude~\eqref{error-B-grad}.
		
		For the estimate in the region where the gradient may vanish, we use~\eqref{E1-gr-not}. Observe that the same powers of $\epsilon$ arise in this case. Thus, an analogous computation yields~\eqref{error-B-gr-not}.
	\end{proof}
	
	This last result also implies an asymptotic expansion for $C^3$ functions in the framework of equations \eqref{eq_p-q} and \eqref{eq-parabolic}.
	
	\begin{cor}\label{exp-c3-p_q}
		Let $d\geq 2$, $p>q>2$, $R>0$, $x=(x',x'')\in \Rd$ with $x'=(x_1,\cdots x_n)$ and $x''=(x_{n+1},\cdots x_d)$. Let $\ph \in C^3(B_R(x))$ and assume \eqref{m-M} and \eqref{m-M-b} for both $p$ and $q$. Then, there exists $0<\beps<R$ such that for all $\epsilon\in(0,\beps)$ we have
		\begin{equation}
			\C_\epsilon[\ph](x',x'')=\ph(x',x'')+\frac{\epsilon^2}{2}\left(\plap^{x'}\ph(x',x'')+\qlap^{x''}\ph(x',x'')\right)+\epsilon^2\left(E_p(\epsilon,\ph,x)+E_q(\epsilon,\ph,x)\right).
		\end{equation}
		Here, $E_p(\epsilon,\ph,x)$ and $E_q(\epsilon,\ph,x)$ have the bounds of \eqref{error-B-grad} if $\gr \ph\not=0$ or \eqref{error-B-gr-not} in any case.
	\end{cor}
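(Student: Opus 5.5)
The proof reduces the corollary to Theorem~\ref{exp-c3}, applied once in the $x'$ variables with exponent $p$ and once in the $x''$ variables with exponent $q$, exactly as Corollary~\ref{exp-c2-p_q} was deduced from Theorem~\ref{exp-c2}.

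First, fix $x''$ and consider the slice $\ph(\cdot,x'')$ as a function of $x'\in\RR^n$. It belongs to $C^3$ on the $n$-dimensional ball $\{y' : |y'-x'|<R\}$, since $\{(y',x''): |y'-x'|<R\}\subset B_R(x)$. Applying Theorem~\ref{exp-c3} in dimension $n$ with exponent $p$ gives
\[
\B_\epsilon^p[\ph](x',x'')=\ph(x',x'')+\epsilon^2\plap^{x'}\ph(x',x'')+\epsilon^2E_p(\epsilon,\ph,x).
\]
Here $E_p$ satisfies \eqref{error-B-grad} when $\gr_{x'}\ph(x)\neq0$, and \eqref{error-B-gr-not} in general. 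In both bounds the norms of $\gr\ph$, $D^2\ph$, $D^3\ph$ are taken over the slice. They are therefore dominated by the corresponding full-space norms over $B_{\beps}(x)$, and the partial gradient is dominated by the full one.

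Second, the same argument in the $x''$ variables, with dimension $d-n$ and exponent $q$, gives the analogous expansion for $\B_\epsilon^q[\ph](x',x'')$ with error $E_q$.

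Finally, take $\beps$ to be the minimum of the two thresholds produced by Theorem~\ref{exp-c3}. Then add the two expansions, divide by $2$, and use the definition \eqref{operator-C} of $\C_\epsilon$. This yields the claimed expansion with error $\tfrac12(E_p+E_q)$, which is bounded by $|E_p|+|E_q|$.

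The only point needing care is bookkeeping rather than a real obstacle. One must check that the dimension-dependent constants $k_4,\dots,k_7$ for the two slices depend only on $(n,p)$ and $(d-n,q)$, and hence only on $d$, $p$, $q$. One must also check that \eqref{m-M} and \eqref{m-M-b} hold simultaneously for $p$ and $q$, which is assumed, so that both applications of Theorem~\ref{exp-c3} are legitimate for the same $\epsilon$.
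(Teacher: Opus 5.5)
Your proposal is correct and is the paper's argument: apply Theorem~\ref{exp-c3} slicewise in $x'$ (exponent $p$) and in $x''$ (exponent $q$), then average using \eqref{operator-C}; your observation that this actually gives the error $\tfrac{\epsilon^2}{2}(E_p+E_q)$ matches the paper's own computation. The one slip is in the domination step: replacing slice quantities by full ones is valid only for terms that increase with them, so in \eqref{error-B-grad} the denominator of $|D^2\ph(x)|^2/|\gr\ph(x)|$ must remain the partial gradient $|\gr_{x'}\ph(x)|$ (resp.\ $|\gr_{x''}\ph(x)|$), because $1/|\gr_{x'}\ph(x)|\geq 1/|\gr\ph(x)|$.
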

	\begin{proof}
		This result follows by Theorem \ref{exp-c3},
		\begin{align*}
			\frac{\C_\epsilon[\ph](x',x'')-\ph(x',x'')}{\epsilon^2}&=\frac{\B_\epsilon^p[\ph](x',x'')-\ph(x',x'')+\B_\epsilon^q[\ph](x',x'')-\ph(x',x'')}{2\epsilon^2}\\
			&= \frac{1}{2}\left(\plap^{x'}\ph(x',x'')+\qlap^{x''}\ph(x',x'')+E_p(\epsilon,\ph,x)+E_q(\epsilon,\ph,x)\right).
		\end{align*}
	\end{proof}
	
	\begin{cor}\label{exp-c3-parabolic}
		Let $d\in \NN$, $p>2$, $x\in \Rd$, $t>0$, $R>0$ and let $\ph \in C^3(B_R(x)\times (t-R,t+R))$. Assume that \eqref{m-M} and \eqref{m-M-b} holds. Then, there exists $0<\beps<R$ such that for all $\epsilon\in(0,\beps)$ we have
		\begin{equation}
			\B_\epsilon^p[\ph](x,t)=\ph(x,t+\epsilon^2)+\epsilon^2 \left(\plap \ph(x,t)-\partial_t \ph(x,t)\right)+\epsilon^2 E(\epsilon,\ph,x,t).
		\end{equation}
		Where
		\begin{equation}
			\left|E(\epsilon,\ph,x,t)\right|\leq \left|E_p(\epsilon,\ph,x,t)\right|+ \frac{\epsilon^2}{2}\|\partial_{tt}\ph\|_{\LL^\infty(t-\beps,t+\beps)}.
		\end{equation} 
		Remember that we can estimate $E_p(\epsilon,\ph,x)$ as in \eqref{error-B-grad} if $\gr\ph\not=0$ or \eqref{error-B-gr-not} in any case.
	\end{cor}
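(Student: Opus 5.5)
The plan is to reduce everything to the spatial expansion of Theorem~\ref{exp-c3}, applied at the frozen time $t$, and then add a Taylor expansion in time. The operator $\B_\epsilon^p[\ph](x,t)$ acts only on the spatial variable, with $t$ held fixed, so $\ph(\cdot,t)\in C^3(B_R(x))$. Theorem~\ref{exp-c3} then gives, for $\epsilon$ below some $\beps<R$,
\[
\B_\epsilon^p[\ph](x,t)=\ph(x,t)+\epsilon^2\plap\ph(x,t)+\epsilon^2E_p(\epsilon,\ph,x,t).
\]
Here $E_p$ obeys \eqref{error-B-grad} when $\gr\ph(x,t)\neq0$, and \eqref{error-B-gr-not} in general. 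All spatial norms are computed on $B_{\beps}(x)\times\{t\}$, which is a subset of the space–time neighbourhood.

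For the time variable we use Taylor's theorem with Lagrange remainder in $s\mapsto\ph(x,s)$ on $(t-R,t+R)$. This requires $\epsilon^2<\beps$, which we may assume after shrinking $\beps$ so that $\beps<\min\{R,1\}$. It gives
\[
\ph(x,t+\epsilon^2)=\ph(x,t)+\epsilon^2\partial_t\ph(x,t)+\tfrac{\epsilon^4}{2}\partial_{tt}\ph(x,\xi),\qquad \xi\in(t,t+\epsilon^2)\subset(t-\beps,t+\beps).
\]
Substituting $\ph(x,t)=\ph(x,t+\epsilon^2)-\epsilon^2\partial_t\ph(x,t)-\tfrac{\epsilon^4}{2}\partial_{tt}\ph(x,\xi)$ into the first display, exactly as in the proof of Corollary~\ref{exp-c2-parabolic}, yields the claimed identity with
\[
E(\epsilon,\ph,x,t)=E_p(\epsilon,\ph,x,t)-\tfrac{\epsilon^2}{2}\partial_{tt}\ph(x,\xi).
\]
The triangle inequality then gives the stated bound $|E|\le|E_p|+\tfrac{\epsilon^2}{2}\|\partial_{tt}\ph\|_{\LL^\infty(t-\beps,t+\beps)}$.

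The only point that needs care is the uniformity of $\beps$. The $\beps$ from Theorem~\ref{exp-c3} depends only on $R$, $p$, $d$ and the ball-containment requirements of Lemma~\ref{well-def-B}, and not on $t$. We therefore take the minimum of that $\beps$ with $R$ and $1$, so that $\epsilon^2<\epsilon<\beps$ keeps the time increment inside the domain. No other obstacle is expected; this is a routine corollary.
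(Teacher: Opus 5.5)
Your proposal is correct and follows the paper's own proof: apply Theorem~\ref{exp-c3} at the frozen time $t$, then absorb the Taylor expansion of $s\mapsto\ph(x,s)$ into the error term; its Lagrange remainder is bounded by $\frac{\epsilon^2}{2}\|\partial_{tt}\ph\|_{\LL^\infty(t-\beps,t+\beps)}$. Your extra check that $\epsilon^2<\epsilon<\beps$, so that $t+\epsilon^2$ stays inside the time domain, is a detail the paper leaves implicit.
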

	\begin{proof}
		It also follows by Theorem \ref{exp-c3},
		\begin{align*}
			\frac{\B_\epsilon^p[\ph](x,t)-\ph(x,t+\epsilon^2)}{\epsilon^2}&=\frac{\B_\epsilon^p[\ph](x,t)-\ph(x,t)}{\epsilon^2}-\frac{\ph(x,t+\epsilon^2)-\ph(x,t)}{\epsilon^2}\\
			&=\plap \ph(x)+E_p(\epsilon,\ph,x)-\partial_{t}\ph(x,t)+R(\epsilon,\ph,x,t).
		\end{align*}
		Where by Taylor we have
		$$\left|R(\epsilon,\ph,x,t)\right|\leq \frac{\epsilon^2}{2}\|\partial_{tt}\ph\|_{\LL^\infty(t-\beps,t+\beps)}.$$
	\end{proof}
	
	As explained in Section \ref{sec2}, it is convenient to analyze the asymptotic expansions of the operators $\widehat{\A_\epsilon^p}$ and $\widehat{\B_\epsilon^p}$ defined in \eqref{operator-A-promedio} and \eqref{operator-B-promedio}, respectively. Notice that the only difference from the previous setting is that we approximate the function at a given point by its average over a very small neighborhood of that point. Thus, the previous arguments can be reproduced step by step. The only novelty is the appearance of an additional term in the estimate of the error term; see \eqref{error-A-promedio} and \eqref{error-B-promedio}. This additional term does not affect any of the other results. 
	
	Since we will use these operators for the elliptic DPP, we state the result only for this setting, namely for the mean operator $\widehat{C_\epsilon}$.
	
	\begin{thm}\label{exp-c3-promedio}
		Let the assumptions of Corollary \ref{exp-c3-p_q} hold.  Then, there exists $0<\beps<R$ such that, for all $\epsilon\in(0,\beps)$,
		\begin{equation}\label{exp-promedio}
			\widehat{\C_\epsilon}[\ph](x',x'')
			=
			\ph(x',x'')
			+\frac{\epsilon^2}{2}\left(
			\plap^{x'}\ph(x',x'')
			+\qlap^{x''}\ph(x',x'')
			\right)
			+\epsilon^2\left(
			\widehat{E_p}(\epsilon,\ph,x)
			+\widehat{E_q}(\epsilon,\ph,x)
			\right).
		\end{equation}
		Here, $\widehat{E_p}(\epsilon,\ph,x)$ and $\widehat{E_q}(\epsilon,\ph,x)$ can be estimate by the bounds \eqref{error-B-grad} and \eqref{error-B-gr-not} plus \eqref{error-B-promedio}.
	\end{thm}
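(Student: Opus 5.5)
The plan is to reproduce, with the averaged operators, the chain of results used for $\C_\epsilon$, tracking only the single new source of error: the point value $\ph(x)$ in \eqref{operator-A} is replaced by $\fint_{B_{\epsilon^8}(x)}\ph$ in \eqref{operator-A-promedio}.

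\emph{Step 1: control the averaged value.} For $\ph\in C^2$ (in particular $C^3$), a second-order Taylor expansion around $x$ gives
\[
\Big|\fint_{B_{\epsilon^8}(x)}\ph(y)\dy-\ph(x)\Big|\le \tfrac12\epsilon^{16}\|D^2\ph\|_{\LL^\infty(B_{\beps}(x))}.
\]
The first-order term integrates to zero by symmetry of the ball.

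\emph{Step 2: pass this through $\widehat{\A^p_\epsilon}$.} The maps $(s,t)\mapsto \max[s,t]$ and $(s,t)\mapsto \min[s,t]$ are $1$-Lipschitz in $t$. Replacing $\ph(x)$ by the average in both brackets of \eqref{operator-A} therefore changes $\A^p_\epsilon[\ph](x)$ by at most $\tfrac12\epsilon^{16}\|D^2\ph\|$. We then write $\widehat{\A^p_\epsilon}[\ph](x)=\ph(x)+\epsilon^2(\plap\ph(x))^{\frac1{p-1}}+\epsilon^2\widehat{E_1}$ and apply Theorem~\ref{thm.asexp}. Dividing by $\epsilon^2$ yields
\[
|\widehat{E_1}|\le|E_1|+\tfrac12\epsilon^{14}\|D^2\ph\|,
\]
which is \eqref{error-A-promedio}.

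\emph{Step 3: insert into $\widehat{\B^p_\epsilon}$.} We repeat verbatim the computation in the proof of Theorem~\ref{exp-c3}, with $\A$ replaced by $\widehat{\A}$ at scale $\epsilon\sqrt{2(p-1)b}$ and $E_1$ replaced by $\widehat{E_1}$. The error estimate there contains the term $2(p-1)M_b\,|\widehat{E_1}(\epsilon\sqrt{2(p-1)M_b},\ph,x)|$.

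\emph{Main obstacle: keeping the extra term uniform in $b$.} The bound must hold uniformly in $b\in[m_b,M_b]$. The averaging radius is $\epsilon^8$ at the original scale $\epsilon$, but inside $\widehat{\A}_{\epsilon\sqrt{2(p-1)b}}$ it becomes $(\epsilon\sqrt{2(p-1)b})^8$. Hence the extra contribution after multiplying by $(p-1)b\le(p-1)M_b$ is
\[
(p-1)M_b\cdot\tfrac12\big(\epsilon\sqrt{2(p-1)M_b}\big)^{14}\|D^2\ph\|\le C(p)\,\epsilon^{14}M_b^{8}\|D^2\ph\|.
\]
By \eqref{m-M-b}, $M_b\le\epsilon^{-1/3}$, because $\tfrac12\min[\cdot]\le\tfrac13$ since $\tfrac{2}{3(p-1)}<\tfrac23$. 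So $M_b^8\le\epsilon^{-8/3}$, and the contribution is bounded by $C\epsilon^{11}\|D^2\ph\|$ (even $\epsilon^{34/3}$). This is \eqref{error-B-promedio} with $k_8=k_8(d,p)$.

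The remaining error terms are literally those of Theorem~\ref{exp-c3}, namely \eqref{error-B-grad} when $\gr\ph(x)\ne0$ and \eqref{error-B-gr-not} in general.

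\emph{Step 4: conclude.} Applying Step 3 with exponents $p$ in the $x'$ variables and $q$ in the $x''$ variables, and averaging as in the proof of Corollary~\ref{exp-c3-p_q}, gives \eqref{exp-promedio}. The well-definedness and the choice of $\beps$ are as in Lemma~\ref{well-def-B}, since $B_{\epsilon^8}(x)$ is contained in the balls already used.
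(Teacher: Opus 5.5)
Your proposal is correct and follows the paper's route. The paper only asserts that the proofs of Theorem~\ref{thm.asexp}, Theorem~\ref{exp-c3} and Corollary~\ref{exp-c3-p_q} carry over step by step once the point value is replaced by the average over $B_{\epsilon^8}(x)$, with \eqref{error-A-promedio} and \eqref{error-B-promedio} as the only new error terms. Your write-up supplies the bookkeeping the paper omits: the $\tfrac12\epsilon^{16}\|D^2\varphi\|$ Taylor bound, the $1$-Lipschitz property of $\max$ and $\min$, and the rescaled averaging radius $(2(p-1)b)^4\epsilon^8$, which gives $\epsilon^{14}M_b^8\le\epsilon^{34/3}\le\epsilon^{11}$ uniformly in $b$.
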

	
	As a consequence of the preceding asymptotic expansions, we obtain a characterization of smooth subsolutions and supersolutions of differents equations which include the $p$-Laplacian.
	
	\begin{lem}\label{characterizacion-1}
			The following statements are true:
		\begin{enumerate}[label=(\roman*)]
			\item Let the assumptions of Theorem \ref{exp-c3} hold. Then, the following inequalities are equivalent:
			\begin{equation}
				\plap\ph(x)\leq f(x)
				\Longleftrightarrow
				\B_\epsilon^p[\ph](x)
				\leq
				\ph(x)+\epsilon^2f(x)+o(\epsilon^2)
				\qquad\mbox{as }\epsilon\to 0^+.
			\end{equation}
			\begin{equation}
				\plap\ph(x)\geq f(x)
				\Longleftrightarrow
				\B_\epsilon^p[\ph](x)
				\geq
				\ph(x)+\epsilon^2f(x)+o(\epsilon^2)
				\qquad\mbox{as }\epsilon\to 0^+.
			\end{equation}
			
			\item Let the assumptions of Corollary \ref{exp-c3-parabolic} hold. Then, the following inequalities are equivalent:
			\begin{equation}
				\plap\ph(x,t)\leq\partial_t\ph(x,t)
				\Longleftrightarrow
				\B_\epsilon^p[\ph](x,t)
				\leq\ph(x,t+\epsilon^2)+o(\epsilon^2)
				\qquad\mbox{as }\epsilon\to0^+.
			\end{equation}
			\begin{equation}
				\plap\ph(x,t)\geq\partial_t\ph(x,t)
				\Longleftrightarrow
				\B_\epsilon^p[\ph](x,t)
				\geq\ph(x,t+\epsilon^2)+o(\epsilon^2)
				\qquad\mbox{as }\epsilon\to0^+.
			\end{equation}
			
			\item et the assumptions of Theorem \ref{exp-c3-promedio} hold.  Then, the following inequalities are equivalent:
			\begin{equation}
				\plap^{x'}\ph(x',x'')+\qlap^{x''}\ph(x',x'')\leq f(x)
				\Longleftrightarrow
				\widehat{\C_\epsilon}[\ph](x)
				\leq
				\ph(x)+\frac{\epsilon^2}{2}f(x)+o(\epsilon^2)
				\qquad\mbox{as }\epsilon\to 0^+.
			\end{equation}
			\begin{equation}
				\plap^{x'}\ph(x',x'')+\qlap^{x''}\ph(x',x'')\geq f(x)
				\Longleftrightarrow
				\widehat{\C_\epsilon}[\ph](x)
				\geq
				\ph(x)+\frac{\epsilon^2}{2}f(x)+o(\epsilon^2)
				\qquad\mbox{as }\epsilon\to 0^+.
			\end{equation}
		\end{enumerate}
	\end{lem}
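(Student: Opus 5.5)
The plan is to derive all three equivalences directly from the asymptotic expansions already established: Theorem~\ref{exp-c3} (or Theorem~\ref{exp-c2}) for (i), Corollary~\ref{exp-c3-parabolic} (or Corollary~\ref{exp-c2-parabolic}) for (ii), and Theorem~\ref{exp-c3-promedio} for (iii). Each has the form "operator $=$ reference value $+\,\epsilon^2\times$(differential expression) $+\,o(\epsilon^2)$". The $o(\epsilon^2)$ claim holds at the fixed point $x$ for any $C^2$ function, and a fortiori for $C^3$, by item (a) of the respective results. So the differential expression equals $\lim_{\epsilon\to0^+}\epsilon^{-2}(\text{operator}-\text{reference value})$. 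Both implications are then a matter of reading this identity in each direction.

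For (i), write $\B_\epsilon^p[\ph](x)=\ph(x)+\epsilon^2\plap\ph(x)+\epsilon^2E_p(\epsilon,\ph,x)$ with $E_p\to0$.

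\emph{Forward direction.} If $\plap\ph(x)\le f(x)$, then
\[
\B_\epsilon^p[\ph](x)\le \ph(x)+\epsilon^2 f(x)+\epsilon^2E_p(\epsilon,\ph,x),
\]
and the last term is $o(\epsilon^2)$.

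\emph{Converse.} Interpret the statement "$\B_\epsilon^p[\ph](x)\le \ph(x)+\epsilon^2f(x)+o(\epsilon^2)$" as $\limsup_{\epsilon\to0^+}\epsilon^{-2}\big(\B_\epsilon^p[\ph](x)-\ph(x)-\epsilon^2 f(x)\big)\le 0$. Subtracting the expansion, dividing by $\epsilon^2$ and letting $\epsilon\to0^+$ gives $\plap\ph(x)-f(x)\le 0$.

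The reverse inequality follows the same way, or by applying the result to $-\ph$ and $-f$, since $\plap(-\ph)=-\plap\ph$.

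For (ii), use Corollary~\ref{exp-c3-parabolic}:
\[
\B_\epsilon^p[\ph](x,t)-\ph(x,t+\epsilon^2)=\epsilon^2\big(\plap\ph(x,t)-\partial_t\ph(x,t)\big)+\epsilon^2E(\epsilon,\ph,x,t).
\]
The error is bounded by $|E_p|+\tfrac{\epsilon^2}{2}\|\partial_{tt}\ph\|_\infty\to0$. The same limsup/liminf argument with $f$ replaced by $\partial_t\ph(x,t)$ gives both equivalences.

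For (iii), use \eqref{exp-promedio}. Here the factor $\tfrac{\epsilon^2}{2}$ multiplies the whole anisotropic operator, which is why $\tfrac{\epsilon^2}{2}f(x)$ appears on the right-hand side. One must check that the error $\epsilon^2(\widehat{E_p}+\widehat{E_q})$ is $o(\epsilon^2)$. By \eqref{error-B-promedio}, the extra term from averaging over $B_{\epsilon^8}(x)$ is $O(\epsilon^{11})$. The remaining terms are controlled by \eqref{error-B-grad} or \eqref{error-B-gr-not}, or more simply by the $C^2$ qualitative statement of Corollary~\ref{exp-c2-p_q} combined with \eqref{error-A-promedio}. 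Multiplying by $2\epsilon^{-2}$ and passing to the limit then yields the claim.

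\textbf{Main obstacle.} The only delicate point is justifying that the error terms tend to $0$ when $\gr\ph(x)=0$. The quantitative bound \eqref{error-B-gr-not} contains the term $\epsilon^{-2/3}|\gr\ph(x)|^{\frac{p-2}{p-1}}$, which is not useful for this. I would instead rely on the qualitative $o_\epsilon(1)$ statements, Theorem~\ref{exp-c2} and its corollaries, which hold at every point for $C^2$ functions regardless of the gradient, and apply them uniformly to all three items. Beyond that, the proof is a formal division by $\epsilon^2$.
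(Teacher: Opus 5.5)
Your proposal is correct and follows the paper's own argument: the paper proves the lemma in one line by invoking the asymptotic expansions (Theorem~\ref{exp-c3}, Corollary~\ref{exp-c3-parabolic}, Theorem~\ref{exp-c3-promedio}) and reading them in both directions after dividing by $\epsilon^2$. One small correction to your ``main obstacle'': when $\gr\ph(x)=0$ the term $\epsilon^{-2/3}|\gr\ph(x)|^{\frac{p-2}{p-1}}$ in \eqref{error-B-gr-not} vanishes identically, and when $\gr\ph(x)\neq0$ you can use \eqref{error-B-grad} instead, so the quantitative bounds already give $o(1)$ at each fixed point; your appeal to the qualitative $C^2$ statements works just as well.
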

	\begin{proof}
		The result follows from applying the asymptotic expansions for $C^3$ functions in each case.
	\end{proof}
	 
	To conclude this section, we provide an asymptotic mean value characterization of viscosity solutions for some equations which involve the $p$-Laplacian. Note that no a priori regularity assumptions on $u$ are imposed beyond continuity.
	
	\begin{lem}\label{characterizacion-2}
				The following statements are true:
		\begin{enumerate}[label=(\roman*)]
			\item Let the assumptions of Theorem \ref{exp-c3} hold. Then, $u$ is a viscosity solution of
			$$\plap u(x)=f(x)\qquad \mbox{for all }x\in\Omega,$$
			if and only if $u$ is a viscosity solution of the following AMVF
			$$u(x)=\B_\epsilon^p[u](x)-\epsilon^2 f(x) + o(\epsilon^2)\qquad \mbox{in }\Omega,\quad \mbox{as }\epsilon\to 0^+.$$
			
			\item Let the assumptions of Corollary \ref{exp-c3-parabolic} hold. Then, $u$ is a viscosity solution of
			$$\partial_tu(x,t)=\plap u(x,t)
			\qquad\mbox{for all }(x,t)\in\Omega\times(0,\infty),$$
			if and only if $u$ is a viscosity solution of the following AMVF
			$$u(x,t+\epsilon^2)=\B_\epsilon^p[u](x,t)+o(\epsilon^2)
			\qquad\mbox{in }\Omega\times(0,\infty),\quad \mbox{as }\epsilon\to0^+.$$
			
			\item Let the assumptions of Theorem \ref{exp-c3-promedio} hold.  Then, $u$ is a viscosity solution of
			$$\plap^{x'}\ph(x',x'')+\qlap^{x''}\ph(x',x'')= f(x)\qquad\mbox{for all }x\in \Omega,$$
			if and only if $u$ is a viscosity solution of the following AMVF
			$$u(x)=\widehat{\C_\epsilon}[u](x)-\frac{\epsilon^2}{2}f(x)+o(\epsilon^2)
			\qquad\mbox{in }\Omega,\quad\mbox{as }\epsilon\to 0^+.$$
		\end{enumerate}
	\end{lem}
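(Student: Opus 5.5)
The plan is the standard one for asymptotic mean value characterizations (as in \cite{Manfredi_Parviainen_Rossi_2010} and \cite{delTeso+Fuertes+Rossi_2026}): reduce both notions of viscosity solution to statements about smooth test functions touching $u$, and then use Lemma~\ref{characterizacion-1} to pass between them. We use the paper's definition of viscosity solutions (Appendix). For the AMVF, $u$ is a viscosity subsolution of, say, $u(x)=\B_\epsilon^p[u](x)-\epsilon^2f(x)+o(\epsilon^2)$ if for every $x_0$ and every $\ph\in C^2$ touching $u$ from above at $x_0$ (i.e.\ $u-\ph$ has a strict local maximum $0$ at $x_0$) we have
\[
\ph(x_0)\le \B_\epsilon^p[\ph](x_0)-\epsilon^2 f(x_0)+o(\epsilon^2).
\]
Supersolutions are defined analogously, with touching from below and the reverse inequality. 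For the PDE side, at points where $\gr\ph(x_0)=0$ with $D^2\ph(x_0)\neq 0$ one may need the usual semicontinuous envelope convention for $p>2$. In fact, since $p>2$, $\plap\ph(x_0)=0$ is well defined whenever $\gr\ph(x_0)=0$, so the operator is continuous and no envelopes are needed.

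\textbf{PDE implies AMVF.} Take $\ph$ touching $u$ from above at $x_0$. Being a PDE viscosity subsolution of (i) means $\plap\ph(x_0)\ge f(x_0)$. The $C^2$ expansion of Theorem~\ref{exp-c2} then gives
\[
\B_\epsilon^p[\ph](x_0)=\ph(x_0)+\epsilon^2\plap\ph(x_0)+o(\epsilon^2)\ge \ph(x_0)+\epsilon^2f(x_0)+o(\epsilon^2),
\]
which is exactly the AMVF subsolution inequality. If one prefers to quote Lemma~\ref{characterizacion-1}, one first reduces to $C^3$ (or even quadratic) test functions, as is standard: replace $\ph$ by its second order Taylor polynomial plus $\eta|x-x_0|^2$, which still touches $u$ from above at $x_0$ locally, and let $\eta\to0$. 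Supersolutions follow symmetrically, or by applying the argument to $-u$ and using $\B_\epsilon^p[-\ph]=-\B_\epsilon^p[\ph]$, since the $\sup$ and $\inf$ swap in \eqref{operator-B} and \eqref{operator-A}.

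\textbf{AMVF implies PDE.} Conversely, suppose $\ph$ touches $u$ from above at $x_0$ and
\[
\ph(x_0)\le \B_\epsilon^p[\ph](x_0)-\epsilon^2f(x_0)+o(\epsilon^2).
\]
Substituting Theorem~\ref{exp-c2} gives $0\le \epsilon^2(\plap\ph(x_0)-f(x_0))+o(\epsilon^2)$. Dividing by $\epsilon^2$ and letting $\epsilon\to0$ yields $\plap\ph(x_0)\ge f(x_0)$. Parts (ii) and (iii) follow verbatim. In (ii) we replace Theorem~\ref{exp-c2} by Corollary~\ref{exp-c2-parabolic}, with test functions in $(x,t)$ and the term $\ph(x,t+\epsilon^2)$. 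In (iii) we use Theorem~\ref{exp-c3-promedio}, or its $C^2$ analogue obtained from Corollary~\ref{exp-c2-p_q} plus the $\epsilon^8$-average modification, whose error is $O(\epsilon^{16})\|D^2\ph\|$ and hence negligible. The factor $\tfrac12$ in front of $f$ is carried through.

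\textbf{Main obstacle.} The delicate point is the case $\gr\ph(x_0)=0$. There the $C^3$ error bounds \eqref{error-B-grad} degenerate, and even \eqref{error-B-gr-not} contains the term $\epsilon^{-2/3}\|D^2\ph\|^{1/(p-1)}|\gr\ph(x_0)|^{(p-2)/(p-1)}$, which vanishes at $x_0$ but is not uniform nearby. Since we only evaluate at the single point $x_0$, we rely on the pointwise $o(\epsilon^2)$ statement of Theorem~\ref{exp-c2}. That statement is valid for $C^2$ functions regardless of the gradient, through Theorem~\ref{thm.asexp}(a). I would check carefully that, in \eqref{expression}, the $o_\epsilon(1)$ from $E_1(\epsilon\sqrt{2(p-1)b},\ph,x_0)$ is uniform over $b\in[m_b,M_b]$. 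This holds because $\epsilon\sqrt{2(p-1)M_b}\to0$ by \eqref{HT-b}, and $E_1(\rho,\ph,x_0)\to0$ as $\rho\to0$. A further subtlety in that step is that the factor $(p-1)b$ multiplying $E_1$ can be as large as $M_b\to\infty$. This is handled in the proof of Theorem~\ref{exp-c2} because the optimal $b$ stays bounded (near $|\plap\ph(x_0)|^{(p-2)/(p-1)}$), so restricting to bounded $b$ changes the $\sup$ and $\inf$ only by $o(1)$. I would make that localization explicit. Once this uniformity is secured, the rest of the argument is routine.
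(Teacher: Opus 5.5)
Your proposal is correct and follows the same route as the paper. In both directions you feed $C^2$ test functions touching $u$ into the asymptotic expansion and compare the resulting inequalities, and you treat (ii) and (iii) verbatim. The only difference is that you invoke the $C^2$ expansions (Theorem~\ref{exp-c2} and Corollaries~\ref{exp-c2-parabolic}, \ref{exp-c2-p_q}) directly, whereas the paper cites Lemma~\ref{characterizacion-1}, which is stated under $C^3$ hypotheses even though its test functions are only $C^2$, so your version is, if anything, slightly more careful.
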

	\begin{proof}
		We only provide a sketch of the proof for one of the cases, as the other cases follow by the same argument. Let $\ph\in C^2(B_R(x))$ be a test function touching $u$ from below at $x$. Then, since $u$ is a viscosity solution of equation \ref{eq_p-q} we have
		$$
		\plap^{x'}\ph(x',x'')+\qlap^{x''}\ph(x',x'')\leq f(x),
		$$
		and, by Lemma~\ref{characterizacion-1}, we obtain that
		$$
		\widehat{\C_\epsilon}[\ph](x)
		\leq
		\ph(x)+\frac{\epsilon^2}{2}f(x)+o(\epsilon^2).
		$$
		Therefore, $u$ is a viscosity supersolution of the AMVF. If the test function $\ph$ touches $u$ from above, we conclude analogously that $u$ is a viscosity subsolution of the AMVF and, hence, a viscosity solution. 
		
		The converse implication is proved in the same way. Let $\ph\in C^2(B_R(x))$ be a test function touching $u$ from below at $x$. Then
		$$
		\widehat{\C_\epsilon}[\ph](x)
		\leq
		\ph(x)+\frac{\epsilon^2}{2}f(x)+o(\epsilon^2),
		$$
		and Lemma~\ref{characterizacion-1} yields
		$$
		\plap^{x'}\ph(x',x'')+\qlap^{x''}\ph(x',x'')\leq f(x).
		$$
		The case in which $\ph$ touches $u$ from above is analogous.
	\end{proof}
	
	
	\section{Dynamic programming principle for the anisotropic $p-q$ Laplacian}\label{sec4}
	
	Once the operator $\widehat{\C_\epsilon}$ has been defined and shown to approximate equation~\eqref{eq_p-q} for fixed $\epsilon>0$, it induces the Dynamic Programming Principle \eqref{DPP-aniso}. For the reader's convenience, we recall this DPP below.
	\begin{equation}\label{DPP}
		\left\{\begin{array}{rlll}
			\ue(x)&=&\frac{1}{2}\left(\widehat{\B_\epsilon^p}[\ue](x',x'')+\widehat{\B_\epsilon^q}[\ue](x',x'')\right)-\frac{\epsilon^2}{2}f(x), & \mbox{in } \Omega,\\ 
			\ue(x)&=&g(x), & \mbox{in } \partial \Omega.
		\end{array}\right.
	\end{equation}
	Here, $\Omega\subset \Rd$ is a bounded regular domain, $f\in C(\overline{\Omega})$, $g\in C_b(\Rd\setminus\Omega)$, and $x=(x',x'')\in \Rd$, where $x'=(x_1,\ldots,x_n)$ and $x''=(x_{n+1},\ldots,x_d)$.
	
	The structure of this section is divided into two clearly distinguished parts. First, we establish the existence and uniqueness of solutions to~\eqref{DPP}. We then show that this unique solution converges uniformly to the viscosity solution of \eqref{anosotropic-problem} with the same prescribed data.

	\subsection{Existence and Uniqueness}\label{sec4.1}
	
	We begin with the uniqueness part, which follows from the following comparison result.
	
	\begin{thm}\label{comparison}
		Let $d\geq 2$, $p>q>2$, $\epsilon\in (0,1)$, and let $\Omega\subset \Rd$ be a bounded domain. Assume that~\eqref{HT} and~\eqref{HT-b} hold for both $p$ and $q$. Let $g_1,g_2\in C_b(\Rd\setminus\Omega)$ and $f_1,f_2\in C(\overline{\Omega})$. Moreover, let $\ua,\ub:\Rd\longrightarrow \RR$ be two bounded measurable functions satisfying
		$$\left\{\begin{array}{rlll}
			\ua(x)&\geq&\frac{1}{2}\left(\widehat{\B_\epsilon^p}[\ua](x',x'')+\widehat{\B_\epsilon^q}[\ua](x',x'')\right)-\frac{\epsilon^2}{2}f_1(x), & \mbox{in } \Omega,\\ 
			\ua(x)&\geq&g_1(x), & \mbox{in } \partial \Omega.
		\end{array}\right.$$
		and
		$$\left\{\begin{array}{rlll}
			\ub(x)&\leq&\frac{1}{2}\left(\widehat{\B_\epsilon^p}[\ub](x',x'')+\widehat{\B_\epsilon^q}[\ub](x',x'')\right)-\frac{\epsilon^2}{2}f_2(x), & \mbox{in } \Omega,\\ 
			\ub(x)&\leq&g_2(x), & \mbox{in } \partial \Omega.
		\end{array}\right.$$
		If $f_1\leq f_2$ and $g_1\geq g_2$, then
		$$\ua\geq\ub\quad \mbox{in }\Rd.$$
	\end{thm}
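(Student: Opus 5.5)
The plan is the standard comparison argument for DPPs with non-attained sup/inf: argue by contradiction on $M:=\sup_{\Rd}(\ub-\ua)$ and exploit the structure of the operator. Monotonicity, translation invariance by constants, and the averaging term will force propagation of the maximum until it reaches $\Rd\setminus\Omega$. First I record the elementary properties of $\widehat{\C_\epsilon}=\frac12(\widehat{\B^p_\epsilon}+\widehat{\B^q_\epsilon})$:
\begin{enumerate}[label=(\alph*)]
\item it is monotone, i.e.\ $\ph\le\psi$ implies $\widehat{\C_\epsilon}[\ph]\le\widehat{\C_\epsilon}[\psi]$, since every ingredient (sup, inf, averages, $M_\rho$, max/min) is monotone;
\item it commutes with constants, $\widehat{\C_\epsilon}[\ph+k]=\widehat{\C_\epsilon}[\ph]+k$.
\end{enumerate}
I also use the sublinearity-type estimate
\[
\widehat{\C_\epsilon}[\ub](x)-\widehat{\C_\epsilon}[\ua](x)\le \sup\text{ of the corresponding operator applied to } \ub-\ua ,
\]
obtained via $\sup A-\sup B\le\sup(A-B)$, $\inf A-\inf B\le \sup(A-B)$ and similarly for max/min. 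The penalization terms $\pm 2(p-2)\epsilon^2b^{\frac{p-1}{p-2}}$ cancel when the same $b$ is used.

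Assume by contradiction that $M>0$. Since $\ub\le g_2\le g_1\le\ua$ outside $\Omega$, we have $\ub-\ua\le0$ there. Subtracting the two inequalities and using $f_1\le f_2$ gives, for $x\in\Omega$,
\[
\ub(x)-\ua(x)\le \widehat{\C_\epsilon}[\ub](x)-\widehat{\C_\epsilon}[\ua](x).
\]
The key point is to extract a strictly averaging component. In each $\widehat{\A}$ the competing alternative is $\fint_{B_{\epsilon^8}(x)}\ph$, and the tug-of-war-with-noise part $M_\rho$ contains the term $\frac{d+2}{p+d}\fint$ over a ball of radius comparable to $\epsilon\sqrt{b}\,c^{-\alpha_p/2}$. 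I expect to show a bound of the form
\[
\ub(x)-\ua(x)\le (1-\theta)M+\theta \fint_{B_{r}(x)}(\ub-\ua)\,dy
\]
with $\theta>0$ and $r>0$ depending only on $\epsilon,p,q,d$. Here the ball is a horizontal or vertical lower-dimensional ball, coming from the $x'$- or $x''$-slice. Both branches of every max/min contain such an averaging term with coefficient bounded below, because $(1-\alpha_p)\frac{d+2}{p+d}>0$ and $\epsilon^8$ is fixed.

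Take $x_k\in\Omega$ with $\ub(x_k)-\ua(x_k)\to M$. The inequality above forces $\fint_{B_r(x_k)}(\ub-\ua)\to M$ along the relevant slice balls. Passing to a limit point $x_0\in\overline\Omega$ and using the continuity in $x$ of averages of bounded functions, the set where $\ub-\ua$ is close to $M$ has almost full measure in $B_r(x_0)$ (slicewise). Iterating this in alternating horizontal and vertical directions, in the spirit of the uniqueness proofs for tug-of-war with noise, one reaches points at distance $\ge r$ beyond $x_0$ in a fixed direction. Finitely many steps, since $\Omega$ is bounded, produce positive measure outside $\Omega$ where $\ub-\ua$ is close to $M>0$, contradicting $\ub-\ua\le0$ there.

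The main obstacle is this propagation step. Working with sequences rather than attained maxima (the functions are only measurable), and with averages over lower-dimensional slices rather than full balls, requires care. If it becomes messy, I would switch to the alternative: perturb $\ua$ by a strict supersolution, namely add $\delta\phi$ with $\phi$ a quadratic-type function for which $\widehat{\C_\epsilon}[\phi]\ge\phi+c\epsilon^2$, or simply a large constant shift combined with a counting argument. That gives a strict inequality $\ub-\ua\le \widehat{\C_\epsilon}[\ub]-\widehat{\C_\epsilon}[\ua]-\eta$, which directly contradicts $M=\sup$ after taking the sup over $x$.
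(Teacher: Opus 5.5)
Your key inequality is sound. After shifting by the supremum it is the same estimate as the paper's lemma $\widehat{\A_\epsilon^p}[\ub]-\widehat{\A_\epsilon^p}[\ua]\le\eta\fint_{B_{\epsilon^8}}(\ub-\ua)$. With $w=\ub-\ua$ and $M=\sup_{\Rd}w$, one gets for $x\in\Omega$
\[
w(x)\le M-\theta\Big(\fint_{B_r(x')}(M-w)(y',x'')\,dy'+\fint_{B_r(x'')}(M-w)(x',y'')\,dy''\Big).
\]
To reach a single radius $r$ you should say why: the radii depend on the $b,c$ picked by the sup/inf, but $M-w\ge 0$, so the average over a larger concentric ball dominates $(r/\rho)^k$ times the average over the smallest one. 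For fixed $\epsilon$ all radii lie in a compact subset of $(0,\infty)$.

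The gap is the propagation step. You pass to a limit point $x_0$ of the maximizing sequence and invoke continuity in $x$ of averages. These are averages over $n$- or $(d-n)$-dimensional slices, and $x\mapsto\fint_{B_r(x')}h(y',x'')\,dy'$ is continuous in $x'$ but not in $x''$, because moving $x''$ changes the slice. For example, take $h=M$ on $\{x''=0\}$ and $h=-1$ elsewhere. So nothing transfers from the slices through $x_k$ to the slice through $x_0$, and the claim that $w$ is close to $M$ on most of a ball around $x_0$ is unfounded.

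Your fallback does not rescue this. $\widehat{\C_\epsilon}$ mixes $\sup$ and $\inf$ terms, so $\widehat{\C_\epsilon}[\ua+\delta\phi]$ cannot be compared with $\widehat{\C_\epsilon}[\ua]+\delta\phi+c\delta\epsilon^2$. The available one-sided bounds lose first-order terms of size $\delta\epsilon|\nabla\phi|$, which swamp the $\epsilon^2$ gain. Constant shifts commute with the operator, so they never create strictness.

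The repair is to drop limit points and run a finite quantitative chain in one horizontal direction. Suppose $z\in\Omega$ and $w(z)\ge M-\delta$. Then the inequality gives $\fint_{B_r(z')}(M-w)(y',z'')\,dy'\le\delta/\theta$. Let $H=\{y'\in B_r(z'):\,y'_1>z'_1+r/2\}$, which has measure $c_n|B_r|$. Since $M-w\ge0$, whenever $\sqrt\delta<\theta c_n$ there is $y'\in H$ with $w(y',z'')\ge M-\sqrt\delta$. So one step moves more than $r/2$ in the $e_1$ direction at the cost $\delta\mapsto\sqrt\delta$.

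Fix $N>2\,\mathrm{diam}(\Omega)/r$. Start at $x_k$ with $\delta_k=M-w(x_k)$ so small that $\delta_k^{2^{-N}}<\min\{M,\theta c_n\}$. The chain cannot stay in $\Omega$ for $N$ steps, so it reaches a point of $\Rd\setminus\Omega$ where $w\ge M-\delta_k^{2^{-N}}>0$. This contradicts $\ub\le g_2\le g_1\le\ua$ there. No alternation between horizontal and vertical moves is needed.

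This is more careful than the paper's own proof. The paper simply assumes that $\sup(\ub-\ua)$ is attained at some $x_0\in\Omega$, which is not justified for merely bounded measurable functions. It then propagates the exact equality $\ub-S=\ua$ from ball to ball. The chain argument removes the attainment assumption.
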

	
	\begin{rem}\label{remark-boundary}{\rm
			In order to apply this comparison principle, it is enough to compare the exterior data in a neighborhood of the domain. More precisely, it suffices to impose the condition $g_1\geq g_2$ in the set
			$$
			\partial\Omega_\epsilon := \Omega_\epsilon \setminus \Omega\qquad \mbox{with } \Omega_\epsilon:=(\Omega + B_{\epsilon^\gamma}(0)),
			$$
			where $\epsilon^\gamma=\max\left\{2(p-1)M_b(\epsilon)\epsilon^2 M(\epsilon)^{1-\alpha}, \, \sqrt{2(p-1)M_b(\epsilon)}\epsilon \, m(\epsilon)^{-\alpha/2},\, (2(p-1)M_b(\epsilon))^{3/2}\epsilon^3\right\}$.}
	\end{rem}
	
	Before proceeding with the proof, we require a couple of technical lemmas concerning differences of maxima and their application to the operator $\widehat{\A_\epsilon^p}$.
	
	\begin{lem}\label{max-ineq}
		Let $A_1$, $A_2$, $B_1$, and $B_2$ be arbitrary real numbers. Then,
		$$
		\max\left[A_1,\,A_2\right]-\max\left[B_1,\,B_2\right]
		\leq
		\max\left[A_1-B_1,\,A_2-B_2\right].
		$$
	\end{lem}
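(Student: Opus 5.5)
The plan is to reduce everything to the single elementary fact that the maximum is monotone in each argument. Set $D:=\max\left[A_1-B_1,\,A_2-B_2\right]$. Then for each $i\in\{1,2\}$ we have $A_i-B_i\leq D$, that is, $A_i\leq B_i+D$.

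Since $B_i\leq \max\left[B_1,B_2\right]$ for each $i$, we get
\[
A_i\leq \max\left[B_1,\,B_2\right]+D \qquad\mbox{for } i=1,2 .
\]
Taking the maximum over $i$ on the left-hand side then yields $\max\left[A_1,A_2\right]\leq \max\left[B_1,B_2\right]+D$. Rearranging gives exactly the claimed inequality.

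An equivalent route, which I would mention as a sanity check, is a case split. Suppose the maximum of the $A$'s is attained at index $j$. Then
\[
\max\left[A_1,A_2\right]-\max\left[B_1,B_2\right]\leq A_j-B_j\leq D,
\]
because $\max\left[B_1,B_2\right]\geq B_j$.

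There is no real obstacle here; the only point requiring care is the direction of the inequality when subtracting the maximum of the $B$'s, which is handled by the bound $\max\left[B_1,B_2\right]\geq B_j$. I would also note the symmetric consequence obtained by applying the lemma to $-A_i,-B_i$:
\[
\min\left[A_1,A_2\right]-\min\left[B_1,B_2\right]\leq \max\left[A_1-B_1,\,A_2-B_2\right].
\]
This min version is what will be needed for the second half of $\widehat{\A_\epsilon^p}$, and the sup/inf over $c$ and $b$ are treated likewise, since $\sup_c F_c-\sup_c G_c\leq\sup_c(F_c-G_c)$ by the same argument.
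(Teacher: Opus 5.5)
Your proof is correct and complete. Each $A_i\leq B_i+D\leq \max[B_1,B_2]+D$, so $\max[A_1,A_2]\leq \max[B_1,B_2]+D$. The case-split variant, evaluating at the index $j$ with $\max[A_1,A_2]=A_j$, is equally valid.

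The paper gives no argument for this lemma; it simply defers it to the authors' earlier work on the doubly nonlinear equation. So there is no competing route to compare against, and your one-line argument is the standard one and is all that is needed.

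Your closing remarks are worth keeping. Later on, the paper silently uses more than the stated inequality:
\begin{itemize}
\item the $\min$ version you derive, for the second half of $\widehat{\A_\epsilon^p}$;
\item the parameter version $\sup_c F_c-\sup_c G_c\leq \sup_c(F_c-G_c)$;
\item the companion $\inf_c F_c-\inf_c G_c\leq \sup_c(F_c-G_c)$, for the infimum over $b$ in $\B_\epsilon^p$ and over $c$ in $\widehat{\A_\epsilon^p}$.
\end{itemize}
These are used in Lemma \ref{lemma-A-gorro}, Theorem \ref{comparison}, Step 4 of Theorem \ref{existence} and Theorem \ref{comparison-II}.

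Each follows from the bound $F_c\leq G_c+\sup_{c'}(F_{c'}-G_{c'})$, taking the sup or inf over $c$ on the left. This requires the quantities involved to be finite, which Lemma \ref{well-def-B} guarantees for bounded measurable functions.
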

	
	\begin{lem}
		Let $d\in \NN$, $p>2$, and $\epsilon\in(0,1)$. Assume that \eqref{HT} hold, and let $\ua,\ub$ be two bounded measurable functions such that $\ub\leq\ua$ in $\Rd$. Then, for any $x_0\in\Rd$, there exists a positive constant $\eta$ such that
		$$
		\widehat{\A_\epsilon^p}[\ub](x_0)-\widehat{\A_\epsilon^p}[\ua](x_0)
		\leq
		\eta\fint_{B_{\epsilon^8}(x_0)}\left(\ub(y)-\ua(y)\right)\,\dy.
		$$
	\end{lem}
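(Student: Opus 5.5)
The plan is to use three facts: monotonicity of every ingredient of $\widehat{\A_\epsilon^p}$; Lemma~\ref{max-ineq} together with its analogue for minima; and the observation that the integral mean over the large ball appearing in $M_\rho$ controls, up to a positive factor, the mean over the tiny ball $B_{\epsilon^8}(x_0)$. Throughout we write $w:=\ub-\ua\le 0$. Note that $\eta$ may depend on $\epsilon$, $p$ and $d$.

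\emph{Step 1: decomposition.} For $c\in[m_p,M_p]$ set
\[
F_c[\ph]:=\alpha_p\sup_{B_{2\epsilon^2c^{1-\alpha_p}}(x_0)}\ph+(1-\alpha_p)M_{\sqrt2\epsilon c^{-\alpha_p/2}}[\ph](x_0),
\qquad
G_c[\ph]:=\alpha_p\inf_{B_{2\epsilon^2c^{1-\alpha_p}}(x_0)}\ph+(1-\alpha_p)M_{\sqrt2\epsilon c^{-\alpha_p/2}}[\ph](x_0).
\]
Let $I[\ph]=\inf_c F_c[\ph]$, $S[\ph]=\sup_c G_c[\ph]$ and $a[\ph]=\fint_{B_{\epsilon^8}(x_0)}\ph$. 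Then
\[
\widehat{\A_\epsilon^p}[\ph](x_0)=\tfrac12\max\big[I[\ph],a[\ph]\big]+\tfrac12\min\big[S[\ph],a[\ph]\big].
\]
Lemma~\ref{max-ineq} gives
\[
\max[I[\ub],a[\ub]]-\max[I[\ua],a[\ua]]\le\max\big[I[\ub]-I[\ua],\,a[w]\big].
\]
For the minimum term I would prove the analogue directly: if $B_j$ realises $\min[B_1,B_2]$, then $\min[A_1,A_2]-B_j\le A_j-B_j$, so $\min[A_1,A_2]-\min[B_1,B_2]\le\max[A_1-B_1,A_2-B_2]$. Moreover $I[\ub]-I[\ua]\le\sup_c(F_c[\ub]-F_c[\ua])$ and $S[\ub]-S[\ua]\le\sup_c(G_c[\ub]-G_c[\ua])$.

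\emph{Step 2: the key estimate on $F_c$ and $G_c$.} Since $\ub\le\ua$, all the sup and inf differences are $\le0$, and so are the tug-of-war parts of $M_\rho$. Only the averaged part of $M_\rho$ has to be kept. Let $r_c=\sqrt{2(p+d)}\sqrt2\,\epsilon c^{-\alpha_p/2}$. By \eqref{HT} we have $r_c\ge 2\sqrt{p+d}\,\epsilon M_p^{-\alpha_p/2}$, and for $\epsilon$ small this is larger than $\epsilon^8$ (with the choice \eqref{m-M} this reduces to a comparison of powers of $\epsilon$), so $B_{\epsilon^8}(x_0)\subset B_{r_c}(x_0)$. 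Because $w\le0$,
\[
\fint_{B_{r_c}(x_0)}w\le\frac{|B_{\epsilon^8}|}{|B_{r_c}|}\fint_{B_{\epsilon^8}(x_0)}w=\Big(\frac{\epsilon^8}{r_c}\Big)^d a[w].
\]
Using $r_c\le r_{\max}:=2\sqrt{p+d}\,\epsilon\, m_p^{-\alpha_p/2}$, which is finite, and $a[w]\le 0$, we obtain, uniformly in $c$,
\[
F_c[\ub]-F_c[\ua],\;G_c[\ub]-G_c[\ua]\;\le\;\eta\,a[w],
\qquad
\eta:=(1-\alpha_p)\frac{d+2}{p+d}\Big(\frac{\epsilon^8}{r_{\max}}\Big)^d\in(0,1].
\]
Taking the supremum over $c$ preserves this bound.

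\emph{Step 3: conclusion.} Each maximum from Step 1 is now bounded by $\max[\eta a[w],a[w]]=\eta\,a[w]$, because $a[w]\le0$ and $\eta\le1$. Adding the two halves gives
\[
\widehat{\A_\epsilon^p}[\ub](x_0)-\widehat{\A_\epsilon^p}[\ua](x_0)\le\eta\fint_{B_{\epsilon^8}(x_0)}(\ub-\ua).
\]
The main obstacle is Step 2. One must check that the large averaging ball really contains $B_{\epsilon^8}(x_0)$ for every admissible $c$, and that the localisation factor is bounded below uniformly in $c$. This is exactly where the bounds $m_p\le c\le M_p$ from \eqref{HT} enter.
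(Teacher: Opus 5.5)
Your proof is correct and is essentially the paper's argument. You apply Lemma~\ref{max-ineq}, discard the nonpositive sup/inf and tug-of-war contributions because $\ub\le\ua$, and dominate the average over the tug-of-war ball by a multiple of the average over $B_{\epsilon^8}(x_0)$ via the containment $B_{\epsilon^8}(x_0)\subset B_{r_c}(x_0)$. In fact you are more careful than the appendix, which handles only the max half and does not track the volume-ratio factor that your $\eta$ makes explicit.

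One caveat, which the paper shares: the containment $\epsilon^8<2\sqrt{p+d}\,\epsilon M_p(\epsilon)^{-\alpha_p/2}$ holds for every $\epsilon\in(0,1)$ under \eqref{m-M}, but \eqref{HT} alone does not imply it. For large $p$ one may take $M_p$ close to $\epsilon^{-2(p-1)}$, and then the containment fails. So it should be stated as an assumption rather than justified by ``$\epsilon$ small''.
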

	
	The proofs can be found in Appendix~\ref{apenA}.
	
	\begin{proof}[Proof of Theorem \ref{comparison}]
		Since $\ua \geq g_1 \geq g_2 \geq \ub$ in $\Rd\setminus \Omega$, it suffices to prove this inequality in $\Omega$.
		
		Assume, by contradiction, that there exists a point $x_0=(x_0',x_0'')\in \Omega$ such that
		$$\sup_{x\in \Rd}\left\{\ub(x)-\ua(x)\right\}=\ub(x_0)-\ua(x_0)=S>0.$$
		We then define $\vb=\ub-S$. It can be easily verified that this function $\vb$ is a subsolution of~\eqref{DPP} with data $f_2$ and $g_2$. 
		
		Moreover, $\vb(x_0)=\ua(x_0)$ and $\vb(x)\leq \ua(x)$ at every other point by construction. Therefore, using the preceding lemmas, we obtain
		\begin{align*}
			0&= \vb(x_0)-\ua(x_0)\leq \widehat{\C_\epsilon}[\vb](x_0)-\widehat{\C_\epsilon}[\ua](x_0)+\frac{\epsilon^2}{2}\left(f_2(x_0)-f_1(x_0)\right)\\
			&\leq \sup_{b\in[m_b,M_b]}\left\{\widehat{\A_{\epsilon\sqrt{2(p-1)b}}^p}[\vb](x_0',x_0'')-\widehat{\A_{\epsilon\sqrt{2(p-1)b}}^p}[\ua](x_0',x_0'')\right\}\\
			&\quad + \sup_{b\in[m_b,M_b]}\left\{\widehat{\A_{\epsilon\sqrt{2(q-1)b}}^q}[\vb](x_0',x_0'')-\widehat{\A_{\epsilon\sqrt{2(q-1)b}}^q}[\ua](x_0',x_0'')\right\}\\
			&\leq \eta_p \fint_{B_{(2(p-1)m_b)^{4}\epsilon^8}(x_0)}\vb(y)-\ua(y) \, \dy+ \eta_q \fint_{B_{(2(q-1)m_b)^{4}\epsilon^8}(x_0)} \vb(y)-\ua(y)\, \dy.
		\end{align*}
		
		Since $\vb\leq \ua$, we conclude that there exists a sufficiently small radius $\gamma$ such that
		$\vb\equiv \ua$ in $B_\gamma(x_0)$.
		Repeating this argument at any point in the ball $B_\gamma(x_0)$ where $\vb$ and $\ua$ coincide, we deduce the existence of a point $x_1$ and a radius $\gamma$ such that the two functions also coincide in $B_\gamma(x_1)$. Since the domain is bounded, after finitely many steps we eventually find a point $x_k\in \mathbb{R}^d\setminus\Omega$ such that $\vb(x_k)=\ua(x_k)$, which leads us to the following contradiction:
		$$\ub(x_k)\leq g_2(x_k)\leq g_1(x_k)\leq \ua(x_k)=\vb(x_k)\leq \ub(x_k)-S.$$
	\end{proof}
	
	The uniqueness of solutions to~\eqref{DPP} follows by taking $f_1=f_2=f$ and $g_1=g_2=g$. To establish existence, we shall employ an iterative scheme for the equation. In order to carry out this argument, the existence of subsolutions and supersolutions to the problem is required; this will be proved below. Such functions can be constructed as in~\cite{delTeso_Rossi_2024}, with only minor modifications.
	
	\begin{lem}
		Let $d\geq 2$, $p>q>2$, $\epsilon\in(0,1)$, and let $\Omega\subset\Rd$ be a bounded domain. Assume that \eqref{HT} and \eqref{HT-b} hold for both $p$ and $q$. Let $f\in C(\overline{\Omega})$ and $g\in C_b(\Rd\setminus\Omega)$. Then there exist two measurable bounded functions $\ua$ and $\ub$ such that
		$$
		\left\{
		\begin{array}{rlll}
			\ub(x)&\leq&\frac{1}{2}\left(\widehat{\B_\epsilon^p}[\ub](x',x'')+\widehat{\B_\epsilon^q}[\ub](x',x'')\right)-\frac{\epsilon^2}{2}f(x), & \mbox{in } \Omega,\\ 
			\ub(x)&\leq&g(x), & \mbox{in } \partial \Omega.
		\end{array}
		\right.
		$$
		and
		$$
		\left\{
		\begin{array}{rlll}
			\ua(x)&\geq&\frac{1}{2}\left(\widehat{\B_\epsilon^p}[\ua](x',x'')+\widehat{\B_\epsilon^q}[\ua](x',x'')\right)-\frac{\epsilon^2}{2}f(x), & \mbox{in } \Omega,\\ 
			\ua(x)&\geq&g(x), & \mbox{in } \partial \Omega.
		\end{array}
		\right.
		$$
	\end{lem}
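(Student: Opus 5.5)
We need sub- and supersolutions of the DPP. The plan is to take a quadratic function that grows in one direction and then shift it by a large constant.

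Fix a large constant $L>0$ and set, for the subsolution,
$$\ub(x)=\begin{cases} -L+\kappa|x|^2 & x\in\Omega,\\ \inf g & x\notin\Omega,\end{cases}$$
or, more simply, use the boundedness of $\Omega$ and pick $\ub = -L - \kappa\,\ph$ inside $\Omega$ with $\ph$ chosen below. It turns out to be cleaner to work with constants plus a strictly "convex in the $\B$-sense" quadratic.

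\textbf{Step 1: the quadratic.} Take $\ph(x)=\frac{K}{2}|x'-z'|^2+\frac{K}{2}|x''-z''|^2$ with $z=(z',z'')$ far from $\Omega$. This guarantees that $|\gr_{x'}\ph|$ and $|\gr_{x''}\ph|$ are bounded below on a neighbourhood of $\Omega$, and we fix $K>0$ large.

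\textbf{Step 2: pointwise inequalities for $\widehat{\A}$.} We estimate $\widehat{\A^p_\rho}[\ph]$ directly rather than through asymptotics, because $\epsilon$ is fixed. Each piece of \eqref{operator-A-promedio} involves $\sup$, $\inf$, averages and the tug-of-war mean $M_\rho$. For a convex function, the average over a ball is at least the value at the centre, the supremum is at least the value, and the infimum is at least the value minus $|\gr\ph|\cdot$radius. Adding the fixed-$\epsilon$ computation, we aim to show that
$$\widehat{\A^p_\rho}[\ph](x)\ \ge\ \ph(x)+c\,\rho^2$$
with some $c>0$ depending on $K$ and the gradient lower bound. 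Convexity gives $M_\rho[\ph]\ge \ph(x)+ c'\rho^2 K$, via the exact mean of a quadratic over a ball, while the $\sup$/$\inf$ terms together contribute a nonnegative amount. The averages over $B_{\epsilon^8}$ are $\ge\ph(x)$.

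\textbf{Step 3: pass to $\widehat{\B}$.} Inserting Step 2 into \eqref{operator-B-promedio} and choosing, for instance, $b=m_b$ in the infimum and in the supremum, the penalisation terms are $O(\epsilon^2 m_b^{\frac{p-1}{p-2}})$. We therefore get $\widehat{\B^p_\epsilon}[\ph](x)\ge \ph(x)+\epsilon^2(\tilde c\,K-o(1))$, and the same holds for $q$. Taking $K$ large relative to $\|f\|_\infty$ yields
$$\tfrac12\big(\widehat{\B^p_\epsilon}[\ph]+\widehat{\B^q_\epsilon}[\ph]\big)-\tfrac{\epsilon^2}{2}f\ \ge\ \ph \quad\text{in }\Omega .$$
This is the subsolution inequality.

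\textbf{Step 4: exterior values and shifting.} The operators commute with constants and are monotone, the latter by the lemma preceding the comparison theorem. Set $\ub=\ph-C$ on $\Omega_\epsilon$ and $\ub=\min(\ph-C,\inf g)$ outside, with $C$ large enough that $\ph-C\le\inf g$ on the bounded set $\Omega_\epsilon$ of Remark \ref{remark-boundary}. Then $\ub$ coincides with $\ph-C$ wherever $\B$ looks, so the interior inequality persists, and $\ub\le g$ outside. The supersolution is $\ua=-\ph+C$ handled symmetrically; equivalently, apply the construction to $-g,-f$ and use the odd symmetry $\widehat{\B}[-u]=-\widehat{\B}[u]$, which follows from swapping the $\sup$ and $\inf$ in the definitions. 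Both functions are bounded and measurable because $\ph$ is bounded on the bounded set $\Omega_\epsilon$ and we extend by constants.

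\textbf{Main obstacle.} The main obstacle is Step 2. There we need a genuine lower bound, not just an asymptotic one, for the nested $\max/\inf_c$ and $\min/\sup_c$ structure in $\widehat{\A}$. The most delicate point is the $\min[\cdot,\fint\ph]$ term, where the tug-of-war/$\inf$ part could fall below $\ph(x)$. I would handle it through the gradient lower bound from Step 1 together with the explicit means of quadratics over balls, allowing $K$ (and possibly $|z|$) to depend on $\epsilon$. This is legitimate since $\epsilon$ is fixed in the lemma.
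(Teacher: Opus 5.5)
Your strategy is viable and differs from the paper's, but the mechanism you propose for Step 2 does not close it, and Step 2 is the whole difficulty. The strategy is: a convex quadratic whose one-step increment is strictly positive, scaled to beat $f$, then shifted below $g$.

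\textbf{Why your proposed fix fails.} $\widehat{\A^p_\rho}$ is positively $1$-homogeneous and commutes with constants, so enlarging $K$ cannot change the sign of $\widehat{\A^p_\rho}[\ph]-\ph$. A lower bound on $|\gr\ph|$, or moving $z$ away, only hurts. The loss in $\alpha_p\inf_{B_{2\rho^2c^{1-\alpha_p}}(x)}\ph$ is about $2\alpha_pK|x'-z'|\rho^2c^{1-\alpha_p}$. The gain $(1-\alpha_p)(M[\ph]-\ph)$ is of order $K\rho^2c^{-\alpha_p}$ and does not depend on the gradient. So once $|x'-z'|$ is large (for fixed $\epsilon$), the bracket in the $\min$-half is below $\ph(x)$ for every $c$. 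That half then genuinely lies below $\ph(x)$, and no termwise argument can fix it.

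\textbf{What works.} The pairing you allude to (``the $\sup$/$\inf$ terms together contribute a nonnegative amount'') does the job once made precise. Let $P(c)$ and $Q(c)$ be the brackets in the $\max$- and $\min$-halves of \eqref{operator-A-promedio} (same $c$, same radii), and set $s(c)=\sqrt2\rho c^{-\alpha_p/2}$.

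Case $\sup_cQ\le\fint_{B_{\rho^8}(x)}\ph$. Then $\widehat{\A^p_\rho}[\ph](x)\ge\frac12(\inf_cP+\sup_cQ)$. Pick a $\delta$-minimiser $c_1$ of $P$ and use $\sup_{B_r(x)}\ph+\inf_{B_r(x)}\ph\ge2\ph(x)$ (reflect a minimiser through $x$):
\[
\inf_cP+\sup_cQ\ \ge\ P(c_1)+Q(c_1)-\delta\ \ge\ 2\ph(x)+2(1-\alpha_p)\big(M_{s(c_1)}[\ph](x)-\ph(x)\big)-\delta .
\]

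Case $\sup_cQ>\fint_{B_{\rho^8}(x)}\ph$. The $\min$-half is that average, which is $\ge\ph(x)$. The $\max$-half is $\ge\inf_cP\ge\ph(x)+(1-\alpha_p)\inf_c\big(M_{s(c)}[\ph](x)-\ph(x)\big)$.

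Moreover $M_s[\ph](x)-\ph(x)\ge\kappa_0Ks^2$ with $\kappa_0=\kappa_0(n,p)>0$: the $\sup+\inf$ part is $\ge2\ph(x)$ by convexity, and the averaged part is explicit. Both cases therefore give $\widehat{\A^p_\rho}[\ph](x)-\ph(x)\ge\kappa K\rho^2M_p(\epsilon)^{-\alpha_p}$ with $\kappa>0$. This holds uniformly in $x$ and $b$ and uses no gradient information.

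\textbf{Steps 3--4.} With this bound, both steps go through, with three small corrections.
\begin{itemize}
\item In the infimum over $b$ you cannot choose $b$. But the bound above is nonnegative for every $b$ and the penalty enters with a plus sign there, so that half is $\ge\ph(x)$.
\item Your $\tilde c$ depends on $\epsilon$ through $m_b(\epsilon)$ and $M_p(\epsilon)$. This is harmless for fixed $\epsilon$.
\item In Step 4, make sure $\ub=\ph-C$ at every point the operators reach from $\Omega$; a large ball is safest. Putting anything smaller there pushes the subsolution inequality the wrong way.
\end{itemize}

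\textbf{Comparison with the paper.} The paper takes $\ub=\ee^{Lx_1}-T$ on a large ball. It bounds $\widehat{\A}$ below by the one-sided quantity $\min\big[\inf_c\{\alpha_p\inf\ub+(1-\alpha_p)M[\ub]\},\fint\ub\big]$, discarding the $\sup$ half, and imports Lemma 4.3 of \cite{delTeso_Rossi_2024}. This works because, for the exponential, the curvature gain scales like $L^2$ while the gradient loss scales like $L$. A quadratic lacks exactly this, which is why you must keep both halves and pair them. In exchange, your barrier is elementary and self-contained. It is also curved in both $x'$ and $x''$, so both $\widehat{\B^p_\epsilon}$ and $\widehat{\B^q_\epsilon}$ contribute. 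By contrast, $\ee^{Lx_1}$ is constant in $x''$, so $\widehat{\B^q_\epsilon}$ leaves it unchanged and the $p$-half alone has to absorb $f$.
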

	
	\begin{proof}
		Fix $\epsilon\in(0,1)$. Since $\Omega$ is bounded, we choose $R>0$ sufficiently large such that $\Omega_\epsilon\subset B_R(0)$. We define the following bounded measurable function:
		$$
		\ub(x)=
		\left\{
		\begin{array}{ll}
			\ee^{Lx_1}-T & \mbox{if } x\in B_R(0),\\ 
			-\|g\|_{\LL^\infty(\Rd\setminus\Omega)} & \mbox{if } x\in\Rd\setminus B_R(0),
		\end{array}
		\right.
		$$
		where $L,T>0$ are two constants to be determined.
		
		Using
		$$
		\widehat{\A^p_\epsilon}[\ph](x)\geq\min\left[
		\inf_{c\in[m_p,M_p]}
		\left\{
		\alpha_p \inf_{B_{2\epsilon^2 c^{1-\alpha_p}}(x)}\ph
		+(1-\alpha_p) M_{\sqrt{2}\epsilon c^{-\alpha_p/2}}[\ph](x)
		\right\},
		\fint_{B_{\epsilon^8}(x)}\ph(y)\,\dy
		\right],
		$$
		for any function $\ph$, it follows from \cite[Lemma 4.3]{delTeso_Rossi_2024} that, for any $b\in[m_b,M_b]$,
		\begin{align*}
			\frac{\widehat{\A_{\epsilon\sqrt{2(p-1)b}}^p}[\ub](x)-\ub(x)}{\epsilon^2}
			&\geq \ee^{Lx_1}
			\min\Bigg[
			-4(p-1)M_b(\epsilon)\alpha_pM_p(\epsilon)^{1-\alpha_p}L\\
			&\qquad\qquad
			+8(p+d)(p-1)m_b(\epsilon)K_d(1-\alpha_p)
			M_p(\epsilon)^{-\alpha_p}L^2,\\
			&\qquad\qquad
			\left(8(p+d)(p-1)m_b(\epsilon)\right)^3
			K_d\epsilon^4L^2
			\Bigg]\\
			&:=A_\epsilon^p(L).
		\end{align*}
		Therefore, for $L$ sufficiently large, this quantity is positive. In fact, $A_\epsilon^p(L)$ is increasing in $L$.
		
		Moreover, 
		\begin{align*}
			&\frac{\widehat{\B_\epsilon^p}[\ub](x)+\widehat{\B_\epsilon^q}[\ub](x)-2\ub(x)}{\epsilon^2}-f(x)\\
			&\geq
			\frac{1}{2}\sup_{b\in[m_b,M_b]}
			\left\{
			\frac{\widehat{\A_{\epsilon\sqrt{2(p-1)b}}^p}[\ub](x)-\ub(x)}{\epsilon^2}
			-2(p-2)b^{\frac{p-1}{p-2}}
			\right\}\\
			&\qquad
			+\frac{1}{2}\sup_{b\in[m_b,M_b]}
			\left\{
			\frac{\widehat{\A_{\epsilon\sqrt{2(q-1)b}}^q}[\ub](x)-\ub(x)}{\epsilon^2}
			-2(q-2)b^{\frac{q-1}{q-2}}
			\right\}
			-\|f\|_{\LL^\infty(\Omega)}\\
			&\geq
			\frac{1}{2}
			\left(
			A_\epsilon^p(L)
			-2(p-2)m_b(\epsilon)^{\frac{p-1}{p-2}}
			-\|f\|_{\LL^\infty(\Omega)}
			\right)\\
			&\qquad
			+\frac{1}{2}
			\left(
			A_\epsilon^q(L)
			-2(q-2)m_b(\epsilon)^{\frac{q-1}{q-2}}
			-\|f\|_{\LL^\infty(\Omega)}
			\right)\\
			&\geq 0.
		\end{align*}
		Here, in the last inequality, we have fixed $L$ sufficiently large. Thus, for all $x\in\Omega$, we have
		$$
		\ub(x)\leq
		\frac{1}{2}\left(
		\widehat{\B_\epsilon^p}[\ub](x)+
		\widehat{\B_\epsilon^q}[\ub](x)
		\right)
		-\frac{\epsilon^2}{2}f(x),
		$$
		and this inequality is independent of the choice of $T$. Finally, we choose $T$ sufficiently large so that
		$$
		\ub(x)\leq-\|g\|_{\LL^\infty(\Rd\setminus\Omega)}
		\qquad\mbox{for all }x\in B_R(0).
		$$
		This implies that $\ub$ is a subsolution of \eqref{DPP}.
		
		To prove the existence of a supersolution, we repeat the above argument with $\ua=-\ub$, using
		$$
		\widehat{\A^p_\epsilon}[\ph](x)\leq
		\max\left[
		\sup_{c\in[m_p,M_p]}
		\left\{
		\alpha_p\sup_{B_{2\epsilon^2 c^{1-\alpha_p}}(x)}\ph
		+(1-\alpha_p)M_{\sqrt{2}\epsilon c^{-\alpha_p/2}}[\ph](x)
		\right\},
		\fint_{B_{\epsilon^8}(x)}\ph(y)\,\dy
		\right].
		$$
	\end{proof}
	
	Now we are ready to establish the existence result.
	\begin{thm}\label{existence}
		Let $d\geq 2$, $p>q>2$, $\epsilon\in(0,1)$, and let $\Omega\subset\Rd$ be a bounded domain. Assume that \eqref{HT} and \eqref{HT-b} hold for both $p$ and $q$. Let $f\in C(\overline{\Omega})$ and $g\in C_b(\Rd\setminus\Omega)$. Then there exists a unique measurable and bounded function $\ue$ satisfying \eqref{DPP}.
	\end{thm}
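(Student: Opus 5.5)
Uniqueness is already settled: if $u_1,u_2$ both satisfy \eqref{DPP}, then each is a super- and a subsolution with the same data $f$, $g$. Applying Theorem~\ref{comparison} in both directions gives $u_1=u_2$. So the whole task is existence. For that I would run a monotone iteration (Perron-type scheme) starting from the subsolution $\ub$ built in the preceding lemma.

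\textbf{The operator and its monotonicity.} Define, for bounded measurable $v$,
$$T[v](x)=\tfrac12\left(\widehat{\B_\epsilon^p}[v](x',x'')+\widehat{\B_\epsilon^q}[v](x',x'')\right)-\tfrac{\epsilon^2}{2}f(x)\quad\text{in }\Omega,\qquad T[v]=g\ \text{in }\Rd\setminus\Omega.$$
The first step is to check that $T$ is monotone: $v\le w$ implies $T[v]\le T[w]$. This holds because every building block of $\widehat{\A^p_\epsilon}$ is nondecreasing in $\ph$. These blocks are suprema, infima, averages, $M_\rho$, and max/min with $\fint_{B_{\epsilon^8}}\ph$. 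The penalty terms in $\widehat{\B^p_\epsilon}$ do not depend on $\ph$, so monotonicity passes through the sup and inf over $b$. Measurability of $T[v]$ must also be verified; I would handle it as in \cite{delTeso_Rossi_2024}, since sup/inf over balls of a bounded function give lower/upper semicontinuous type functions and averages are continuous in $x$. If needed, I would restrict to countable dense families of $b$ and $c$, using continuity in these parameters of the averaged terms.

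\textbf{The iteration.} Set $u_0=\ub$ and $u_{k+1}=T[u_k]$. Since $\ub$ is a subsolution, $u_0\le T[u_0]=u_1$ in $\Omega$. Outside $\Omega$ we have $u_0\le g=u_1$. By monotonicity the sequence is nondecreasing. By comparison with the supersolution $\ua$ (Theorem~\ref{comparison}), and more directly by induction using $T[\ua]\le \ua$, we get $u_k\le \ua$. Hence $u_k\uparrow u$ pointwise, with $u$ bounded and measurable.

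\textbf{Passing to the limit (the main obstacle).} It remains to show $T[u_k]\to T[u]$, which is the hard step. The integral terms converge by monotone convergence. Suprema of an increasing sequence also commute: $\sup_B u_k\uparrow \sup_B u$. However, infima over balls and the inf/sup over $c$ and $b$ need not commute with an increasing limit, so pointwise convergence alone is not enough.

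To handle this, I would first try to upgrade to uniform convergence, following the argument of \cite{delTeso_Rossi_2024} and \cite{delTeso+Fuertes+Rossi_2026}. Suppose $M:=\sup_x (u-u_k)$ does not tend to $0$. Then choose points where the gap is nearly maximal. The averaged term $\fint_{B_{\epsilon^8}}$ contributes with a positive weight $\eta$, exactly as in the lemma used for comparison. Combining this with the contraction-type estimate $T[u]-T[u_k]\le \sup(\cdots)$ (Lemma~\ref{max-ineq}), the gap is forced to propagate through balls of fixed radius until it reaches $\Rd\setminus\Omega$, where $u_k=u=g$. This mirrors the proof of Theorem~\ref{comparison}.

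If this route becomes too delicate, the fallback is as follows. Use $u\ge u_k$ and monotonicity to get $T[u]\ge \lim T[u_k]=u$. For the reverse inequality, note that $u_{k+1}=T[u_k]\ge \ldots$ already gives $u\le T[u]$; then show that $T[u]$ is also a subsolution bounded by $\ua$, and use a maximality argument. Concretely, take $u$ as the supremum of all subsolutions below $\ua$ (these are uniformly bounded). Then $T[u]$ is a subsolution, so $T[u]\le u$, giving $u=T[u]$.
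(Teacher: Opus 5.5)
Your proof is correct in its main line, but it obtains the crucial uniform convergence differently from the paper. The overall scheme is the paper's: uniqueness from Theorem~\ref{comparison}, monotone iteration from $\ub$, the bound by $\ua$, and the pointwise limit $u$. You also diagnose the real issue correctly: infima over balls and the inf/sup over $b,c$ do not commute with increasing limits. So one needs $\sup_{\Rd}(u-u_k)\to0$, after which $T$, being monotone and commuting with constants, gives $T[u_k]\to T[u]$ uniformly.

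\textbf{The paper's route.} The paper gets uniform convergence in one stroke. It bounds $\widehat{\C_\epsilon}[u_m]-\widehat{\C_\epsilon}[u_n]$ by $\eta\sup_{\Rd}|u_m-u_n|+C\|u_m-u_n\|_{\LL^1(\Omega)}$. Here $\eta<1$ comes from the weights $\alpha_p+(1-\alpha_p)\frac{p-2}{p+d}$ and their $q$-analogues, and $C=C(\epsilon)$ comes from the averaged terms. It then lets $m\to\infty$ and removes the $\LL^1$ term by dominated convergence.

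\textbf{Your route.} You argue by contradiction, in the style of Luiro--Parviainen--Saksman. Set $M_k=\sup(u-u_k)\downarrow M_\infty>0$. Combine $u-u_{k+1}\le T[u]-T[u_k]$ (valid since $u_{k+1}=T[u_k]\le T[u]$) with the averaging lemma from the proof of Theorem~\ref{comparison}, applied to $u-M_k\le u_k$. This shows that wherever $u-u_{k+1}\ge M_{k+1}-\delta$, the function $u-u_k$ is close to $M_k$ on most of a slice ball whose radius is bounded below for fixed $\epsilon$. You then pick the next point in the outward half of that ball and lower the index by one.

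To make this rigorous, three things are needed:
\begin{enumerate}
\item Fix the number of steps $N\approx 2\,\mathrm{diam}\,\Omega/\rho_{\min}$ first.
\item Note that both the $p$- and $q$-halves of $\widehat{\C_\epsilon}$ must be nearly maximal, since their average is.
\item Take $k$ large and $\delta$ small, so that the accumulated errors (driven by $M_{k-N}-M_k$ and $\delta$) stay below $M_\infty/2$ when the chain exits $\Omega$, where the gap is $0$.
\end{enumerate}

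\textbf{What each buys.} The paper's route is shorter and gives a contraction-type inequality. Yours is purely qualitative, but it works slice by slice. In $\widehat{\C_\epsilon}$ the averages are over $n$- and $(d-n)$-dimensional slices with the other variables frozen. Such averages are not controlled by the full-dimensional norm $\|u_m-u_n\|_{\LL^1(\Omega)}$ used in the paper's estimate, and your propagation never needs that control.

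\textbf{Drop the fallback paragraph.} The sentence announcing the ``reverse inequality'' only reproduces $u\le T[u]$. The Perron variant, with $u$ the supremum of all subsolutions below $\ua$, fails on measurability. A pointwise supremum of uncountably many measurable functions need not be measurable, so neither $T[u]$ nor Theorem~\ref{comparison} is available for it. This is exactly why one works with the countable iteration.
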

	
	\begin{proof}
		Uniqueness has already been established. We now prove existence. Let $u_0$ be a subsolution of \eqref{DPP}, namely,
		$$
		\left\{
		\begin{array}{rlll}
			u_0(x)&\leq&
			\frac{1}{2}\left(
			\widehat{\B_\epsilon^p}[u_0](x',x'')
			+\widehat{\B_\epsilon^q}[u_0](x',x'')
			\right)-\frac{\epsilon^2}{2}f(x),
			& \mbox{in } \Omega,\\ 
			u_0(x)&\leq&g(x), & \mbox{in } \partial\Omega.
		\end{array}
		\right.
		$$
		We then define the sequence of functions $\{u_k\}$ recursively by
		$$
		u_k(x)=
		\left\{
		\begin{array}{rlll}
			\frac{1}{2}\left(
			\widehat{\B_\epsilon^p}[u_{k-1}](x',x'')
			+\widehat{\B_\epsilon^q}[u_{k-1}](x',x'')
			\right)-\frac{\epsilon^2}{2}f(x),
			& \mbox{in } \Omega,\\ 
			g(x), & \mbox{in } \partial\Omega.
		\end{array}
		\right.
		$$
		We split the remainder of the proof into several steps.
		
		\noindent\textit{Step 1: The sequence $\{u_k\}$ is nondecreasing.}
		Since $u_k=g$ in $\Rd\setminus\Omega$ for every $k$, it suffices to show that the sequence is nondecreasing in $\Omega$. We proceed by induction.
		
		It is clear that $u_0\leq u_1$ in $\Omega$. Suppose that $u_{k-1}\leq u_k$. Then, for any $x\in\Omega$,
		\begin{equation*}
			u_{k+1}(x)
			= \widehat{\C_\epsilon}[u_k](x)-\frac{\epsilon^2}{2}f(x)
			\geq \widehat{\C_\epsilon}[u_{k-1}](x)-\frac{\epsilon^2}{2}f(x)
			= u_k(x).
		\end{equation*}
		Thus, $\{u_k\}$ is nondecreasing.
		
		\noindent\textit{Step 2: Each function $u_k$ is a subsolution of \eqref{DPP}.}
		Since $u_k=g$ in $\Rd\setminus\Omega$, it suffices to verify the subsolution inequality in $\Omega$. By Step 1, for any $x\in\Omega$,
		\begin{equation*}
			u_k(x)
			=\widehat{\C_\epsilon}[u_{k-1}](x)-\frac{\epsilon^2}{2}f(x)
			\leq \widehat{\C_\epsilon}[u_k](x)-\frac{\epsilon^2}{2}f(x).
		\end{equation*}
		Therefore, $u_k$ is a subsolution of \eqref{DPP}.
		
		\noindent\textit{Step 3: The sequence $\{u_k\}$ converges pointwise.}
		Let $\ua$ be a supersolution of \eqref{DPP}. Since $\{u_k\}$ is monotone nondecreasing and, by the comparison principle, bounded above by $\ua$, there exists a measurable function $u$ such that
		\begin{equation*}
			u_k\to u \qquad \mbox{pointwise in }\Rd.
		\end{equation*}
		
		\noindent\textit{Step 4: The sequence $\{u_k\}$ converges uniformly.}
		Again, it suffices to establish convergence in $\Omega$, since all the functions in the sequence are equal to $g$ outside $\Omega$. Let $m>n$. Then, for any $x=(x',x'')\in\Omega$, we have
		\begin{align*}
			\Big|u_{m+1}(x)-u_{n+1}(x)\Big|
			&= \widehat{\C_\epsilon}[u_m](x)
			-\widehat{\C_\epsilon}[u_n](x)
			\\
			&\leq \frac{1}{2}
			\sup_{b\in[m_b,M_b]}
			\Big\{
			\widehat{\A_{\epsilon\sqrt{2(p-1)b}}^p}[u_m](x',x'')
			-\widehat{\A_{\epsilon\sqrt{2(p-1)b}}^p}[u_n](x',x'')
			\Big\}
			\\
			&\quad
			+\frac{1}{2}
			\sup_{b\in[m_b,M_b]}
			\Big\{
			\widehat{\A_{\epsilon\sqrt{2(q-1)b}}^q}[u_m](x',x'')
			-\widehat{\A_{\epsilon\sqrt{2(q-1)b}}^q}[u_n](x',x'')
			\Big\}
			\\
			&\leq \frac{1}{2}
			\max\Bigg[
			\alpha_p\sup_{\Rd}|u_m-u_n|
			+(1-\alpha_p)\frac{p-2}{p+d}
			\sup_{\Rd}|u_m-u_n|
			\\
			&\quad
			+(1-\alpha_p)\frac{d+2}{p+d}
			\left|B_{(4(p-1)(p+d)M_b)^{1/2}
				\epsilon m_p^{-\alpha_p/2}}\right|^{-1}
			\int_{\Omega}|u_m(y)-u_n(y)|\,\dy,
			\\
			&\qquad
			\left|B_{(2(p-1)M_b)^{3/2}\epsilon^3}\right|^{-1}
			\int_{\Omega}|u_m(y)-u_n(y)|\,\dy
			\Bigg]
			\\
			&\quad 
			+\frac{1}{2}\max\Bigg[
			\alpha_q\sup_{\Rd}|u_m-u_n|
			+(1-\alpha_q)\frac{q-2}{q+d}
			\sup_{\Rd}|u_m-u_n|
			\\
			&\quad
			+(1-\alpha_q)\frac{d+2}{q+d}
			\left|B_{(4(q-1)(q+d)M_b)^{1/2}
				\epsilon m_q^{-\alpha_q/2}}\right|^{-1}
			\int_{\Omega}|u_m(y)-u_n(y)|\,\dy,
			\\
			&\qquad
			\left|B_{(2(q-1)M_b)^{4}\epsilon^8}\right|^{-1}
			\int_{\Omega}|u_m(y)-u_n(y)|\,\dy
			\Bigg]
			\\
			&\leq
			\Bigg(
			\frac{1}{2}
			\left(
			\alpha_p
			+(1-\alpha_p)\frac{p-2}{p+d}
			\right)
			+\frac{1}{2}
			\left(
			\alpha_q
			+(1-\alpha_q)\frac{q-2}{q+d}
			\right)
			\Bigg)
			\sup_{\Rd}|u_m-u_n|
			\\
			&\quad
			+C\|u_m-u_n\|_{\LL^1(\Omega)}.
		\end{align*}
		
		Note that the coefficient of the supremum term, which we denote by $\eta$, is strictly less than $1$. Letting $m\to\infty$ and using the arbitrariness of $x$, we obtain
		$$
		\sup_{\Rd}|u-u_{n+1}|
		\leq
		\eta\sup_{\Rd}|u-u_n|
		+C\|u-u_n\|_{\LL^1(\Omega)}.
		$$
		Since $u_n\to u$ pointwise and the sequence $(u_n)_{n\geq 1}$ is uniformly bounded, the Dominated Convergence Theorem implies that the $\LL^1$-norm of the second term converges to $0$. Consequently,
		\begin{equation*}
			\lim_{n\to\infty}\sup_{\Rd}|u-u_n|
			=
			\lim_{n\to\infty}\sup_{\Rd}|u-u_{n+1}|
			\leq
			\eta\lim_{n\to\infty}\sup_{\Rd}|u-u_n|.
		\end{equation*}
		Since $\eta<1$, this implies
		$$
		\lim_{n\to\infty}\sup_{\Rd}|u-u_n|=0,
		$$
		and hence $u_n\to u$ uniformly in $\Rd$.
		
		\noindent\textit{Step 5: The limit $u$ is a solution of \eqref{DPP}.}
		Since $u_k=g$ in $\Rd\setminus\Omega$ for every $k$, we have $u(x)=g(x)$ for every $x\in\Rd\setminus\Omega$. For $x\in\Omega$, we have
		\begin{equation*}
			u(x)
			=\lim_{k\to\infty}u_k(x)
			=\lim_{k\to\infty}
			\widehat{\C_\epsilon}[u_{k-1}](x)
			-\frac{\epsilon^2}{2}f(x)
			=\widehat{\C_\epsilon}[u](x)
			-\frac{\epsilon^2}{2}f(x).
		\end{equation*}
		Therefore, $u$ satisfies \eqref{DPP}, which completes the proof.
	\end{proof}
	
	\subsection{Convergence as $\epsilon\to 0^+$}\label{sec4.2}
	
	In this second part, we show that the solution to~\eqref{DPP} converges uniformly to the viscosity solution of the problem \eqref{anosotropic-problem}. Before proceeding, we need to construct smooth barriers for $\ue$ that are independent of $\epsilon$. To this end, we first establish the following computation.
	
	\begin{lem}\label{lemma_p-q_computation}
		Let $1\leq n<d$, $p>q>2$, and $x=(x',x'')$. For any $z\in\Rd$ and $\alpha>0$, define
		\[
		v(x)=|x-z|^{-\alpha}.
		\]
		Then the $p$-Laplacian with respect to the variables $x'$ is given by
		\begin{equation*}
			\begin{split}
				\plap^{x'}v(x)
				=\alpha^{p-1}
				\frac{|x'-z'|^{p-2}}
				{|x-z|^{(\alpha+2)(p-1)+2}}
				\Bigg(
				\alpha(p-1)|x'-z'|^2\\
				+(p-n)|x'-z'|^2
				-(n+p-2)|x''-z''|^2
				\Bigg).
			\end{split}
		\end{equation*}
		Similarly, one can compute that
		\begin{equation*}
			\begin{split}
				\qlap^{x''}v(x)
				=\alpha^{q-1}
				\frac{|x''-z''|^{q-2}}
				{|x-z|^{(\alpha+2)(q-1)+2}}
				\Bigg(
				\alpha(q-1)|x''-z''|^2\\
				+(q-(d-n))|x''-z''|^2
				-(d-n+q-2)|x'-z'|^2
				\Bigg).
			\end{split}
		\end{equation*}
	\end{lem}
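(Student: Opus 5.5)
We prove the formula by a direct computation in the variables $x'$, treating $x''$ as a parameter. The only two ingredients are the chain rule for radial functions and the divergence identity for $|w|^{p-2}w$. The second formula is the same computation with the roles of $(x',p,n)$ and $(x'',q,d-n)$ interchanged, so we only carry out the first.

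Set $r=|x-z|$, $w=x'-z'\in\RR^n$ and $s=|w|$, and note that $r^2=s^2+|x''-z''|^2$. Since $\gr_{x'} r = w/r$, we get
\[
\gr_{x'}v=-\alpha r^{-\alpha-2}w,\qquad |\gr_{x'}v|^{p-2}\gr_{x'}v=-\alpha^{p-1}r^{-\beta}s^{p-2}w,
\]
where $\beta:=(\alpha+2)(p-1)$. Hence
\[
\plap^{x'}v=-\alpha^{p-1}\,\nabla_{x'}\cdot\left(r^{-\beta}s^{p-2}w\right).
\]

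We compute this divergence with the product rule, using two facts.
\begin{enumerate}
\item The identity $\nabla_{x'}\cdot(s^{p-2}w)=(n+p-2)s^{p-2}$ in $\RR^n$.
\item The gradient $\gr_{x'}(r^{-\beta})=-\beta r^{-\beta-2}w$, which gives $\gr_{x'}(r^{-\beta})\cdot s^{p-2}w=-\beta r^{-\beta-2}s^{p}$.
\end{enumerate}
Together these yield
\[
\nabla_{x'}\cdot\left(r^{-\beta}s^{p-2}w\right)=r^{-\beta-2}s^{p-2}\left((n+p-2)r^2-\beta s^2\right).
\]

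It remains to substitute $r^2=s^2+|x''-z''|^2$ and collect terms. The coefficient of $s^2$ becomes $\beta-(n+p-2)=\alpha(p-1)+(p-n)$, and the coefficient of $|x''-z''|^2$ is $-(n+p-2)$. Combined with the prefactor $-\alpha^{p-1}$ and the denominator $r^{\beta+2}=|x-z|^{(\alpha+2)(p-1)+2}$, this is exactly the stated expression.

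The main point requiring care is the bookkeeping of signs and exponents. There is no real obstacle, since $v$ is smooth away from $z$ and the formula is only claimed there. When $s=0$ we have $p>2$, so the vector field $|\gr_{x'}v|^{p-2}\gr_{x'}v$ is still $C^1$ and the formula remains valid there, with both sides equal to zero.
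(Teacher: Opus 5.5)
Your proof is correct and is essentially the paper's computation: you compute $\nabla_{x'}v$ and $|\nabla_{x'}v|^{p-2}\nabla_{x'}v$, take the divergence with the product rule, and substitute $|x-z|^2=|x'-z'|^2+|x''-z''|^2$. Your closing remark is a small addition the paper does not make. The paper only claims the formula for $x\neq z$, whereas you note that at points with $x'=z'$ the formula still holds because $p>2$, so both sides vanish.
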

	
	The computations can be found in Appendix~\ref{apenA}.
	
	The construction of such barriers is precisely where a certain degree of regularity of the boundary of the domain becomes necessary. In particular, we shall assume that the domain satisfies the uniform exterior ball condition.
	
	\begin{thm}\label{barreras}
		Let $d\geq 2$, $p>q>2$, $\epsilon\in(0,1)$, and let $\Omega\subset\Rd$ be a bounded domain satisfying the uniform exterior ball condition. Assume that \eqref{m-M} and \eqref{m-M-b} hold for both $p$ and $q$. Let $\ue$ be the solution of \eqref{DPP} with data $f\in C(\overline{\Omega})$ and $g\in C_b(\Rd\setminus\Omega)$. Then
		$$
		\|\ue\|_{\LL^\infty(\Rd)}\leq C.
		$$
		Moreover, for any $x_0\in\partial\Omega$, we have
		$$
		\limsup_{\epsilon\to0\, ,\, x\to x_0}\ue(x)\leq g(x_0)
		\qquad\mbox{and}\qquad
		\liminf_{\epsilon\to0\, ,\, x\to x_0}\ue(x)\geq g(x_0).
		$$
	\end{thm}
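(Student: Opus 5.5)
The plan has two parts: a uniform $\LL^\infty$ bound, and boundary barriers independent of $\epsilon$. Both rest on the comparison principle, Theorem~\ref{comparison}, and Remark~\ref{remark-boundary}.

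\emph{Uniform bound.} I will reuse the subsolution construction from the existence lemma, but with $L$ and $T$ chosen independently of $\epsilon$. Take $\ub(x)=\ee^{Lx_1}-T$ on a large ball $B_R(0)\supset\Omega_1$, and $\ub=-\|g\|_\infty$ outside. This function is smooth in $x'$ (recall $1\le n$), with $\gr\ub\neq0$. Instead of the crude lower bound used in the existence lemma, I will apply the $C^3$ expansion of Theorem~\ref{exp-c3-promedio}. The error bounds \eqref{error-B-grad} and \eqref{error-B-promedio} are uniform on $\overline{B_R}$ because $|\gr\ub|\ge L\ee^{-LR}>0$ there. This gives
\[
\widehat{\C_\epsilon}[\ub]-\ub\ge \tfrac{\epsilon^2}{2}\big(\plap^{x'}\ub+\qlap^{x''}\ub-o(1)\big).
\]
Here $\plap^{x'}\ub=(p-1)L^p\ee^{(p-1)Lx_1}$ and $\qlap^{x''}\ub=0$. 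Fixing $L$ so that $(p-1)L^p\ee^{-(p-1)LR}>2\|f\|_\infty$, and $\epsilon$ small, $\ub$ is a subsolution of \eqref{DPP}. Choosing $T$ so that $\ub\le -\|g\|_\infty$ on $B_R$ and applying comparison gives $\ue\ge\ub\ge -C$. The upper bound follows symmetrically with $-\ub$. For $\epsilon$ not small, the bound from the existence lemma suffices, since only small $\epsilon$ matters.

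\emph{Boundary barriers.} Fix $x_0\in\partial\Omega$ and $\eta>0$. By the exterior ball condition there is $B_\delta(z)\subset\Rd\setminus\Omega$ with $x_0\in\partial B_\delta(z)$. The natural barrier is
\[
\ua(x)=g(x_0)+\eta+K\big(\delta^{-\alpha}-|x-z|^{-\alpha}\big)\quad(\text{possibly }+\kappa|x-z|^2\text{-type correction}).
\]
The goal is $\plap^{x'}(-v)+\qlap^{x''}(-v)\le -\|f\|_\infty-1$ uniformly on $\Omega$, with $v=|x-z|^{-\alpha}$, using Lemma~\ref{lemma_p-q_computation}. This is the main obstacle. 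Each term vanishes where $x'=z'$ or $x''=z''$, respectively. The bracket $\alpha(p-1)|x'-z'|^2+(p-n)|x'-z'|^2-(n+p-2)|x''-z''|^2$ is positive for large $\alpha$ only when $|x''-z''|\lesssim\sqrt{\alpha}|x'-z'|$. The two bracket conditions together cover every direction once $\alpha$ is large, since at least one of $|x'-z'|,|x''-z''|$ is comparable to $|x-z|\ge\delta$. The difficulty is that the term with the negative bracket can dominate: the homogeneities $p\ne q$ differ, so the magnitudes do not match.

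I would first try taking $\alpha$ large and multiplying by $K$ large. Since $K$ enters as $K^{p-1}$ versus $K^{q-1}$ and $p>q$, the $p$-part dominates wherever its bracket is positive. The cones where only the $q$-bracket is positive need a separate check: there $|x'-z'|\ll|x''-z''|$, so the $p$-term is small, with weight $|x'-z'|^{p-2}$. If this direct approach fails, the fallback is to replace $v$ by a sum of two radial profiles, one in $x'$ and one in $x''$, or to use $\ee^{-L|x-z|}$-type profiles whose $\plap^{x'}$ is controlled. Either way the target is a strict supersolution uniformly on $\overline{\Omega_1}\cap B_{R}$, with gradient in $x'$ and $x''$ nonvanishing where needed. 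Points with $x'=z'$ can be avoided by taking the gradient of the sum nondegenerate, or handled with the gradient-free error bound \eqref{error-B-gr-not}.

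\emph{Conclusion.} Once the strict supersolution is in hand, Theorem~\ref{exp-c3-promedio} with uniform errors makes $\ua$ a supersolution of \eqref{DPP} for small $\epsilon$. By continuity of $g$ at $x_0$ and the choice of $K$ large relative to $\sup|g|$, we have $\ua\ge g$ on $\partial\Omega_\epsilon$, which is all Remark~\ref{remark-boundary} requires. Comparison then gives $\ue\le\ua$, so
\[
\limsup_{\epsilon\to0,\,x\to x_0}\ue(x)\le g(x_0)+\eta,
\]
and letting $\eta\to0$ yields the upper boundary inequality. The $\liminf$ inequality follows symmetrically with $-\ua$ and $-g$.
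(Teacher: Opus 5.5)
\textbf{There is a genuine gap.} The proposal's central step, the boundary barrier, is announced but never built: your text says ``if this direct approach fails, the fallback is\dots''.

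\textbf{Your first attempt is the paper's route, and it fails as stated.} The single profile $v=|x-z|^{-\alpha}$, with $\alpha$ large and then $K$ large, is exactly what the paper does. It takes $\wb=\kappa(|x-z|^{-\alpha}-R^{-\alpha})+g(x_0)-\eta$ (its $R$ is your $\delta$), uses Lemma~\ref{lemma_p-q_computation}, splits into ``$|x'-z'|^2,|x''-z''|^2\ge R^2/5$'' versus ``one of them nearly $0$'', fixes $\alpha$, and then takes $\kappa$ large. The danger you flagged is real, and that case split does not cover it either, so following the paper does not close the hole.

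Suppose $x_0'=z'$, i.e.\ $z-x_0$ lies in the $x''$-subspace. At every $x\in\Omega$ within distance $\ll\delta/\sqrt{\alpha}$ of $x_0$ with $x'\neq z'$, the $p$-bracket is negative. Hence, for fixed $\alpha$, the sum $K^{p-1}\plap^{x'}v+K^{q-1}\qlap^{x''}v$ is negative there once $K$ is large. Quantitatively, take $|x-z|\approx\delta$ and $|x'-z'|\approx c\,\delta/\sqrt{\alpha}$:
\begin{itemize}
\item the $p$-term is of order $-K^{p-1}\alpha^{p/2}\delta^{-\alpha(p-1)-p}$, because the weight $|x'-z'|^{p-2}$ only buys a fixed factor $\alpha^{-(p-2)/2}$;
\item the $q$-term is of order $K^{q-1}\alpha^{q}\delta^{-\alpha(q-1)-q}$.
\end{itemize}
Positivity therefore forces the \emph{upper} bound
\[
K^{p-q}\lesssim \alpha^{q-\frac p2}\,\delta^{(\alpha+1)(p-q)} .
\]
This is in direct tension with the lower bounds on $K$ you need for $\|f\|_\infty$ and for $\ua\ge g$ on $\partial\Omega_\epsilon$ away from $x_0$.

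\textbf{What closing the gap requires.} First, localize: compare in $\Omega\cap B_\rho(x_0)$ and use your uniform bound on the inner boundary. Globally, the mirror cone near $\{x''=z''\}$ far from $x_0$ imposes a lower bound on $K$ growing like $|x-z|^{\alpha+1}$. Then treat boundary points by type.
\begin{itemize}
\item If $z-x_0$ has nonzero components in both subspaces, then for $\alpha$ large both brackets are positive on $B_\rho(x_0)$, both partial gradients stay away from $0$, and ``$\alpha$, then $K$'' works.
\item If $x_0''=z''$, the $p$-term wins for $K$ large.
\item If $x_0'=z'$, you need either a coupled choice or a different (e.g.\ anisotropic) barrier. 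Writing $K=\lambda\delta^{\alpha}$, you need $\lambda\lesssim\alpha^{(2q-p)/(2(p-q))}\delta$. Meanwhile the $q$-term decays like $(\delta/|x-z|)^{\alpha(q-1)}$ away from the sphere, so compatibility with the other constraints is not automatic.
\end{itemize}

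\textbf{The error terms also fail at the two non-generic types of points.} There the relevant partial gradient of the barrier vanishes at $x_0$, and \eqref{error-B-gr-not} does not give what you claim. Estimate \eqref{error-B-grad} is $o(1)$ only where $|\gr_{x'}\ph|\gg\epsilon^{1/(3p-4)}$. Estimate \eqref{error-B-gr-not} contains $\epsilon^{-2/3}|\gr_{x'}\ph(x)|^{\frac{p-2}{p-1}}$, so it is $o(1)$ only where $|\gr_{x'}\ph|\ll\epsilon^{\frac{2(p-1)}{3(p-2)}}$. On the band in between you need a separate one-sided estimate (with $q$ and $\gr_{x''}$ when $x_0''=z''$).

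\textbf{A small slip in the $\LL^\infty$ bound.} Since $\sup_{L>0}L^p\ee^{-(p-1)LR}=\big(\frac{p}{(p-1)\ee R}\big)^p$, the condition $(p-1)L^p\ee^{-(p-1)LR}>2\|f\|_\infty$ cannot be arranged in general. Use $\kappa\,\ee^{Lx_1}$ together with the $(p-1)$-homogeneity of $\plap^{x'}$, or shift coordinates so that $x_1\ge 0$ on $\Omega$.
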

	
	\begin{proof}
		Given $x_0=(x_0',x_0'')\in\partial\Omega$, the uniform exterior ball condition ensures the existence of $R>0$ and $z=(z',z'')\in\Rd\setminus\Omega$ such that $\overline{B_R(z)}\cap\overline{\Omega}=\{x_0\}$.
		
		For any $\eta>0$, we define the lower barrier
		$$
		\wb(x)=\kappa\left(|x-z|^{-\alpha}-R^{-\alpha}\right)+g(x_0)-\eta,
		$$
		where $\kappa$ and $\alpha$ are positive constants to be determined. We shall show that there exists $\beps\in(0,1)$ such that, for all $\epsilon\in(0,\beps)$, the function $\wb$ is a subsolution of \eqref{DPP} and satisfies $\wb\leq\ue$.
		
		Since $\wb$ is smooth in $\Omega$, we may apply Theorem~\ref{exp-c3-promedio}. Furthermore, by Lemma~\ref{lemma_p-q_computation}, we obtain
		\begin{align*}
			\frac{\widehat{\C_\epsilon}[\wb](x)-\wb(x)}{\epsilon^2/2}
			&=\plap^{x'}\wb(x',x'')
			+\qlap^{x''}\wb(x',x'')
			+E_p(\epsilon,\wb,x)+E_q(\epsilon,\wb,x)\\
			&=\left|x-z\right|^{-(\alpha+2)(p-1)-2}\kappa^{p-1}
			\Big(
			(p-1)\alpha^p\left|x'-z'\right|^p\\
			&\qquad\qquad\qquad
			+(p-n)\alpha^{p-1}\left|x'-z'\right|^p\\
			&\qquad\qquad\qquad
			-(n+p-2)\alpha^{p-1}
			\left|x'-z'\right|^{p-2}
			\left|x''-z''\right|^2
			\Big)\\
			&\quad
			+\left|x-z\right|^{-(\alpha+2)(q-1)-2}\kappa^{q-1}
			\Big(
			(q-1)\alpha^q\left|x''-z''\right|^q\\
			&\qquad\qquad\qquad
			+(q-(d-n))\alpha^{q-1}\left|x''-z''\right|^q\\
			&\qquad\qquad\qquad
			-(d-n+q-2)\alpha^{q-1}
			\left|x''-z''\right|^{q-2}
			\left|x'-z'\right|^2
			\Big)\\
			&\quad
			+E_p(\epsilon,\wb,x)+E_q(\epsilon,\wb,x).
		\end{align*}
		
		We now observe that
		$$
		R^2\leq |x'-z'|^2+|x''-z''|^2\leq C.
		$$
		Consequently, one of the following two situations may occur. Either both terms are uniformly positive; for instance, $|x'-z'|^2,\ |x''-z''|^2\geq \frac{R^2}{5}$.
		In this case, the preceding expression is dominated by
		$$
		2(p-1)\alpha^p|x'-z'|^p
		\qquad\mbox{and}\qquad
		2(q-1)\alpha^q|x''-z''|^q,
		$$
		respectively, and hence we may choose $\alpha$ sufficiently large to ensure that the total quantity is positive.
		
		The other possibility is that one of the two terms vanishes or becomes arbitrarily small; for example, $|x''-z''|\sim0$. In this case, the other term remains uniformly positive, and the dominant contribution to the total sum is
		$$
		2(p-1)\alpha^p|x'-z'|^p.
		$$
		Hence, once again, by taking $\alpha$ sufficiently large, we ensure that the total quantity is positive.
		
		Finally, since $f$ is continuous on $\overline{\Omega}$ and therefore bounded, we may choose $\kappa$ sufficiently large so that
		$$
		\frac{\widehat{\C_\epsilon}[\wb](x)-\wb(x)}{\epsilon^2/2}
		\geq\|f\|_{\LL^\infty(\Omega)}+1.
		$$
		
		Let us now show that $\wb(x)\leq g(x)$ in $\partial\Omega_\epsilon:=(\Omega+B_{\epsilon^\gamma})\setminus\Omega,$
		where $\epsilon^\gamma$ is the largest radius appearing in $\widehat{\B_\epsilon^p}$ and $\widehat{\B_\epsilon^q}$; see Remark~\ref{remark-boundary}. We first prove that $\wb(x)\leq g(x)$ on $\partial\Omega$. Clearly,
		$$
		\wb(x)\leq g(x_0)-\eta
		\qquad\mbox{for all }x\in\partial\Omega.
		$$
		Since $g\in C_b$, there exists $\rho>0$, depending only on $g$ and $\eta$, such that
		$$
		\wb(x)\leq g(x)-\frac{\eta}{2}
		\qquad\mbox{for all }x\in B_\rho(x_0)\cap\partial\Omega.
		$$
		For the remaining boundary points, we may again choose $\kappa$ sufficiently large so that
		$$
		\wb(x)\leq g(x)-\frac{\eta}{2}
		\qquad\mbox{for all }x\in
		\partial\Omega\setminus\big(B_\rho(x_0)\cap\partial\Omega\big).
		$$
		Then, the two previous inequalities hold for all $x\in\partial\Omega$.
		We now extend this result to a neighborhood of the boundary, namely $\partial\Omega_\epsilon$. Let $x\in\partial\Omega_\epsilon$. Then there exists $\widehat{x}\in\partial\Omega$ such that
		$|x-\widehat{x}|\leq\epsilon^\gamma.$
		Therefore, using the smoothness of $\wb$ and the continuity of $g$, we obtain
		\begin{equation*}
			\wb(x)
			\leq \wb(\widehat{x})
			+\|\gr\wb\|_{\LL^\infty(\Omega_\epsilon)}\epsilon^\gamma
			\leq g(\widehat{x})-\frac{\eta}{4}
			\leq g(x)-\frac{\eta}{8}.
		\end{equation*}
		Here, we have taken $\epsilon\in(0,\beps)$ for some $\beps>0$ depending on $\eta$.
		
		By the comparison principle, Theorem~\ref{comparison}, we obtain
		$$
		\wb(x)\leq\ue(x)
		\qquad\mbox{for all }x\in\overline{\Omega},
		$$
		and hence
		$$
		\liminf_{\epsilon\to0\, ,\, x\to x_0}\ue(x)
		\geq
		\liminf_{\epsilon\to0\, ,\, x\to x_0}\wb(x)
		=g(x_0)-\frac{\eta}{8}.
		$$
		Since $\eta>0$ is arbitrary, the desired result follows. 
		
		The upper bound is obtained by repeating the argument with the upper barrier
		$$
		\wa(x)=-\kappa\left(|x-z|^{-\alpha}-R^{-\alpha}\right)+g(x_0)+\eta.
		$$
	\end{proof}
	
	A final key observation in the convergence argument is the monotonicity of the operator $\widehat{\C_\epsilon}$, inherited from the monotonicity of $\widehat{\A_\epsilon^p}$. More precisely, given two bounded measurable functions satisfying $\ph_2\leq \ph_1$, a bounded measurable function $\ph$, and any constant $\xi\in\RR$, the following identities hold:
	$$\widehat{C_\epsilon}[\ph_1]\geq \widehat{C_\epsilon}[\ph_2]\qquad \mbox{and}\qquad \widehat{C_\epsilon}[\ph+\xi]=\widehat{C_\epsilon}[\ph]+\xi.$$
	
	With all these tools in place, we are now in a position to prove the convergence, as $\epsilon \to 0^+$, of the solutions of the DPP to the viscosity solution of the elliptic problem.
	\begin{thm}\label{convergence-elliptic}
		Let $d\geq 2$, $p>q>2$, $\epsilon\in (0,1)$, and let $\Omega\subset \Rd$ be a bounded domain satisfying the uniform exterior ball condition. Assume that \eqref{m-M} and \eqref{m-M-b} hold for both $p$ and $q$. If $\ue$ is the solution of \eqref{DPP} with data $f\in C(\overline{\Omega})$ and $g\in C_b(\Rd\setminus \Omega)$, define
		\begin{equation*}
			\ua(x)=\limsup_{\epsilon\to 0 \, \, \, y\to x}\ue(y)\qquad\mbox{and}\qquad \ub(x)=\liminf_{\epsilon\to 0 \, \, \, y\to x} \ue(y).
		\end{equation*}
		Then, $\ua=\ub$. In particular, if $u=\ua=\ub$, then
		\begin{equation*}
			\ue \longrightarrow u \qquad \mbox{ as }\epsilon\to 0^+\quad \mbox{uniformly in }\overline{\Omega}.
		\end{equation*}
		This limit coincides with the viscosity solution $u$ of~\eqref{anosotropic-problem}.
	\end{thm}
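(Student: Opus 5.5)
This will be proved with the Barles--Souganidis half-relaxed limits method. By Theorem~\ref{barreras}, the family $\{\ue\}$ is uniformly bounded in $\Rd$, so $\ua$ and $\ub$ are finite. By construction $\ua$ is upper semicontinuous, $\ub$ is lower semicontinuous, and $\ub\leq\ua$. The boundary part of Theorem~\ref{barreras} also gives $\ua\leq g\leq\ub$ on $\partial\Omega$. The remaining steps are: (1) $\ua$ is a viscosity subsolution and $\ub$ a viscosity supersolution of $\plap^{x'}u+\qlap^{x''}u=f$ in $\Omega$; (2) a comparison principle for semicontinuous viscosity sub/supersolutions of \eqref{anosotropic-problem} with the same boundary data yields $\ua\leq\ub$. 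Then $\ua=\ub=:u$ is continuous on $\overline\Omega$ and attains $g$. Equality of the half-relaxed limits gives uniform convergence on the compact set $\overline\Omega$ by a standard contradiction argument.

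For step (1), take $\ua$ and $\ph\in C^2$ (one may use $C^3$, or approximate) such that $\ua-\ph$ has a strict maximum at $x_0\in\Omega$, with $\ua(x_0)=\ph(x_0)$. Choose $\epsilon_j\to0$ and points $x_j\to x_0$ where $\ue-\ph$ is almost maximal, say $\ue(y)-\ph(y)\leq \ue(x_j)-\ph(x_j)+\epsilon_j^3$ in a neighborhood, and such that $u_{\epsilon_j}(x_j)\to\ua(x_0)$. Set $\xi_j=\ue(x_j)-\ph(x_j)$. Monotonicity and translation invariance of $\widehat{\C_\epsilon}$ give $\widehat{\C_{\epsilon_j}}[\ue]\leq \widehat{\C_{\epsilon_j}}[\ph]+\xi_j+\epsilon_j^3$ at $x_j$. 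The balls involved shrink to zero (by \eqref{HT}, \eqref{HT-b}), so only a neighborhood of $x_j$ matters. Substituting into the DPP \eqref{DPP} gives
\[
\ph(x_j)\leq \widehat{\C_{\epsilon_j}}[\ph](x_j)-\tfrac{\epsilon_j^2}{2}f(x_j)+\epsilon_j^3 .
\]
Applying Theorem~\ref{exp-c3-promedio} and dividing by $\epsilon_j^2/2$, we get $0\leq \plap^{x'}\ph(x_j)+\qlap^{x''}\ph(x_j)-f(x_j)+o(1)$, which yields the subsolution inequality at $x_0$. The supersolution case for $\ub$ is symmetric.

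The main obstacle is that the error bounds must be uniform along the moving points $x_j$. When $\gr^{x'}\ph(x_0)\neq0$ and $\gr^{x''}\ph(x_0)\neq0$, estimate \eqref{error-B-grad} is locally uniform and the argument closes. When a partial gradient vanishes at $x_0$, I will use \eqref{error-B-gr-not}. Its dangerous term is $\epsilon^{-2/3}\|D^2\ph\|^{1/(p-1)}|\gr\ph(x_j)|^{(p-2)/(p-1)}$, so I will treat it as in \cite{delTeso+Fuertes+Rossi_2026}. If $\gr^{x'}\ph(x_0)=0$, then $\plap^{x'}\ph(x_0)=0$ because $p>2$, and I will modify the test function. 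Replace $\ph$ by $\ph+|x-x_0|^4$-type perturbations, or use the equivalent definition of viscosity solutions that requires testing only with functions whose relevant gradients vanish only at the point. This forces the $j$-dependent points to satisfy either $\gr\ph(x_j)=0$, where the bad term vanishes, or $\gr\ph(x_j)\neq0$ with $\gr\ph(x_j)\to0$. I still need to check that these perturbations do not spoil the strict-maximum argument, and that the powers from \eqref{m-M}, \eqref{m-M-b} make the remaining error $o(1)$.

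For step (2), I will rely on a known comparison principle for the anisotropic operator with continuous $f$. One option is the equivalence of viscosity and weak solutions \cite{Ochoa+Valvelde_2024} combined with monotone-operator uniqueness. The alternative is a direct doubling-of-variables argument, exploiting that the operator is degenerate elliptic and separable in $x'$, $x''$. If $f$ has no sign this is delicate. The fallback is to perturb $\ua$ to a strict subsolution, for instance $\ua+\delta|x|^2$-type corrections scaled for both homogeneities, before comparing.
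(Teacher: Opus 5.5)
Your route is the paper's: half-relaxed limits, Barles--Souganidis quasi-maximizers, monotonicity and translation invariance of $\widehat{\C_\epsilon}$, Theorem~\ref{exp-c3-promedio}, and a comparison principle. For comparison the paper invokes Lemma~\ref{com-vis-elliptic}, i.e.\ your doubling-variables fallback. The Ochoa--Valverde equivalence you list first needs extra regularity, as the paper notes in Appendix~\ref{apenB}, so it is not available here. Your nondegenerate case is fine.

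The gap is the case where a partial gradient of $\ph$ vanishes at $x_0$, and the fix you propose does not work. Perturbing by $|x-x_0|^4$, or testing only with functions whose gradient vanishes only at $x_0$, gives at best $\nabla_{x'}\ph(x_j)\neq0$ with $\nabla_{x'}\ph(x_j)\to0$. It gives no rate relative to $\epsilon_j$, since you do not control the quasi-maximizers. In that regime:
\begin{itemize}
\item \eqref{error-B-grad} tends to zero only if $|\nabla_{x'}\ph(x_j)|\gg\epsilon_j^{\frac{1}{3p-4}}$, because of the term $\epsilon^{\frac{1}{3p-4}}|D^2\ph(x)|^2/|\nabla\ph(x)|$;
\item \eqref{error-B-gr-not} tends to zero only if $|\nabla_{x'}\ph(x_j)|\ll\epsilon_j^{\frac{2(p-1)}{3(p-2)}}$, because of the term $\epsilon^{-2/3}|\nabla\ph(x)|^{\frac{p-2}{p-1}}$.
\end{itemize}
Since $\frac{2(p-1)}{3(p-2)}>\frac{1}{3p-4}$, there is a range of gradient sizes where neither bound is small. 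At the crossover both are a negative power of $\epsilon_j$, so no bookkeeping with \eqref{m-M} and \eqref{m-M-b} will make the remainder $o(1)$. The paper's proof asserts this uniformity by pointing to the same two formulas, so it passes over this point as well.

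The missing idea is to bypass the $\B$-level error bounds. Their negative power of $\epsilon$ comes only from estimating $(p-1)b\,|E_1|$ by $(p-1)M_b|E_1|$ in the proof of Theorem~\ref{exp-c3}. Work at the level of $\A$ instead.

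For $\ph\in C^3$, set $\eta_j:=\sup_{b\in[m_b,M_b]}|E_1(\epsilon_j\sqrt{2(p-1)b},\ph,x_j)|$. Then $\eta_j\to0$, because every radius is at most $\epsilon_j\sqrt{2(p-1)M_b}\to0$ and:
\begin{itemize}
\item if $\nabla_{x'}\ph(x_0)\neq0$, use Theorem~\ref{thm.asexp}(b);
\item if $\nabla_{x'}\ph(x_0)=0$, use \eqref{E1-gr-not}, whose leading term is $\|D^2\ph\|^{\frac{1}{p-1}}|\nabla_{x'}\ph(x_j)|^{\frac{p-2}{p-1}}$ with no negative power of $\epsilon$.
\end{itemize}

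The maps $a\mapsto\sup_{b}\{(p-1)ab-(p-2)b^{\frac{p-1}{p-2}}\}$ and $a\mapsto\inf_{b}\{(p-1)ab+(p-2)b^{\frac{p-1}{p-2}}\}$ are nondecreasing. So the $\delta$-argument from the proof of Theorem~\ref{exp-c2}, run with $\delta=\eta_j$, squeezes $(\B^p_{\epsilon_j}[\ph](x_j)-\ph(x_j))/\epsilon_j^2$ between the corresponding sums evaluated at $a_j\pm\eta_j$, where $a_j=(\plap^{x'}\ph(x_j))^{\frac{1}{p-1}}$. By Lemma~\ref{signed-power-lemma}, these sums equal $(a_j\pm\eta_j)^{p-1}$ up to an error that vanishes uniformly for bounded $a_j$, since $m_b\to0$ and $M_b\to\infty$.

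Hence the quotient is $\plap^{x'}\ph(x_j)+o(1)$, and $\plap^{x'}\ph(x_j)\to\plap^{x'}\ph(x_0)$ because $p>2$. The same argument works for $q$ and for the hatted operators, where the averaging only adds $O(\epsilon^{14})$ to $E_1$. This closes step (1) without modifying the test functions.
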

	\begin{proof}
		By construction, we have $\ub(x)\leq \ua(x)$. Thus, uniform convergence will be established once we prove the reverse inequality. This follows from the comparison principle, see Lemma \ref{com-vis-elliptic} and \cite{Crandall_Ishii_Lions_1992}, provided that we show that $\ua$ is a viscosity subsolution of problem \eqref{anosotropic-problem}.
		
		On the one hand,
		$$\ua(x)=\limsup_{\epsilon\to 0 \, \, \, y\to x}\ue(y)\leq g(x)\qquad \mbox{for all }x\in \partial \Omega.$$
		
		On the other hand, let $x_0=(x_0',x_0'')\in \Omega$, $R>0$, and $\ph\in C^\infty(B_R(x_0))$ be any test function such that $\ph(x_0)=\ua(x_0)$ and $\ph(x)>\ua(x)$ at all other points of the ball. Since $x_0$ is a strict maximum point of $\ua-\ph$, by~\cite{Barles_Souganidis_1991}, there exists a sequence $(\epsilon_n,x_n)\to(0^+,x_0)$ such that $x_n$ are quasi-maximum points of $u_{\epsilon_n}-\ph$, namely,
		\begin{equation}\label{quasi-max-ellip}
			u_{\epsilon_n}(x)-\ph(x)\leq u_{\epsilon_n}(x_n)-\ph(x_n)+\ee^{\frac{1}{\epsilon_n}}.
		\end{equation}
		
		Now, for $n$ sufficiently large, we have $\epsilon_n<\beps$ and $x_n\in \Omega$, and therefore we can apply the asymptotic formulas from Theorem \ref{exp-c3-p_q}. Define $\xi_n=u_{\epsilon_n}(x_n)-\ph(x_n)$. Since $u_{\epsilon_n}$ is a solution of \eqref{DPP}, we have
		\begin{align*}
			0&=\frac{u_{\epsilon_n}(x_n)-\widehat{\C_\epsilon}[u_{\epsilon_n}](x_n)}{\epsilon_n^2/2}+f(x_n)\\
			&\geq \frac{\ph(x_n)+\xi_n -\widehat{\C_\epsilon}[\ph+\xi_n+\ee^{\frac{1}{\epsilon_n}}](x_n)}{\epsilon_n^2/2}+f(x_n)\\
			&=-\plap^{x'}\ph(x_n',x_n'')-\qlap^{x''}\ph(x_n',x_n'')+E(\epsilon_n,\ph,x_n)-\frac{2 \ee^{\frac{1}{\epsilon_n}}}{\epsilon_n^2}+f(x_n).
		\end{align*}
		Here, we have used \eqref{quasi-max-ellip} and Theorem~\ref{exp-c3-promedio}. Furthermore, the error term can be bounded independently of the point $x_n$; see formulas~\eqref{error-B-grad} and~\eqref{error-B-gr-not}. Indeed, from some point onward, all points in the sequence lie in a neighborhood of $x_0$. Taking limit as $n\to\infty$, and using the regularity of $\ph$ and $f$, we conclude that $\ua$ is a viscosity subsolution, namely,
		\begin{equation*}
			\plap^{x'} \ph(x_0',x_0'')+\qlap^{x''}\ph(x_0',x_0'')\geq f(x_0).
		\end{equation*}
		
		In order to see that $\ub$ is a viscosity supersolution, we only need to repeat the same argument as above. The only differences are that
		$$\ub(x)=\liminf_{\epsilon\to 0 \, \, \, y\to x}\ue(y)\geq g(x)\qquad \mbox{for all }x\in \partial \Omega,$$
		and that the sequence of points $(\epsilon_n,x_n)\to (0^+,x_0)$ now satisfies the following condition:
		$$u_{\epsilon_n}(x_n)-\ph(x_n)-\ee^{\frac{1}{\epsilon_n}}\leq u_{\epsilon_n}(x)-\ph(x).$$
	\end{proof}
	

	\section{The associated game}\label{sec5}
	
	The DPP studied in Section \ref{sec4} is related with a game which is described in subsection \ref{sec-main-game}. Our goal here is to show that it has a value that coincides with the solution of the DPP \eqref{DPP}.
	
	The game associated with $\widehat{\C_\epsilon}$, generates a sequence of states (i.e., positions of the token) starting from $x_0\in \Omega$,
	
	$$
	P=\left\{x_0, x_1, \ldots , x_\tau \right\},
	$$
	
	where $x_\tau$ is the final state, with $\tau$ being the stopping time defined as the first time at which $x_\tau\in \Rd\setminus \Omega$.
	Observe that the game terminates after finitely many plays with probability one, since, for $\epsilon>0$, we added a small random movement in the definition of $\widehat{\A_\epsilon^p}$.
	
	The final payoff corresponding to a realization of the game is given by
	$$g(x_\tau)-\frac{\epsilon^2}{2}\sum_{i=0}^{\tau-1}f(x_i),$$
	which consists of the final payoff $g(x_\tau)$ and the running payoff accumulated along the sequence of states $x_0, x_1, \ldots , x_{\tau-1}$.
	
	A strategy for Player~$I$ is a function $S_I$ defined on the partial histories of the game that specifies the choices available to Player~$I$ at each round. On the one hand, if the first fair coin lands heads, it determines the choices of Player~$I$ when the game associated with $\widehat{\B_\epsilon^p}$ is played in balls of $\mathbb{R}^n$ centered at $x=(x',x'')\in \Rd$, with the last coordinates $x''$ kept fixed. On the other hand, if the first coin lands tails, $S_I$ specifies how Player~$I$ acts while playing $\widehat{\B_\epsilon^q}$ in balls of $\mathbb{R}^{d-n}$ centered at $x=(x',x'')\in \Rd$, with the first coordinates $x'$ kept fixed.
	Similarly, a strategy for Player~$II$ is a function $S_{II}$, also defined on the partial histories of the game, that specifies how this player acts at every round.
	
	Once the two players have fixed their strategies, $S_I$ and $S_{II}$, the outcome of the game depends only on the initial position and the sequence of coin tosses involved in each play, which induce a probability measure on the space of sequences of states. With respect to this probability measure, we can compute the expected payoff when playing with strategies $S_I$ and $S_{II}$ and starting from $x_0$, namely,
	
	$$
	\mathbb{E}_{S_I, S_{II}}^{x_0}\left( g(x_\tau)-\epsilon^2\sum_{i=0}^{\tau-1}\frac{f(x_i)}{2} \right).
	$$
	
	Since Player~$I$ seeks to maximize the expected payoff, whereas Player~$II$ seeks to minimize it, the lower and upper values of the game are defined, respectively, by
	
	$$
	u^\epsilon_I(x_0)
	=
	\sup_{S_{I}}
	\left(
	\inf_{S_{II}} \left(
	\mathbb{E}_{S_I, S_{II}}^{x_0}
	\left(g(x_\tau)-\epsilon^2\sum_{i=0}^{\tau-1}\frac{f(x_i)}{2}\right)
	\right)\right)
	$$
	
	and
	
	$$
	u^\epsilon_{II}(x_0)
	=
	\inf_{S_{II}}
	\left(
	\sup_{S_{I}} \left(
	\mathbb{E}_{S_I, S_{II}}^{x_0}
	\left(g(x_\tau)-\epsilon^2\sum_{i=0}^{\tau-1}\frac{f(x_i)}{2}\right)
	\right) \right).
	$$
	
	These represent the best and worst expected outcomes that the two players can guarantee, respectively. In game theory, the game is said to have a value if if these two quantities coincide, that is,
	$u^\epsilon_{I}(x_0) = u^\epsilon_{II}(x_0)$.
	
	Our next result shows that the game described above has a value $\ue(x)$, which is characterized as the unique solution of the problem \eqref{DPP} studied in the previous section.
	
	\begin{thm} \label{teo.juego.p-q}
		Let the assumptions of Theorem \ref{convergence-elliptic} hold. Then, the game associated with $\widehat{\C_\epsilon}$ has a 
		value which is characterized by the unique solution $\ue$ to the \eqref{DPP}, that is
		$$\ue(x)=u_I^\epsilon(x)=u_{II}^\epsilon(x)\qquad\mbox{for all }x\in \Rd.$$
		
		In particular, the value of the game converges uniformly in $\overline{\Omega}$ as $\epsilon\to 0^+$ to viscosity solution to the elliptic problem \eqref{anosotropic-problem}.
	\end{thm}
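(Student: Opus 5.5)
We will follow the standard martingale argument for tug-of-war type games, as in \cite{PSSW2,Manfredi_Parviainen_Rossi_2010}. Let $\ue$ be the unique solution of \eqref{DPP} given by Theorem~\ref{existence}. The goal is the chain $u_{II}^\epsilon\le \ue\le u_I^\epsilon$. Since $u_I^\epsilon\le u_{II}^\epsilon$ always holds, this gives equality. The convergence statement then follows at once from Theorem~\ref{convergence-elliptic}.

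\textbf{Step 1: $\ue\le u_I^\epsilon$.} Fix $\eta>0$ and build an almost-optimal strategy $S_I^0$ for Player~$I$ from the DPP. At a position $x_k$, Player~$I$ makes the following choices.
\begin{itemize}
\item Whenever he chooses $b$ (in $\widehat{\B^p_\epsilon}$ or $\widehat{\B^q_\epsilon}$), he picks $b$ attaining the supremum up to $\eta 2^{-(k+1)}$.
\item Inside $\widehat{\A}$, he picks the larger entry of the max, i.e.\ between starting the round and the small random move in $B_{\epsilon^8}$.
\item When he has to choose a point (in $B_{2\epsilon^2c^{1-\alpha}}$ or in the tug-of-war ball), he picks a point attaining $\sup \ue$ up to $\eta 2^{-(k+1)}$.
\end{itemize}
Fix an arbitrary strategy $S_{II}$ for Player~$II$ and set
\[
M_k=\ue(x_k)-\frac{\epsilon^2}{2}\sum_{i<k}f(x_i)-\eta 2^{-k}.
\]
Conditioning on the history and averaging over the coin tosses, the DPP shows that $M_k$ is a submartingale. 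The key point is that every choice made by Player~$II$ (the value of $c$, the inner $\inf/\sup$, and the penalty $b$) can only increase the conditional expectation relative to the corresponding $\inf$ or $\sup$ in $\widehat{\B}$. This is exactly how the DPP is written, so monotonicity of each nested operator gives the inequality.

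\textbf{Step 2: optional stopping.} We need $\tau<\infty$ almost surely with $\mathbb{E}\tau<\infty$, uniformly in the strategies, so that optional stopping applies with the bounded running payoff. This is the main obstacle. Neither player is forced to make progress: a player can keep choosing the small average, or take tiny $c$ and $b$ steps.

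What rescues us is that every branch contains some nondegenerate uniform random move:
\begin{itemize}
\item either the random move in $B_{\epsilon^8}$,
\item or the noise of the tug-of-war in a ball of radius at least $\sqrt{4(p+d)(p-1)m_b}\,\epsilon M^{-\alpha/2}$,
\end{itemize}
each occurring with probability bounded below by a constant $\theta>0$ independent of the strategies. The argument proceeds as follows.
\begin{itemize}
\item Compare with a barrier $v(x)=-|x-z|^2$, or use a direct argument: in each round, with probability at least $\theta$, the first coordinate increases by a fixed fraction $\delta(\epsilon)>0$ of the radius, whatever the players do.
\item Hence, within $N\approx \operatorname{diam}\Omega/\delta$ consecutive rounds the token exits with probability at least $\theta^N>0$.
\item This yields geometric tails for $\tau$, so $\mathbb{E}\tau<\infty$ for fixed $\epsilon$.
\end{itemize}
Since $\ue$ is bounded, optional stopping gives
\[
\mathbb{E}_{S_I^0,S_{II}}\Bigl[g(x_\tau)-\tfrac{\epsilon^2}{2}\sum f(x_i)\Bigr]\ge \ue(x_0)-\eta,
\]
because $\ue=g$ outside $\Omega$. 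Taking the infimum over $S_{II}$, then the supremum, and letting $\eta\to0$ gives $u_I^\epsilon\ge \ue$.

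\textbf{Step 3: symmetric bound.} Symmetrically, Player~$II$ follows the almost-minimizing choices in the DPP. Then $\ue(x_k)-\frac{\epsilon^2}{2}\sum_{i<k} f(x_i)+\eta2^{-k}$ is a supermatingale against any $S_I$, and the same finiteness of $\tau$ yields $u_{II}^\epsilon\le\ue$.

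One measurability caveat must be handled. The almost-optimal selections must be made measurably, or we must work with the outer expectations used in \cite{PSSW}. We would follow the convention of \cite{libroBR}, where the strategies are Borel by measurable selection; this is possible because $\ue$ is bounded and measurable, and the nested sup/inf are over compact parameter intervals.
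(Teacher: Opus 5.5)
Your argument is the paper's proof: strategies built from near-optimal choices in the DPP, the sub/supermartingale $\ue(x_k)-\frac{\epsilon^2}{2}\sum_{i<k}f(x_i)\mp\eta 2^{-k}$, optional stopping, and convergence taken from Theorem~\ref{convergence-elliptic}. Your Step~2 goes beyond the paper. The paper only asserts that the game ends almost surely, and it applies optional stopping without checking integrability. You observe that in every round, whatever the players do, a uniform move in a ball of radius bounded below (for fixed $\epsilon$) occurs with probability at least $\min\{(1-\alpha_p)(1-\beta_p),(1-\alpha_q)(1-\beta_q)\}$. This gives geometric tails for $\tau$ uniformly in the strategies, which is exactly what is needed.

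There is one bookkeeping correction, which the paper's payoff formula also misses. The transfers $2(p-2)\epsilon^2 b^{\frac{p-1}{p-2}}$ belong to the payoff and must be accumulated in $M_k$: subtracted when Player~$I$ chooses $b$, added when Player~$II$ does. With $M_k$ as you wrote it, the DPP does not give the submartingale inequality. When Player~$II$ chooses $b$, the conditional expectation of $\ue(x_{k+1})$ is only bounded below by $\inf_{b'}\{\widehat{\A^p_{\epsilon\sqrt{2(p-1)b'}}}[\ue](x_k)+2(p-2)\epsilon^2 (b')^{\frac{p-1}{p-2}}\}-2(p-2)\epsilon^2 b^{\frac{p-1}{p-2}}$. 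This deficit is not compensated in general by Player~$I$'s branch. Once the transfers are included, the inequality holds as you describe. Since each transfer is at most $2(p-2)\epsilon^2 M_b^{\frac{p-1}{p-2}}$, your bound on $\mathbb{E}\tau$ still justifies optional stopping. Also add to your strategy Player~$I$'s near-optimal choice of $c$ when Player~$II$ starts a round.
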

	\begin{proof}
		It is clear that $u_I^\epsilon(x_0)\leq u_{II}^\epsilon(x_0)$. Thus, it only remains to prove the reverse inequality.
		
		\noindent \textit{Step 1: We show that $\ue\leq u_I^\epsilon$.} Fix any $\eta>0$. Given any strategy $S_{II}$ for Player~$II$, we choose a strategy $S_I^*$ for Player~$I$ by following the solution of the DPP \eqref{DPP} starting at $x_k$.
		
		\begin{itemize}[leftmargin=*]
			\item \textbf{First toss of a fair coin, with probability $1/2$ of landing heads and probability $1/2$ of landing tails.}\\
			If the outcome is heads, a round of $\widehat{\B_\epsilon^p}$ is played. Otherwise, a round of $\widehat{\B_\epsilon^q}$ is played. Since the descriptions of the two games are analogous, we explain only the case of $\widehat{\B_\epsilon^p}$. The other case is entirely analogous, except that one works with $q>2$ and the last $d-n$ variables $x_k''$, while keeping the first $n$ variables $x_k'$ fixed.
			
			\item \textbf{Second toss of a fair coin, with probability $1/2$ of landing heads and probability $1/2$ of landing tails.}\\
			If the outcome is heads, then Player~$I$ chooses a constant $b^*\in[m_b,M_b]$ such that
			\begin{equation*}
				\widehat{\A_{\epsilon\sqrt{2(p-1)b^*}}^p} [\ue](x_k)-2(p-2)\epsilon^2 (b^*)^{\frac{p-1}{p-2}}\geq \sup_{b\in[m_b,M_b]}\left\{\widehat{\A_{\epsilon\sqrt{2(p-1)b}}^p} [\ue](x_k)-2(p-2)\epsilon^2 b^{\frac{p-1}{p-2}}\right\}-\frac{\eta}{2^{k+2}}.
			\end{equation*}
			After that, Player~$I$ pays Player~$II$ the amount $2(p-2)\epsilon^2 (b^*)^{\frac{p-1}{p-2}}$, and a round of the game associated with $\widehat{\A_{\epsilon \sqrt{2(p-1)b^*}}^p}$ is played.
			
			\item \textbf{Third toss of a fair coin, with probability $1/2$ of landing heads and probability $1/2$ of landing tails.}\\
			If the outcome is heads, Player~$I$ decides to move randomly within a small ball $B_{(2(p-1)b^*)^4\epsilon^8}(x_k)$ if
			\begin{equation}\label{eq1-elliptic-game}
				\begin{split}
					\fint_{B_{(2(p-1)b^*)^4\epsilon^8}(x_k)}\ue(y',x_k'')\, \dy'>\inf_{c\in[m_p,M_p]}&\Biggl\{\alpha_p \sup_{B_{4(p-1)b^* \epsilon^2 c^{1-\alpha_p}}(x_k)}\ue(\cdot,x_k'')\\ &\quad+(1-\alpha_p)M_{\epsilon\sqrt{4(p-1)b^*}c^{-\alpha_p/2}}[\ue](x_k',x_k'')\Biggr\}.
				\end{split}
			\end{equation}
			If this does not hold, Player~$II$ chooses any $c\in[m_p,M_p]$, and Player~$I$ determines the next position as follows:
			
			\item \textbf{Fourth toss of a biased coin, with probability $\alpha_p=(p-2)/(p-1)$ of landing heads and probability $1-\alpha_p=1/(p-1)$ of landing tails.}\\
			If the outcome is heads, Player~$I$ chooses the next position $x_{k+1}^*\in B_{4(p-1)b^*\epsilon^2 c^{1-\alpha_p}}(x_k)$ such that
			\begin{equation}\label{eq2-elliptic-game}
				\ue(x_{k+1}^*)\geq \sup_{B_{4(p-1)b^*\epsilon^2 c^{1-\alpha_p}}(x_k)}\ue(\cdot,x_k'')-\frac{\eta}{2^{k+1}}.
			\end{equation}
			
			If the outcome is tails, a tug-of-war with noise is played. In this case, Player~$I$ chooses $\widehat{x}_{k+1}\in B_{\sqrt{8(p+d)(p-1)b^*} \epsilon c^{-\alpha_p/2}}(x_k)$ such that
			\begin{equation} \label{eq3-elliptic-game}
				\ue(\widehat{x}_{k+1})
				\geq
				\sup_{B_{\sqrt{8(p+d)(p-1)b^*} \epsilon c^{-\alpha_p/2}}(x_k)}
				\ue(\cdot,x_k'')
				-
				\frac{\eta}{2^{k+1}}.
			\end{equation}
			
			\item If, in the third toss of the fair coin, the outcome is tails and Player~$II$ chooses to play, then Player~$I$ chooses a constant $c^*\in[m_p,M_p]$ such that
			\begin{equation}\label{eq4-elliptic-game}
				\begin{split}
					\alpha_p& \inf_{4(p-1)b^* \epsilon^2 (c^*)^{1-\alpha_p}}\ue(\cdot,x_k'')+(1-\alpha_p)M_{\sqrt{4(p-1)b^*}\epsilon (c^*)^{-\alpha_p/2}}[\ue](x_k',x_k'')
					\geq\\
					&\sup_{c\in[m_p,M_p]}\left\{\alpha_p \inf_{4(p-1)b^* \epsilon^2 c^{1-\alpha_p}}\ue(\cdot,x_k'')+(1-\alpha_p)M_{\sqrt{4(p-1)b^*}\epsilon c^{-\alpha_p/2}}[\ue](x_k',x_k'')\right\}-\frac{\eta}{2^{k+1}}.
				\end{split}
			\end{equation}
			Moreover, if a round of tug-of-war is played, Player~$I$ chooses $\widehat{x}_{k+1}$ as in~\eqref{eq3-elliptic-game}.
			
			\item If, in the second toss, the outcome is tails, then Player~$II$ chooses any $b\in [m_b,M_b]$ and a round of the game generated by $\widehat{\A_{\epsilon\sqrt{2(p-1)b}}^p}$ is played. Player~$I$ chooses the next position $x_{k+1}$ according to the previous rules \eqref{eq1-elliptic-game}, \eqref{eq2-elliptic-game}, \eqref{eq3-elliptic-game}, and \eqref{eq4-elliptic-game}, with $b$ in place of $b^*$. With a slight abuse of notation, we denote these positions by $x_{k+1}^*$, $\widehat{x}_{k+1}$ and the constant by $c^*$, as before.
		\end{itemize}
		
		Fix the strategies $S_I^*$ and $S_{II}$, and consider the sequence of random variables
		$$M_k=\ue(x_k)-\frac{\eta}{2^k}-\epsilon^2 \sum_{i=0}^{k-1}\frac{f(x_i)}{2}.$$
		To compute the conditional expectation of the random variable $M_{k+1}$ given the previous position $x_k$, we need to take into account that $\ue$ is the solution of the DPP. For this purpose, we introduce the following notation:
		\begin{align*}
			&N_p[\ue](x_k):=\frac{1}{2}\Bigg(-2(p-2)\epsilon^2 (b^*)^{\frac{p-1}{p-2}}
			+\frac{1}{2}\max\Bigg[ \fint_{B_{(2(p-1)b^*)^4\epsilon^8}(x_k)}\ue(y',x_k'')\, \dy'-\frac{\eta}{2^{k+1}},\\
			&\qquad \qquad\qquad \qquad \quad\quad \alpha_p \ue(x_{k+1}^*) +(1-\alpha_p)\frac{\beta_p}{2}\left(\ue(\widehat{x}_{k+1})+\ue(x_{k+1}^{II})\right) 
			\\ & \qquad \qquad \qquad \qquad\qquad \qquad \qquad +(1-\alpha_p)(1-\beta_p)\fint_{B_{\sqrt{8(p+d)(p-1)b^*}\epsilon c^{-\alpha_p/2}}(x_k)}\ue(y',x_k'') \,\dy' \Bigg]\\
			&\quad+\frac{1}{2}\min\Bigg[\fint_{B_{(2(p-1)b^*)^4\epsilon^8}(x_k)}\ue(y',x_k'')-\frac{\eta}{2^{k+1}},\\ 
			&\quad \quad\alpha_p \inf_{B_{4(p-1)b^* \epsilon^2 (c^*)^{1-\alpha_p}}(x_k)}\ue(\cdot,x_k'')+(1-\alpha_p)M_{\sqrt{4(p-1)b^*}\epsilon(c^*)^{-\alpha_p/2}}[\ue](x_k',x_k'')\Bigg]\Bigg)\\
			&+ \frac{1}{2}\Bigg(2(p-2)\epsilon^2 b^{\frac{p-1}{p-2}}
			+\frac{1}{2}\max\Bigg[ \fint_{B_{(2(p-1)b^*)^4\epsilon^8}(x_k)}\ue(y',x_k'')\, \dy'-\frac{\eta}{2^{k+1}},\\
			&\qquad \qquad\qquad \qquad\qquad \qquad\quad\quad \alpha_p \ue(x_{k+1}^*)  +(1-\alpha_p)\frac{\beta_p}{2}\left(\ue(\widehat{x}_{k+1})+\ue(x_{k+1}^{II})\right) \\
			& \qquad \qquad\qquad \qquad\qquad \qquad\qquad \qquad+(1-\alpha_p)(1-\beta_p)\fint_{B_{\sqrt{8(p+d)(p-1)b}\epsilon c^{-\alpha_p/2}}(x_k)}\ue(y',x_k'') \,\dy' \Bigg]\\
			&\quad+\frac{1}{2}\min\Bigg[\fint_{B_{(2(p-1)b^*)^4\epsilon^8}(x_k)}\ue(y',x_k'')\, \dy'-\frac{\eta}{2^{k+1}},\\
			&\quad \quad \alpha_p \inf_{B_{4(p-1)b \epsilon^2 (c^*)^{1-\alpha_p}}(x_k)}\ue(\cdot,x_k'')+(1-\alpha_p)M_{\sqrt{4(p-1)b}\epsilon(c^*)^{-\alpha_p/2}}[\ue](x_k',x_k'')\Bigg]\Bigg),
		\end{align*}
		
		Now, taking into account the way in which the strategy $S_I^*$ has been defined, we can use the inequalities \eqref{eq1-elliptic-game}, \eqref{eq2-elliptic-game}, \eqref{eq3-elliptic-game}, and \eqref{eq4-elliptic-game} to obtain
		\begin{align*}
			&\mathbb{E}^{x_0}_{S_I^*, S_{II}}[M_{k+1}\mid x_k]
			=\frac{1}{2}N_p[\ue](x_k) + \frac{1}{2}N_q[\ue](x_k) - \frac{\epsilon^2}{2}\sum_{i=0}^{k}f(x_i)\\
			&\geq \frac{1}{4}\left(\sup_{b\in[m_b,M_b]}\left\{\widehat{\A_{\epsilon\sqrt{2(p-1)b}}^p}[\ue](x_k)-2(p-2)\epsilon^2 b^{\frac{p-1}{p-2}}\right\}-\frac{\eta}{2^{k+2}}-\frac{2\eta}{2^{k+2}}-(\alpha_p+(1-\alpha_p)\beta_p)\frac{\eta}{2^{k+2}}\right)\\
			&\quad\quad + \frac{1}{4}\left(\inf_{b\in[m_b,M_b]}\left\{\widehat{\A_{\epsilon\sqrt{2(p-1)b}}^p}[\ue](x_k)-2(p-2)\epsilon^2 b^{\frac{p-1}{p-2}}\right\}-\frac{\eta}{2^{k+2}}-\frac{2\eta}{2^{k+2}}-(\alpha_p+(1-\alpha_p)\beta_p)\frac{\eta}{2^{k+2}}\right)\\
			&\quad\quad+
			\frac{1}{4}\left(\sup_{b\in[m_b,M_b]}\left\{\widehat{\A_{\epsilon\sqrt{2(p-1)b}}^q}[\ue](x_k)-2(q-2)\epsilon^2 b^{\frac{q-1}{q-2}}\right\}-\frac{\eta}{2^{k+2}}-\frac{2\eta}{2^{k+2}}-(\alpha_q+(1-\alpha_q)\beta_q)\frac{\eta}{2^{k+2}}\right)\\
			&\quad\quad + \frac{1}{4}\left(\inf_{b\in[m_b,M_b]}\left\{\widehat{\A_{\epsilon\sqrt{2(q-1)b}}^q}[\ue](x_k)-2(q-2)\epsilon^2 b^{\frac{q-1}{q-2}}\right\}-\frac{\eta}{2^{k+2}}-\frac{2\eta}{2^{k+2}}-(\alpha_q+(1-\alpha_q)\beta_q)\frac{\eta}{2^{k+2}}\right)\\
			&\quad \quad - 
			\frac{\epsilon^2}{2}\sum_{i=0}^{k}f(x_i)\\
			&= \widehat{\C_\epsilon}[\ue](x_k)-\frac{1}{2}(3+\alpha_p+(1-\alpha_p)\beta_p)\frac{\eta}{2^{k+2}}-\frac{1}{2}(3+\alpha_q+(1-\alpha_q)\beta_q)\frac{\eta}{2^{k+2}}- 
			\frac{\epsilon^2}{2}\sum_{i=0}^{k}f(x_i)\\
			&\geq \ue(x_k)-\frac{\eta}{2^k} - 
			\frac{\epsilon^2}{2}\sum_{i=0}^{k}f(x_i).
		\end{align*}
		
		Once we have established that $(M_k)_{k\geq 0}$ is a submartingale, we apply the Optional Stopping Theorem (cf.\ \cite{Williams}) to obtain
		$$\mathbb{E}^{(x_0)}_{S^*_I,S_{II}}
		[M_{\tau}]
		\geq
		M_0
		=
		\ue(x_0)-\eta.$$
		
		Since $S_{II}$ is an arbitrary strategy, we have
		$$\inf_{S_{II}} \mathbb{E}^{x_0}_{S^*_I,S_{II}}
		[g(x_\tau)]
		=
		\inf_{S_{II}} \mathbb{E}^{x_0}_{S^*_I,S_{II}}
		[\ue(x_\tau)]
		\geq
		\inf_{S_{II}} \mathbb{E}^{x_0}_{S^*_I,S_{II}}
		[M_{\tau}]
		\geq
		\ue(x_0)-\eta.$$
		Therefore,
		\begin{align*}
			\ue(x_0)-\eta
			&\leq
			\inf_{S_{II}}
			\mathbb{E}^{x_0}_{S_I^*, S_{II}}
			\left[
			g(x_\tau)-\epsilon^2\sum_{i=0}^{\tau-1}\frac{f(x_i)}{2}
			\right]\\
			&\leq
			\sup_{S_I}
			\left(
			\inf_{S_{II}}
			\mathbb{E}^{x_0}_{S_I, S_{II}}
			\left[
			g(x_\tau)-\epsilon^2\sum_{i=0}^{\tau-1}\frac{f(x_i)}{2}
			\right]
			\right)
			=
			u_I^\epsilon(x_0).
		\end{align*}
		The first step is completed by the arbitrariness of $\eta$.
		
		\noindent\textit{Step 2: We show that $u^{\epsilon}_{II}\leq \ue$.} The proof follows the same argument as in Step 1, since the game is symmetric. For this reason, we omit the details and provide only a sketch of the argument. Let $\eta>0$ be arbitrary and, given any strategy $S_I$ for Player $I$, choose $S_{II}^*$ analogously to Step 1.
		
		Once the strategies $S_I$ and $S_{II}^*$ are fixed, we define the random variable
		$$M_k=\ue(x_k',x_k'')+\frac{\eta}{2^k}-\frac{\epsilon^2}{2} \sum_{i=0}^{k-1}f(x_i).$$
		It can be shown that $(M_k)_k$ is a supermartingale. Therefore, by the Optimal Stopping Theorem, we obtain
		
		$$\mathbb{E}^{(x_0,t_0)}_{S_I,S_{II}^*}[M_{\tau}]\leq
		M_0= \ue(x_0,t_0)+\eta,$$
		and hence
		\begin{align*}
			\ue(x_0)+\eta
			\geq
			\sup_{S_{I}}
			\mathbb{E}^{(x_0)}_{S_I, S_{II}^*}[g(x_\tau)-\epsilon^2\sum_{i=0}^{\tau-1}\frac{f(x_i)}{2}]
			\geq
			\inf_{S_{II}}
			\left(
			\sup_{S_{I}}
			\mathbb{E}^{(x_0)}_{S_I, S_{II}}[g(x_\tau)-\epsilon^2\sum_{i=0}^{\tau-1}\frac{f(x_i)}{2}]
			\right)
			=
			u_{II}^\epsilon(x_0).
		\end{align*}
		Since $\eta$ is arbitrary, we obtain $\ue(x_0)\geq u_{II}^\epsilon(x_0)$. Therefore, we conclude that
		$$
		\ue(x_0)
		=  u_{I}^\epsilon(x_0) = u_{II}^\epsilon(x_0),
		$$
		proving that the game has a value which coincides with the unique solution of \eqref{DPP}.
		
		Since $\ue(x)\to u(x)$ uniformly in $\overline{\Omega}$ as $\epsilon\to 0^+$, see Theorem \ref{convergence-elliptic}, we conclude that the game has a value, which converges to $u$, the viscosity solution of \eqref{anosotropic-problem}.
	\end{proof}
	
	\begin{rem}\label{teo.juego.p}\rm
		With the same arguments it can be proved that the game associated to 
		$\widehat{\B_\epsilon^p}$ (the operator which give us an asymptotic expansion for the $p$-Laplacian) has a value that converges uniformly in $\overline{\Omega}$ to the solution to the Poisson problem for the $p-$Laplacian,
		\begin{equation}\label{p-Laplacian-poisson}
			\left\{\begin{array}{rlll}
				\plap u(x)&=&f(x), & \mbox{in } \Omega,\\ 
				u(x)&=&g(x), & \mbox{in } \partial \Omega
			\end{array}\right.
		\end{equation}.
	\end{rem}
	A different game for the problem \eqref{p-Laplacian-poisson} was analyzed in \cite{delTeso_Rossi_2024}.
	The game introduced here (associated with $\widehat{\B_\epsilon^p}$)
	is more flexible and can be adapted to tackle problems with different homogeneities (as we previously show) or parabolic problems (see
	Section \ref{sec7} below).
	
	
	\section{Holder regularity}\label{sec6}
	
	In this section, we show how to establish a certain degree of interior Hölder regularity for solutions to~\eqref{anosotropic-problem} with $f=0$. To this end, we draw inspiration from the techniques developed in~\cite{Peres+Wang_2026}, see also \cite{Attouchi+Luiro+Parviainen-2021, Luiro+Parviainen+Saksman-2013}, where Hölder regularity and Harnack inequalities are established for viscosity solutions of the homogeneous Poisson problem \eqref{p-Laplacian-poisson}. The strategy consists in controlling the oscillation of the DPP solutions $\ue$ in balls of radius $\delta$ by their oscillation in balls of radius $1$, independently of $\epsilon$, using the game induced by $\widehat{\C_\epsilon}$. We then take limits as $\epsilon\to 0$, relying on the convergence to the unique viscosity solution of the problem \eqref{anosotropic-problem}. Moreover, in \cite{Arroyo+Luiro+Parviainen+Ruosteenoja-2020 }, the method is used to prove the Lipschitz continuity for $p(x)$-harmonic functions. In \cite{Attouchi+Luiro+Parviainen-2021},a regularized tug-of-war game is constructed to provide a Lipschitz continuous game value.
	
	In order to arrange the various radii appearing in the mean operator, it is necessary to slightly modify the regions over which the infima and suprema are taken in $\widehat{\A_\epsilon^p}$ and $\widehat{\B_\epsilon^p}$; see formulas \eqref{operator-A-promedio} and \eqref{operator-B-promedio}. More precisely, we make the following modifications:
	
	On the one hand, the constant $c\in [m_p,M_p]$ appearing in $\widehat{\A_\epsilon^p}$ is taken from the new interval
	\begin{equation}\label{m-M-2}
		m_p(\epsilon)=\epsilon^{\frac{2(p-1)}{3p-4}}\qquad \mbox{and}\qquad M_p(\epsilon)=\epsilon^{-\frac{p-1}{p}}.
	\end{equation}
	On the other hand, the constant $b\in[m_b,M_b]$ appearing in $\widehat{\B_\epsilon^p}$ is taken from the new interval
	\begin{equation}\label{m-M-b-2}
		m_b(\epsilon)=\epsilon^2\qquad \mbox{and}\qquad M_b(\epsilon)=\epsilon^{-\frac{1}{2}\min\left[\frac{2(p-2)}{3p-2} ,\, \frac{2}{3(p-1)}\right]}.
	\end{equation}
	With these new intervals, it is straightforward to verify that, for any choice of $c \in [m_p,M_p]$ and $b \in [m_b,M_b]$, the following inequalities hold:
	\begin{equation}\label{radios-ordenados}
		(2(p-1)b)^4 \epsilon^8 \ll 4(p-1)b \epsilon^2 c^{1-\alpha_p}<\sqrt{4(p-1)(p+d)b}\epsilon c^{-\alpha_p/2}\qquad\mbox{for any }\epsilon<1.
	\end{equation}
	
	These modifications do not affect the previous results, but only slightly change the error estimates in the asymptotic formulas for $C^3$ functions, as summarized below.
	\begin{enumerate}[leftmargin=*]
		\item In Theorem \ref{thm.asexp}, we have to replace $\epsilon^{2-\frac{4}{p}}$ by $\epsilon^{1-\frac{2}{p}}$ in formulas \eqref{E1} and \eqref{E1-gr-not}.
		
		\item In Theorems \ref{main-asym-exp-plap} and \ref{exp-c3}, $\epsilon^{-\frac{1}{2}\min\left[\frac{p-2}{p-1}, \frac{2}{3(p-1)}\right]}$ should be replaced by $\epsilon^{-\frac{1}{2}\min\left[\frac{2(p-2)}{3p-4}, \frac{2}{3(p-1)}\right]}$. Moreover, $\epsilon^{1-\frac{2}{p}}$ should be replaced by $\epsilon^{\frac{p-2}{2p}}$ in the second and third lines, respectively, of \eqref{error-B-grad} and \eqref{error-B-gr-not}.
	\end{enumerate}
	
	\begin{lem}\label{lema-Holder}
		Let the assumptions of Theorem \ref{convergence-elliptic} hold, with these new intervals \eqref{m-M-2} and \eqref{m-M-b-2}. Let $u$ be the viscosity solution of \eqref{anosotropic-problem} in $B_R(0)$ with $f=0$ and $R>1$. Then there exists a constant $K>0$, depending only on $p$, $q$, and $d$, such that, for every $0<\delta<1$,
		\begin{equation}
			\sup_{B_\delta} u - \inf_{B_\delta} u \leq (K+2)\delta^{1/3}\left(\sup_{B_1} u -\inf_{B_1} u\right).
		\end{equation}
	\end{lem}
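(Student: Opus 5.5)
We will prove the estimate first for the DPP solutions $\ue$ of \eqref{DPP} with $f=0$ in $\Omega=B_R(0)$ (exterior datum $g=u$ outside, suitably extended), uniformly in $\epsilon$ up to an additive error that vanishes as $\epsilon\to0^+$. We then pass to the limit using the uniform convergence $\ue\to u$ from Theorem \ref{convergence-elliptic}. By translation invariance and the fact that $\widehat{\C_\epsilon}[\ph+\xi]=\widehat{\C_\epsilon}[\ph]+\xi$, we may normalise so that $\inf_{B_1}u=0$ and $\sup_{B_1}u=1$. Since $\ue$ is the value of the game (Theorem \ref{teo.juego.p-q}), the problem reduces to the following comparison. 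Let $x,y\in B_\delta$. It suffices to show $|\ue(x)-\ue(y)|\le (K+2)\delta^{1/3}+o_\epsilon(1)$. For this, run the game started at $x$ (respectively at $y$) only up to the exit time $\tau^*$ from a ball $B_r$, with $r=\delta^{1/3}$, and then up to the exit from $B_1$. Outside $B_1$ the value is controlled by the oscillation on $B_1$ (equal to $1$); this produces the term $2$ in $K+2$.

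The key step is to construct, for each starting point, a strategy for one player that forces the token to stay near its center with high probability, or equivalently a barrier. We will use an explicit radial supersolution of the DPP adapted to the anisotropic structure. Following Lemma \ref{lemma_p-q_computation}, consider $v(x)=a-b|x|^{-\alpha}$, or more conveniently a concave radial profile $\Phi(|x|)$. We verify with Theorem \ref{exp-c3-promedio} that $\widehat{\C_\epsilon}[\Phi]\le\Phi$ on the annulus $B_1\setminus B_{\delta}$, up to errors. The point of the modified intervals \eqref{m-M-2}--\eqref{m-M-b-2} is the ordering of radii \eqref{radios-ordenados}. It guarantees that every step of the game has length at most $C\epsilon^{\theta}$ for some $\theta>0$, so the token does not overshoot the annuli, and the error terms in Theorem \ref{exp-c3-promedio} are uniform on the annulus because $\nabla\Phi\neq0$ there. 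Optional stopping applied to $\Phi(x_k)$ under the strategy ``pull towards the starting point'' (the analogue of Step 1 in the proof of Theorem \ref{teo.juego.p-q}) then gives the exit estimate
\[
\mathbb{P}\big(\text{token exits } B_{r}(x) \text{ before } \ldots\big)\le C\,\delta^{1/3}.
\]
Combined with a standard coupling, namely translating Player II's strategy from the game started at $y$ to the one started at $x$ (the balls of the operator are translation invariant in each block of variables), this yields the bound with $K$ depending only on $p,q,d$.

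The main obstacle is the barrier computation. The two blocks of variables $x'$ and $x''$ have different homogeneities, so a single radial profile gives $\plap^{x'}$ and $\qlap^{x''}$ contributions of different orders. Moreover, near the coordinate subspaces $|x'-z'|\approx0$ or $|x''-z''|\approx0$, one of them degenerates, as already occurred in the proof of Theorem \ref{barreras}. We will handle this by choosing the exponent $\alpha$ large, so that the positive term $(p-1)\alpha^p|x'|^p$ or $(q-1)\alpha^q|x''|^q$ dominates on whichever block is not small. We will then balance the powers of $\delta$ coming from $p$ and from $q$. The exponent $1/3$ is expected to come out of this balancing together with the error exponents of Theorem \ref{exp-c3-promedio} (e.g.\ $\epsilon^{\frac{1}{3p-4}}$, $\epsilon^{\frac{p-2}{2p}}$), and we will choose the intermediate radius as a power of $\delta$ accordingly.

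Finally, we let $\epsilon\to0^+$ in
\[
\sup_{B_\delta}\ue-\inf_{B_\delta}\ue\le (K+2)\delta^{1/3}\big(\sup_{B_1}\ue-\inf_{B_1}\ue\big)+o_\epsilon(1),
\]
using the uniform convergence of Theorem \ref{convergence-elliptic}. This gives the claim.
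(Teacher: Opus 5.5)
Your argument breaks where you combine a one-token exit estimate with a \emph{translation} coupling. If Player~II's strategy from the game at $y$ is translated to the game at $x$, with shared coin tosses and noise, then $X_n-Y_n\equiv x-y$ for every $n$. So at whatever stopping time you choose, you compare $u_\epsilon$ at two points a fixed distance $|x-y|$ apart near $\partial B_r$ or $\partial B_1$, where nothing is known beyond $\sup_{B_1}u_\epsilon-\inf_{B_1}u_\epsilon$. If you stop at $\partial B_R$, you get the modulus of continuity of $g$ instead. This translation--comparison mechanism transfers boundary regularity to the interior, but it cannot give a scale-invariant bound $C\delta^{1/3}(\sup_{B_1}u-\inf_{B_1}u)$.

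The single-token exit estimate does not repair this. The game does not stop on $\partial B_r(x)$, and $u_\epsilon$ there is no better controlled than anywhere else; your exit event is in fact left unspecified (``before \ldots''). Pulling one token toward the fixed point $y$ fails too. You would need it to enter $B_\eta(y)$ before leaving $B_1$ with probability $1-O(\delta^{1/3})$ uniformly as $\eta\to0$. For a single token the isotropic noise is not cancelled, and a barrier centred at $y$ only gives hitting estimates that deteriorate as $\eta\to0$.

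Two side issues. The exponent $1/3$ cannot come from balancing $\epsilon$-error exponents such as $\epsilon^{\frac{1}{3p-4}}$, since these vanish as $\epsilon\to0$ and say nothing about $\delta$. You also may not normalise the oscillation to $1$: the equation and $\widehat{\C_\epsilon}$ are invariant under adding constants but not under multiplication, because $\plap^{x'}$ and $\qlap^{x''}$ scale with different powers.

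The missing idea is a coupling that drives the two tokens toward \emph{each other}, so that the noise cancels in $Z_n=X_n-Y_n$. In the paper, it works as follows.
\begin{itemize}
\item The counter-strategies $S_I^*,S_{II}^*$ copy the choices of $b$, $c$ and the stay/move decision. Both games therefore always play the same kind of round, in the same block of variables, with identical radii. This is why the anisotropy is harmless and no $p$--$q$ barrier balancing is needed.
\item $S_{II}^*$ always moves $X_n$ a full step toward $Y_n$, and $S_I^*$ always moves $Y_n$ toward $X_n$. Because the coin tosses are shared, in each tug-of-war step either both tokens are moved by the free strategies or both move toward each other.
\item The noise step uses the same vector in both games, so $Z_n$ is unchanged. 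The $\epsilon^8$-ball uses the reflected vector.
\end{itemize}
One then shows that $M_n=|X_n|^2+|Y_n|^2+K|Z_n|^{2\beta}$, with $\beta=1/6$, is a supermartingale. Concavity of $t\mapsto t^{2\beta}$ gives, in the tug-of-war steps, a negative drift of order $Kb\epsilon^2c^{-\alpha_p}$ for aligned moves (via the variance), or $Kb^{1/2}\epsilon c^{-\alpha_p/2}$ for non-aligned moves (at first order). For $K$ large this beats the $O(b\epsilon^2c^{-\alpha_p})$ growth of $|X_n|^2+|Y_n|^2$. The intervals \eqref{m-M-2}--\eqref{m-M-b-2} serve only to ensure $b^2\epsilon^4c^{2(1-\alpha_p)}\le b\epsilon^2c^{-\alpha_p}$.

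Optional stopping at the first time $|Z_n|<\eta$ or a token leaves $B_1$ bounds the probability of leaving first by $M_0\le 2\delta^2+K\delta^{1/3}$. That is where $K+2$ comes from, not from control outside $B_1$. This gives $u_\epsilon(x)-u_\epsilon(y)\le\sup_{|z-t|<\eta}|u_\epsilon(z)-u_\epsilon(t)|+(K+2)\delta^{1/3}(\sup_{B_1}u_\epsilon-\inf_{B_1}u_\epsilon)$. You conclude by letting $\epsilon\to0$ and then $\eta\to0$, using the continuity of $u$.
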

	\begin{proof}
		Let $\delta\in (0,1)$ and $x,y\in B_\delta(0)$. We consider two games (induced by $\widehat{\C_\epsilon}$) starting simultaneously from $x$ and $y$, respectively, with the same boundary data $g$ on $\partial B_1(0)$. Following the notation of \cite{Peres+Wang_2026}, we denote the resulting processes by $(X_n)$ and $(Y_n)$, respectively. For each $n\geq 1$, let $\mathcal{F}_n$ denote the $\sigma$-field generated by $(X_i,Y_i)_{i\leq n}$. We also introduce the notation
		
		$$
		Z_n:=X_n-Y_n
		\qquad\mbox{and}\qquad
		\widehat{v}:=\frac{v}{|v|}.
		$$
		
		Given a strategy $S_I$ used by Player~$I$ (who seeks to maximize the payoff) in the game starting from $x$, and a strategy $S_{II}$ used by Player~$II$ (who seeks to minimize the payoff) in the game starting from $y$, we define the counter-strategies $S_{II}^*(S_I,S_{II})$ for Player~$II$ in the game starting from $x$ and $S_I^*(S_I,S_{II})$ for Player~$I$ in the game starting from $y$ as follows. The idea is to replicate the choices of the constants $b\in[m_b,M_b]$ and $c\in[m_p,M_p]$ (or $c\in[m_q,M_q]$) prescribed by the strategies $S_I$ and $S_{II}$, respectively, and then move towards the position occupied by the other game. More precisely, whenever it is Player~$I$'s turn to move, the strategy $S_I^*$ prescribes a move from $Y_n$ in the direction $\widehat{Z_n}$, that is, towards $X_n$. Conversely, whenever it is Player~$II$'s turn to move, the analogous prescription is made in the opposite game. We now define the strategies $S_I^*$ and $S_{II}^*$.
		
		\begin{itemize}[leftmargin=*]
			\item The toss of the first fair coin determines whether the exponent $p$ or $q$ is selected. Since the game is completely symmetric, we may assume without loss of generality that $p$ is selected, as the case $q$ is entirely analogous.
			
			\item The toss of the second fair coin determines which player chooses the value of $b\in[m_b,M_b]$. If Player~$I$ chooses it, the strategy $S_I$ determines $b_I$ for the game associated with $X_n$, while the strategy $S_I^*$ prescribes choosing the same value $b_I$ in the game associated with $Y_n$. Otherwise, the strategy $S_{II}$ determines $b_{II}$ for the game associated with $Y_n$, while $S_{II}^*$ prescribes choosing the same value $b_{II}$ in the game associated with $X_n$.
			
			\item The toss of the third fair coin determines which player decides whether to remain ``almost stationary''---that is, whether to move randomly within a ball of radius $(4(p-1)b)^4\epsilon^8$---or to initiate the movement. If Player~$I$ is the one who makes this decision according to the strategy $S_I$ in the game associated with $X_n$, then the strategy $S_I^*$ makes the same decision---whether to remain ``almost stationary'' or to move---in the game associated with $Y_n$. Likewise, if Player~$II$ makes the decision, the strategy $S_{II}^*$ replicates the action prescribed by $S_{II}$.
			
			\item If it is decided to remain ``almost stationary'', a vector $v$ is chosen uniformly in $B_{(2(p-1)b)^4\epsilon^8}(0)$. If Player~$I$ makes this choice, the new positions are
			$$
			X_{n+1}=X_n+v,
			\qquad
			Y_{n+1}=Y_n+\overline{v},
			$$
			whereas if Player~$II$ makes this choice, they are
			$$
			X_{n+1}=X_n+\overline{v},
			\qquad
			Y_{n+1}=Y_n+v.
			$$
			Here, $\overline{v}$ denotes the reflection of $v$ across the direction of $Z_n$, namely $\overline{v}=2\langle v,\widehat{Z_n}\rangle\widehat{Z_n}-v$.
			
			\item If the previous decision was to move, the next choice concerns the constant $c\in[m_p,M_p]$. If the decision was made by Player~$I$, then Player~$II$ chooses it. Thus, the strategy $S_{II}$ determines $c_{II}$, while $S_{II}^*$ selects the same constant. Conversely, if the decision was made by Player~$II$, then Player~$I$ chooses $c_I$ according to $S_I$ and $S_I^*$ selects the same constant.
			
			Observe that, up to this point, we have constructed $S_I^*$ and $S_{II}^*$ by mimicking the given strategies, with the aim of ensuring that the players always move in balls of exactly the same radius.
			
			\item Once the decision to move has been made in both games, the strategies determine the direction in which each player would move. The strategy $S_I^*$ for Player~$I$ in the game associated with $Y_n$ prescribes moving towards the position $X_n$; that is,
			$$
			Y_{n+1}=Y_n+R\widehat{Z}_n,
			$$
			where $R$ denotes the corresponding radius. Similarly, $S_{II}^*$ prescribes moving towards $Y_n$,
			$$
			X_{n+1}=X_n-R\widehat{Z}_n.
			$$
			If the first biased coin lands heads, with probability $\alpha_p$, then Player~$I$ (or Player~$II$, as appropriate) moves in both games within a ball of radius
			$$
			4(p-1)b\epsilon^2c^{1-\alpha_p}.
			$$
			If it lands tails, a round of tug-of-war with noise is played.
			If the second biased coin lands heads, with probability $\beta_p$, then the players move within balls of radius
			$$
			\sqrt{4(p-1)(p+d)b}\,\epsilon\,c^{-\alpha_p/2}.
			$$
			A fair coin is then tossed to determine which player moves. If it lands heads, Player~$I$ moves in the game associated with $X_n$, while Player~$II$ moves in the game associated with $Y_n$; if it lands tails, the roles are reversed.
			
			If the second biased coin lands tails, then a vector $u$ is chosen uniformly from the ball of radius $\sqrt{4(p-1)(p+d)b},\epsilon,c^{-\alpha_p/2}$ centered at $0$, and the new positions are given by
			$$
			X_{n+1}=X_n+u
			\qquad\mbox{and}\qquad
			Y_{n+1}=Y_n+u.
			$$
		\end{itemize}
		Now let $\beta=1/6$ and define the random variable 
		$$M_n=|X_n|^2+|Y_n|^2+K|Z_n|^{2\beta}.$$
		We want to show that it is a supermartingale for a suitable choice of $K$. In order to do it, we fix $\eta>0$ until the end of the proof, assume that $\eta<|Z_i|<1$ for all $i\leq n$ and we write $D_{n+1}=Z_{n+1}-Z_n$ to be the increment. As it is done in \cite{Peres+Wang_2026}, we use Taylor two times to obtain
		\begin{align*}
			|Z_{n+1}|^{2\beta}&=|Z_n|^{2\beta}\left(1+2 \widehat{Z_n} \cdot \frac{D_{n+1}}{|Z_n|}+\left|\frac{D_{n+1}}{|Z_n|}\right|^2\right)^\beta\\
			&= |Z_n|^{2\beta}\left(1+2\beta \widehat{Z_n} \cdot \frac{D_{n+1}}{|Z_n|}+\beta \left|\frac{D_{n+1}}{|Z_n|}\right|^2 -2\beta (1-\beta)\left(\widehat{Z_n}\cdot \frac{D_{n+1}}{|Z_n|}\right)^2+ O(\epsilon^3)\right).
		\end{align*}
		To compute the conditional expectation of \(M_{n+1}\), we fix a small angle $\theta_0<1/10$, and the calculation is divided into three cases. Observe that, due to the symmetry of the game, we may assume without loss of generality that the first three coin tosses result in heads. Any other combination can be treated in exactly the same way.
		
		\noindent\textit{Case 1: The moves prescribed by the strategies $S_I$ and $S_{II}$ are aligned with $\widehat{Z_n}$. } More precisely, if we denote by $U_{n+1}$ the move that Player~$I$ would make in the game starting from $x$, and by $V_{n+1}$ the move that Player~$II$ would make in the game starting from $y$, then
		$$\widehat{Z_n}\cdot \widehat{U_{n+1}}>\cos \theta_0\qquad \mbox{and}\qquad -\widehat{Z_n}\cdot \widehat{V_{n+1}}>\cos \theta_0.$$
		\begin{center}
			\begin{tikzpicture}[
				scale=0.93,
				transform shape,
				>=Stealth,
				line cap=round,
				line join=round
				]
				
				
				\coordinate (X) at (0,2.3);
				\coordinate (Y) at (0,-2.3);
				
				
				\draw[blue!50!black, dashed, thick]
				(X) -- (-1.15,4.1);
				
				\draw[blue!50!black, dashed, thick]
				(X) -- (1.15,4.1);
				
				\draw[blue!50!black, dashed, thick]
				(Y) -- (-1.15,-4.1);
				
				\draw[blue!50!black, dashed, thick]
				(Y) -- (1.15,-4.1);
				
				
				\draw[red!70!black, very thick, ->]
				(X) -- (0.1,4.0)
				node[above=2pt] {$U_{n+1}$};
				
				\draw[red!70!black, very thick, ->]
				(X) -- (0,0.45)
				node[midway,right=5pt] {$S^*_{II}$};
				
				\draw[red!70!black, very thick, ->]
				(Y) -- (0,-0.45)
				node[midway,right=5pt] {$S^*_{I}$};
				
				\draw[red!70!black, very thick, ->]
				(Y) -- (0.2,-4.0)
				node[below=2pt] {$V_{n+1}$};
				
				
				\draw[blue!70!black, very thick]
				(0,0.45) -- (0,-0.45);
				
				\node[blue!70!black, right=0.75cm] at (0,0)
				{$Z_n=X_n-Y_n$};
				
				
				\fill[blue!70!black] (X) circle (2.5pt);
				\fill[blue!70!black] (Y) circle (2.5pt);
				
				\node[blue!70!black, right=7pt] at (X)
				{$X_n$};
				
				\node[blue!70!black, right=7pt] at (Y)
				{$Y_n$};
				
				
				\draw[blue!70!black, thick]
				(X) ++(-0.32,0.50)
				arc[start angle=114,end angle=66,radius=0.59];
				
				\node[blue!70!black] at (0,3.05)
				{$2\theta$};
				
				
				\draw[blue!70!black, thick]
				(Y) ++(-0.32,-0.50)
				arc[start angle=246,end angle=294,radius=0.59];
				
				\node[blue!70!black] at (0,-3.05)
				{$2\theta$};
				
			\end{tikzpicture}
		\end{center}
		
		If the first biased coin lands heads with probability $\alpha_p$, then Player~$I$ (or equivalently Player~$II$) moves in the two game, and
		\begin{align*}
			|X_{n+1}|^2&\leq |X_n|^2 + 16 (p-1)^2 b^2 \epsilon^4 c^{2(1-\alpha_p)}.\\
			|Y_{n+1}|^2&\leq |Y_n|^2 + 16 (p-1)^2 b^2 \epsilon^4 c^{2(1-\alpha_p)}.\\
			|Z_{n+1}|^{2\beta}&\leq |Z_n|^{2\beta}.
		\end{align*}
		If both the first and the second biased coins land tails with probability $(1-\alpha_p)(1-\beta_p)$, then
		\begin{align*}
			|X_{n+1}|^2&\leq |X_n|^2 +4(p-1)(p+d)b \epsilon^2 c^{-\alpha_p} .\\
			|Y_{n+1}|^2&\leq |Y_n|^2 +4(p-1)(p+d)b \epsilon^2 c^{-\alpha_p} .\\
			|Z_{n+1}|^{2\beta}&= |Z_n|^{2\beta}.
		\end{align*}
		Finally, if the first biased coin lands tails and the second one lands heads, a round of tug of war is played. As before, it is clear that
		\begin{align*}
			|X_{n+1}|^2&\leq |X_n|^2 +4(p-1)(p+d)b \epsilon^2 c^{-\alpha_p} .\\
			|Y_{n+1}|^2&\leq |Y_n|^2 +4(p-1)(p+d)b \epsilon^2 c^{-\alpha_p} .
		\end{align*}
		However, since with probability $1/2$ the distance between the points increases by the quantity $4\sqrt{(p-1)(p+d)b}\,\epsilon c^{-\alpha_p/2}$, and with probability $1/2$ it decreases by the same amount, it follows that
		\begin{equation}\label{obs-1}
			\mathbb{E}[D_{n+1}\mid \mathcal{F}_n]=0\qquad \mbox{and}\qquad \mathbb{E}[|D_{n+1}|^2\mid \mathcal{F}_n]\geq C b \epsilon^2 c^{-\alpha_p}.
		\end{equation}
		Moreover, since the movements are aligned with $\widehat{Z_n}$ and the angle $\theta_0$ is sufficiently small, it is straightforward to verify that
		\begin{equation}\label{obs-2}
			\mathbb{E} [|D_{n+1}\cdot \widehat{Z_n}|^2\mid \mathcal{F}_n]\geq \frac{3}{4}\mathbb{E}[|D_{n+1}|^2 \mid \mathcal{F}_n].
		\end{equation}
		Therefore, using the Taylor expanssion for $|Z_{n+1}|^{2\beta}$
		\begin{align*}
			\mathbb{E}[|Z_{n+1}|^{2\beta}\mid \mathcal{F}_n]&=|Z_n|^{2\beta}\left(1+\beta |Z_n|^{-2}\mathbb{E}[|D_{n+1}|^2 \mid \mathcal{F}_n]-2\beta (1-\beta)|Z_n|^{-2}\mathbb{E}[|D_{n+1}\cdot \widehat{Z_n}|^2 \mid \mathcal{F}_n]+O(\epsilon^3)\right)\\
			&\leq |Z_n|^{2\beta} -\frac{\beta}{4}|Z_n|^{2\beta-2}\mathbb{E}[|D_{n+1}|^2\mid \mathcal{F}_n]+O(\epsilon^3)\\
			&\leq |Z_n|^{2\beta}- \frac{C}{24} b \epsilon^2 c^{-\alpha_p},
		\end{align*}
		where we have used \eqref{obs-1} in the first and third line, and \eqref{obs-2} in the second line.
		
		Now, combining the preceding estimates in each case, we obtain
		\begin{align*}
			\mathbb{E}[M_{n+1}\mid \mathcal{F}_n]&\leq \alpha_p\left(M_n+32(p-1)^2 b^2 \epsilon^4 c^{2(1-\alpha_p)}\right) + (1-\alpha_p)(1-\beta_p)\left(M_n + 8(p-1)(p+d) b \epsilon^2 c^{-\alpha_p}\right)\\
			&\quad+ (1-\alpha_p) \beta_p \left(M_n + 8(p-1)(p+d) b \epsilon^2 c^{-\alpha_p} - K\frac{C}{24} b \epsilon^2 c^{-\alpha_p} \right)\\
			&\leq M_n.
		\end{align*}
		Where in the last inequality we have chosen $K$ large enough. Note that, $K$ is independent of $\epsilon$ since $b^2 \epsilon^4 c^{2(1-\alpha_p)}\leq b \epsilon^2 c^{-\alpha_p}$ for all $c\in[m_p,M_p]$ and $b\in[m_b,M_b]$. This is precisely where the modifications \eqref{m-M-2} and \eqref{m-M-b-2} play a crucial role.
		
		\vspace{6mm}
		\noindent\textit{Case 2: Unaligned case. }At least one of the moves $U_{n+1}$ or $V_{n+1}$ does not exhibit a substantial deviation in the direction of $\widehat{Z_n}$. 
			\begin{center}
			\begin{tikzpicture}[
				scale=0.93,
				transform shape,
				>=Stealth,
				line cap=round,
				line join=round
				]
				
				
				\coordinate (X) at (0,2.3);
				\coordinate (Y) at (0,-2.3);
				
				
				\draw[blue!50!black, dashed, thick]
				(X) -- (-1.15,4.1);
				
				\draw[blue!50!black, dashed, thick]
				(X) -- (1.15,4.1);
				
				\draw[blue!50!black, dashed, thick]
				(Y) -- (-1.15,-4.1);
				
				\draw[blue!50!black, dashed, thick]
				(Y) -- (1.15,-4.1);
				
				
				\draw[red!70!black, very thick, ->]
				(X) -- (2,1)
				node[above=2pt] {$U_{n+1}$};
				
				\draw[red!70!black, very thick, ->]
				(X) -- (0,0.45)
				node[midway,right=5pt] {$S^*_{II}$};
				
				\draw[red!70!black, very thick, ->]
				(Y) -- (0,-0.45)
				node[midway,right=5pt] {$S^*_{I}$};
				
				\draw[red!70!black, very thick, ->]
				(Y) -- (-1,-2.0)
				node[below=2pt] {$V_{n+1}$};
				
				
				\draw[blue!70!black, very thick]
				(0,0.45) -- (0,-0.45);
				
				\node[blue!70!black, right=0.75cm] at (0,0)
				{$Z_n=X_n-Y_n$};
				
				
				\fill[blue!70!black] (X) circle (2.5pt);
				\fill[blue!70!black] (Y) circle (2.5pt);
				
				\node[blue!70!black, right=7pt] at (X)
				{$X_n$};
				
				\node[blue!70!black, right=7pt] at (Y)
				{$Y_n$};
				
				
				\draw[blue!70!black, thick]
				(X) ++(-0.32,0.50)
				arc[start angle=114,end angle=66,radius=0.59];
				
				\node[blue!70!black] at (0,3.05)
				{$2\theta$};
				
				
				\draw[blue!70!black, thick]
				(Y) ++(-0.32,-0.50)
				arc[start angle=246,end angle=294,radius=0.59];
				
				\node[blue!70!black] at (0,-3.05)
				{$2\theta$};
				
			\end{tikzpicture}
		\end{center}
		
		This case is somewhat simpler, since the new distance between the points always decreases. The situations in which the first biased coin lands heads, or in which both the first and the second coins land tails, are handled analogously. Suppose, therefore, that we are in the tug-of-war case. Then using the Taylor expanssion of second order for $|Z_{n+1}|^{2\beta}$ we obtain
		\begin{align*}
			\mathbb{E}[|Z_{n+1}|^{2\beta} \mid \mathcal{F}_n]&= |Z_n|^{2\beta}\left(1+2\beta \mathbb{E}[\widehat{Z_n}\cdot \frac{D_{n+1}}{|Z_n|}\mid \mathcal{F}_n]+O(\epsilon^2)\right)\\
			&\leq |Z_n|^{2\beta}-2\beta (1-\cos\theta_0)2\sqrt{(p-1)(p+d)b}\epsilon c^{-\alpha_p/2} + O(,\epsilon^2)\\
			&\leq |Z_n|^{2\beta}-\frac{\sqrt{(p-1)(p+d)}}{300} b^{1/2} \epsilon c^{-\alpha_p/2}.
		\end{align*}
		Therefore, if $K$ is large enough we obtain
		\begin{align*}
			\mathbb{E}[M_{n+1}\mid \mathcal{F}_n]&\leq \alpha_p\left(M_n+32(p-1)^2 b^2 \epsilon^4 c^{2(1-\alpha_p)}\right) + (1-\alpha_p)(1-\beta_p)\left(M_n + 8(p-1)(p+d) b \epsilon^2 c^{-\alpha_p}\right)\\
			&\quad+ (1-\alpha_p) \beta_p \left(M_n + 8(p-1)(p+d) b \epsilon^2 c^{-\alpha_p} - K\frac{\sqrt{(p-1)(p+d)}}{300} b^{1/2} \epsilon c^{-\alpha_p/2} \right)\\
			&\leq M_n.
		\end{align*}
		
		\noindent\textit{Case 3: Player~$I$ (or Player~$II$) decides to remain ``almost stationary''. }This case is handled similarly to the aligned tug-of-war case. Indeed, the uniform distribution assigns the same probability to moving in either direction within the ball. Consequently, the new distance between the points either increases by $32 (p-1)^4 b^4 \epsilon^8$ or decreases by the same amount, so that \eqref{obs-1} holds once again. Moreover, by complementing the moves with the reflected vector, \eqref{obs-2} is also satisfied. Therefore, one may argue exactly as in Case~1 to obtain
		\begin{equation*}
			\mathbb{E}[M_{n+1}\mid \mathcal{F}_n]\leq M_n + 512 (p-1)^8 b^8 \epsilon^{16}-K\frac{C}{24} 256 (p-1)^4 b^4 \epsilon^{16}\leq M_n.
		\end{equation*}
		
		Once we have established that $M_n$ is a supermartingale for fixed $K$, we define the stopping time
		
		$$
		\tau=\inf\left\{n\geq 0 : |Z_n|<\eta \mbox{ or } |X_n|>1 \mbox{ or } |Y_n|>1\right\}.
		$$
		
		Then, by the optional stopping theorem,
		
		$$
		\mathbb{E}[M_\tau]\leq M_0\leq 2\delta^2+K\delta^{2\beta}.
		$$
		
		Since $M_\tau$ is nonnegative, it follows that
		\begin{equation*}
			\mathbb{P}[|X_\tau|^2+|Y_\tau|^2\geq 1]
			\leq \mathbb{P}[M_\tau\geq 1]
			\leq (K+2)\delta^{2\beta}.
		\end{equation*}
		
		We now turn to the value of the game. We have previously shown that
		
		$$
		\ue(x)\leq \sup_{S_I}\mathbb{E}_{S_I,S_{II}^*}[\ue(X_\tau)]
		\qquad\mbox{and}\qquad
		\ue(y)\geq \inf_{S_{II}}\mathbb{E}_{S_I^*,S_{II}}[\ue(Y_\tau)].
		$$
		
		Therefore,
		\begin{align*}
			\mathbb{E}[\ue(X_\tau)-\ue(Y_\tau)]
			&\leq \mathbb{P}[|X_\tau-Y_\tau|<\eta]
			\mathbb{E}[\ue(X_\tau)-\ue(Y_\tau)\mid |X_\tau-Y_\tau|<\eta]\\
			&\quad + \mathbb{P}[|X_\tau-Y_\tau|>\eta]
			\left(\sup_{B_1}\ue-\inf_{B_1}\ue\right)\\
			&\leq \sup_{\substack{z,t\in B_1\ |z-t|<\eta}}
			\left|\ue(z)-\ue(t)\right|
			+(K+2)\delta^{2\beta}
			\left(\sup_{B_1}\ue-\inf_{B_1}\ue\right).
		\end{align*}
		Taking the supremum over $S_I$ and the infimum over $S_{II}$, we obtain
		\begin{equation*}
			\ue(x)-\ue(y)
			\leq \sup_{\substack{z,t\in B_1\ |z-t|<\eta}}
			\left|\ue(z)-\ue(t)\right|
			+(K+2)\delta^{2\beta}
			\left(\sup_{B_1}\ue-\inf_{B_1}\ue\right).
		\end{equation*}
		
		Since $\ue$ converges uniformly to $u$, we may pass to the limit and obtain
		\begin{equation*}
			u(x)-u(y)
			\leq \sup_{\substack{z,t\in B_1\ |z-t|<\eta}}
			\left|u(z)-u(t)\right|
			+(K+2)\delta^{2\beta}
			\left(\sup_{B_1}u-\inf_{B_1}u\right).
		\end{equation*}
		Finally, letting $\eta\to 0$ and using the uniform continuity of $u$, we obtain 
		\begin{equation*}
			u(x)-u(y)
			\leq (C+2)\delta^{2\beta}
			\left(\sup_{B_1}u-\inf_{B_1}u\right).
		\end{equation*}
		This concludes the proof of the Hölder continuity.
	\end{proof}
	
	To conclude this section, we show that a Harnack inequality holds in certain regimes of $d$, $n$, $p$, and $q$. This inequality was recently established in \cite{Byun+Kim-Harnack} by analytical methods for a class of equations that includes the one considered here. Once the Hölder estimate above is established, the result follows by applying the following lemma, which can be found in \cite[Lemma 3.1]{Peres+Wang_2026}.
	
	\begin{lem}
		Let $u$ be a positive continuous function on $B_5(0)$ with $u(0)=1$. Suppose that there exists $A\geq 1$ and $\gamma >0$ such that for all $0<r<R$ and $|x|<2$,
		\begin{equation}\label{condicion-1}
			\osc_{B_r(x)}u \leq A \left( \frac{r}{R} \right)^\gamma \osc_{B_R(x)}u.
		\end{equation}
		Assume also that there exists $C>0$ and $\lambda>0$ such that for all $r\leq 1$ and $|x|<2-r$,
		\begin{equation}\label{condicion-2}
			\inf_{B_r(x)}u\leq C r^{-\lambda}.
		\end{equation}
		Then,
		\begin{equation}\label{harnack}
			\sup_{B_1(0)} u \leq 4C \exp\left( \lambda \left(\frac{1}{\gamma}\log 4A+2 \log 2 \lambda\right)\right).
		\end{equation}
	\end{lem}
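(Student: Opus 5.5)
The plan is a chaining argument: combine the oscillation decay \eqref{condicion-1} with the smallness of the infimum \eqref{condicion-2}, following \cite[Lemma 3.1]{Peres+Wang_2026}. We argue by contradiction. Suppose $\sup_{B_1(0)}u > 4C\,\exp(\cdots)$. Then we build points $x_0,x_1,x_2,\dots$ with $|x_0|<1$ and radii $r_k=2^{-k}r_0$ such that $u(x_{k+1})\geq 2u(x_k)$, $|x_{k+1}-x_k|\le r_k$, and every $x_k$ stays in $B_2(0)$. Since $u$ is continuous on the compact set $\overline{B_2}$, it is bounded there, and $u(x_k)\to\infty$ gives the contradiction.

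The step from $x_k$ to $x_{k+1}$ works as follows. Put $M_k=u(x_k)$ and pick the scale $r_k$ from \eqref{condicion-2}, so that $\inf_{B_{r_k}(x_k)}u\le C r_k^{-\lambda}\le M_k/2$. This requires $r_k\ge (2C/M_k)^{1/\lambda}$, so we take $r_k$ comparable to $M_k^{-1/\lambda}$. Then $\osc_{B_{r_k}(x_k)}u\ge M_k/2$. Next apply \eqref{condicion-1} with the small radius $r_k$ and a larger radius $R_k=(4A)^{1/\gamma}r_k$. This yields $\osc_{B_{R_k}(x_k)}u\ge \frac{1}{A}(R_k/r_k)^{\gamma}\frac{M_k}{2}=2M_k$. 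Because $u>0$, the supremum of $u$ on $B_{R_k}(x_k)$ is at least $2M_k$, so we can choose $x_{k+1}\in B_{R_k}(x_k)$ with $u(x_{k+1})\ge 2M_k$ (up to an arbitrarily small loss, absorbed into the constants).

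The growth $M_{k+1}\ge 2M_k$ makes the radii geometric: $R_{k+1}\lesssim 2^{-1/\lambda}R_k$. Hence $\sum_k R_k\lesssim R_0/(1-2^{-1/\lambda})$. To start, take $x_0\in B_1$ with $u(x_0)$ close to $\sup_{B_1}u$. With $M_0$ huge, $R_0=(4A)^{1/\gamma}(2C/M_0)^{1/\lambda}$ is tiny. The assumed lower bound on $\sup_{B_1}u$ is chosen exactly so that $\sum_k R_k<1$, which keeps every $x_k$ in $B_2$ and every radius at most $1$, so both hypotheses apply at each step. Solving $\sum_k R_k<1$ for $M_0$ gives a threshold of the form $4C\exp\big(\lambda(\tfrac1\gamma\log 4A+c(\lambda))\big)$. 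The term $2\log 2\lambda$ should come from bounding $1/(1-2^{-1/\lambda})\lesssim \lambda$.

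The main obstacle is this bookkeeping. We must check that the admissibility constraints of \eqref{condicion-1} and \eqref{condicion-2} ($|x|<2$, $r\le1$, $|x|<2-r$) hold along the whole chain, and that the geometric sum produces precisely the constant in \eqref{harnack}. The normalization $u(0)=1$ is not essential here; if needed, it can be used to run the same chain from $0$ instead.
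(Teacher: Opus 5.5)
Your doubling-chain argument is correct and is the standard proof of this lemma. The paper gives no proof of its own and simply cites Peres--Wang, and the factor $(4A)^{\lambda/\gamma}$ in \eqref{harnack} comes from exactly your ratio $R_k/r_k=(4A)^{1/\gamma}$.

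Some bookkeeping needs tidying:
\begin{itemize}
\item The radii are $r_k=(2C/M_k)^{1/\lambda}\le 2^{-k/\lambda}r_0$ and the steps satisfy $|x_{k+1}-x_k|<R_k$. The $2^{-k}r_0$ and $r_k$ in your first paragraph are slips.
\item Since $r_k\le R_k$, you get $|x_k|+r_k<1+\sum_j R_j<2$ and $r_k\le 1$, which is the admissibility needed in \eqref{condicion-2}.
\item Your threshold $2C(4A)^{\lambda/\gamma}(1-2^{-1/\lambda})^{-\lambda}$ does lie below the bound in \eqref{harnack}, reading the last term as $2\log(2\lambda)$. With $t=1/\lambda$ this reduces to $t^2\le 4(2^t-1)$, which holds for all $t>0$.
\item The estimate $1/(1-2^{-1/\lambda})\lesssim\lambda$ you suggest fails as $\lambda\to0$, where the left side tends to $1$. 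In that regime it is the spare factor $2$ in $4C$ that closes the inequality.
\end{itemize}
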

	
	\begin{lem}
		Let the assumptions of Lemma \ref{lema-Holder} hold. Let $u$ be a positive continuous solution to \eqref{eq_p-q} with $f\equiv 0$ on $B_5(0)$. Then, $u$ satisfies the Harnack Inequality,
		\begin{equation*}
			\sup_{B_1(0)} u \leq H \inf_{B_1(0)} u
		\end{equation*}
	\end{lem}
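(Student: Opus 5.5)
The plan is to check the two hypotheses of the preceding lemma (Lemma 3.1 of \cite{Peres+Wang_2026}) for a suitably normalized and translated $u$. A Harnack inequality is invariant under multiplication by positive constants, so we replace $u$ by $u/u(0)$ and assume $u(0)=1$. This is legitimate because the equation $\plap^{x'}u+\qlap^{x''}u=0$ is invariant under $u\mapsto \lambda u$: each term is homogeneous, of degree $p-1$ and $q-1$ respectively, and with zero right-hand side both sides simply vanish. The conclusion \eqref{harnack} then gives $\sup_{B_1}u\le H_0$ with $H_0$ depending only on $A,\gamma,C,\lambda$. It remains to bound $\inf_{B_1}u$ from below by a constant multiple of $u(0)=1$, which upgrades this to $\sup_{B_1}u\le H\inf_{B_1}u$.

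\textbf{Condition \eqref{condicion-1}.} This should follow from Lemma \ref{lema-Holder} by scaling. For $|x|<2$ and $0<r<R$, I would set $v(y)=u(x+Ry)$, which is defined on $B_{R'}(0)$ for some $R'>1$ whenever $B_R(x)\subset B_5$. The anisotropic equation is not scaling invariant with the same factor in both groups of variables: $v$ solves $R^{-p}\plap^{y'}v+R^{-q}\qlap^{y''}v=0$. This mismatch can be absorbed by also rescaling the values, $v(y)=\mu\,u(x+Ry)$. One needs $\mu^{p-1}R^{p}=\mu^{q-1}R^q$, that is, $\mu=R^{(q-p)/(p-q)}=R^{-1}$, and this choice is always possible since $p\neq q$. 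The oscillation inequality of Lemma \ref{lema-Holder} is homogeneous in $u$, so multiplying by $\mu$ is harmless. Applying the lemma to $v$ with $\delta=r/R$ then gives \eqref{condicion-1} with $A=K+2$ and $\gamma=1/3$. For larger $R$, where $B_R(x)$ leaves $B_5$, the constraint $|x|<2$ still allows $R\le 3$. Beyond that range, $\mathrm{osc}_{B_R}$ is monotone in $R$ and the quantity $(r/R)^\gamma$ only shrinks, so one can restrict to $R\le 3$ after enlarging $A$ by a fixed factor.

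\textbf{Condition \eqref{condicion-2}.} This is the step I expect to be the main obstacle. The route I would take is a comparison argument with explicit barriers, in the spirit of Lemma \ref{lemma_p-q_computation}. To get a power lower bound, one would show that a positive supersolution cannot be too small on a small ball $B_r(x)$ while being comparable to $1$ at the origin. Equivalently, one wants $\inf_{B_r(x)}u\le Cr^{-\lambda}$, an upper bound on the infimum in terms of $u(0)=1$. For this I would use radial functions of the form $w(y)=\kappa(|y-x|^{-\alpha}-\rho^{-\alpha})$. For $\alpha$ large the computation in Lemma \ref{lemma_p-q_computation}, exactly as in the proof of Theorem \ref{barreras}, makes $w$ a subsolution in the annulus $\{r<|y-x|<3\}$ (and similarly on larger annuli). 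I would then compare $w$, scaled so that $w\le \inf_{B_r(x)}u$ on the inner sphere and $w\le 0<u$ on the outer sphere, with $u$ using Lemma \ref{com-vis-elliptic}. Evaluating at $0$ gives $1=u(0)\ge c\,(\inf_{B_r(x)}u)\, r^{\alpha}$, hence $\inf_{B_r(x)}u\le C r^{-\alpha}$.

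The difficulty lies in the scaling once more. Because the operator is not homogeneous, multiplying $w$ by $\inf_{B_r}u$ does not preserve the subsolution property. I would deal with this by noting that the positive sign of the barrier operator persists for every $\kappa>0$. The reason is that each term in Lemma \ref{lemma_p-q_computation} is $\kappa^{p-1}$ or $\kappa^{q-1}$ times a quantity which is nonnegative once $\alpha$ is large, in both regimes handled in Theorem \ref{barreras}. This shows that \eqref{condicion-2} holds with $\lambda=\alpha$.

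\textbf{Conclusion.} The lower bound $\inf_{B_1}u\ge c$ then follows by applying the same barrier comparison centered at points of $B_1$. Combining this with $\sup_{B_1}u\le H_0$ gives the Harnack inequality $\sup_{B_1}u\le H\inf_{B_1}u$.
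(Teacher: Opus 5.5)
\textbf{There is a genuine gap, and it cannot be filled.}

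\textbf{The normalization step.} Your reduction to $u(0)=1$ rests on a false claim. Since $p\neq q$, for a (smooth) solution $u$
\[
\Delta_p^{x'}(\lambda u)+\Delta_q^{x''}(\lambda u)=\lambda^{p-1}\Delta_p^{x'}u+\lambda^{q-1}\Delta_q^{x''}u=(\lambda^{p-1}-\lambda^{q-1})\,\Delta_p^{x'}u ,
\]
which does not vanish in general. As you yourself find when checking \eqref{condicion-1}, value rescalings preserve solutions only when coupled with a change of spatial scale, e.g.\ $u\mapsto R^{-1}u(x_0+R\,\cdot)$. Dividing by $u(0)$ is still harmless for the Peres--Wang lemma, which never uses the equation. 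But then \eqref{condicion-2} must be proved in the amplitude-free form $\inf_{B_r(x)}u\le Cr^{-\lambda}u(0)$ for every positive solution.

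\textbf{The barrier step.} This is where your argument breaks. The claim that each term in Lemma~\ref{lemma_p-q_computation} is nonnegative once $\alpha$ is large is false. For $w(y)=\kappa(|y-x|^{-\alpha}-\rho^{-\alpha})$, the $x'$-term is strictly negative wherever $0<(\alpha(p-1)+p-n)|y'-x'|^2<(n+p-2)|y''-x''|^2$, for every $\alpha$; the $x''$-term is negative on the symmetric cone. Set $\Lambda=\kappa|y-x|^{-(\alpha+1)}$. The two terms are $|y-x|^{-1}\Lambda^{p-1}$ and $|y-x|^{-1}\Lambda^{q-1}$ times functions of the direction of $y-x$ alone. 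So at a fixed direction in the first cone, $w$ stops being a subsolution once $\Lambda>\Lambda_1$; at a fixed direction in the second cone, once $\Lambda<\Lambda_0$. Here $\Lambda_0,\Lambda_1$ depend only on $\alpha,p,q,n,d$. On $\{r<|y-x|<3\}$, $\Lambda$ varies by the factor $(3/r)^{\alpha+1}$, so for small $r$ no $\kappa$ makes $w$ a subsolution on the whole annulus. Your closing lower bound for $\inf_{B_1}u$ reuses the same barrier and fails with it.

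\textbf{A counterexample.} No better barrier can fix this, because with $H$ depending only on $p,q,n,d$ (the only non-trivial reading) the inequality is false. Given $c_1>0$, let $c_2>0$ solve $n\big(\tfrac{p}{p-1}c_1\big)^{p-1}=(d-n)\big(\tfrac{q}{q-1}c_2\big)^{q-1}$ and set
\[
u(x',x'')=2\cdot 5^{\frac{q}{q-1}}c_2+c_1|x'|^{\frac{p}{p-1}}-c_2|x''|^{\frac{q}{q-1}} .
\]
Its fluxes $|\nabla_{x'}u|^{p-2}\nabla_{x'}u=\big(\tfrac{p}{p-1}c_1\big)^{p-1}x'$ and $|\nabla_{x''}u|^{q-2}\nabla_{x''}u=-\big(\tfrac{q}{q-1}c_2\big)^{q-1}x''$ are linear with cancelling divergences. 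So $u$ solves the equation in $\mathbb{R}^d$. It is also a viscosity solution: on $\{x'=0\}$ (resp.\ $\{x''=0\}$) no $C^2$ function touches from above (resp.\ below), since the exponents lie in $(1,2)$. From the other side the degenerate term vanishes and the remaining term has the required sign. Moreover $u\ge 5^{\frac{q}{q-1}}c_2>0$ on $B_5$. Yet, with $e'$ a unit vector in $\mathbb{R}^n$,
\[
\frac{\sup_{B_1}u}{\inf_{B_1}u}\ge\frac{u(\tfrac12 e',0)}{u(0)}=1+\frac{2^{-\frac{p}{p-1}}}{2\cdot 5^{\frac{q}{q-1}}}\,\frac{c_1}{c_2},\qquad \frac{c_1}{c_2}\simeq c_1^{-\frac{p-q}{q-1}}\to\infty\quad (c_1\to 0^+).
\]

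\textbf{Comparison with the paper.} The paper's own argument does not avoid this. It too normalizes $u(0)=1$ ``by scaling''. For \eqref{condicion-2} it uses the concave paraboloid $c(9-|x-z|^2)$, whose two terms are each $\le 0$ for every $c$, so its sign is indeed amplitude-independent. But that makes it a supersolution, while the comparison is run in the direction ($u\ge v$ on the boundary $\Rightarrow$ $u\ge v$ inside) that requires a subsolution. Your direction of comparison is the correct one. What is missing, and cannot exist, is a constant independent of $u(0)$. A valid version has to let the constant or the radii depend on $u(0)$, in line with the scaling $u\mapsto R^{-1}u(x_0+R\,\cdot)$.
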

	\begin{proof}
		By scaling and translation it is enough to check that a positive function $u$, which is solution of \eqref{eq_p-q} in the domain $B_5(0)$, normalized with $u(0) = 1$ satisfies the conditions of the previous lemma.
		
		On the one hand, condition \eqref{condicion-1} is verified by Lemma \ref{lema-Holder}.
		
		On the other hand, condition \eqref{condicion-2} follows using comparison and the maximum principle. Fix $z$ and $r\leq 1$ such that $|z|<2-r$. We may assume that $0\not \in B_r(z)$ since otherwise the estimate automatically holds.
		
		We now consider the radial function
		$$
		v(x)=c\left(9-|x-z|^2\right),
		\qquad
		x\in\Omega:=B_3(z)\setminus\overline{B_r(z)},
		$$
		where $c>0$ is chosen so that $v(0)=2$. Observe that $v=0$ in $\partial B_3(z)$ and $v>0$ in $\Omega$.
		
		We claim that $v$ is a supersolution of equation \eqref{eq_p-q}. A direct calculation yields
		\begin{align*}
			\plap^{x'}v(x)+\qlap^{x''}v(x)
			&=-(2c)^{p-1}|x'-z'|^{p-2}(n+p-2)\\
			&\quad -(2c)^{q-1}|x''-z''|^{q-2}(d-n+q-2)\\
			&\leq 0.
		\end{align*}
		
		Suppose that $u\geq v$ on $\partial B_r(z)$. Since $u\geq v$ also holds on $\partial B_3(z)$, the comparison principle implies that
		$$
		u\geq v
		\qquad\text{in }B_3(z)\setminus\overline{B_r(z)}.
		$$
		However, this leads to a contradiction, since
		$$
		u(0)=1<2=v(0).
		$$
		Consequently,
		$$
		\inf_{\partial B_r(z)}u
		\leq
		\frac{2}{9-|z|^2}(9-r^2)
		\leq \frac{18}{5},
		$$
		where the last inequality follows from $|z|\leq 2$.
		
		This yields an even stronger estimate. In particular, we may choose any $\lambda>0$ in \eqref{condicion-2}, since $r\leq 1$. Finally, \eqref{condicion-2} follows from the fact that $u$ solves the equation in $B_r(z)$, in particular $u$ attains its infimum on the boundary by the maximum principle.
	\end{proof}


	\section{Parabolic equation}\label{sec7}
	
	Starting from the asymptotic expansion for equation \eqref{eq-parabolic} established in Corollaries \ref{exp-c2-parabolic} and \ref{exp-c3-parabolic}, the aim of this section is to study the DPP and the game associated with the heat equation involving the $p$-Laplacian \eqref{parabolic-problem}.
	
	Remember that, in this setting, it is not necessary to modify the operator by replacing $\ph(x)$ with its average over a ball of radius $\epsilon^8$ in $\A_\epsilon$. Indeed, although the players are allowed to remain stationary, the time variable decreases by $\epsilon^2$ at each step, see \eqref{DPP-parab}, which guarantees that the game terminates in finite time.
	
	\subsection{Dynamic Programming Principle for the $p$ Laplacian heat equation}\label{sec7.1}
	
	Given $\epsilon>0$, the mean value operator $\B_\epsilon^p$ defined in~\eqref{operator-B} induces the following Dynamic Programming Principle \eqref{DPP-parab} associated with~\eqref{eq-parabolic}. For the reader's convenience, we write it below
	\begin{equation}\label{DPP-parabolic}
		\left\{\begin{array}{cll} \ue(x,t+\epsilon^2)=\B_\epsilon^p[\ue](x,t), &\mbox{in }\Omega\times (-\epsilon^2,\infty),
			\\ \ue(x,t)=u_0(x), &\mbox{in } \Omega\times (-\epsilon^2,0],
			\\ \ue(x,t)=g(x), &\mbox{in }\Rd\setminus \Omega\times (-\epsilon^2,\infty).\end{array}\right.
	\end{equation}
	where $\Omega\subset\Rd$ is a bounded smooth domain, and the initial and exterior data belong to the space of bounded continuous functions, namely, $g\in C_b(\Rd\setminus\Omega)$ and $u_0\in C_b(\overline{\Omega})$. Note that the initial datum is extended to negative times. This last is an equivalent way of stating this problem. Depending on the computations to be carried out, we will choose the formulation of the DPP that is most convenient.
	
	 Our goal is to prove the existence and uniqueness of  solutions and to show that, as $\epsilon\to0$, it converges to the viscosity solution of the parabolic problem~\eqref{parabolic-problem}. The existence and uniqueness of solutions to this parabolic DPP are established via an explicit Euler time-discretization scheme, namely,
	
	$$
	u_{\epsilon}^{j+1}=\A_\epsilon[u_{\epsilon}^j]
	\qquad\textup{in }\Omega.
	$$
	Here, for any initial time $t_0\in(-\epsilon^2,0]$ and $j\in\NN$, we have denoted $t_j=t_0+j\epsilon^2$ and $u_{\epsilon}^j=\ue(\cdot,t_j)$.
	
	The remaining properties of the DPP solutions $\ue$, such as comparison and convergence, can be established in a similar manner to those in~\cite{delTeso+Fuertes+Rossi_2026} for the doubly-nonlinear equation. We therefore state these results without providing the details of the proofs.
	
	\begin{thm}\label{comparison-II}
		Let $d\in\NN$, $p>2$, $\epsilon\in(0,1)$, and let $\Omega\subset\Rd$ be a bounded domain. Assume that \eqref{HT} and \eqref{HT-b} hold. Let $g_1,g_2\in C_b(\Rd\setminus\Omega)$ and $u_{0,1},u_{0,2}\in C_b(\overline{\Omega})$. Let $\ua,\ub:\Rd\times\RR\longrightarrow\RR$ be two bounded measurable functions such that
		$$\left\{\begin{array}{cll} \ua(x,t+\epsilon^2)\geq\B_\epsilon^p[\ua](x,t), &\mbox{in }\Omega\times (-\epsilon^2,\infty),\\
			\ua(x,t)\geq u_{0,1}(x), &\mbox{in } \Omega\times (-\epsilon^2,0],\\ \ua(x,t)\geq g_1(x), &\mbox{in }\Rd\setminus \Omega\times (-\epsilon^2,\infty).\end{array}\right.$$
		and
		$$\left\{\begin{array}{cll} \ub(x,t+\epsilon^2)\leq\B_\epsilon^p[\ub](x,t), &\mbox{in }\Omega\times (-\epsilon^2,\infty),\\
			\ub(x,t)\leq u_{0,2}(x), &\mbox{in } \Omega\times (-\epsilon^2,0],\\ \ub(x,t)\leq g_2(x), &\mbox{in }\Rd\setminus \Omega\times (-\epsilon^2,\infty).\end{array}\right.$$
		
		If $u_{0,1}\geq u_{0,2}$ in $\Omega$ and $g_1\geq g_2$ in $\Rd\setminus\Omega$, then:
		\begin{enumerate}[label=\alph*)]
			\item If $\ub(x,t_j)\leq\ua(x,t_j)$ for all $x\in\Rd$, then the ordering is preserved at the next time step, 
			$$
			\ub(x,t_{j+1})\leq\ua(x,t_{j+1}).
			$$
			
			\item Moreover, $$\ub(x,t)\leq\ua(x,t),\quad \mbox{ for all } (x,t)\in\Rd\times(-\epsilon^2,\infty).$$
		\end{enumerate}
	\end{thm}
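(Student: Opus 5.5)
The plan is to establish part a) from the monotonicity of $\B_\epsilon^p$, and then deduce part b) by induction over the discrete time levels $t_j=t_0+j\epsilon^2$, with $t_0\in(-\epsilon^2,0]$ fixed.

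\emph{Monotonicity.} First I would check that $\B_\epsilon^p$ is monotone: if $\ph_2\leq\ph_1$ in $\Rd$, then $\B_\epsilon^p[\ph_2](x)\leq\B_\epsilon^p[\ph_1](x)$. Each ingredient of $M_\rho$ in Theorem~\ref{exp-plapN} is monotone in $\ph$: the supremum, the infimum and the average over a ball. Hence $M_\rho$ is monotone, and so is each bracket inside \eqref{operator-A}. The outer $\max[\,\cdot\,,\ph(x)]$ and $\min[\,\cdot\,,\ph(x)]$ are monotone in both arguments, and so are the $\inf_c$ and $\sup_c$. Therefore $\A^p_{\epsilon\sqrt{2(p-1)b}}$ is monotone for every $b\in[m_b,M_b]$. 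The penalization $\pm2(p-2)\epsilon^2b^{\frac{p-1}{p-2}}$ does not depend on $\ph$, so after taking $\sup_b$ and $\inf_b$ in \eqref{operator-B}, $\B_\epsilon^p$ is monotone. Lemma~\ref{well-def-B} guarantees that all these quantities are finite for bounded measurable functions and $\epsilon<\beps$. In the parabolic setting $\B_\epsilon^p[\cdot](x,t)$ acts only on the spatial slice at time $t$, so the ordering needed is $\ub(\cdot,t_j)\leq\ua(\cdot,t_j)$ on all of $\Rd$.

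\emph{Part a).} Assume $\ub(\cdot,t_j)\leq\ua(\cdot,t_j)$ in $\Rd$.
\begin{itemize}
\item If $x\in\Omega$ and $t_j\in(-\epsilon^2,\infty)$, the sub- and supersolution inequalities together with monotonicity give
\[
\ub(x,t_{j+1})\leq\B_\epsilon^p[\ub](x,t_j)\leq\B_\epsilon^p[\ua](x,t_j)\leq\ua(x,t_{j+1}).
\]
\item If $x\in\Rd\setminus\Omega$, then $t_{j+1}>-\epsilon^2$, so the exterior conditions apply and $\ub(x,t_{j+1})\leq g_2(x)\leq g_1(x)\leq\ua(x,t_{j+1})$.
\end{itemize}
Thus the ordering holds at $t_{j+1}$ on all of $\Rd$.

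\emph{Part b).} At the base level $t_0\in(-\epsilon^2,0]$:
\begin{itemize}
\item for $x\in\Omega$ the initial conditions give $\ub(x,t_0)\leq u_{0,2}(x)\leq u_{0,1}(x)\leq\ua(x,t_0)$;
\item for $x\notin\Omega$ the exterior conditions give the same ordering through $g_2\leq g_1$.
\end{itemize}
Part a) then gives the ordering at every $t_j$ by induction. Every $t\in(-\epsilon^2,\infty)$ can be written as $t=t_0+j\epsilon^2$ for a unique $t_0\in(-\epsilon^2,0]$ and some $j\in\NN$, so the inequality holds on $\Rd\times(-\epsilon^2,\infty)$.

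\emph{Expected difficulty.} The only delicate point is the monotonicity of the nested max/min and inf/sup structure, which is elementary; the coercive constant $\eta$ of the elliptic comparison (Theorem~\ref{comparison}) is not needed. One bookkeeping check is that the DPP inequality at $(x,t_j)$ involves only the slice at time $t_j$, so no information from other times enters the argument.
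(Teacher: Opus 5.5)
Your proof is correct and follows the paper's argument. The paper also obtains a) by treating $x\notin\Omega$ through $g_2\leq g_1$, and for $x\in\Omega$ combines the monotonicity of $\A_\epsilon^p$ (quoted from the companion paper) with Lemma~\ref{max-ineq} to pass to $\B_\epsilon^p$; it then gets b) by iterating a) from $t_0\in(-\epsilon^2,0]$, and your direct check of monotonicity through the nested $\sup$/$\inf$/$\max$/$\min$ structure simply unpacks those two cited facts.
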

	
	\begin{proof}
		\textit{(a).}
		On the one hand, if $x\in\Rd\setminus\Omega$, there is nothing to prove, since
		$$\ub(x,t)\leq g_2(x)\leq g_1(x)\leq\ua(x,t).$$
		
		On the other hand, let $x\in\Omega$ and assume that
		$\ub(x,t_j)\leq\ua(x,t_j)$ for some $j\in\NN$. Then, by~\cite[Lemma 4.5]{delTeso+Fuertes+Rossi_2026}, $\A_\epsilon^p[\ub](x,t_j)\leq\A_\epsilon^p[\ua](x,t_j),$
		and therefore, using Lemma~\ref{max-ineq} and the definition \eqref{operator-B} of the operator $\B_\epsilon^p$,
		\begin{equation*}
			\ub(x,t_{j+1})-\ua(x,t_{j+1})
			\leq
			\sup_{b\in[m_b,M_b]}
			\left\{
			\A_{\epsilon\sqrt{2(p-1)b}}^p[\ub](x,t_j)
			-
			\A_{\epsilon\sqrt{2(p-1)b}}^p[\ua](x,t_j)
			\right\}
			\leq0.
		\end{equation*}
		
		\textit{(b).}
		If $(x,t)\in(\Rd\setminus\Omega)\times\RR^+$ or $(x,t)\in\Omega\times\RR^-\cup\{0\}$, then
		$$
		\ub(x,t)\leq g_2(x)\leq g_1(x)\leq\ua(x,t)
		\qquad\mbox{and}\qquad
		\ub(x,t)\leq u_{0,2}(x)\leq u_{0,1}(x)\leq\ua(x,t),
		$$
		respectively.
		
		Finally, if $(x,t)\in\Omega\times\RR^+$, we write $t=t_0+j\epsilon^2$ with $t_0\in(-\epsilon^2,0]$
		and, applying part~(a) iteratively $j$ times, we obtain
		$$
		\ub(x,t_0)\leq\ua(x,t_0)
		\Rightarrow
		\ub(x,t_0+\epsilon^2)\leq\ua(x,t_0+\epsilon^2)
		\Rightarrow\cdots\Rightarrow
		\ub(x,t_0+j\epsilon^2)\leq\ua(x,t_0+j\epsilon^2).
		$$
		
	\end{proof}
	
	To construct the smooth barriers, we employ the same ones previously introduced in~\cite{delTeso+Fuertes+Rossi_2026}.
	\begin{lem}\label{bound-ue}
		Let $d\in \NN$, $p>2$, $\epsilon\in(0,1)$, and let $\Omega\subset\Rd$ be a bounded domain satisfying the uniform exterior ball condition. Let $u_0\in C_b(\Omega)$, $g\in C_b(\Rd\setminus\Omega)$, and assume that \eqref{HT} and \eqref{HT-b} hold. If $\ue$ is the solution of the DPP \eqref{DPP-parabolic}, then there exists a positive constant $C$, independent of $\epsilon$, such that
		$$\|\ue\|_{\LL^\infty(\Rd)}\leq C.$$
		Moreover, we have the following barriers:
		\begin{enumerate}[label=(\roman*)]
			\item\label{bound-g} For every $T>0$ and $x_0\in \partial\Omega$,
			\begin{equation*}
				\liminf_{\epsilon\to 0^+,\, y\to x_0,\, s\to T}\ue(y,s)\geq g(x_0),\qquad\mbox{and}\qquad
				\limsup_{\epsilon\to 0^+,\, y\to x_0,\, s\to T}\ue(y,s)\leq g(x_0).
			\end{equation*}
			
			\item\label{bound-u0} For all $x_0\in\Omega$,
			\begin{equation*}
				\liminf_{\epsilon\to 0^+,\, y\to x_0,\, s\to 0^+} \ue(y,s)\geq u_0(x_0),\qquad\mbox{and}\qquad
				\limsup_{\epsilon\to 0^+,\, y\to x_0,\, s\to 0^+} \ue(y,s)\leq u_0(x_0).
			\end{equation*} 
		\end{enumerate}
	\end{lem}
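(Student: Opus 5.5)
The plan is to prove Lemma~\ref{bound-ue} by comparison. By Theorem~\ref{comparison-II}, it is enough to construct sub- and supersolutions of \eqref{DPP-parabolic} that are independent of $\epsilon$ and have the required boundary and initial behaviour. The only properties of $\B_\epsilon^p$ needed are its monotonicity, its invariance under adding constants, and the $C^2$/$C^3$ asymptotic expansions of Corollaries~\ref{exp-c2-parabolic} and~\ref{exp-c3-parabolic}.

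\textbf{Uniform bound.} Set $C_0=\max\{\|u_0\|_{\LL^\infty},\|g\|_{\LL^\infty}\}$. Constants are exact fixed points: $\A_\epsilon^p[\xi]=\xi$, hence $\B_\epsilon^p[\xi]=\xi+\frac12\big(-2(p-2)\epsilon^2m_b^{\frac{p-1}{p-2}}+2(p-2)\epsilon^2m_b^{\frac{p-1}{p-2}}\big)=\xi$. So $\pm C_0$ are a super- and a subsolution. Theorem~\ref{comparison-II}(b) then gives $|\ue|\le C_0$.

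\textbf{Lateral barriers (item \ref{bound-g}).} Fix $x_0\in\partial\Omega$ and an exterior ball $B_R(z)$ touching $\overline\Omega$ only at $x_0$. For $\eta>0$ I take
\[
\wb(x,t)=g(x_0)-\eta-\kappa\big(R^{-\alpha}-|x-z|^{-\alpha}\big)-\lambda t ,
\]
and the symmetric upper barrier. Since $\gr|x-z|^{-\alpha}\neq0$ in $\overline\Omega$ for $\alpha$ large, the radial function is uniformly $p$-subharmonic there: $\plap |x-z|^{-\alpha}\ge c>0$ (compare Lemma~\ref{lemma_p-q_computation} with $n=d$). Its $C^3$ norms and $|D^2|^2/|\gr|$ are bounded on $\overline{\Omega_\epsilon}$, so the error in Corollary~\ref{exp-c3-parabolic}, via \eqref{error-B-grad}, is $o(1)$ uniformly in $x$. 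Then
\[
\B_\epsilon^p[\wb](x,t)-\wb(x,t+\epsilon^2)=\epsilon^2\big(\kappa^{p-1}\plap|x-z|^{-\alpha}+\lambda+o(1)\big)\ge0 ,
\]
so $\wb$ satisfies the subsolution inequality in $\Omega\times(-\epsilon^2,\infty)$.

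Next I check the data. Choose $\kappa$ large (depending on $\eta$, $g$ and $C_0$) so that $\wb\le g$ on $\partial\Omega_\epsilon$; this uses continuity of $g$ near $x_0$, exactly as in the proof of Theorem~\ref{barreras} and Remark~\ref{remark-boundary}. To get $\wb\le u_0$ on $(-\epsilon^2,0]$ uniformly, I would instead subtract a large constant and only require $\wb\le C_0$-bounds away from $x_0$. The cleaner route is to replace $-\lambda t$ by a cutoff: work on a time window $[T/2,\infty)$ and compare with the solution restarted at time $T/2$, using the uniform bound $|\ue|\le C_0$ as initial datum. Concretely, with $\wb\le -C_0$ at $t=T/2$ away from $x_0$, enlarging $\kappa$ again handles this. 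Comparison then gives $\liminf\ue(y,s)\ge g(x_0)-\eta-\lambda T$. I let $\lambda\to0$ after $\kappa$ is fixed; this is possible because the $p$-Laplacian term already has a positive margin. Finally $\eta\to0$.

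\textbf{Initial barriers (item \ref{bound-u0}).} For $x_0\in\Omega$ and $\eta>0$, I take
\[
\wb(x,t)=u_0(x_0)-\eta-\kappa|x-x_0|^{\sigma}-\lambda t,\qquad \sigma>\tfrac{p}{p-1}\ \text{(e.g. a smooth profile with } \plap \text{ bounded)} .
\]
The constant $\kappa$ is chosen so that $\wb\le u_0$ near $x_0$ (by continuity) and $\wb\le -C_0\le \ue$ elsewhere, including on the exterior region. Then $\lambda$ is chosen larger than $\kappa^{p-1}\sup|\plap|x-x_0|^\sigma|+1$, so that the subsolution inequality holds. Comparison gives $\ue(y,s)\ge u_0(x_0)-\eta-\kappa|y-x_0|^\sigma-\lambda s$, which yields the $\liminf$ as $y\to x_0$, $s\to0^+$, $\epsilon\to0$. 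The upper barrier is symmetric.

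\textbf{Main obstacle.} The delicate step is the error control at the critical point $x=x_0$ of the radial profile, where $\gr\wb=0$. There \eqref{error-B-grad} is unusable, and I would rely on \eqref{error-B-gr-not}. That bound contains the term $\epsilon^{-2/3}\|D^2\wb\|^{\frac1{p-1}}|\gr\wb(x)|^{\frac{p-2}{p-1}}$, which is not uniformly small near $x_0$. To avoid this, I would first try choosing the profile so that the errors are absorbed by the large $\lambda$. Failing that, I would follow \cite{delTeso+Fuertes+Rossi_2026} and use an $\epsilon$-independent sub-/supersolution property for $\A_\epsilon^p$ proved directly from the operator (as in \cite[Lemma 4.3]{delTeso_Rossi_2024}) rather than through the expansion, passing it through the $\sup/\inf$ in $b$ as in the existence lemma of Section~\ref{sec4}.
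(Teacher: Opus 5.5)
Your plan has a genuine gap in (i): the lateral barrier is never made compatible with the initial datum, and none of the fixes you sketch works. For $x\in\Omega$ close to $x_0$ and $t\in(-\epsilon^2,0]$ you have $\wb(x,t)\approx g(x_0)-\eta$. But $u_0(x)$ can stay far below this value: no compatibility between $u_0$ and $g$ is assumed, and $u_0\in C_b(\Omega)$ need not even have a limit at $x_0$. So the hypothesis $\wb\le u_0$ of Theorem~\ref{comparison-II} fails.

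\begin{itemize}
\item Enlarging $\kappa$ does not help, because $R^{-\alpha}-|x-z|^{-\alpha}\to0$ as $x\to x_0$.
\item Restarting at time $T/2$ with datum $\ue(\cdot,T/2)\ge -C_0$ has exactly the same defect. Comparison needs $\wb(\cdot,T/2)\le\ue(\cdot,T/2)$ at every point, including near $x_0$, not only away from it.
\item The term $-\lambda t$ points the wrong way. Taking $\lambda$ large enough to push $\wb(\cdot,T/2)$ below $-C_0$ ruins the final bound $g(x_0)-\eta-\lambda T$. Letting $\lambda\to0$ brings the defect back.
\end{itemize}

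The missing idea is a \emph{nondecreasing} time correction, paid for by the strict $p$-subharmonicity of the spatial profile. Let $h(x)=g(x_0)-\eta-\kappa(R^{-\alpha}-|x-z|^{-\alpha})$ and take $\wb(x,t)=h(x)+\phi(t)$ with $\phi(t)=\min\{0,\mu t-L\}$. Choose the constants as follows:
\begin{itemize}
\item $L=|g(x_0)|+C_0$. Since $h\le g(x_0)-\eta$ on $\overline\Omega$, this gives $\wb\le -C_0\le u_0$ in $\Omega\times(-\epsilon^2,0]$.
\item $\mu=2L/T$, so that $\phi\equiv0$ for $t\ge T/2$.
\item $\kappa$ so large that $\kappa^{p-1}\min_{\overline\Omega}\plap|x-z|^{-\alpha}\ge\mu+1$, in addition to your boundary requirements.
\end{itemize}
Since $\B_\epsilon^p$ acts only in $x$ and commutes with constants,
$\B_\epsilon^p[\wb](x,t)-\wb(x,t+\epsilon^2)=\B_\epsilon^p[h](x)-h(x)-(\phi(t+\epsilon^2)-\phi(t))\ge\epsilon^2\big(\kappa^{p-1}\plap|x-z|^{-\alpha}+o(1)-\mu\big)\ge0$
for small $\epsilon$, with the $o(1)$ uniform in $x$ as you argued. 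Because $\phi\le0$, the bound $\wb\le g$ on the exterior strip is preserved. Comparison then gives $\liminf\ue(y,s)\ge g(x_0)-\eta$ as $(y,s)\to(x_0,T)$, and the upper barrier is symmetric.

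The rest of your plan is sound. Constants are exact solutions of the DPP, so $|\ue|\le C_0$.

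In (ii), the fallback you mention suffices. For a subsolution with large $\lambda$ you only need the one-sided bound $\B_\epsilon^p[\ph](x)-\ph(x)\ge -C\epsilon^2$, uniform in $x$, for $\ph$ with bounded gradient and Hessian. This follows directly from the definition:
\begin{itemize}
\item take $c=1$ in the $\sup_c$ to get $\A^p_\rho[\ph]-\ph\ge -C\rho^2$;
\item take $b=m_b$ in the $\sup_b$;
\item use $\inf_{b>0}\{-Cb+2(p-2)b^{\frac{p-1}{p-2}}\}>-\infty$ in the $\inf_b$.
\end{itemize}
So the $\epsilon^{-2/3}$ term in \eqref{error-B-gr-not} never enters.
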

	It is straightforward to verify, by following the proof of~\cite[Lemma 4.9]{delTeso+Fuertes+Rossi_2026} and \cite[Lemma 4.10]{delTeso+Fuertes+Rossi_2026}, that the same lower and upper barriers ($\wb$ and $\wa$) also apply in this case.
	 Therefore, taking limits as $x\to x_0$ and $t\to T$ in \ref{bound-g} and taking limits as $x\to x_0$ and $t\to 0^+$ in \ref{bound-u0} yields the results.
	
	We can now establish the convergence exactly as in Section \ref{sec4}.
	\begin{thm}\label{convergence-parabolic}
		Let $d\geq 1$, $p>2$, $\epsilon\in(0,1)$, and let $\Omega\subset\Rd$ be a bounded domain satisfying the uniform exterior ball condition. Let $u_0\in C_b(\Omega)$, $g\in C_b(\Rd\setminus\Omega)$, and assume that \eqref{m-M} and \eqref{m-M-b} hold. Let $\ue$ be the solution of the DPP~\eqref{DPP-parabolic}. For all $x\in\overline{\Omega}$ and $t\geq 0$, we define
		\begin{align*}
			\ua(x,t):=\limsup_{\epsilon\to 0^+,\, y\to x,\, s\to t}\ue(y,s),\qquad\mbox{and}\qquad
			\ub(x,t):=\liminf_{\epsilon\to 0^+,\, y\to x,\, s\to t}\ue(y,s).
		\end{align*}
		Then $\ua(x,t)=\ub(x,t)$ for all $(x,t)\in\overline{\Omega}\times\RR^+$, and consequently the limit of $u_\epsilon$ as $\epsilon\to 0^+$ exists. This limit coincides with the viscosity solution $u$ of~\eqref{parabolic-problem}:
		\begin{equation}\label{convergence-1}
			\ue(x,t)\to u(x,t)=\ua(x,t)=\ub(x,t)
			\quad \mbox{uniformly in } \overline{\Omega}\times\RR^+
			\quad \mbox{as } \epsilon\to 0^+.
		\end{equation}
	\end{thm}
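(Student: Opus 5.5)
We follow the half-relaxed limits method of Barles--Souganidis, as in the proof of Theorem~\ref{convergence-elliptic}. By Lemma~\ref{bound-ue} the family $\{\ue\}$ is uniformly bounded independently of $\epsilon$. Hence $\ua$ and $\ub$ are well defined and finite. By construction $\ua$ is upper semicontinuous, $\ub$ is lower semicontinuous, and $\ub\le\ua$. Lemma~\ref{bound-ue}\ref{bound-g} and \ref{bound-u0} give the boundary and initial conditions in the strong sense:
\[
\ua\le g\le\ub \ \text{ on } \partial\Omega\times(0,\infty),\qquad \ua(\cdot,0)\le u_0\le\ub(\cdot,0) \ \text{ in } \Omega .
\]
It then suffices to prove that $\ua$ is a viscosity subsolution and $\ub$ a viscosity supersolution of $\partial_t u=\plap u$ in $\Omega\times(0,\infty)$. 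The comparison principle for viscosity sub/supersolutions of \eqref{parabolic-problem} with continuous data (as in \cite{delTeso+Fuertes+Rossi_2026} and \cite{Crandall_Ishii_Lions_1992}) then yields $\ua\le\ub$. Therefore $u:=\ua=\ub$ is continuous, and the usual argument gives uniform convergence on $\overline\Omega\times\RR^+$. On unbounded time intervals this uses the barriers uniformly in $T$, or one works on each $[0,T]$ as in the cited paper.

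For the subsolution property, fix $(x_0,t_0)\in\Omega\times(0,\infty)$ and $\ph$ smooth with $\ua-\ph$ attaining a strict maximum at $(x_0,t_0)$. By \cite{Barles_Souganidis_1991} there are $(\epsilon_n,x_n,t_n)\to(0,x_0,t_0)$ that are quasi-maximum points of $u_{\epsilon_n}-\ph$:
\[
u_{\epsilon_n}(y,s)-\ph(y,s)\le u_{\epsilon_n}(x_n,t_n)-\ph(x_n,t_n)+\epsilon_n^3 \quad \text{near } (x_0,t_0).
\]
Set $\xi_n=u_{\epsilon_n}(x_n,t_n)-\ph(x_n,t_n)$. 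Write the DPP as $\ue(x_n,t_n)=\B_{\epsilon_n}^p[\ue](x_n,t_n-\epsilon_n^2)$. Monotonicity of $\B_\epsilon^p$ (inherited from $\A_\epsilon^p$ via \cite[Lemma 4.5]{delTeso+Fuertes+Rossi_2026} and the structure of \eqref{operator-B}) and its commutation with constants give
\[
\ph(x_n,t_n)+\xi_n\le \B_{\epsilon_n}^p[\ph](x_n,t_n-\epsilon_n^2)+\xi_n+\epsilon_n^3 .
\]
Cancel $\xi_n$, apply Corollary~\ref{exp-c3-parabolic} at $(x_n,t_n-\epsilon_n^2)$, and divide by $\epsilon_n^2$. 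This gives
\[
0\le \plap\ph(x_n,t_n-\epsilon_n^2)-\partial_t\ph(x_n,t_n-\epsilon_n^2)+E(\epsilon_n,\ph,x_n,t_n-\epsilon_n^2)+\epsilon_n .
\]
Letting $n\to\infty$ yields $\partial_t\ph(x_0,t_0)\le\plap\ph(x_0,t_0)$, which is the subsolution inequality. The supersolution property for $\ub$ is symmetric, using quasi-minimum points.

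The main obstacle is controlling the error term $E$ uniformly along the moving points $(x_n,t_n)$. If $\gr\ph(x_0,t_0)\ne0$, the bound \eqref{error-B-grad} is locally uniform, since $|\gr\ph|$ stays bounded below near $(x_0,t_0)$, so $E\to0$. If $\gr\ph(x_0,t_0)=0$, we use \eqref{error-B-gr-not}. The delicate term there is $\epsilon^{-2/3}\|D^2\ph\|^{1/(p-1)}|\gr\ph(x_n)|^{(p-2)/(p-1)}$, which is not automatically small. I would first try the standard reduction: for $p>2$ it suffices to test with $\ph$ whose gradient vanishes to high order at the touching point (for instance adding $|x-x_0|^4$-type perturbations, as in \cite{delTeso+Fuertes+Rossi_2026}). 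Then $|\gr\ph(x_n)|\lesssim|x_n-x_0|^3$ and $\plap\ph(x_0,t_0)=0$. If this is insufficient, I would instead pick the quasi-maximum points so that $|x_n-x_0|$ decays fast enough relative to $\epsilon_n$. Alternatively I would use the weaker $o(1)$ estimate of Theorem~\ref{exp-c2} together with continuity of $\plap\ph$, noting that the $o(1)$ there depends only on the $C^2$ modulus near $x_0$.
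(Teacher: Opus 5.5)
Your skeleton matches the paper's proof: half-relaxed limits, the barriers of Lemma~\ref{bound-ue} for the lateral and initial data, Barles--Souganidis quasi-maxima, monotonicity and translation invariance of $\B_\epsilon^p$, Corollary~\ref{exp-c3-parabolic} at $(x_n,t_n-\epsilon_n^2)$, and comparison for the limit problem. You correctly identify the delicate point: the error $E$ must be uniformly small along $(x_n,t_n)$ when $\gr\ph(x_0,t_0)=0$, and \eqref{error-B-gr-not} does not give this, because the term $\epsilon^{-2/3}\|D^2\ph\|^{\frac{1}{p-1}}|\gr\ph(x_n)|^{\frac{p-2}{p-1}}$ need not vanish. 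The paper's proof passes over this by asserting that \eqref{error-B-grad}--\eqref{error-B-gr-not} bound the error independently of the point.

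However, your proposal leaves this step open, and two of your three remedies do not close it. Restricting to test functions whose gradient vanishes to high order only turns the bad term into $\epsilon_n^{-2/3}|x_n-x_0|^{3\frac{p-2}{p-1}}$. The Barles--Souganidis construction gives no rate for $|x_n-x_0|$ in terms of $\epsilon_n$, so you cannot choose the quasi-maximum points to decay fast enough. That reduction would also need its own equivalence lemma. Your third remedy points in the right direction, but its justification is not available as stated. The $o(1)$ in Theorem~\ref{exp-c2} is inherited from Theorem~\ref{thm.asexp}(a), which the paper explicitly notes is \emph{not} locally uniform in $x$.

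The missing idea is that the blow-up factor comes from the crude bound $2(p-1)M_b|E_1|$ in the proof of Theorem~\ref{exp-c3}. It disappears if you keep the monotone structure of \eqref{operator-B}, as in the $\delta$-argument of Theorem~\ref{exp-c2}. Set
\[
y_n:=(x_n,t_n-\epsilon_n^2),\qquad a_n:=(\plap\ph(y_n))^{\frac{1}{p-1}},\qquad \delta_n:=\sup_{b\in[m_b,M_b]}|E_1(\epsilon_n\sqrt{2(p-1)b},\ph,y_n)|,
\]
where $a_n$ is bounded. Since $\epsilon_n\sqrt{M_b(\epsilon_n)}\to0$, Theorem~\ref{thm.asexp}(c) gives $\delta_n\le k_3\|D^2\ph\|^{\frac{1}{p-1}}|\gr\ph(y_n)|^{\frac{p-2}{p-1}}+o(1)\to0$ when $\gr\ph(x_0,t_0)=0$. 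When $\gr\ph(x_0,t_0)\neq0$, Theorem~\ref{thm.asexp}(b) gives $\delta_n\to0$ directly. Each bracket in \eqref{operator-B} is nondecreasing in $(\A^p_{\epsilon\sqrt{2(p-1)b}}[\ph]-\ph)/(2(p-1)b\epsilon^2)$. So Lemma~\ref{signed-power-lemma} applied with $a=a_n\pm\delta_n$ yields, with signed powers,
\[
(a_n-\delta_n)^{p-1}-o(1)\le\frac{\B_{\epsilon_n}^p[\ph](y_n)-\ph(y_n)}{\epsilon_n^2}\le(a_n+\delta_n)^{p-1}+o(1).
\]
The restriction errors here are uniform because $a_n\pm\delta_n$ stay bounded. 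Uniform continuity of $a\mapsto(a)^{p-1}$ on bounded sets then gives $E\to0$ along the sequence. In short, the $(p-1)$-th power absorbs the non-uniform $O(|\gr\ph|^{\frac{p-2}{p-1}})$ error of $\A_\epsilon^p$, and $M_b$ never multiplies it. With this in place, your argument (and the paper's) goes through.
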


	\begin{proof}
		By construction, we have $\ub(x,t)\leq \ua(x,t)$. Hence, uniform convergence will be established once we prove the reverse inequality. This follows from the fact that $\ub(x,t)$ is a viscosity supersolution of \eqref{parabolic-problem}, while $\ua(x,t)$ is a viscosity subsolution, since the comparison principle holds; see \cite{Juutinen+Lindqvist+Manfredi_2001, Siltakoski-equivalence-solution}.
		
		Thanks to Lemma \ref{bound-ue}, if either $x\in\partial\Omega$ and $t>0$, or $x\in\Omega$ and $t=0$, then $\ub(x,t)\geq\ua(x,t)$ follows directly. It therefore remains only to verify this inequality in $\Omega\times\RR^+$.
		
		We first show that $\ua$ is a viscosity subsolution. Let $(x_0,t_0)\in \Omega\times \RR^+$ and let $\ph\in C^\infty(B_R(x_0)\times(t_0-R,t_0+R))$ be a test function such that $\ph(x_0,t_0)=\ua(x_0,t_0)$ and $\ph(x,t)>\ua(x,t)$ at all other points in the ball. Since $(x_0,t_0)$ is a strict maximum point of $\ua-\ph$, by \cite{Barles_Souganidis_1991} there exists a sequence $(\epsilon_n,y_n,t_n)\to(0^+,x_0,t_0)$ such that $(y_n,t_n)$ are ``quasi-maximum'' points of $u_{\epsilon_n}-\ph$, namely,
		\begin{equation}\label{quasi-max}
			u_{\epsilon_n}(x,t)-\ph(x,t)
			\leq u_{\epsilon_n}(y_n,t_n)-\ph(y_n,t_n)+e^{-1/\epsilon_n}.
		\end{equation}
		Notice that, for $n$ sufficiently large, $(y_n,t_n)\in\Omega\times\RR^+$, so we can use the asymptotic expansions of Corollary \eqref{exp-c3-parabolic}. Denote
		$$\xi_n:=u_{\epsilon_n}(y_n,t_n)-\ph(y_n,t_n).$$
		Using the fact that $u_{\epsilon_n}$ is a solution of the DPP, together with \eqref{quasi-max} and the asymptotic expansion, we obtain
		\begin{align*}
			0
			&=\frac{u_{\epsilon_n}(y_n,t_n)-\B_{\epsilon_n}^p[u_{\epsilon_n}](y_n,t_n-\epsilon_n^2)}{\epsilon_n^2} \\
			&\geq \frac{\ph(y_n,t_n)+\xi_n-\B_{\epsilon_n}^p[\ph+\xi_n+e^{-1/\epsilon_n}](y_n,t_n-\epsilon_n^2)}{\epsilon_n^2} \\
			&=\partial_t\ph(y_n,t_n-\epsilon_n^2)
			-\plap\ph(y_n,t_n-\epsilon_n^2)
			+E(\epsilon,\ph, y_n, t_n)
			-\frac{e^{-1/\epsilon_n}}{\epsilon_n^2}.
		\end{align*}
		Notice that the error term can be bounded independently of the point $(y_n,t_n)$; see formulas~\eqref{error-B-grad} and~\eqref{error-B-gr-not}. Indeed, from some point onward, all points in the sequence lie in a neighborhood of $(x_0,t_0)$. Taking limit as $n\to\infty$ and using the regularity of $\ph$, we conclude that $\ua$ is a viscosity subsolution:
		\begin{equation*}
			\partial_t\ph(x_0,t_0)\leq \plap\ph(x_0,t_0).
		\end{equation*}
		
		The proof that $\ub$ is a viscosity supersolution of~\eqref{parabolic-problem} is similar.
	\end{proof}
	
	\subsection{The asociated game}\label{sec7.2}
	
	The DPP studied in Section~\ref{sec7.1} is also related to a two-player zero-sum game which is already studied in Section \ref{sec2}. 
	
	This game described above generates a sequence of states
	$$
	P=\left\{(x_0,t_0), (x_1,t_1), \ldots , (x_\tau,t_\tau)\right\},
	$$
	where $(x_0,t_0)\in \Omega \times (0,\infty)$ is the initial state of the game, and $(x_\tau,t_\tau)$ is the final state, when either $x_\tau\in \Rd\setminus \Omega$ or $t_\tau\leq 0$. Observe that the game always terminates after a finite number of plays, since, for any $t_0>0$ and $\epsilon>0$, there exists $k\in \NN$ such that
	$t_k=t_0-k\epsilon^2\leq 0$.
	We write the final payoff function as
	
	$$
	\Phi(x_\tau,t_\tau)=
	\left\{
	\begin{array}{ll}
		u_0(x_\tau) & \mbox{if }(x_\tau,t_\tau)\in\Omega \times (-\infty,0],\\[4pt]
		g(x_\tau) & \mbox{if }(x_\tau,t_\tau)\in (\Rd\setminus \Omega)\times \RR.
	\end{array}
	\right.
	$$
	
	The notions of strategies for both players are introduced in Section~\ref{sec5}. Once the two players have fixed their strategies, $S_I$ and $S_{II}$, the evolution of the game depends only on the initial state and the outcomes of the coin tosses, which induce a probability measure on the space of sequences of states. With respect to this probability measure, we can compute the expected payoff obtained when playing according to the strategies $S_I$ and $S_{II}$, starting from $(x_0,t_0)$:
	
	$$
	\mathbb{E}_{S_I, S_{II}}^{(x_0,t_0)}
	\left(\Phi(x_\tau,t_\tau)\right).
	$$
	
	Since Player~I tries to maximize the expected payoff, while Player~II aims to minimize it, the values for Players~I and~II are defined, respectively, by
	
	$$
	u^\epsilon_I(x_0,t_0)
	=
	\sup_{S_{I}}
	\left(
	\inf_{S_{II}}
	\mathbb{E}_{S_I, S_{II}}^{(x_0,t_0)}
	(\Phi(x_\tau,t_\tau))
	\right)
	\quad \mbox{and}\quad
	u^\epsilon_{II}(x_0,t_0)
	=
	\inf_{S_{II}}
	\left(
	\sup_{S_{I}}
	\mathbb{E}_{S_I, S_{II}}^{(x_0,t_0)}
	(\Phi(x_\tau,t_\tau))
	\right).
	$$
	Notice that, trivially,
	$u^\epsilon_I\leq u^\epsilon_{II}$.
	We say that the game has a value if
	$u^\epsilon_I=u^\epsilon_{II}$.
	
	\begin{thm}\label{game-parabolic}
		Let the assumptions of Theorem \ref{convergence-parabolic} hold. Then, the parabolic game associated with $\B_\epsilon^p$ has a value characterized by the unique solution $\ue$ to the DPP \eqref{DPP-parabolic}; that is,
		$$
		\ue(x,t)=u_I^\epsilon(x,t)=u_{II}^\epsilon(x,t)
		\qquad\mbox{for }(x,t)\in \Rd\times \RR.
		$$
		
		In particular, the value of the game converges uniformly on $\overline{\Omega}\times\RR^+$ as $\epsilon\to 0^+$ to the viscosity solution of the parabolic problem \eqref{parabolic-problem}.
	\end{thm}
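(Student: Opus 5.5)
We follow the scheme of Theorem~\ref{teo.juego.p-q}, adapted to the space--time setting. The first observation is that the game always ends after a deterministic number of rounds: $t_k=t_0-k\epsilon^2$, so $\tau\le \lceil t_0/\epsilon^2\rceil+1$. Because of this, the optional stopping theorem applies directly to bounded (super/sub)martingales, and no termination argument is needed. Since $u^\epsilon_I\le u^\epsilon_{II}$ holds trivially, it suffices to prove $\ue\le u_I^\epsilon$ and $u^\epsilon_{II}\le \ue$. Here $\ue$ is the unique bounded solution of \eqref{DPP-parabolic}; its existence follows from the Euler scheme and its uniqueness from Theorem~\ref{comparison-II}. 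Outside $\Omega\times(0,\infty)$ we have $\ue=\Phi$ by construction, so the claim is trivial there.

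For the inequality $\ue\le u_I^\epsilon$, we fix $\eta>0$ and an arbitrary strategy $S_{II}$, and build a quasi-optimal $S_I^*$ that follows $\ue$ at the current time level $t_k-\epsilon^2$. This works exactly as in Step~1 of the proof of Theorem~\ref{teo.juego.p-q}, but with the plain operator $\A^p$ (no averaging ball). Player~I's choices are as follows:
\begin{itemize}
\item When Player~I chooses $b$, he picks $b^*$ realizing the supremum in \eqref{operator-B} up to $\eta 2^{-(k+2)}$.
\item In the $\max[\,\cdot\,,\ue(x_k,t_k-\epsilon^2)]$ alternative of \eqref{operator-A}, he stays put exactly when the stationary value is larger.
\item When he chooses $c$, he picks $c^*$ almost realizing the supremum in the second line of \eqref{operator-A}.
\item When he moves, he selects points where $\ue(\cdot,t_k-\epsilon^2)$ is within $\eta 2^{-(k+1)}$ of its supremum over the relevant ball.
\end{itemize}
We then set
$$M_k=\ue(x_k,t_k)-\eta 2^{-k}.$$
Conditioning on the history and using the DPP $\ue(x,t)=\B^p_\epsilon[\ue](x,t-\epsilon^2)$, a computation identical to the elliptic one gives $\mathbb E[M_{k+1}\mid\mathcal F_k]\ge \B_\epsilon^p[\ue](x_k,t_k-\epsilon^2)-\eta 2^{-k}\ge M_k$. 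The terms involving $\pm2(p-2)\epsilon^2b^{\frac{p-1}{p-2}}$ are accounted for because they enter $\B^p_\epsilon$ exactly as the payments enter the payoff. We must be careful to include these payments in the payoff, so the martingale should be $\ue(x_k,t_k)$ plus the accumulated net payments. Hence $(M_k)$ is a submartingale. Optional stopping gives $\mathbb E[\Phi(x_\tau,t_\tau)]\ge \ue(x_0,t_0)-\eta$; taking the infimum over $S_{II}$ and letting $\eta\to0$ yields $\ue\le u^\epsilon_I$.

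The reverse inequality $u^\epsilon_{II}\le\ue$ is the symmetric argument: Player~II follows $\ue$ quasi-optimally, and $\ue(x_k,t_k)+\eta2^{-k}$ becomes a supermartingale. Combining the two gives $\ue=u_I^\epsilon=u_{II}^\epsilon$. The convergence statement then follows immediately from Theorem~\ref{convergence-parabolic}.

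The main obstacle is bookkeeping rather than concept. We must write the conditional expectation of one round as a nested max/min structure matching \eqref{operator-B}--\eqref{operator-A} term by term, including the tug-of-war-with-noise step inside $M_\rho$ and the running transfer payments. We must also check measurability of the quasi-optimal selections; we would handle this as in \cite{delTeso+Fuertes+Rossi_2026}, using that $\ue$ is bounded and choosing countably-valued approximate selectors.
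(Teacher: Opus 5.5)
Your proposal is correct and follows the paper's proof essentially step by step. The shared steps are: a quasi-optimal strategy that tracks $\ue$ at the time level $t_k-\epsilon^2$; the sub/supermartingale $\ue(x_k,t_k)\mp\eta 2^{-k}$; optional stopping, justified by the deterministic bound on $\tau$; the symmetric argument for Player~$II$; and Theorem~\ref{convergence-parabolic} for the convergence. Your insistence that the transfer payments $\pm 2(p-2)\epsilon^2 b^{\frac{p-1}{p-2}}$ be included both in the payoff and in $M_k$ is well taken: the paper's $M_k$ and its payoff $\mathbb{E}(\Phi(x_\tau,t_\tau))$ omit these payments even though its one-step computation uses them, so your bookkeeping is the consistent version of the same argument.
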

	\begin{proof}
		We already have $u_I^\epsilon(x_0,t_0)\leq u_{II}^\epsilon(x_0,t_0)$. Then, it remains only to prove the reverse inequality.
		
		\noindent \textit{Step 1: We show that $\ue\leq u_I^\epsilon$.}
		Fix any $\eta>0$. Given any strategy $S_{II}$ for Player~$II$, we construct a strategy $S_I^*$ for Player~$I$ by following the solution of the DPP \eqref{DPP-parabolic} starting from $(x_k,t_k)$.
		
		\begin{itemize}[leftmargin=*]
			\item \textbf{First toss of a fair coin, with probability $1/2$ of heads and probability $1/2$ of tails.}\
			If the outcome is heads, then Player~$I$ chooses a constant $b^*\in[m_b,M_b]$ such that
			\begin{equation*}
				\begin{split}
					& \A_{\epsilon\sqrt{2(p-1)b^*}}^p [\ue](x_k,t_{k+1})
					-2(p-2)\epsilon^2 (b^*)^{\frac{p-1}{p-2}}
					\\ &\geq
					\sup_{b\in[m_b,M_b]}
					\left\{
					\A_{\epsilon\sqrt{2(p-1)b}}^p [\ue](x_k,t_{k+1})
					-2(p-2)\epsilon^2 b^{\frac{p-1}{p-2}}
					\right\}\\
					&-\frac{\eta}{2^{k+2}}.
				\end{split}
			\end{equation*}
			Afterwards, Player~$I$ pays Player~$II$ the amount
			$2(p-2)\epsilon^2 (b^*)^{\frac{p-1}{p-2}}$,
			and a round of the game associated with
			$\A_{\epsilon\sqrt{2(p-1)b^*}}^p$
			is played.
			
			\item \textbf{Second toss of a fair coin, with probability $1/2$ of heads and probability $1/2$ of tails.}\\
 			If the outcome is heads, Player~$I$ decides to remain at the same position, $x_{k+1}=x_k$, if
			\begin{equation}\label{eq1-parabolic-game}
				\ue(x_k,t_{k+1})> \!\!\!\! \!\!
				\inf_{c\in[m_p,M_p]} \!\!
				\left\{
 				\alpha_p \!\!\!\!
				\sup_{B_{4(p-1)b^* \epsilon^2 c^{1-\alpha_p}}(x_k)} \!\!\! \!\!\!
				\ue(\cdot,t_{k+1})
				+
				(1-\alpha_p)
				M_{\epsilon\sqrt{4(p-1)b^*}c^{-\alpha_p/2}}
				[\ue](x_k,t_{k+1})
				\right\}.
			\end{equation}
			If the outcome is tails, Player~$II$ chooses any $c\in[m_p,M_p]$, and Player~$I$ determines the next position as follows:
			
			\item \textbf{Third toss of a biased coin, with probability $\alpha_p=(p-2)/(p-1)$ of heads and probability $1-\alpha_p=1/(p-1)$ of tails.}\\
			If the outcome is heads, Player~$I$ chooses the next position
			$x_{k+1}^*\in B_{4(p-1)b^*\epsilon^2 c^{1-\alpha_p}}(x_k)$
			such that
			\begin{equation}\label{eq2-parabolic-game}
				\ue(x_{k+1}^*,t_{k+1})
				\geq
				\sup_{B_{4(p-1)b^*\epsilon^2 c^{1-\alpha_p}}(x_k)}
				\ue(\cdot,t_{k+1})
				-\frac{\eta}{2^{k+1}}.
			\end{equation}
			
			If the outcome is tails, a tug-of-war with noise is played. In this case, Player~$I$ chooses the next position
			$\widehat{x}_{k+1}\in
			B_{\sqrt{8(p+d)(p-1)b^*}\epsilon c^{-\alpha_p/2}}(x_k)$
			such that
			\begin{equation}\label{eq3-parabolic-game}
				\ue(\widehat{x}_{k+1},t_{k+1})
				\geq
				\sup_{B_{\sqrt{8(p+d)(p-1)b^*}\epsilon c^{-\alpha_p/2}}(x_k)}
				\ue(\cdot,t_{k+1})
				-
				\frac{\eta}{2^{k+1}}.
			\end{equation}
			
			\item If, in the second toss of the fair coin, the outcome is tails and Player~$II$ decides to play, then Player~$I$ chooses a constant $c^*\in[m_p,M_p]$ such that
			\begin{equation}\label{eq4-parabolic-game}
				\begin{split}
					\alpha_p& \inf_{4(p-1)b^* \epsilon^2 (c^*)^{1-\alpha_p}}\ue(\cdot,t_{k+1})+(1-\alpha_p)M_{\sqrt{4(p-1)b^*}\epsilon (c^*)^{-\alpha_p/2}}[\ue](x_k,t_{k+1})
					\geq\\
					&\sup_{c\in[m_p,M_p]}\left\{\alpha_p \inf_{4(p-1)b^* \epsilon^2 c^{1-\alpha_p}}\ue(\cdot,t_{k+1})+(1-\alpha_p)M_{\sqrt{4(p-1)b^*}\epsilon c^{-\alpha_p/2}}[\ue](x_k,t_{k+1})\right\}-\frac{\eta}{2^{k+1}}.
				\end{split}
			\end{equation}
			Moreover, if a round of tug-of-war is played, Player~$I$ chooses $\widehat{x}_{k+1}$ as in~\eqref{eq3-parabolic-game}.
			
			\item If, in the first toss of the fair coin, the outcome is tails and Player~$II$ chooses any $b\in[m_b,M_b]$, then Player~$II$ pays Player~$I$ the amount $2(p-2)\epsilon^2 b^{\frac{p-1}{p-2}}$, and a round of the game generated by $\A_{\epsilon\sqrt{2(p-1)b}}^p$ is played. Player~$I$ chooses the next position $x_{k+1}$ according to the previous rules
			\eqref{eq1-parabolic-game}, \eqref{eq2-parabolic-game}, \eqref{eq3-parabolic-game}, and \eqref{eq4-parabolic-game},
			with $b$ in place of $b^*$.
		\end{itemize}
		Fix the strategies $S_I^*$ and $S_{II}$, and consider the sequence of random variables
		$$
		M_k=\ue(x_k,t_k)-\frac{\eta}{2^k}.
		$$
		Our next goal is to verify that $(M_k)_{k\geq 0}$ is a submartingale. This follows from the inequalities \eqref{eq1-parabolic-game}, \eqref{eq2-parabolic-game}, \eqref{eq3-parabolic-game}, and \eqref{eq4-parabolic-game}, together with the fact that $\ue$ is a solution of the DPP \eqref{DPP-parabolic}.
		\begin{align*}
			&\mathbb{E}^{(x_0,t_0)}_{S_I^*, S_{II}}[M_{k+1}\mid (x_k,t_k)]
			=\frac{1}{2}\Bigg(-2(p-2)\epsilon^2 (b^*)^{\frac{p-1}{p-2}}
			+\frac{1}{2}\max\Bigg[ \ue(x_k,t_{k+1})-\frac{\eta}{2^{k+1}},\\
			&\quad\quad \alpha_p \ue(x_{k+1}^*,t_{k+1}) +(1-\alpha_p)\frac{\beta_p}{2}\left(\ue(\widehat{x_{k+1}},t_{k+1})+\ue(x_{k+1}^{II},t_{k+1})\right)\\ &\quad\quad\quad+(1-\alpha_p)(1-\beta_p)\fint_{B_{\sqrt{8(p+d)(p-1)b^*}\epsilon c^{-\alpha_p/2}}(x_k)}\ue(y,t_{k+1}) \,\dy \Bigg]+\frac{1}{2}\min\Bigg[\ue(x_k,t_{k+1})-\frac{\eta}{2^{k+1}}\\
			&\quad\quad \alpha_p \inf_{B_{4(p-1)b^* \epsilon^2 (c^*)^{1-\alpha_p}}(x_k)}\ue(\cdot,t_{k+1})+(1-\alpha_p)M_{\sqrt{4(p-1)b^*}\epsilon(c^*)^{-\alpha_p/2}}[\ue](x_k,t_{k+1})\Bigg]\Bigg)\\
			&+ \frac{1}{2}\Bigg(2(p-2)\epsilon^2 b^{\frac{p-1}{p-2}}
			+\frac{1}{2}\max\Bigg[ \ue(x_k,t_{k+1})-\frac{\eta}{2^{k+1}},\\
			&\quad\quad \alpha_p \ue(x_{k+1}^*,t_{k+1}) +(1-\alpha_p)\frac{\beta_p}{2}\left(\ue(\widehat{x_{k+1}},t_{k+1})+\ue(x_{k+1}^{II},t_{k+1})\right)\\ &\quad\quad\quad+(1-\alpha_p)(1-\beta_p)\fint_{B_{\sqrt{8(p+d)(p-1)b}\epsilon c^{-\alpha_p/2}}(x_k)}\ue(y,t_{k+1}) \,\dy \Bigg]+\frac{1}{2}\min\Bigg[\ue(x_k,t_{k+1})-\frac{\eta}{2^{k+1}}\\
			&\quad\quad \alpha_p \inf_{B_{4(p-1)b \epsilon^2 (c^*)^{1-\alpha_p}}(x_k)}\ue(\cdot,t_{k+1})+(1-\alpha_p)M_{\sqrt{4(p-1)b}\epsilon(c^*)^{-\alpha_p/2}}[\ue](x_k,t_{k+1})\Bigg]\Bigg)\\
			&\geq \frac{1}{2}\left(\sup_{b\in[m_b,M_b]}\left\{\A_{\epsilon\sqrt{2(p-1)b}}^p[\ue](x_k,t_{k+1})-2(p-2)\epsilon^2 b^{\frac{p-1}{p-2}}\right\}-\frac{\eta}{2^{k+2}}-\frac{2\eta}{2^{k+2}}-(\alpha_p+(1-\alpha_p)\beta_p)\frac{\eta}{2^{k+2}}\right)\\
			&\quad + \frac{1}{2}\left( \!\! \inf_{b\in[m_b,M_b]}\left\{\A_{\epsilon\sqrt{2(p-1)b}}^p[\ue](x_k,t_{k+1})-2(p-2)\epsilon^2 b^{\frac{p-1}{p-2}}\right\}-\frac{\eta}{2^{k+2}}-\frac{2\eta}{2^{k+2}}-(\alpha_p+(1-\alpha_p)\beta_p)\frac{\eta}{2^{k+2}}\right)\\
			&= \B_\epsilon^p[\ue](x_k,t_k-\epsilon^2)-(3+\alpha_p+(1-\alpha_p)\beta_p)\frac{\eta}{2^{k+2}}\\
			&\geq \ue(x_k,t_k)-\frac{\eta}{2^k}.
		\end{align*}
		
		Once we have established that $(M_k)_{k\geq 0}$ is a submartingale, we apply the Optional Stopping Theorem (cf.~\cite{Williams}) to obtain
		$$
		\mathbb{E}^{(x_0,t_0)}_{S^*_I,S_{II}}
		[M_{\tau}]
		\geq
		M_0
		=
		\ue(x_0,t_0)-\eta.
		$$
		Since $S_{II}$ is an arbitrary strategy, we have
		$$
		\inf_{S_{II}} \mathbb{E}^{(x_0,t_0)}_{S^*_I,S_{II}}
		[\Phi(x_\tau,t_\tau)]
		=
		\inf_{S_{II}} \mathbb{E}^{(x_0,t_0)}_{S^*_I,S_{II}}
		[\ue(x_\tau,t_\tau)]
		\geq
		\inf_{S_{II}} \mathbb{E}^{(x_0,t_0)}_{S^*_I,S_{II}}
		[M_{\tau}]
		\geq
		\ue(x_0,t_0)-\eta,
		$$
		and therefore
		\begin{equation*}
			\ue(x_0,t_0)-\eta
			\leq
			\inf_{S_{II}}
			\mathbb{E}^{(x_0,t_0)}*{S_I^*, S*{II}}
			[\Phi(x_\tau,t_\tau)]
			\leq
			\sup_{S_I}
			\left(
			\inf_{S_{II}}
			\mathbb{E}^{(x_0,t_0)}*{S_I, S*{II}}
			[\Phi(x_\tau,t_\tau)]
			\right)
			=
			u_I^\epsilon(x_0,t_0).
		\end{equation*}
		The first step is completed by the arbitrariness of $\eta$.
		
		\noindent\textit{Step 2: We show that $u^{\epsilon}_{II}\leq \ue$.}
		The proof follows analogously to that of Step 1, since the game is symmetric. We therefore omit the computations and provide only a sketch of the argument. Let $\eta>0$ be arbitrary. Given any strategy $S_I$ for Player~$I$, we choose a strategy $S_{II}^*$ for Player~$II$ analogously to Step 1.
		
		Once the strategies $S_I$ and $S_{II}^*$ have been fixed, we define the sequence of random variables
		$$
		M_k=\ue(x_k,t_k)+\frac{\eta}{2^k}.
		$$
		It can be shown as before that $(M_k)_{k\geq 0}$ is a supermartingale. Therefore, by the Optional Stopping Theorem, we obtain
		$$
		\mathbb{E}^{(x_0,t_0)}_{S_I,S_{II}^*}[M_{\tau}]
		\leq
		M_0
		=
		\ue(x_0,t_0)+\eta,
		$$
		and hence
		\begin{equation*}
			\ue(x_0,t_0)+\eta
			\geq
			\sup_{S_I}
			\mathbb{E}^{(x_0,t_0)}*{S_I,S*{II}^*}
			[\Phi(x_\tau,t_\tau)]
			\geq
			\inf_{S_{II}}
			\left(
			\sup_{S_I}
			\mathbb{E}^{(x_0,t_0)}*{S_I,S*{II}}
			[\Phi(x_\tau,t_\tau)]
			\right)
			=
			u_{II}^\epsilon(x_0,t_0).
		\end{equation*}
		Since $\eta>0$ is arbitrary, we conclude that $\ue(x_0,t_0)\geq u_{II}^\epsilon(x_0,t_0)$. Therefore,
		$$
		\ue(x_0,t_0)
		=
		u_I^\epsilon(x_0,t_0)
		=
		u_{II}^\epsilon(x_0,t_0),
		$$
		which proves that the game has a value that coincides with the unique solution of the DPP \eqref{DPP-parabolic}.
		
		Since $\ue(x,t)\to u(x,t)$ uniformly in $\overline{\Omega}\times\RR^+$ as $\epsilon\to 0^+$, see Theorem~\ref{convergence-parabolic}, we conclude that the game has a value, which converges to $u$, the viscosity solution of \eqref{parabolic-problem}.
	\end{proof}

	
	\section{Final comments and possible extensions} \label{sect-final}
	
	In this final section we comment briefly on possible extensions of our results and include a brief remark
	on scaling invariance of our mean value formula for the $p-$Laplacian. 
	
	\subsection{Extensions}
	
	\begin{enumerate}
	
	\item With the same arguments used here we can tackle equations involving more than two $p-$Laplacians, like, for example,
	$$\Delta_p^{x'} u(x)+\Delta_q^{x''}u(x)+ \Delta_r^{x'''}u(x)+ ... =f(x).$$
	
	For instance, for the sum of three $p-$Laplacians in different sets of variables, 
	$$\Delta_p^{x'} u(x)+\Delta_q^{x''}u(x)+ \Delta_r^{x'''}u(x),$$ 
	we have the mean value formula
	\begin{equation*}
	\begin{array}{l}
	\displaystyle  
		\frac{1}{3}\B_\epsilon^p[\ph](x',x'',x''')
		+
		\frac{1}{3}\B_\epsilon^q[\ph](x',x'',x''')+ \frac{1}{3}\B_\epsilon^r[\ph](x',x'',x''') \\[6pt]
		\displaystyle =\ph (x',x'',x''')
		+\frac{\epsilon^2}{2}\left(\Delta_p^{x'} \ph (x',x'',x''')+\Delta_q^{x''}\ph (x',x'',x''')
		+ \Delta_r^{x'''} \ph (x',x'',x''')\right)+\epsilon^2\left(E (\epsilon,\ph,x))\right).
		\end{array}
	\end{equation*}

	\item We can also deal with equations involving a gradient term and a $p-$Laplacian, like, for example,
	$$\Delta_p u(x)+ |\nabla u (x) | =f(x).$$
	In fact, in this case we have 
	\begin{equation*}  
		(1-\epsilon) B_\epsilon^p[\ph](x) + \epsilon \left( \sup_{B_\epsilon (x)} \ph (y)\right)
		\displaystyle =\ph (x)
		+\frac{\epsilon^2}{2}\left(\Delta_p \ph (x)+ |\nabla \ph (x) | \right)+\epsilon^2\left(E (\epsilon,\ph,x))\right).
	\end{equation*}
	Here we only have to observe that 
	$$
	\epsilon \left( \sup_{B_\epsilon (x)} \ph (y) - \ph (x) \right) \approx \epsilon^2 |\nabla \ph (x) |.
	$$
	
	The case with the reverse sign,
	$$\Delta_p u(x) - |\nabla u (x) | =f(x),$$
	corresponds to 
	\begin{equation*}
			(1-\epsilon) B_\epsilon^p[\ph](x) + \epsilon \left( \inf_{B_\epsilon (x)} \ph (y)\right)
		\displaystyle =\ph (x)
		+\frac{\epsilon^2}{2}\left(\Delta_p \ph (x)+ |\nabla \ph (x) | \right)+\epsilon^2\left(E (\epsilon,\ph,x))\right).
		\end{equation*}
	\end{enumerate}
	
	It is worth to mention one more time that none of the previous equations is homogeneous.

	\subsection{Scaling invariance} Finally we observe that 
		the operator $\B_\epsilon^p$, which we have just shown asymptotically approximates the $p$-Laplacian (Theorems~\ref{exp-c2} and~\ref{exp-c3}), also satisfies the same scaling property as the $p$-Laplacian. More precisely, if $w(x)=u(\lambda x)$ for some $\lambda>0$, then
		\begin{equation*}
			\plap w(x)=\lambda^p \plap u(\lambda x)\qquad\mbox{and}\qquad \B_\epsilon^p[w](x)=\lambda^p \B_{\tilde{\epsilon}}^p[u](\lambda x).
		\end{equation*}
	
	This happens since when we compute $\B_\epsilon^p[w](x)$ we have (up to a constant) the supremum, infimum and integrals on the following balls
	$$B_{\epsilon^2 b c^{1-\alpha_p}}(x)=B_{\lambda \epsilon^2 b c^{1-\alpha_p}}(\lambda x)\qquad \mbox{and}\qquad B_{\epsilon b^{1/2} c^{-\alpha_p/2}}(x)=B_{\lambda \epsilon b^{1/2} c^{-\alpha_p/2}}(\lambda x).$$
	Thus, what we need is to distribute $\lambda$ as follows:
	\begin{align*}
		\lambda \epsilon^2 b c^{1-\alpha_p}&= \tilde{\epsilon}^2 \tilde{b} \tilde{c}^{1-\alpha_p}.\\
		\lambda \epsilon b^{1/2} c^{-\alpha_p/2}&= \tilde{\epsilon} \tilde{b}^{1/2} \tilde{c}^{-\alpha_p/2}.\\
		\epsilon^2 b^{\frac{p-1}{p-2}}&= \tilde{\epsilon}^2 \tilde{b}^{\frac{p-1}{p-2}}.
	\end{align*}
	This is satisfied if and only if
	$$\lambda^{-\frac{1}{p-1}}c^{1-\alpha_p}=\tilde{c}^{1-\alpha_p},\qquad \lambda^p\epsilon^2=\tilde{\epsilon}^2\qquad \mbox{ and }\qquad \lambda^{\frac{p}{p-1}-p}b=\tilde{b}.$$
	Therefore, we conclude that
	\begin{equation*}
		\frac{\B_\epsilon^p[w](x)-w(x)}{\epsilon^2}=\lambda^p\frac{\B_{\tilde{\epsilon}}^p[u](\lambda x)}{\tilde{\epsilon}^2}.
	\end{equation*}

	
	\appendix
	
	\section{Technical results}\label{apenA}
	
	In this appendix, we focus exclusively on proving several of the technical results used throughout the paper. We have chosen to collect these proofs here in order to preserve the flow of the exposition, since they do not contribute any additional conceptual insight to the main results and consist primarily of necessary computations.
	
	We begin with Lemma~\ref{signed-power-lemma}, where we express the signed power of a real number as the sum of a supremum and an infimum.
	\begin{lem}\label{lem-A1}
		Let $a\geq 0$ and $r>1$. Then,
		$$a^r=\sup_{b>0}\left\{r a b -(r-1)b^{\frac{r}{r-1}}\right\}.$$
		Moreover, we can restrict the set of the supremum to obtain
		$$a^r=\sup_{b\in[m,M]}\left\{rab-(r-1)b^{\frac{r}{r-1}}\right\}+E(a,r,m,M),$$
		with
		\begin{equation}\label{error-1-apenA}
			\left|E(a,r,m,M)\right|\leq r m^{\frac{r}{r-1}}+r a \left(a^{r-1}-M\right)_+.
		\end{equation}
	\end{lem}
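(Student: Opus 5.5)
Set $h(b)=rab-(r-1)b^{\frac{r}{r-1}}$ for $b>0$, with $a\ge0$ and $r>1$ fixed. The plan is to compute the unconstrained supremum of this strictly concave function in closed form (Young's inequality), and then bound the loss incurred when $b$ is restricted to $[m,M]$ by a short case analysis on where the maximizer lies.

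\textbf{Step 1 (unconstrained identity).} We have $h'(b)=ra-rb^{\frac{1}{r-1}}$, which vanishes exactly at $b^*=a^{r-1}$, and $h$ is strictly concave. If $a>0$, then
\[
h(b^*)=ra^r-(r-1)a^r=a^r,
\]
so the supremum over $b>0$ equals $a^r$. If $a=0$, then $h<0$ on $(0,\infty)$ and $h(b)\to0$ as $b\to0^+$, so the supremum is again $0=a^r$. Equivalently, this is Young's inequality $ab\le\frac{a^r}{r}+\frac{r-1}{r}b^{\frac{r}{r-1}}$, with equality at $b=a^{r-1}$.

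\textbf{Step 2 (restricted supremum).} Define $E:=a^r-\sup_{[m,M]}h\ge0$, which is nonnegative since we take a supremum over a smaller set. By concavity, $h$ is increasing on $(0,b^*]$ and decreasing on $[b^*,\infty)$. There are three cases.

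\begin{itemize}
\item If $b^*\in[m,M]$, then $E=0$.
\item If $b^*<m$, the restricted supremum is attained at $b=m$. Then
\[
E=a^r-ram+(r-1)m^{\frac{r}{r-1}}\le a^r+(r-1)m^{\frac{r}{r-1}}.
\]
Since $a<m^{\frac{1}{r-1}}$, we have $a^r<m^{\frac{r}{r-1}}$, and hence $E\le r\,m^{\frac{r}{r-1}}$.
\item If $b^*>M$, i.e.\ $a^{r-1}>M$, the restricted supremum is attained at $b=M$. By concavity, the linearization of $h$ at $M$ lies above $h$, so
\[
a^r=h(b^*)\le h(M)+h'(M)(b^*-M).
\]
Here $h'(M)=r\bigl(a-M^{\frac{1}{r-1}}\bigr)\le ra$, which gives
\[
E\le ra\,(a^{r-1}-M)=ra\,(a^{r-1}-M)_+.
\]
\end{itemize}

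In all three cases, $|E|\le r\,m^{\frac{r}{r-1}}+ra\,(a^{r-1}-M)_+$, which is \eqref{error-1-apenA}.

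\textbf{Main obstacle.} The only delicate point is the case $b^*>M$. There one must produce a bound that is linear in the overshoot $a^{r-1}-M$ rather than one involving $a^r$ itself. The concavity (tangent-line) estimate handles this cleanly, since it avoids having to expand $h(M)$ explicitly. The remaining cases are direct computations.
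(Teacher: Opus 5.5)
Your proof is correct and follows the paper's argument. Both locate the maximizer $b=a^{r-1}$ with value $a^r$, then split into the cases $a^{r-1}\in[m,M]$, $a^{r-1}<m$ and $a^{r-1}>M$, with the restricted supremum attained at the nearer endpoint. You also supply justifications the paper leaves implicit, e.g.\ $a^r<m^{\frac{r}{r-1}}$ in the second case; in the third case your tangent-line bound is just a variant of the paper's direct expansion, which reduces to $(r-1)M^{\frac{r}{r-1}}\le (r-1)a^r$.
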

	\begin{proof}
		The first part is proved as follows. Consider the function
		$$
		\Phi(b)=rab-(r-1)b^{\frac{r}{r-1}}, \qquad b>0.
		$$
		We look for its maximum, which is attained at $b=a^{r-1}$ and has value $a^r$.
		
		Let show how can we estimate the error in the case when we restrict the set. We split the proof of the second part in several cases:
		
		\noindent \textit{Case 1: $m\leq a^{r-1}\leq M$. }Then,
		\begin{equation*}
			\sup_{b>0}\left\{rab -(r-1)b^{\frac{r}{r-1}}\right\}=\sup_{b\in[m,M]}\left\{rab -(r-1)b^{\frac{r}{r-1}}\right\}\qquad\mbox{and}\qquad E(a,r,m,M)=0.
		\end{equation*}
		
		\noindent \textit{Case 2: $a^{r-1}<m$. }Then, since function $\Phi$ is concave we have
		$$\sup_{b\in[m,M]}\left\{rab-(r-1)b^{\frac{r}{r-1}}\right\}=ram-(r-1)m^{\frac{r}{r-1}},$$
		and
		\begin{equation*}
			\left|E(a,r,m,M)\right|= a^r -ram +(r-1)m^{\frac{r}{r-1}}
			\leq r m^{\frac{r}{r-1}}.
		\end{equation*}
		
		\noindent \textit{Case 3: $a^{r-1}>M$. }Then, since function $\Phi$ is concave we have
		$$\sup_{b\in[m,M]}\left\{rab-(r-1)b^{\frac{r}{r-1}}\right\}=ra M -(r-1)M^{\frac{r}{r-1}},$$
		and 
		\begin{equation*}
			\left|E(a,r,m,M)\right|= a^r -raM +(p-1)M^{\frac{r}{r-1}}
			\leq ra (a^{r-1}-M).
		\end{equation*}
		Therefore, by combining the error terms arising in each case, the proof is complete.
	\end{proof}
	
	\begin{lem}
		Let $a\in\RR$ and $r>1$. Then, 
		\begin{equation*}
			a^r= \operatorname{sgn}(a)\,|a|^r
			=
			\sup_{b>0}
			\left\{
			rab-(r-1)b^{\frac{r}{r-1}}
			\right\}
			+
			\inf_{b>0}
			\left\{
			rab+(r-1)b^{\frac{r}{r-1}}
			\right\}.
		\end{equation*}
		Moreover, if we restrict the supremum and infimum to $b\in[m,M]$, then
		$$
		a^r
		=
		\sup_{b\in[m,M]}
		\left\{
		rab-(r-1)b^{\frac{r}{r-1}}
		\right\}
		+
		\inf_{b\in[m,M]}
		\left\{
		rab+(r-1)b^{\frac{r}{r-1}}
		\right\}
		+
		E(a,r,m,M),
		$$
		where
		\begin{equation}\label{error-2-apenA}
			\left|E(a,r,m,M)\right|
			\leq
			2r
			\left(
			m^{\frac{r}{r-1}}
			+
			m\left|a\right|
			+
			\left|a\right|
			\left(\left|a\right|^{r-1}-M\right)_+
			\right).
		\end{equation}
	\end{lem}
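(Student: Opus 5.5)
The plan is to reduce the statement to Lemma~\ref{lem-A1}, which handles $a\ge 0$ and the supremum. Write
\[
S(a)=\sup_{b>0}\{rab-(r-1)b^{\frac{r}{r-1}}\},\qquad I(a)=\inf_{b>0}\{rab+(r-1)b^{\frac{r}{r-1}}\}.
\]
Then $I(a)=-\sup_{b>0}\{r(-a)b-(r-1)b^{\frac{r}{r-1}}\}=-S(-a)$. We therefore split into the cases $a\ge0$ and $a<0$.

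\emph{Case $a\ge 0$.} Lemma~\ref{lem-A1} gives $S(a)=a^r$. For every $b>0$ the bracket in $I(a)$ is positive, and it tends to $0$ as $b\to0^+$, so $I(a)=0$. Hence $S(a)+I(a)=a^r=\sgn(a)|a|^r$.

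\emph{Case $a<0$.} The bracket $rab-(r-1)b^{\frac{r}{r-1}}$ is negative for every $b>0$ and tends to $0$ as $b\to 0^+$, so $S(a)=0$. On the other side, $I(a)=-S(|a|)=-|a|^r$ by Lemma~\ref{lem-A1}. This proves the unrestricted identity.

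For the restricted version, let $S_{m,M}$ and $I_{m,M}$ denote the same expressions with $b\in[m,M]$, and again take $a\ge0$ first.

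\emph{Supremum, $a\ge 0$.} Lemma~\ref{lem-A1} gives $|S(a)-S_{m,M}(a)|\le r m^{\frac{r}{r-1}}+ra(a^{r-1}-M)_+$.

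\emph{Infimum, $a\ge 0$.} The function $b\mapsto rab+(r-1)b^{\frac{r}{r-1}}$ is increasing on $b>0$, so the restricted infimum is attained at $b=m$ and equals $ram+(r-1)m^{\frac{r}{r-1}}$. Its distance to $I(a)=0$ is therefore at most $r(m|a|+m^{\frac{r}{r-1}})$.

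Adding the two contributions by the triangle inequality gives
\[
|E|\le 2r\bigl(m^{\frac{r}{r-1}}+m|a|+|a|(|a|^{r-1}-M)_+\bigr).
\]
The case $a<0$ follows by the same argument with the roles exchanged. The restricted supremum is at $b=m$, with value $-(ram-(r-1)m^{\frac{r}{r-1}})$ up to sign, so its distance to $S(a)=0$ is at most $r(m|a|+m^{\frac{r}{r-1}})$. The restricted infimum equals $-S_{m,M}(|a|)$, which Lemma~\ref{lem-A1} controls with $|a|$ in place of $a$.

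None of this is really an obstacle. The only point requiring care is the monotonicity claim that puts the boundary term at $b=m$ rather than inside the interval. For $a\ge0$ this holds because the derivative $ra+rb^{\frac{1}{r-1}}$ is positive. The symmetric claim for $a<0$ holds for the same reason after changing sign.
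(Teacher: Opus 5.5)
Your proof is correct and takes essentially the same route as the paper. Both arguments use Lemma~\ref{lem-A1} for the term whose sign matches $a$. The wrong-sign supremum or infimum vanishes in the unrestricted case, and in the restricted case it is attained at $b=m$, at distance at most $r(m|a|+m^{\frac{r}{r-1}})$ from $0$. The two errors are then added.

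Your handling of the wrong-sign term is slightly cleaner than the paper's Step~2. The paper states that this term vanishes for $b\in[m,M]$, but that holds only for $b>0$.
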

	\begin{proof}
			We split the proof into several steps.
		
		\noindent\textit{Step 1: The power of a positive or negative real number.}
		We first recall that, when $a\geq 0$, we have
		$$
		a^r=\sup_{b>0}\left\{rab-(r-1)b^{\frac{r}{r-1}}\right\}.
		$$
		Similarly, when $a<0$, applying Lemma \ref{lem-A1} to $-a$ yields
		$$
		a^r=-(-a)^r
		=\inf_{b>0}\left\{rab+(r-1)b^{\frac{r}{r-1}}\right\}.
		$$
		
		\noindent\textit{Step 2: Positive and negative parts.}
		We now consider the case in which the parameter $a$ has the opposite sign to that required in the expressions above. In this case, the corresponding supremum and infimum vanish, yielding the positive and negative parts, respectively. Namely,
		$$(a^r)_+=\sup_{b\in[m,M]}\left\{rab-(r-1)b^{\frac{r}{r-1}}\right\}\qquad \mbox{and}\qquad (a^r)_-=-\inf_{b\in[m,M]}\left\{rab+(r-1)b^{\frac{r}{r-1}}\right\}.$$
		
		Therefore, regardless of the sign of $a$, we obtain
		$$
		a^r
		=
		\sup_{b\in[m,M]}
		\left\{
		rab-(r-1)b^{\frac{r}{r-1}}
		\right\}
		+
		\inf_{b\in[m,M]}
		\left\{
		rab+(r-1)b^{\frac{r}{r-1}}
		\right\}.
		$$
		
		\noindent\textit{Step 3: Estimation of the error.}
		We first consider the error arising from restricting the supremum. On the one hand, by Lemma \ref{lem-A1}, if $a\geq 0$, the error is given by \eqref{error-1-apenA}. On the other hand, if $a<0$, the supremum is attained at $m$, and the distance between its value and $0$ is
		$$
		r|a|m+(r-1)m^{\frac{r}{r-1}}.
		$$
		
		Combining these two estimates gives
		\begin{equation*}
			\left|E(a,r,m,M)\right|
			\leq
			r
			\left(
			m^{\frac{r}{r-1}}
			+
			m|a|
			+
			|a|
			\left(|a|^{r-1}-M\right)_+
			\right).
		\end{equation*}
		
		The same argument applies to the infimum, accounting for the factor of $2$ in the final estimate \eqref{error-2-apenA}.
	\end{proof}
	Observe that the error tends to $0$ as $m\to 0^+$ and $M\to \infty$.
	
	We next turn to the technical results from Section~\ref{sec4.1}, where we estimate the difference of maxima and the quantity $\A_\epsilon^p[\ub]-\A_\epsilon^p[\ua]$ under the assumption that the functions are ordered. We do not have to prove Lemma \ref{max-ineq} since it is already done in \cite{delTeso+Fuertes+Rossi_2026}.
	
	\begin{lem}\label{lemma-A-gorro}
		Let $d\in \NN$, $p>2$, and $\epsilon\in (0,1)$. Assume \eqref{HT} and let $\ub,\ua:\Rd\longrightarrow \RR$ are two measurable functions such that $\ub\leq \ua$ in $\Rd$. Then, for any $x_0\in \Rd$ we have
		\begin{equation}
			\widehat{\A_\epsilon^p}[\ub](x_0)-\widehat{\A_\epsilon^p}[\ua](x_0)\leq   \fint_{B_{\epsilon^8}(x_0)}\ub(y)-\ua(y)\dy
		\end{equation}
	\end{lem}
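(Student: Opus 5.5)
The natural plan is to compare $\widehat{\A_\epsilon^p}[\ub]$ and $\widehat{\A_\epsilon^p}[\ua]$ term by term. Working through the cases, however, suggests that the inequality as worded (coefficient $1$ in front of the average) is too strong. So the plan is to establish the version with a positive constant $\eta$ in front of the average, as in the Lemma stated in Section~\ref{sec4.1}, which is the form actually used in the proof of Theorem~\ref{comparison}.

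\textbf{Notation.} For a bounded measurable $\ph$ write
\[
I[\ph]=\inf_{c}\Big\{\alpha_p\sup_{B_{2\epsilon^2c^{1-\alpha_p}}(x_0)}\ph+(1-\alpha_p)M_{\sqrt2\epsilon c^{-\alpha_p/2}}[\ph](x_0)\Big\},
\]
\[
S[\ph]=\sup_{c}\Big\{\alpha_p\inf_{B_{2\epsilon^2c^{1-\alpha_p}}(x_0)}\ph+(1-\alpha_p)M_{\sqrt2\epsilon c^{-\alpha_p/2}}[\ph](x_0)\Big\},
\]
and $m[\ph]=\fint_{B_{\epsilon^8}(x_0)}\ph$. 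Then $\widehat{\A_\epsilon^p}[\ph]=\tfrac12\max[I[\ph],m[\ph]]+\tfrac12\min[S[\ph],m[\ph]]$. Set $\Delta=m[\ub]-m[\ua]\le0$.

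\textbf{Step 1 (monotone decomposition).} Each of $\sup_B$, $\inf_B$ and $M_\rho$ is monotone. Split every integral appearing in $M_\rho$ over a ball $B_\rho(x_0)$ with $\rho\ge\epsilon^8$ as
\[
\int_{B_{\epsilon^8}(x_0)}+\int_{B_\rho(x_0)\setminus B_{\epsilon^8}(x_0)} .
\]
Using $\ub\le\ua$ on the annulus, this gives, for every admissible $c$,
\[
\big(\text{bracket for }\ub\big)-\big(\text{bracket for }\ua\big)\le(1-\alpha_p)\frac{d+2}{p+d}\,\frac{|B_{\epsilon^8}|}{|B_\rho|}\,\Delta .
\]
By \eqref{HT}, $\rho\le\sqrt2\,\epsilon\, m_p^{-\alpha_p/2}$, so the volume ratio is bounded below by a constant $\eta_0=\eta_0(\epsilon,p,d)>0$. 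Taking $\inf_c$ and $\sup_c$ then yields $I[\ub]-I[\ua]\le\eta_0\Delta$ and $S[\ub]-S[\ua]\le\eta_0\Delta$.

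\textbf{Step 2 (combine the max and min).} Lemma~\ref{max-ineq} gives
\[
\max[I[\ub],m[\ub]]-\max[I[\ua],m[\ua]]\le\max[\eta_0\Delta,\Delta]=\eta_0\Delta,
\]
after decreasing $\eta_0$ to be at most $1$. The analogous inequality $\min[A_1,A_2]-\min[B_1,B_2]\le\max[A_1-B_1,A_2-B_2]$ handles the second half. Averaging the two halves gives the conclusion with $\eta=\eta_0$.

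\textbf{Main obstacle: the coefficient $1$ is not attainable.} The hard point is upgrading $\eta_0$ to $1$, and I expect this to be impossible in general. Take $\ua\equiv0$ and $\ub=-\chi_{B_{\epsilon^8}(x_0)}$, so $\Delta=-1$ and $\widehat{\A_\epsilon^p}[\ua](x_0)=0$.
\begin{itemize}
\item In $I[\ub]$ every supremum is still $0$, so $I[\ub]=-a$ with $0<a<1$, and $\max[I[\ub],-1]=-a$.
\item For the min term, $\min[S[\ub],-1]=-1$.
\end{itemize}
Hence
\[
\widehat{\A_\epsilon^p}[\ub](x_0)-\widehat{\A_\epsilon^p}[\ua](x_0)=-\tfrac{1+a}{2}>-1=\Delta .
\]
So the proof will establish the $\eta$-weighted inequality, which suffices for Theorem~\ref{comparison}, and the statement should be read with that constant.
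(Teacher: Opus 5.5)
Your analysis is right. The inequality with coefficient $1$ is false, and your counterexample works: under \eqref{m-M} the balls $B_{2\epsilon^2c^{1-\alpha_p}}(x_0)$ have radius at least $2\epsilon^{2+2/(3p-4)}>\epsilon^8$, so $I[\ub]\geq-(1-\alpha_p)$, while $\ub\geq-1$ forces $\min[S[\ub],-1]=-1$, giving a difference $\geq-\frac{p}{2(p-1)}>-1$.

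The paper's own argument (Lemma~\ref{max-ineq}, monotonicity, and bounding the $M_\rho$-average over the larger ball by the average over $B_{\epsilon^8}(x_0)$) is essentially your Steps~1--2. It likewise only yields the $\eta$-weighted form stated in Section~\ref{sec4.1} and used in Theorem~\ref{comparison}.

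One thing to make explicit in Step~1: every $M_\rho$-ball must contain $B_{\epsilon^8}(x_0)$. Under \eqref{m-M} this holds, since the radius is $\sqrt{4(p+d)}\,\epsilon c^{-\alpha_p/2}\geq\sqrt{4(p+d)}\,\epsilon^{2-2/p}$. This is what the paper's closing remark is meant to secure, and it should be checked separately if only \eqref{HT} is assumed.
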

	\begin{proof}
		Using the previous lemma we directly obtain
		\begin{align*}
			\widehat{\A_\epsilon^p}[\ub](x_0)-\widehat{\A_\epsilon^p}[\ua](x_0)
			&\leq \max\Bigg[
			\sup_{c\in[m_p,M_p]}\Bigg\{
			\alpha\Big(
			\sup_{B_{2\epsilon^2 c^{1-\alpha_p}}(x_0)}\ub
			-
			\sup_{B_{2\epsilon^2 c^{1-\alpha_p}}(x_0)}\ua
			\Big) \\
			&\qquad\qquad
			+ (1-\alpha)\Big(
			M_{\epsilon c^{-\alpha_p/2}}[\ub](x_0)
			-
			M_{\epsilon c^{-\alpha_p/2}}[\ua](x_0)
			\Big)
			\Bigg\}, \\
			&\qquad
			\fint_{B_{\epsilon^8}(x_0)} \ub(y)-\ua(y)\,dy
			\Bigg] \\
			&\leq \max\Bigg[
			\frac{d+2}{(p-1)(d+p)}
			\fint_{B_{\sqrt{2(p+d)}\epsilon M_p(\epsilon)^{-\alpha_p/2}}(x_0)}
			\ub(y)-\ua(y)\,dy, \\
			&\qquad
			\fint_{B_{\epsilon^8}(x_0)} \ub(y)-\ua(y)\,dy
			\Bigg].
		\end{align*}
		where in the last line we have used the definition of $M_\rho[\ph](x)$, see Lemma \ref{exp-plapN}, and the fact $\ub\leq \ua$ in any ball. The final result follows since $\epsilon^3<\sqrt{2(p+d)}\epsilon M_p(\epsilon)^{-\alpha_p/2}$.
	\end{proof}

	Finally, in Section~\ref{sec4.2}, it is necessary to compute the $p$-Laplacian with respect to the first $n$ variables of the radial function $|x-z|^{-\alpha}$.
	\begin{lem}
		Let $1\leq n<d$, $p>2$, and denote $x=(x',x'')\in \Rd$ with $x'=(x_1,\cdots,x_n)$ and $x''=(x_{n+1},\cdots,x_d)$. Define $v(x)=|x-z|^{-\alpha}$ for any $\alpha>0$ and $z\in \Rd$. Then, for any $x\not=z$ we have
		\begin{equation*}
			\plap^{x'} v(x)=\alpha^{p-1}\frac{|x'-z'|^{p-2}}{|x-z|^{(\alpha+2)(p-1)+2}}\left[\alpha(p-1)|x'-z'|^2+(p-n)|x'-z'|^2-(n+p-2)|x''-z''|^2\right].
		\end{equation*}
	\end{lem}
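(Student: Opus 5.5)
The plan is a direct computation in the variables $x'$ alone, with $x''$ treated as a parameter. I will write $w=x-z=(w',w'')$, $r=|w|$, $\rho'=|w'|$ and $\rho''=|w''|$, so that $r^2=\rho'^2+\rho''^2$ and $v=r^{-\alpha}$. The steps are: compute the partial gradient $\gr_{x'}v$, form the flux $|\gr_{x'}v|^{p-2}\gr_{x'}v$, and take its divergence in $x'$ with the product rule. Throughout we work at $x\neq z$, where $v$ is smooth.

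\textbf{Step 1: gradient and flux.} Since $\partial_{x_i}r=w_i/r$, we get
\[
\gr_{x'}v=-\alpha r^{-\alpha-2}w', \qquad |\gr_{x'}v|=\alpha r^{-\alpha-2}\rho'.
\]
Hence, setting $k:=(\alpha+2)(p-1)$, the flux is
\[
|\gr_{x'}v|^{p-2}\gr_{x'}v=-\alpha^{p-1}\,r^{-k}\,\rho'^{\,p-2}\,w'.
\]

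\textbf{Step 2: divergence in $x'$.} By the product rule,
\[
\plap^{x'}v=-\alpha^{p-1}\Big[r^{-k}\,\nabla_{x'}\!\cdot\!\big(\rho'^{\,p-2}w'\big)+\rho'^{\,p-2}\,w'\cdot\gr_{x'}\big(r^{-k}\big)\Big].
\]
This needs two standard identities:
\begin{itemize}
\item the $n$-dimensional identity $\nabla_{x'}\cdot(\rho'^{\,p-2}w')=(n+p-2)\rho'^{\,p-2}$;
\item $\gr_{x'}(r^{-k})=-k\,r^{-k-2}w'$, so that $\rho'^{\,p-2}\,w'\cdot\gr_{x'}(r^{-k})=-k\,r^{-k-2}\rho'^{\,p}$.
\end{itemize}
Factoring out $\alpha^{p-1}\rho'^{\,p-2}r^{-k-2}$ gives
\[
\plap^{x'}v=\alpha^{p-1}\rho'^{\,p-2}r^{-k-2}\big[k\rho'^2-(n+p-2)r^2\big].
\]

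\textbf{Step 3: match the stated form.} Substitute $r^2=\rho'^2+\rho''^2$ and collect the $\rho'^2$ terms. Since $k-(n+p-2)=\alpha(p-1)+p-n$, the bracket becomes
\[
\alpha(p-1)\rho'^2+(p-n)\rho'^2-(n+p-2)\rho''^2 .
\]
The prefactor is $r^{-(\alpha+2)(p-1)-2}$, which is exactly the claimed formula. The companion formula for $\qlap^{x''}$ in Lemma~\ref{lemma_p-q_computation} follows by the same computation with the roles of $x'$ and $x''$ swapped, $p$ replaced by $q$, and $n$ replaced by $d-n$.

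There is no real obstacle. The only delicate points are keeping the exponent $k$ straight and the set where $w'=0$. There $\rho'^{\,p-2}w'$ is still $C^1$ because $p>2$, and both sides vanish, so the identity holds at those points as well.
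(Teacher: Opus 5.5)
Your proposal is correct and follows the paper's proof essentially line by line. Like the paper, you compute $\nabla_{x'}v$, form the flux $-\alpha^{p-1}|x-z|^{-(\alpha+2)(p-1)}|x'-z'|^{p-2}(x'-z')$, take its divergence with the product rule, and substitute $|x-z|^2=|x'-z'|^2+|x''-z''|^2$. Your remark on the set $\{x'=z'\}$, where both sides vanish because $p>2$, is a small point the paper leaves implicit.
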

	\begin{proof}
		We first compute the gradient of $v$ in its first $n$ variables, that is
		$$\gr_{x'} v= -\alpha \left|x-z\right|^{-(\alpha+2)} (x'-z'),$$
		Therefore,
		$$\gr_{x'} v \left|\gr_{x'}v\right|^{p-2}=-\alpha^{p-1}\left|x-z\right|^{-(\alpha+2)(p-1)}\left|x'-z'\right|^{p-2}(x'-z').$$
		
		Finally, we compute the divergence of this last quantity
		\begin{align*}
			\plap^{x'} v(x)&=-\alpha^{p-1} n \frac{\left|x'-z'\right|^{p-2}}{\left|x-z\right|^{(\alpha+2)(p-1)}} - \alpha^{p-1}(p-2) \frac{\left|x'-z'\right|^{p-2}}{\left|x-z\right|^{(\alpha+2)(p-1)}} + \alpha^{p-1}(\alpha+2)(p-1)\frac{\left|x'-z'\right|^p}{\left|x-z\right|^{(\alpha+2)(p-1)+2}}\\
			&=\alpha^{p-1}\frac{\left|x'-z'\right|^{p-2}}{\left|x-z\right|^{(\alpha+2)(p-1)+2}}\left[-(n+p-2) \left|x-z\right|^2+(\alpha+2)(p-1)\left|x'-z'\right|^2\right]
		\end{align*}
		The result follows since $|x-z|^2=|x'-z'|^2+|x''-z''|^2$.
	\end{proof}
	Note that an analogous calculation shows us that
	\begin{equation*}
	\begin{array}{l}
	\displaystyle 
		\qlap^{x''} v(x)=\alpha^{q-1}\frac{|x''-z''|^{q-2}}{|x-z|^{(\alpha+2)(q-1)+2}}\Big[\alpha(q-1)|x''-z''|^2 \\[8pt]
		\qquad \qquad \qquad\qquad\qquad  \displaystyle +(q-(d-n))|x''-z''|^2-(d-n+q-2)|x'-z'|^2\Big].
		\end{array}
	\end{equation*}
	
	
	\section{Viscosity solutions}\label{apenB}
	
	We devote this appendix to discuss the notion of viscosity solutions for the problems under donsideration in this article. 
	We refer to \cite{Crandall_Ishii_Lions_1992} for a general reference on the topic.
	
	Let us start with the definition of a viscosity solution to the equation
	\begin{equation}\label{equation.A1}
		\plap^{x'} u(x',x'')+\qlap^{x''}u(x',x'')=f(x).
	\end{equation}
	\begin{defn}[\textbf{Viscosity Solution Elliptic I}]\label{viscosity-equation-elliptic}
		Let $d\in \NN$, $p>q>2$, $\Omega\subset \Rd$ be a bounded domain, and denote $x=(x',x'')\in \Rd$, with $x'=(x_1,\cdots,x_n)$ and $x''=(x_{n+1},\cdots,x_d)$. We say that a bounded upper semicontinuous function $u:\Omega\longrightarrow \RR$ is a viscosity subsolution of equation \eqref{equation.A1} in $\Omega$ if, whenever $x_0\in \Omega$ and $\ph\in C^2(B_R(x_0))$ such that $\ph(x_0)=u(x_0)$, and $\ph(x)>u(x)$ for all $x\in B_R(x_0)\setminus\left\{x_0\right\}$ then,
		$$\plap^{x'} u(x_0',x_0'')+\qlap^{x''}u(x_0',x_0'')\geq f(x_0).$$
		Similarly, a bounded lower semicontinuous function $u$ is a viscosity supersolution of \eqref{equation.A1} in $\Omega$ if, under the same assumptions but with $\ph(x_0)=u(x_0)$ and $\ph(x)<u(x)$ in $B_R(x_0)$, we have
		$$\plap^{x'} u(x_0',x_0'')+\qlap^{x''}u(x_0',x_0'')\leq f(x_0).$$
		Moreover, a bounded continuous function $u$ is a viscosity solution of \eqref{equation.A1} if it is both a viscosity subsolution and a viscosity supersolution.
	\end{defn}
	
	This notion of solution is equivalent to the following one in which our mean value formula appears, see Section \ref{sec3}. 
	
	\begin{defn}[\textbf{Viscosity Solution Elliptic II}]\label{viscosity-operator-elliptic}
		A bounded upper semicontinuous (resp.\ lower semicontinuous) function $u$ is a viscosity subsolution (resp.\ supersolution) associated with the operator $\widehat{\C_\epsilon}$ if the following holds: for every $(x_0)\in \Omega$ and every test function $\ph\in C^2(B_R(x_0))$ for some $R>0$, such that $\ph(x_0)=u(x_0)$ and $\ph(x)>u(x)$ (resp.\ $\ph(x)<u(x)$) in $B_R(x_0)$, we have
		\begin{equation*}
			\begin{split}
				&\widehat{\B_\epsilon^p}[\ph](x_0',x_0'')+\widehat{\B_\epsilon^q}[\ph](x_0',x_0'')-2\ph(x_0)\geq \epsilon^2 f(x_0)+o(\epsilon^2),
				\quad \text{as }\epsilon\to 0^+.\\
				&\qquad(\mbox{resp. } \widehat{\B_\epsilon^p}[\ph](x_0',x_0'')+\widehat{\B_\epsilon^q}[\ph](x_0',x_0'')-2\ph(x_0)\leq \epsilon^2 f(x_0)+o(\epsilon^2)).
			\end{split}
		\end{equation*}
	\end{defn}
	
	Here by $\ph(x_0,t_0+\epsilon^2)\geq \B_\epsilon^p[\ph](x_0,t_0)+o(\epsilon^2)$ (resp. $\ph(x,t+\epsilon^2)\leq \B_\epsilon^p[\ph](x,t)+o(\epsilon^2) $) we mean that
	$$
	0\geq \limsup_{\epsilon \to 0} \frac{ \B_\epsilon^p[\ph](x_0,t_0) - \ph(x_0,t_0+\epsilon^2)}{\epsilon^2},
	\qquad
	\left(\mbox{resp. } 0\leq \liminf_{\epsilon \to 0} \frac{ \B_\epsilon^p[\ph](x_0,t_0) - \ph(x_0,t_0+\epsilon^2)}{\epsilon^2}\right).
	$$
	
	Now, let us state the definition of being a viscosity solution to the boundary value problem
	\begin{equation}\label{problem-A-1}
		\left\{\begin{array}{rlll}
			\plap^{x'}u(x',x'')+\qlap^{x''}u(x',x'') &=& f(x), & \mbox{in } \Omega,\\
			u(x)&=&g(x), & \mbox{on } \partial \Omega.
		\end{array}\right.
	\end{equation}
	\begin{defn}[\textbf{Viscosity Solution Elliptic III}]\label{viscosity-dirichlet-elliptic}
		A bounded upper semicontinuous (resp.\ lower semicontinuous) function $u:\overline{\Omega}\to \RR$ is a viscosity subsolution (resp.\ supersolution) to the Dirichlet problem \eqref{problem-A-1} if:
		\begin{enumerate}[label=\roman*)]
			\item $u$ is a viscosity subsolution (resp.\ supersolution) of \eqref{equation.A1} in the sense of Definition~\ref{viscosity-equation-elliptic}.
			\item  $u(x)\leq g(x)$ (resp.\ $u(x)\geq g(x)$) in $\partial \Omega$.
		\end{enumerate}
		
		Moreover, a bounded continuous function $u$ is a viscosity solution of \eqref{problem-A-1} if it is both a viscosity subsolution and a viscosity supersolution.
	\end{defn}
	
	To the best of our knowledge, there is no equivalence result between viscosity and weak solutions for this elliptic problem without imposing additional regularity assumptions on the solutions. See~\cite{Ochoa+Valvelde_2024, Yuzhou+Ruadulescu+Zhang_2024} for a result in a more general framework under the assumption that the solutions are Lipschitz continuous.
	
	Nevertheless, the standard doubling variables method, see~\cite{Crandall_Ishii_Lions_1992}, yields directly the comparison principle for viscosity solutions (and consequently their uniqueness). We provide here a brief sketch of the proof in $d=2$.
	\begin{lem}\label{com-vis-elliptic}
		Let $u$ be an upper semicontinuous function and let $v$ be lower semicontinuous. If they are a viscosity subsolution and supersolution of \eqref{problem-A-1} respectively, then
		$$u\leq v \qquad \mbox{in }\RR^2.$$
	\end{lem}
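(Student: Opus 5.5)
The plan is to interpret the statement in the setting used throughout Section \ref{sec4}: $u$ and $v$ are defined on all of $\RR^2$ (so $d=2$, $n=1$, $x=(x_1,x_2)$), they are a viscosity sub- and supersolution of the equation in $\Omega$, and the Dirichlet condition of Definition \ref{viscosity-dirichlet-elliptic} is understood as $u\le g\le v$ on $\RR^2\setminus\Omega$, as with the exterior datum of \eqref{DPP}. The inequality $u\le v$ on $\RR^2\setminus\Omega$ is then immediate from $u\le g\le v$. So the whole content of the statement is $\sup_{\overline\Omega}(u-v)\le 0$. For that we use the doubling of variables method of \cite{Crandall_Ishii_Lions_1992} on the bounded set $\overline\Omega$; no behaviour at infinity is involved, since off $\Omega$ the ordering comes directly from the data.

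First, for $u,v\in C^2$ and $d=2$ the operator reads
\[
F(\xi,X)=(p-1)|\xi_1|^{p-2}X_{11}+(q-1)|\xi_2|^{q-2}X_{22}.
\]
This is continuous in $(\xi,X)$ because $p,q>2$, and it is degenerate elliptic: $X\le Y$ implies $F(\xi,X)\le F(\xi,Y)$. Assume by contradiction that $\theta:=\sup_{\overline\Omega}(u-v)>0$. Since $u-v\le 0$ on $\partial\Omega$, the supremum is attained only at interior points.

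To produce a strict inequality, we replace $u$ by $u_\delta(x)=u(x)+\delta\,\phi(x)$ with $\phi(x)=e^{Lx_1}$ for $L$ large and $\delta>0$ small. For a smooth test function touching $u_\delta$ from above with gradient $\xi$ and Hessian $X$, the function $u$ is touched with gradient $\xi-\delta\nabla\phi$ and Hessian $X-\delta D^2\phi$. Using the subsolution property of $u$ and monotonicity in $X$, one gets $F(\xi-\delta\nabla\phi,\,X)\ge f+\delta(p-1)|\xi_1-\delta\phi_1|^{p-2}L^2e^{Lx_1}$. For $\delta$ small, the maximum of $u_\delta-v$ is still positive and still interior.

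Next, double variables. Maximize $\Phi_\varepsilon(x,y)=u_\delta(x)-v(y)-\frac{|x-y|^4}{4\varepsilon}$ over $\overline\Omega\times\overline\Omega$. We use the quartic penalty, as in \cite{Juutinen+Lindqvist+Manfredi_2001}, so that the gradient $\xi_\varepsilon=|x_\varepsilon-y_\varepsilon|^2(x_\varepsilon-y_\varepsilon)/\varepsilon$ and the Hessian bounds from the theorem of sums are compatible. The standard argument gives $|x_\varepsilon-y_\varepsilon|^4/\varepsilon\to0$, and $x_\varepsilon,y_\varepsilon$ converge to an interior maximum point. The theorem of sums then provides $X\le Y$ with $(\xi_\varepsilon,X)\in\overline J^{2,+}u_\delta(x_\varepsilon)$ and $(\xi_\varepsilon,Y)\in\overline J^{2,-}v(y_\varepsilon)$. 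Combining the two viscosity inequalities yields
\[
f(x_\varepsilon)+\delta c_L\,|\xi_{\varepsilon,1}-\delta\phi_1|^{p-2}\le F(\xi_\varepsilon-\delta\nabla\phi,X)\quad\mbox{and}\quad F(\xi_\varepsilon,Y)\le f(y_\varepsilon).
\]

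The main obstacle is the mismatch between the gradients $\xi_\varepsilon-\delta\nabla\phi$ and $\xi_\varepsilon$ in the two inequalities. The Hessians $X,Y$ are only bounded by $C|x_\varepsilon-y_\varepsilon|^2/\varepsilon$, so continuity of $F$ in $\xi$ alone does not control the error $\bigl||\xi_1-\delta\phi_1|^{p-2}-|\xi_1|^{p-2}\bigr|\,|X_{11}|$.

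The first thing to try is to choose $\phi$ depending only on $x_1$, so that the $x_2$-part of $F$ matches exactly. One then splits into two cases:
\begin{itemize}
\item if $|\xi_{\varepsilon,1}|\gg\delta L e^{Lx_1}$, the relative error in $|\xi_1|^{p-2}$ is small and is absorbed by the gain term $\delta c_L|\xi_1|^{p-2}$;
\item if $|\xi_{\varepsilon,1}|\lesssim\delta$, both $x_1$-coefficients are $O(\delta^{p-2})$, and since $|X_{11}|\lesssim|x_\varepsilon-y_\varepsilon|^2/\varepsilon$ is controlled through the quartic penalty, these terms vanish as $\varepsilon\to0$ before $\delta\to0$.
\end{itemize}
Letting $\varepsilon\to0$ and using the continuity of $f$ then gives $\delta c\le 0$, the desired contradiction. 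If this bookkeeping fails in the degenerate regime $\xi_\varepsilon\to0$, the fallback is to treat that regime separately using the semi-jets directly, exploiting that $F(0,X)=0$.
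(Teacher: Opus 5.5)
You have located the real difficulty correctly: the shift $\xi_\varepsilon\mapsto\xi_\varepsilon-\delta\nabla\phi$ inside the degenerate coefficient $|\xi_1|^{p-2}$. It is exactly what the paper's sketch passes over in its Step~4, where $u+\delta(x^2+y^2)$ is simply asserted to satisfy $f-\Delta_p^x u_\delta-\Delta_q^y u_\delta\le -C\delta$. That cannot hold at a point where $u_\delta$ is touched from above by a test function with zero gradient, unless $f\le -C\delta$ there.

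Your two-case bookkeeping does not close the gap. After subtracting the two viscosity inequalities and disposing of the $x_2$-part via $X\le Y$, what must be absorbed is $(p-1)\bigl(|\xi_{\varepsilon,1}-\delta\phi_1|^{p-2}-|\xi_{\varepsilon,1}|^{p-2}\bigr)X_{11}$ (or the same with $Y_{11}$). The inequality $X\le Y$ only lets you move this term between $X_{11}$ and $Y_{11}$, and the harmful sign (e.g.\ $X_{11}>0$ when $\xi_{\varepsilon,1}<0$) is not excluded. The theorem of sums gives only $\|X\|,\|Y\|\le C|x_\varepsilon-y_\varepsilon|^2/\varepsilon$, whereas $|\xi_\varepsilon|=|x_\varepsilon-y_\varepsilon|^3/\varepsilon$.

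In your first regime the error is $\approx(p-1)(p-2)|\xi_{\varepsilon,1}|^{p-3}\delta\phi_1|X_{11}|$ and the gain is $\approx(p-1)\delta L\phi_1|\xi_{\varepsilon,1}|^{p-2}$. Their ratio is $\approx(p-2)|X_{11}|/(L|\xi_{\varepsilon,1}|)$, which can be of order $1/(L|x_\varepsilon-y_\varepsilon|)\to\infty$ as $\varepsilon\to0$ with $L,\delta$ fixed. A small relative error in the coefficient does not help, because it multiplies $X_{11}$, which is not controlled by the fixed gain coefficient $\delta L^2e^{Lx_1}$. No power penalty $|x-y|^m/\varepsilon$ avoids this, since the Hessian-to-gradient ratio is always of order $|x-y|^{-1}$.

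In your second regime, the assertion that the quartic penalty makes $|x_\varepsilon-y_\varepsilon|^2/\varepsilon$ small is false. It only gives $|x_\varepsilon-y_\varepsilon|^4/\varepsilon\to0$, and $|x_\varepsilon-y_\varepsilon|^2/\varepsilon=(|x_\varepsilon-y_\varepsilon|^4/\varepsilon)^{1/2}\varepsilon^{-1/2}$ may be unbounded, so an $O(\delta^{p-2})$ coefficient times $X_{11}$ is not negligible. Moreover, your gain $\delta c_L|\xi_{\varepsilon,1}-\delta\phi_1|^{p-2}$ vanishes at $\xi_{\varepsilon,1}=\delta\phi_1$, so there is no uniform $c>0$ for the final ``$\delta c\le 0$''.

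The fallback through $F(0,X)=0$ only covers $x_\varepsilon=y_\varepsilon$. There the penalty has zero Hessian, so $X\le 0\le Y$, and the gain $\delta c_L(\delta\phi_1)^{p-2}>0$ does give a contradiction. It says nothing when $\xi_\varepsilon\neq0$ is small or when $\xi_{\varepsilon,1}$ is close to $\delta\phi_1$.

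What is missing is a source of strictness that is uniform in the gradient and survives the gradient shift; the equation has no zeroth-order term to supply it. Without such an extra ingredient the doubling argument, in your version or in the paper's, does not yield the contradiction.
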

	\begin{proof}
	It is clear that $u\leq v$ in $\mathbb{R}^2\setminus\Omega$, and hence it suffices to prove the inequality in $\Omega$. To maintain the notation of~\cite{Crandall_Ishii_Lions_1992}, we rewrite the equation in $\Omega$ as
	$$
	f(x,y)-\plap^x u-\qlap^y u=0.
	$$
	Assume, by contradiction, that $M:=\sup_{\Omega}\{u-v\}>0$. We divide the remainder of the proof into several steps.
	
	\noindent\textit{Step 1:}
	For every $\epsilon>0$, define
	$$
	\Phi_\epsilon(x,y,x',y')
	=
	u(x,y)-v(x',y')
	-\frac{(x-x')^2+(y-y')^2}{2\epsilon},
	$$
	and let $M_\epsilon:=\Phi_\epsilon(x_\epsilon,y_\epsilon,x'_\epsilon,y'_\epsilon)$ denote the supremum of $\Phi_\epsilon$. We first show that
	$$
	M_\epsilon\to M
	\qquad\mbox{and}\qquad
	\left|(x_\epsilon,y_\epsilon)-(x'_\epsilon,y'_\epsilon)\right|\to 0.
	$$
	
	For any $(x,y)\in\Omega$, we have
	$$
	\Phi_\epsilon(x,y,x,y)=u(x,y)-v(x,y)\leq M_\epsilon,
	$$
	and therefore $M\leq M_\epsilon$.
	
	On the other hand, by the definition of $M_\epsilon$ and the inequality $M\leq M_\epsilon$, we obtain
	$$
	\frac{(x_\epsilon-x'_\epsilon)^2+(y_\epsilon-y'_\epsilon)^2}{2\epsilon}
	\leq
	u(x_\epsilon,y_\epsilon)-v(x'_\epsilon,y'_\epsilon)-M
	\leq C,
	$$
	where the last inequality follows from the fact that $u$ is USC and $v$ is LSC. Consequently,
	$$
	\left|(x_\epsilon,y_\epsilon)-(x'_\epsilon,y'_\epsilon)\right|
	\to 0
	\qquad\mbox{as }\epsilon\to0^+.
	$$
	
	Passing to a subsequence if necessary, we may assume that
	$$
	(x_\epsilon,y_\epsilon),(x'_\epsilon,y'_\epsilon)
	\to
	(\overline{x},\overline{y})\in\overline{\Omega}.
	$$
	Moreover,
	$$
	M
	\leq
	\limsup_{\epsilon\to0}M_\epsilon
	\leq
	u(\overline{x},\overline{y})
	-
	v(\overline{x},\overline{y})
	\leq M.
	$$
	Therefore,
	$$
	M_\epsilon\to M.
	$$
	
	\noindent\textit{Step 2:}
	Applying \cite[Theorem 3.2]{Crandall_Ishii_Lions_1992} to
	$$
	F(x,y,u,Du,D^2u)
	=
	f(x,y)
	-
	|u_x|^{p-2}u_{xx}
	-
	|u_y|^{q-2}u_{yy},
	$$
	and setting $p_\epsilon=\epsilon^{-1}(x_\epsilon-x'_\epsilon,y_\epsilon-y'_\epsilon)$,
	we obtain two matrices $X,Y\in\mathcal{S}^d$ with $X\leq Y$ such that
	$$
	(p_\epsilon,X)
	\in
	\overline{J}^{2,+}u(x_\epsilon,y_\epsilon)
	\qquad\mbox{and}\qquad
	(p_\epsilon,Y)
	\in
	\overline{J}^{2,-}v(x'_\epsilon,y'_\epsilon),
	$$
	where $\overline{J}^{2,+}$ and $\overline{J}^{2,-}$ denote the closed superjet and subjet, respectively.
	
	\noindent\textit{Step 3:}
	Since $u$ is a subsolution and $v$ is a supersolution, we have
	\begin{align*}
		0
		&\geq
		f(x_\epsilon,y_\epsilon)
		-
		\frac{|x_\epsilon-x'_\epsilon|^{p-2}}
		{\epsilon^{p-2}}X_{11}
		-
		\frac{|y_\epsilon-y'_\epsilon|^{q-2}}
		{\epsilon^{q-2}}X_{22},
		\\
		0
		&\leq
		f(x'_\epsilon,y'_\epsilon)
		-
		\frac{|x_\epsilon-x'_\epsilon|^{p-2}}
		{\epsilon^{p-2}}Y_{11}
		-
		\frac{|y_\epsilon-y'_\epsilon|^{q-2}}
		{\epsilon^{q-2}}Y_{22}.
	\end{align*}
	Subtracting the two inequalities gives
	\begin{equation}\label{eq-doubling-variables}
		f(x_\epsilon,y_\epsilon)-f(x'_\epsilon,y'_\epsilon)
		+
		\frac{|x_\epsilon-x'_\epsilon|^{p-2}}
		{\epsilon^{p-2}}
		(Y_{11}-X_{11})
		+
		\frac{|y_\epsilon-y'_\epsilon|^{q-2}}
		{\epsilon^{q-2}}
		(Y_{22}-X_{22})
		\leq0.
	\end{equation}
	
	\noindent\textit{Step 4:}
	Given $\delta>0$, define $u_\delta(x,y)=u(x,y)+\delta(x^2+y^2)$.
	Since $x^2+y^2$ is strictly convex, $u_\delta$ is a strict viscosity subsolution; more precisely,
	$$
	f-\plap^x u_\delta-\qlap^y u_\delta
	\leq-C\delta.
	$$
	
	Repeating the previous steps with $u_\delta$ in place of $u$, and using \eqref{eq-doubling-variables}, we obtain
	$$
	f(x_\epsilon,y_\epsilon)-f(x'_\epsilon,y'_\epsilon)
	+
	\frac{|x_\epsilon-x'_\epsilon|^{p-2}}
	{\epsilon^{p-2}}
	(Y_{11}-X_{11})
	+
	\frac{|y_\epsilon-y'_\epsilon|^{q-2}}
	{\epsilon^{q-2}}
	(Y_{22}-X_{22})
	\leq-C\delta.
	$$
	Taking the limit as $\epsilon\to0$ leads to a contradiction.
	
	Hence, $u_\delta\leq v$ in $\Omega$ and, since $\delta>0$ is arbitrary, we conclude that
	$$
	u\leq v
	\qquad\mbox{in }\overline{\Omega}.
	$$
	\end{proof}
	
	Now we also need to define the notion of viscosity solution to the equation
	\begin{equation}\label{equation.A2}
		\partial_t u (x,t) - \plap u (x,t) = 0.
	\end{equation}
	
	\begin{defn}[\textbf{Viscosity Solution Parabolic I}]\label{viscosity-equation-parabolic}
		A bounded upper semicontinuous function $u:\Omega \times (0,T)\to \RR$ is a viscosity subsolution of the parabolic equation \eqref{equation.A2} in $\Omega\times (0,T)$ if, whenever $(x_0,t_0)\in \Omega\times (0,T)$ and $\ph\in C^2(B_R(x_0)\times [t_0-R,t_0+R])$ for some $R>0$ are such that $\ph(x_0,t_0)=u(x_0,t_0)$ and $\ph(x,t)>u(x,t)$ for all $(x,t)\in B_R(x_0)\times [t_0-R,t_0]$, then
		$$\partial_t \ph(x_0,t_0)\leq \plap \ph(x_0,t_0).$$
		Similarly, a bounded lower semicontinuous function $u$ is a viscosity supersolution of \eqref{equation.A2} in $\Omega\times (0,T)$ if, under the same assumptions but with $\ph(x_0,t_0)=u(x_0,t_0)$ and $\ph(x,t)<u(x,t)$ in $B_R(x_0)\times [t_0-R,t_0]$, we have
		$$\partial_t \ph(x_0,t_0)\geq \plap \ph(x_0,t_0).$$
		Moreover, a bounded continuous function $u$ is a viscosity solution of \eqref{equation.A2} if it is both a viscosity subsolution and a viscosity supersolution.
	\end{defn}
	
	This notion of solution is equivalent to the following one in which our mean value formula appears. 
	
	\begin{defn}[\textbf{Viscosity Solution Parabolic II}]\label{viscosity-operator-parabolic}
		A bounded upper semicontinuous (resp.\ lower semicontinuous) function $u$ is a viscosity subsolution (resp.\ supersolution) associated with the operator $\B_\epsilon^p$ if the following holds: for every $(x_0,t_0)\in \Omega \times [0,T]$ and every test function $\ph\in C^2(B_R(x_0)\times [t_0-R,t_0+R])$ for some $R>0$, such that $\ph(x_0,t_0)=u(x_0,t_0)$ and $\ph(x,t)>u(x,t)$ (resp.\ $\ph(x,t)<u(x,t)$) in $B_R(x_0)\times [t_0-R,t_0+R]$, we have
		\[
		\ph(x_0,t_0+\epsilon^2)\geq \B_\epsilon^p[\ph](x_0,t_0)+o(\epsilon^2),
		\quad \text{as }\epsilon\to 0^+, \qquad
		(\mbox{resp. } \ph(x,t+\epsilon^2)\leq \B_\epsilon^p[\ph](x,t)+o(\epsilon^2)).
		\]
	\end{defn}
	
	Here by $\ph(x_0,t_0+\epsilon^2)\geq \B_\epsilon^p[\ph](x_0,t_0)+o(\epsilon^2)$ (resp. $\ph(x,t+\epsilon^2)\leq \B_\epsilon^p[\ph](x,t)+o(\epsilon^2) $) we mean that
	$$
	0\geq \limsup_{\epsilon \to 0} \frac{ \B_\epsilon^p[\ph](x_0,t_0) - \ph(x_0,t_0+\epsilon^2)}{\epsilon^2},
	\qquad
	\left(\mbox{resp. } 0\leq \liminf_{\epsilon \to 0} \frac{ \B_\epsilon^p[\ph](x_0,t_0) - \ph(x_0,t_0+\epsilon^2)}{\epsilon^2}\right).
	$$
	The proof of the equivalence between both definitions follows from our asymptotic expansions, see Section \ref{sec7.1}.  
	
	Finally, let us state the definition of being a viscosity solution to the boundary value problem
	\begin{equation}\label{problem-A-2}
		\left\{\begin{array}{rlll}
			\partial_t u(x,t) &=& \plap u(x,t), & \mbox{in } \Omega\times (0,\infty),\\
			u(x,0)&=&u_0(x), & \mbox{in } \Omega,\\
			u(x,t)&=&g(x), & \mbox{on } \partial \Omega\times (0,\infty).
		\end{array}\right.
	\end{equation}
	
	\begin{defn}[\textbf{Viscosity Solution Parabolic III}]\label{viscosity-dirichlet-parabolic}
		A bounded upper semicontinuous (resp.\ lower semicontinuous) function $u:\overline{\Omega}\times [0,\infty)\to \RR$ is a viscosity subsolution (resp.\ supersolution) to the Dirichlet problem \eqref{problem-A-1} if:
		\begin{enumerate}[label=\roman*)]
			\item $u$ is a viscosity subsolution (resp.\ supersolution) of \eqref{equation.A2} in the sense of Definition~\ref{viscosity-equation-parabolic}.
			\item $u(x,0)\leq u_0(x)$ (resp.\ $u(x,0)\geq u_0(x)$) in $\Omega$, and $u(x,t)\leq g(x)$ (resp.\ $u(x,t)\geq g(x)$) in $\partial \Omega \times (0,\infty)$.
		\end{enumerate}
		
		Moreover, a bounded continuous function $u$ is a viscosity solution of \eqref{problem-A-2} if it is both a viscosity subsolution and a viscosity supersolution.
	\end{defn}
	The comparison principle, and consequently the uniqueness of solutions, follows directly from the equivalence between viscosity and weak solutions; see, for instance,~\cite{Juutinen+Lindqvist+Manfredi_2001}.
	
	\section*{Acknowledgments}
	
	This work started and has been mostly carried out during the research
	stay of C. Fuertes to Univ. Torcuato di Tella (Argentina). The authors
	are grateful to this institution for the friendly and stimulating
	working atmosphere and the partial fundings.

	C. Fuertes acknowledges the financial support from the grants
	PID2020-113596GBI00, PID2021-127105NB-100, PID2023-150166NB-I00,
	CNS2024-154515, RED2022-134784-T, and PCI2024-155066-2, all funded by
	Ministry of Science and Innovation, Spain, MCIN/
	AEI/10.13039/501100011033 and acknowledges the financial support from
	the Spanish Ministry of Science and Innovation, through the “Severo
	Ochoa Programme for Centres of Excellence in R\&D” CEX2023-001347S.

	J. D. Rossi was partially supported by CONICET PIP GI No
	11220150100036CO (Argentina), PICT-03183 (Argentina) and UBACyT
	20020160100155BA (Argentina).
	
	\bigskip
	
	On behalf of all authors, the corresponding author states that there is no conflict of interest.
	
	This paper has no associated data.


\end{document}